\setlist[enumerate]{itemsep=0mm}
\theoremstyle{plain}
\declaretheorem[title=Theorem, parent=section]{theorem}
\declaretheorem[title=Lemma,sibling=theorem]{lemma}
\declaretheorem[title=Proposition,sibling=theorem]{proposition}
\theoremstyle{definition}
\declaretheorem[title=Definition,sibling=theorem]{definition}
\declaretheorem[title=Remark, numbered=no]{remark*}
\declaretheorem[title=Example, sibling=theorem]{example}
\declaretheorem[title=Assumption, numbered=no]{assumption*}
\numberwithin{equation}{section}
\newcommand{\N}{\mathds{N}}
\newcommand{\R}{\mathds{R}}
\def\hmath$#1${\texorpdfstring{{\rmfamily\textit{#1}}}{#1}}
\newcommand{\cD}{\mathcal{D}}
\newcommand{\cE}{\mathcal{E}}
\newcommand{\cEs}{\cE^{K_s}}
\newcommand{\cEa}{\cE^{K_a}}
\newcommand{\eps}{\varepsilon}
\newcommand{\BIGOP}[1]
{
\mathop{\mathchoice%
{\raise-0.22em\hbox{\huge $#1$}}%
{\raise-0.05em\hbox{\Large $#1$}}{\hbox{\large $#1$}}{#1}}}
\def\Xint#1{\mathchoice
   {\XXint\displaystyle\textstyle{#1}}%
   {\XXint\textstyle\scriptstyle{#1}}%
   {\XXint\scriptstyle\scriptscriptstyle{#1}}%
   {\XXint\scriptscriptstyle\scriptscriptstyle{#1}}%
   \!\int}
\def\XXint#1#2#3{{\setbox0=\hbox{$#1{#2#3}{\int}$}
     \vcenter{\hbox{$#2#3$}}\kern-.5\wd0}}
\def\dashint{\Xint-}
\newcommand{\BIGboxplus}{\mathop{\mathchoice%
{\raise-0.35em\hbox{\huge $\boxplus$}}%
{\raise-0.15em\hbox{\Large $\boxplus$}}{\hbox{\large $\boxplus$}}{\boxplus}}}
\DeclareMathOperator{\diam}{diam}
\DeclareMathOperator{\supp}{supp}
\DeclareMathOperator*{\osc}{osc}
\DeclareMathOperator{\pv}{p.v.}
\newcommand{\U}{\widetilde{u}}
\renewcommand{\d}{\textnormal{d}}
\begin{document}
\allowdisplaybreaks
 \title{Nonlocal operators related to nonsymmetric forms I: {H}\"{o}lder estimates}

\author{Moritz Kassmann}
\author{Marvin Weidner}

\address{Fakult\"{a}t f\"{u}r Mathematik\\Universit\"{a}t Bielefeld\\Postfach 100131\\D-33501 Bielefeld}
\email{moritz.kassmann@uni-bielefeld.de}
\urladdr{https://www.uni-bielefeld.de/math/kassmann}

\address{Fakult\"{a}t f\"{u}r Mathematik\\Universit\"{a}t Bielefeld\\Postfach 100131\\D-33501 Bielefeld}
\email{mweidner@math.uni-bielefeld.de}

\keywords{nonlocal operator, energy form, nonsymmetric, 
regularity, weak Harnack inequality}

\thanks{Moritz Kassmann and Marvin Weidner gratefully acknowledge financial support by the German Research Foundation (SFB 1283 - 317210226 resp. GRK 2235 - 282638148). Moritz Kassmann wholeheartedly thanks the Institut de Math\'{e}matiques de Toulouse for hospitality during the academic year 2020/21.}

\subjclass[2010]{47G20, 35B65, 31B05, 60J75, 35K90}

\allowdisplaybreaks

\begin{abstract}
The aim of this article is to develop the regularity theory for parabolic equations driven by nonlocal operators associated with nonsymmetric forms. H\"older regularity and weak Harnack inequalities are proved using extensions of recently established nonlocal energy methods. We are able to connect the theory of nonsymmetric nonlocal operators with the important results of Aronson-Serrin in the local linear case. This connection is exemplified by nonlocal-to-local convergence results identifying the limiting class of operators as second order differential operators with drift terms.
\end{abstract}

\allowdisplaybreaks

\maketitle

\section{Introduction}  
\label{sec:intro}
The aim of this article is to prove regularity properties for nonlocal operators related to nonsymmetric bilinear forms. Such operators are determined by jumping kernels $K : \R^d \times \R^d \to [0,\infty]$ which might be nonsymmetric. The corresponding operators are of the form
\begin{align}
\label{eq:op}
-L u (x) = 2\pv\int_{\R^d} (u(x)-u(y))K(x,y) \d y.
\end{align}
The operator $L$ is associated with the nonsymmetric bilinear form
\begin{align}\label{eq:energy}
\begin{split}
\cE(u,v) &= 2\int_{\R^d} \int_{\R^d} (u(x)-u(y))v(x) K(x,y)  \d y \d x \\
&= \int_{\R^d} \int_{\R^d}  \big(u(x)-u(y)\big) \big(v(x)-v(y)\big) K_s(x,y) \d y \d x  \\
&\quad + \int_{\R^d} \int_{\R^d}  \big(u(x)-u(y)\big)\big(v(x)+v(y)\big) K_a(x,y)  \d y \d x  \,,
\end{split}
\end{align}
where $K_s$ and $K_a$ are the symmetric respectively the antisymmetric part of $K$. In the last 15 years, a lot of research has been devoted to the symmetric case, i.e., when $K_a=0$. As we explain in this work, treating the antisymmetric part requires a refinement of the proofs of the Caccioppoli estimates that have been developed for the symmetric case. 

In the symmetric case, energy form approaches have been developed in order to establish regularity properties such as H\"older regularity, local boundedness or the validity of weak and full Harnack inequalities for weak solutions to nonlocal elliptic and parabolic equations. Let us comment on this approach and refer to \autoref{subsec:literature} for research in related settings. On the one hand, a Moser iteration scheme for elliptic equations driven by operators related to the fractional Laplacian was developed in \cite{Kas09}. This method has been refined in \cite{DyKa20} and extended to singular anisotropic jumping measures in \cite{ChKa20}, \cite{ChKi21}. A parabolic version was provided in \cite{FeKa13} and further modified in \cite{KaSc14}, \cite{CKW19}. On the other hand, \cite{DKP14} and \cite{DKP16} adopted the De Giorgi method in order to prove regularity estimates and Harnack inequalities for nonlinear operators related to the fractional $p$-Laplacian. Their ideas were extended in \cite{Coz17} and \cite{CKW21}, \cite{CK21b}, \cite{Ok21}, \cite{BOS21} to problems with more general nonlinearities and growth conditions. Parabolic problems were considered, e.g., in \cite{CCV11}, \cite{Str19a}, \cite{Kim20}, \cite{DZZ21}. In contrast to the aforementioned methods, the symmetric case has been successfully studied with the help of the corresponding Markov jump process, too. See \cite{BaLe02b} and \cite{ChKu03} for first results and \cite{CKW20} for further references to this approach.

In this work, we extend some of the aforementioned approaches to the nonsymmetric case. The results can be seen as analogs of the results by Stampacchia \cite{Sta65}, and Aronson-Serrin \cite{ArSe67} in the linear case. The space-time integrability of lower order terms considered in this article is in align with the theory for local equations. One class of examples of nonsymmetric jumping kernels $K$ that we consider is given by 
\begin{align}\label{eq:protex0}
K(x,y) = g(x,y)\vert x-y \vert^{-d-\alpha},
\end{align}
where $g : \R^d \times \R^d \to [0,\infty)$ is a suitable nonsymmetric function and $\alpha \in (0,2)$. We discuss more general examples in \autoref{sec:examples}.

\subsection{Main results}
Let $\Omega \subset \R^d$ be an open set. This article focuses on nonlocal Moser iteration which yields a weak Harnack inequality for nonnegative weak supersolutions and an interior H\"older regularity estimate for weak solutions $u$ to 
\begin{align}
\label{PDE}\tag{$\text{PDE}$}
\partial_t u - L u = f ~~\text{in}~I_R(t_0) \times B_{2R} \subset \R^{d+1},
\end{align}
where $B_{2R} \subset \Omega$ is some ball, $I_R(t_0) = (t_0 - R^{\alpha},t_0 + R^{\alpha}) \subset \R$, $t_0 \in \R$, $0 < R \le 1$, $f \in L^{\infty}(I_R(t_0) \times B_{2R})$ and $L$ is defined as in \eqref{eq:op}.

Now, we state our first main result. Our assumptions will be introduced and explained in the sequel.

\begin{theorem}
\label{thm:mainthmPDE}
Assume that \eqref{K1}, \eqref{K2}, \eqref{cutoff}, \eqref{Poinc} and \eqref{Sob} hold true for some $\alpha \in (0,2)$, $\theta \in [\frac{d}{\alpha},\infty]$.
\vspace{-0.2cm}
\begin{itemize}
\item[(i)] (weak Harnack inequality): Then there is $c > 0$ such that for every $0 < R \le 1$, and every nonnegative,  weak supersolution $u$ to \eqref{PDE} in $I_R(t_0) \times B_{2R}$:
\begin{align}
\label{eq:wHI}
\inf_{\left(t_0 + R^{\alpha} - (\frac{R}{2})^{\alpha}, t_0 + R^{\alpha}\right) \times B_{\frac{R}{2}}} u \ge c \left( \dashint_{\left(t_0 - R^{\alpha}, t_0 - R^{\alpha} + (\frac{R}{2})^{\alpha}\right) \times B_{\frac{R}{2}}} u(t,x) \d x \d t - R^{\alpha} \Vert f \Vert_{L^{\infty}} \right).
\end{align}

\item[(ii)] (H\"older regularity estimate): Assume \eqref{cutoff2}. Then there are $c > 0$ and $\gamma \in (0,1)$ such that for every $0 < R \le 1$ and every weak solution $u$ to \eqref{PDE} in $I_R(t_0) \times B_{2R}$ with $f \equiv 0$:
\begin{align}
\label{eq:HR}
\vert u(t,x) - u(s,y) \vert \le c \Vert u \Vert_{L^{\infty}(I_R(t_0) \times \R^d)} \left( \frac{\vert x-y\vert + \vert t-s \vert^{1/\alpha}}{R}\right)^{\gamma}
\end{align}
for almost every $(t,x),(s,y) \in I_{R/2}(t_0) \times B_{R}$.
\end{itemize}
\end{theorem}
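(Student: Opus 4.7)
My plan is to prove both parts by a parabolic nonlocal Moser iteration, refined to accommodate the antisymmetric part $K_{a}$ of the kernel. The scheme rests on three building blocks: (a) an upper iteration bounding $\sup u$ of a subsolution by an $L^{p}$-norm; (b) a lower iteration bounding $\inf u$ from below by a negative $L^{p}$-norm of a positive supersolution; (c) a logarithmic estimate on $\log u$ which bridges small positive and small negative exponents. Combining these through a Bombieri--Giusti-type lemma on forward and backward parabolic cylinders then yields the weak Harnack inequality \eqref{eq:wHI}. The source term $f$ is absorbed by the standard substitution $u\mapsto u+R^{\alpha}\Vert f\Vert_{L^{\infty}}$.

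The decisive new ingredient is a Caccioppoli estimate that is stable under the antisymmetric perturbation. Testing the equation against $\tau^{2}(t)\eta^{2}(x)u^{\beta}$ with an admissible exponent $\beta$ and smooth cutoffs, the $K_{s}$-part produces the familiar quadratic form in $u^{(\beta+1)/2}$ via the classical algebraic inequality. The new contribution
\begin{equation*}
\iint \bigl(u(x)-u(y)\bigr)\bigl(\eta^{2}(x)u^{\beta}(x)+\eta^{2}(y)u^{\beta}(y)\bigr)K_{a}(x,y)\,\d y\,\d x
\end{equation*}
is the main obstacle. The idea is to expand each of the two summands around the symmetric increment $u^{(\beta+1)/2}(x)-u^{(\beta+1)/2}(y)$ and apply a weighted Cauchy--Schwarz that exploits the pointwise control of $K_{a}$ by $K_{s}$ encoded in \eqref{K2}. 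This yields a small multiple of the symmetric energy, which can be absorbed on the left-hand side, plus a zero-order remainder controlled by a weighted $L^{1}$-norm of $u^{\beta+1}$ whose weight is integrable by \eqref{cutoff}; this is the step that genuinely distinguishes the nonsymmetric setting from the symmetric one.

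Once the Caccioppoli inequality is in place, combining it with the Sobolev embedding \eqref{Sob} on parabolic cylinders gives a reverse H\"older inequality whose iteration across geometrically shrinking cylinders delivers (a) and (b) for suitable exponents, exactly as in the symmetric parabolic theory. For (c), one tests with $\tau^{2}(t)\eta^{2}(x)u^{-1}$, treats the antisymmetric contribution by the same absorption trick, and applies the fractional Poincar\'e inequality \eqref{Poinc} to convert the resulting energy bound into a parabolic BMO-type estimate on $\log u$; a John--Nirenberg argument adapted to parabolic cylinders then glues the small exponents and concludes part (i).

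Part (ii) is deduced from (i) by the classical oscillation-decay scheme. Applying \eqref{eq:wHI} alternately to the nonnegative supersolutions $u-\inf u$ and $\sup u-u$ on a chain of nested parabolic cylinders of radii $\sigma^{k}R$, one obtains a recursion of the form $\omega_{k+1}\le (1-\vartheta)\omega_{k}+c\,\sigma^{k\alpha}\Vert u\Vert_{L^{\infty}(I_{R}(t_{0})\times\R^{d})}$, where $\omega_{k}$ denotes the oscillation of $u$ on the $k$-th cylinder and the last term collects the nonlocal tail at scale $\sigma^{k}R$, controlled by the refined cutoff hypothesis \eqref{cutoff2}. Iterating in $k$ yields the H\"older exponent $\gamma\in(0,1)$ and the estimate \eqref{eq:HR}.
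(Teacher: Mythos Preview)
Your overall architecture---Moser iteration for negative and small positive exponents, a logarithmic estimate, Bombieri--Giusti, then oscillation decay with tail control via \eqref{cutoff2}---matches the paper. But the core step, your treatment of the antisymmetric contribution in the Caccioppoli inequality, contains a genuine gap.

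You propose to absorb the $K_a$-terms using ``the pointwise control of $K_a$ by $K_s$ encoded in \eqref{K2}''. This misreads both assumptions. The trivial pointwise bound $|K_a|\le K_s$ always holds, but a Cauchy--Schwarz based only on it returns the full symmetric energy with constant $1$ and cannot be absorbed. Assumption \eqref{K2} does not give a pointwise bound on $K_a$ at all; it says $K\ge(1-D)j$ for a symmetric comparison kernel $j$ with $\cE^{K_s}\lesssim\cE^{j}$, and in the paper it is used only in the $p=1$ (logarithmic) step to secure a strictly positive coefficient on the diagonal term $I_s+I_a$. The assumption that actually drives the absorption is \eqref{K1}: writing $|K_a|=\sqrt{J}\cdot|K_a|/\sqrt{J}$ in the cross term and applying Cauchy--Schwarz produces an arbitrarily small multiple of the $J$-energy (hence of the $K_s$-energy) plus a remainder weighted by $\int_{B_{2r}}|K_a(\cdot,y)|^2/J(\cdot,y)\,\d y$, which lies in $L^{\theta}$ by \eqref{K1}. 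That remainder is then controlled via \eqref{Sob} (not \eqref{cutoff}) and re-enters the iteration as a lower-order $L^1$ term; see the paper's Lemma~3.1 and the estimate \eqref{eq:K1consequence}. Without invoking \eqref{K1} in this way your Caccioppoli step does not close.

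Two smaller points. First, your block (a)---an upper iteration bounding $\sup u$ for subsolutions---is neither proved in the paper nor needed: the weak Harnack inequality involves only supersolutions, and the paper iterates only for negative exponents (your (b)) and for small positive exponents below $\kappa^{-1}$. Second, in part (ii) the functions $u-\inf u$ and $\sup u-u$ are nonnegative only on the current cylinder, not on $\R^d$; the paper handles this by showing $u_+$ is a supersolution with source $\int u_-(y)K(x,y)\,\d y$ and then bounds this tail via \eqref{cutoff2}. Your recursion also carries the tail with exponent $\alpha$, whereas \eqref{cutoff2} allows any $\sigma>0$, which is needed to accommodate lower-order drifts.
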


Analogous results are established for weak (super)-solutions $u$ to
 \begin{align}
\label{PDEdual}\tag{$\widehat{\text{PDE}}$}
\partial_t u - \widehat{L} u = f ~~\text{in}~I_R(t_0) \times B_{2R} \subset \R^{d+1},
\end{align}
where $\widehat{L}$ is the dual operator associated with $L$. We refer to \autoref{sec:prelim} for the precise definition of $\widehat{L}$ and the weak solution concept.

\begin{theorem}
\label{thm:mainthmPDEdual}
Assume that \eqref{K1glob}, \eqref{K2}, \eqref{cutoff}, \eqref{Poinc} and \eqref{Sob} hold true for some $\alpha \in (0,2)$, $\theta \in (\frac{d}{\alpha},\infty]$. 
\vspace{-0.2cm}
\begin{itemize}
\item[(i)] Then there is $c > 0$ such that for every $0 < R \le 1$, and every nonnegative, weak supersolution $u$ to \eqref{PDEdual} in $I_R(t_0) \times B_{2R}$, the weak Harnack inequality \eqref{eq:wHI} holds.

\item[(ii)] Assume \eqref{cutoff2dual}. Then there are $c > 0$ and $\gamma \in (0,1)$ such that for every $0 < R \le 1$ and every weak solution $u$ to \eqref{PDEdual} in $I_R(t_0) \times B_{2R}$, the H\"older regularity estimate \eqref{eq:HR} holds.
\end{itemize}
\end{theorem}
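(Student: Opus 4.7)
The plan is to prove \autoref{thm:mainthmPDEdual} by paralleling the proof of \autoref{thm:mainthmPDE}, while tracking carefully where the antisymmetric part of the kernel enters and how its sign flips under dualization. Observe first that $\widehat{L}$ is the nonlocal operator whose jumping kernel is $\widehat{K}(x,y) = K(y,x)$, with symmetric part $\widehat{K}_s = K_s$ and antisymmetric part $\widehat{K}_a = -K_a$. Consequently, the bilinear form $\widehat{\cE}$ associated with $\widehat{L}$ admits a splitting as in \eqref{eq:energy} with the sign of the antisymmetric contribution reversed. The aim is therefore to rerun the nonlocal Moser iteration of \autoref{thm:mainthmPDE} for this modified form, and to identify exactly where the stronger assumption \eqref{K1glob} (and the strict inequality $\theta > d/\alpha$) must be used in place of \eqref{K1} and $\theta \ge d/\alpha$.

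First I would derive Caccioppoli and logarithmic estimates for nonnegative weak (super)solutions of \eqref{PDEdual}. Testing the equation against $u^{q}\tau^{2}$ (respectively $-\log u \cdot \tau^{2}$) with a suitable cutoff $\tau$, and invoking \eqref{eq:energy} for $\widehat{\cE}$, the symmetric piece is handled exactly as in the symmetric theory using \eqref{K2}. The antisymmetric piece, now appearing with the opposite sign compared to the primal case, is estimated by means of \eqref{cutoff2dual} together with \eqref{K1glob}; it is precisely here that a global integrability bound on $K_a$ is required, because the antisymmetric integrand couples the values $v(x)+v(y)$ across arbitrarily distant points, so a local control near the diagonal as provided by \eqref{K1} is no longer sufficient. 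The lower order term $f$ is absorbed via the Poincaré-type inequality \eqref{Poinc} and the embedding \eqref{Sob}, which is where the strict inequality $\theta > d/\alpha$ enters in order to guarantee a genuine gain of integrability in the iteration.

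Once these estimates are in place, part (i) follows by the same two-step argument as in the primal case: a Moser iteration starting from the reverse Hölder inequality provides the $L^{p} \to L^{\infty}$ bound for small $p$ on the upper cylinder, while the Bombieri–Giusti lemma together with the logarithmic estimate bridges $L^{1}$ on the lower cylinder with $L^{p}$ on the upper one, yielding \eqref{eq:wHI}. For part (ii) I would then apply the weak Harnack inequality from (i) to $M-u$ and $u-m$ on a nested sequence of parabolic sub-cylinders to obtain a geometric decay of the oscillation; here the truncated tails generated by the cutoff are controlled via \eqref{cutoff2dual}, the dual analogue of \eqref{cutoff2}. A standard Campanato-type iteration then produces \eqref{eq:HR}.

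The main obstacle I anticipate lies in the Caccioppoli step. For the primal operator $L$, the antisymmetric contribution in the test-function computation produces an integrand of the form $(u(x)-u(y))(v(x)-v(y))$-type modulation that can be absorbed into the symmetric Dirichlet form via Young's inequality and \eqref{K1}. For $\widehat{L}$, the dual algebra instead yields an integrand involving $v(x)+v(y)$ on the antisymmetric kernel, which cannot be reabsorbed into the symmetric form and must be estimated directly. This is what forces the global bound \eqref{K1glob} and the dual cutoff hypothesis \eqref{cutoff2dual}, and in turn explains the strict inequality $\theta > d/\alpha$ needed to close the iteration without a borderline loss.
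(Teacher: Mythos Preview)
Your approach to part (i) is essentially correct and matches the paper's strategy: Caccioppoli-type estimates for the dual form (\autoref{lemma:MSgendual}, \autoref{lemma:MSgendual2}) feed into Moser iterations and a logarithmic estimate, and Bombieri--Giusti closes the argument.

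However, there is a genuine gap in your plan for part (ii). You propose to apply the weak Harnack inequality from (i) to $M-u$ and $u-m$ on nested cylinders. This fails because the family of supersolutions to \eqref{PDEdual} is \emph{not} invariant under addition of constants: if $u$ solves $\widehat{\cE}(u,\phi)=0$, then $u-m$ satisfies $\widehat{\cE}(u-m,\phi) = -m\,\widehat{\cE}(1,\phi) = -m\,\cE^{K_a}(\phi,1)$, and this extra term does not vanish. Consequently $u-m$ is not a supersolution to \eqref{PDEdual}, and the weak Harnack inequality from (i) cannot be invoked. The paper addresses this by introducing the extended equation \eqref{PDEdualext}, which carries an additional term $\widehat{\cE}^{K_a}(d,\phi)$ with $d\in L^\infty$, and proving a weak Harnack inequality for \eqref{PDEdualext} (\autoref{thm:wHIdualext}) that contains an error $R^{\eta}\Vert d\Vert_{L^\infty}$ with $\eta=\tfrac12(\alpha-\tfrac{d}{\theta})>0$. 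The oscillation-decay argument then tracks how this error accumulates along the sequence of scales; the geometric decay $R\mapsto \nu^{-2k}R$ ensures the accumulated errors stay summable precisely because $\eta>0$, and this is the real reason the strict inequality $\theta>\tfrac{d}{\alpha}$ is needed---not merely a Sobolev gain in the iteration.

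A secondary point: your description of the dual algebra is slightly off. In $\widehat{\cE}^{K_a}(u,\phi)=\cE^{K_a}(\phi,u)$ the integrand is $(\phi(x)-\phi(y))(u(x)+u(y))K_a(x,y)$, so it is the \emph{solution} $u$ (not the test function) that appears in the symmetric sum. This is what forces the global assumption \eqref{K1glob}: one must control $\int_{\R^d}|K_a(x,y)|^2/J(x,y)\,\d y$ against $u^2(x)$ over all of $\R^d$, since $u$ is not compactly supported (cf.\ \autoref{lemma:welldef}(ii)).
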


\begin{remark*}
	\begin{itemize}
		\item[(i)] The constants in \autoref{thm:mainthmPDE}, \autoref{thm:mainthmPDEdual} only depend on $d,\alpha,\theta$, as well as the constants from \eqref{K1}, \eqref{K1glob}, \eqref{K2}, \eqref{cutoff}, \eqref{Poinc}, \eqref{Sob}, \eqref{cutoff2}, \eqref{cutoff2dual}.
	\item[(ii)] The exclusion of the case $\theta = \frac{d}{\alpha}$ in \autoref{thm:mainthmPDEdual} is in align with the theory of second-order partial differential operators, see Equation (7.2) in \cite{Sta65}.
	\item[(iii)] \autoref{thm:mainthmPDE} and \autoref{thm:mainthmPDEdual} are robust in the sense that the constants do not explode as $\alpha \nearrow 2$, if the constants in \eqref{K1}, \eqref{K2}, \eqref{Poinc}, \eqref{Sob}, \eqref{cutoff} and \eqref{cutoff2} are independent of $\alpha$. 
\end{itemize}
\end{remark*}

\begin{remark*}
\begin{itemize}
\item[(i)] \autoref{thm:mainthmPDE}, \autoref{thm:mainthmPDEdual} remain valid for (super)-solutions $u$ to \eqref{PDE}, resp. \eqref{PDEdual}, where $f \in L^{\infty}(I_R(t_0);L^{\Theta}(B_{2R}))$ for some $\Theta \in ( \frac{d}{\alpha}, \infty)$, with only marginal manipulations in the proofs. We exclude more general source terms in this work.
\item[(ii)] It is possible to extend the aforementioned results to equations on $I_{R}(t_0) \times B_{2R}$ with $R \le \overline{R}$ for any $\overline{R} > 0$ by assuming \eqref{K1}, \eqref{K2}, \eqref{cutoff}, \eqref{Poinc}, \eqref{Sob} for balls with larger radii. However, the constants in \eqref{eq:wHI}, \eqref{eq:HR} would depend on $\overline{R}$ and blow up as $\overline{R} \nearrow \infty$ due to the lack of symmetry.
\item[(iii)] \autoref{thm:mainthmPDE} remains valid if \eqref{K1} is replaced by the assumption that there is $C > 0$ such that for every $0 < r \le 1$ and $B_{2r} \subset \Omega$, $v \in L^2(B_{2r})$:
\begin{align}
\label{eq:Kato}
\left\Vert\frac{|K_a(\cdot,y)|^2}{J(\cdot,y)} \d y \right\Vert_{\mathcal{K}^{d,\alpha}(B_{2r})} \le C, \qquad [v]^2_{H^{\alpha/2}(B_{2r})} + \cE^J_{B_{2r}}(v,v) \le C \cE^{K_s}_{B_{2r}}(v,v).
\end{align}
Here, $\mathcal{K}^{d,\alpha}(B_{2r})$ denotes the nonlocal Kato-class associated with $(-\Delta)^{\alpha/2}$, see \autoref{sec:prelim}.
\end{itemize}
\end{remark*}

As explained above, it is helpful to take into consideration the regularity results for partial differential equations of second order in order to understand the scope of \autoref{thm:mainthmPDE} and \autoref{thm:mainthmPDEdual}. A concrete relation to this theory is given by \autoref{thm:Mosco}, where we study the  limit behavior of the corresponding bilinear forms, as $\alpha \nearrow 2$.

\subsection{Discussion of assumptions}
Let us analyze the effect of a nonsymmetric jumping kernel in more detail in order to motivate our main assumptions and to explain our approach. Throughout this section, it is instructive to think of $K$ being defined as in \eqref{eq:protex0}. Explicit examples of admissible jumping kernels are provided and discussed in \autoref{sec:examples}.\\
We decompose $K = K_s + K_a$ into its symmetric part $K_s$ and its antisymmetric part $K_a$, where
\begin{align*}
K_s(x,y) = \frac{K(x,y)+K(y,x)}{2}, ~~ K_a(x,y) = \frac{K(x,y)-K(y,x)}{2}, ~~x,y \in \R^d.
\end{align*}
Note that by construction $K_s(x,y) = K_s(y,x)$ and $K_a(x,y) = -K_a(y,x)$. Since $K(x,y) \ge 0$ it holds that $K_a$ is dominated by $K_s$ in the following sense:
\begin{align}
\label{eq:KaKs}
\vert K_a(x,y) \vert \le K_s(x,y).
\end{align}

By making use of the decomposition of $K$ and the symmetry (resp. antisymmetry) of $K_s$ and $K_a$, we can rewrite $\cE$ as follows:
\begin{align*}
\begin{split}
\cE(u,v) &= 2\int_{\R^d} \int_{\R^d} (u(x)-u(y))v(x) K_s(x,y) \d y \d x +2\int_{\R^d} \int_{\R^d} (u(x)-u(y))v(x) K_a(x,y) \d y \d x\\
&=: \cEs(u,v) + \cEa(u,v),
\end{split}
\end{align*}
where
\begin{align*}
&\cEs(u,v) = \int_{\R^d} \int_{\R^d} (u(x)-u(y))(v(x)-v(y)) K_s(x,y) \d y \d x,\\
&\cEa(u,v) = \int_{\R^d} \int_{\R^d} (u(x)-u(y))(v(x)+v(y)) K_a(x,y) \d y \d x.
\end{align*}

The novelty caused by the absence of symmetry therefore lies in the existence of the second summand $\cEa$ which is of different shape compared to $\cEs$. Such decomposition of the jumping kernel is standard in the literature concerned with nonsymmetric energy forms (see \cite{FuUe12}, \cite{ScWa15}, \cite{FKV15}).
However, note that for any possibly nonsymmetric kernel $K$ it holds:
\begin{align}
\int_{\R^d} \int_{\R^d} (u(x)-u(y))(v(x)-v(y)) K(x,y) \d y \d x = \cEs(u,v).
\end{align}

There is a vast amount of research concerning symmetric energy forms of type $\cEs$ and corresponding function spaces. Throughout this article we will assume that $K_s$ satisfies the L\'evy-integrability condition
\begin{align}
\label{eq:Levy}
\left(x \mapsto \int_{\R^d} \left(\vert x-y \vert^2 \wedge 1 \right) K_s(x,y) \d y  \right) \in L^1_{loc}(\R^d).
\end{align}
In order to control the nonsymmetric part $\cE^{K_a}$, we need to impose suitable conditions on the jumping kernel $K$. To motivate our main assumption, we recall the following result:

\begin{proposition}[\cite{ScWa15} Theorem 1.1]
\label{prop:SchillingWang}
Assume that $K_s$ satisfies \eqref{eq:Levy} and that
\begin{align}
\label{eq:K1glob}
\sup_{x \in \R^d} \int_{\R^d} \frac{\vert K_a(x,y)\vert^2}{K_s(x,y)} \d y < \infty.
\end{align}
Then $\cE(u,v)$ is well-defined for $u,v \in C^{lip}_c(\R^d)$ and there exists a domain $\mathcal{F} \subset L^2(\R^d)$ such that the pair $(\cE,\mathcal{F})$ is a regular lower bounded semi-Dirichlet form on $L^2(\R^d)$.
\end{proposition}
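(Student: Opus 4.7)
The plan is to decompose $\cE = \cEs + \cEa$ and treat the antisymmetric part as a form-bounded perturbation of the symmetric pure-jump Dirichlet form associated with $K_s$. First I would verify well-definedness of $\cE(u,v)$ for $u,v \in C^{lip}_c(\R^d)$: for $\cEs$ this reduces to the bound $\lvert (u(x)-u(y))(v(x)-v(y)) \rvert \le C (\lvert x-y \rvert^2 \wedge 1)\mathbf{1}_A(x,y)$ on a bounded set $A$, combined with \eqref{eq:Levy}. For $\cEa$, the decisive estimate comes from Cauchy-Schwarz with the splitting $K_a = (K_a/\sqrt{K_s}) \cdot \sqrt{K_s}$:
\begin{align*}
\lvert \cEa(u,v) \rvert \le \cEs(u,u)^{1/2} \left( \iil (v(x)+v(y))^2 \frac{\lvert K_a(x,y)\rvert^2}{K_s(x,y)} \d y \d x \right)^{1/2} \le 2 M^{1/2} \cEs(u,u)^{1/2} \Vert v \Vert_{L^2},
\end{align*}
where $M$ denotes the supremum in \eqref{eq:K1glob}; here I used $(v(x)+v(y))^2 \le 2(v(x)^2+v(y)^2)$ and the symmetry in $(x,y)$ of $\lvert K_a \rvert^2/K_s$.

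Next, a Young inequality yields the coercivity estimate
\begin{align*}
\cE_\beta(u,u) := \cE(u,u) + \beta \Vert u \Vert_{L^2}^2 \ge \tfrac{1}{2} \cEs(u,u) + C_\beta \Vert u \Vert_{L^2}^2
\end{align*}
for $\beta$ sufficiently large in terms of $M$. I would then define $\mathcal{F}$ as the completion of $C^{lip}_c(\R^d)$ under the norm $\cE_\beta(\cdot,\cdot)^{1/2}$. Coercivity gives the embedding $\mathcal{F}\hookrightarrow L^2(\R^d)$, closedness of $\cE_\beta$ on $\mathcal{F}$ is automatic, and the weak sector condition $\lvert \cE_\beta(u,v) \rvert \le C \cE_\beta(u,u)^{1/2}\cE_\beta(v,v)^{1/2}$ follows by combining the standard Cauchy-Schwarz bound for $\cEs$ with the form-boundedness estimate for $\cEa$. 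Regularity of $(\cE,\mathcal{F})$, i.e.\ density of $C_c(\R^d) \cap \mathcal{F}$ both in $(\mathcal{F}, \cE_\beta^{1/2})$ and in $(C_c(\R^d), \Vert\cdot\Vert_\infty)$, is built into the construction since $C^{lip}_c(\R^d)$ sits densely in each.

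The hard part will be verifying the Markov (semi-Dirichlet) property: for the normal contraction $\phi(t) = (0 \vee t) \wedge 1$ one has to show $\phi(u) \in \mathcal{F}$ and $\cE(u + \phi(u), u - \phi(u)) \ge 0$ for every $u \in \mathcal{F}$. The pointwise inequality $(\phi(a)-\phi(b))^2 \le (a-b)^2$ immediately gives $\cEs(\phi(u),\phi(u)) \le \cEs(u,u)$ and settles the symmetric half. For $\cEa$, I would rewrite
\begin{align*}
\cEa(\phi(u), u - \phi(u)) = \iil \bigl(\phi(u(x))-\phi(u(y))\bigr) \bigl((u-\phi(u))(x) + (u-\phi(u))(y)\bigr) K_a(x,y) \d y \d x
\end{align*}
and exploit $\lvert K_a \rvert \le K_s$ from \eqref{eq:KaKs} together with the Cauchy-Schwarz estimate above to absorb the antisymmetric contribution into the nonnegative symmetric contribution, modulo a controlled $L^2$-term. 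This is precisely where the asymmetry forces only a \emph{semi}-Dirichlet (not full Dirichlet) structure, and I would implement this final step following the Beurling-Deny type analysis for antisymmetric jumping kernels carried out in \cite{ScWa15}.
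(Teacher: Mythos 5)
The paper itself offers no proof of this proposition: it is stated as an imported result, cited verbatim as Theorem~1.1 of \cite{ScWa15}, and the authors rely on it as a black box. There is therefore no internal argument in the paper to compare your sketch against, and your task here is really to reconstruct the Schilling--Wang proof, which you openly acknowledge.

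With that caveat, your outline is in line with the strategy of \cite{ScWa15}: decompose $\cE = \cE^{K_s} + \cE^{K_a}$, control $\cE^{K_s}$ via \eqref{eq:Levy}, bound $\cE^{K_a}$ by Cauchy--Schwarz using the splitting $K_a = \sqrt{K_s}\cdot(K_a/\sqrt{K_s})$, and deduce the sector condition and G\r{a}rding inequality; the arithmetic giving $|\cE^{K_a}(u,v)| \le 2M^{1/2}\cE^{K_s}(u,u)^{1/2}\|v\|_{L^2}$ is correct, as is the resulting coercivity of $\cE_\beta$ for $\beta > 2M$. The paper does essentially carry this estimate out itself in the discussion around \eqref{eq:SCGIhelp}--\eqref{eq:unifGarding} (in the localized, $\alpha$-uniform setting of Section~7), so the part of your sketch covering well-definedness, sector condition and G\r{a}rding is both correct and consistent with how the authors re-derive these bounds when they need them.

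The genuine gap is the sub-Markov (semi-Dirichlet) property. Your plan to ``absorb the antisymmetric contribution into the nonnegative symmetric contribution, modulo a controlled $L^2$-term'' cannot work as stated: the Markovianity condition $\cE_\beta\bigl(T u,\, u - Tu\bigr) \ge 0$ (with $T$ the normal contraction) is a sign condition, and a coercivity-type absorption at the level of global forms only yields a lower bound of the shape $-C\|u - Tu\|_{L^2}^2$, which is of the wrong sign and not summable into the $\beta$-term without destroying the inequality. What \cite{ScWa15} actually uses is a pointwise, integrand-level inequality: one writes $u = Tu + w$, observes that $(Tu(x) - Tu(y))(w(x) - w(y)) \ge 0$ pointwise (since a normal contraction preserves the sign of the difference and $|Tu(x)-Tu(y)|\le |u(x)-u(y)|$), and then combines this sign information with $|K_a|\le K_s$ to control the cross-term $(Tu(x)-Tu(y))(w(x)+w(y))K_a(x,y)$ \emph{before} integrating, rather than after. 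Your sketch does not contain this pointwise structural step, and it is precisely the nontrivial content of the Markov property for nonsymmetric jump forms. As you anticipated, this is the hard part; but as written the argument is not merely incomplete, it is taking a route (global form-boundedness) that is structurally unable to produce the required nonnegativity.
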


Note that although it is desirable from a conceptual point of view, we do not necessarily require the energy form $\cE$ to be a regular lower bounded semi-Dirichlet form in order to give a reasonable definition of a weak solution or to prove regularity estimates. However, it is crucial for our purposes that $\cE(u,v)$ is well-defined, provided that $\cEs(u,v) < \infty$, see \autoref{lemma:welldef}. 

We introduce the following notation, given a set $M \subset \R^d \times \R^d$, and a jumping kernel $K$:
\begin{align*}
\cE_M^K(u,v) := \iint_{M} (u(x)-u(y))v(x) K(x,y) \d x \d y.
\end{align*}
Analogously, we define $\cEs_M, \cEa_M$. If $M := B_r \times B_r$ for a ball $B_r \subset \R^d$, we write $\cE^K_{B_r} = \cE^K_{B_r \times B_r}$.

\pagebreak[3]
Let $\alpha \in (0,2)$. The following condition will be our main assumption on the nonsymmetric part of the jumping kernel. It is a considerably more general assumption compared to \eqref{eq:K1glob}.
\begin{assumption*}[\textbf{K1}]
\label{ass:K1}
Let $J : \R^d \times \R^d \to [0,\infty]$ be a symmetric jumping kernel and $\theta \in [\frac{d}{\alpha},\infty]$.
\begin{itemize}
\item $K$ satisfies \eqref{K1} if there is $C > 0$ such that for every ball $B_{2r} \subset \Omega$ with $r \le 1$:
\begin{align}
\label{K1}\tag{$\text{K1}_{loc}$}
\left\Vert \int_{B_{2r}} \frac{\vert K_a(\cdot,y)\vert^2}{J(\cdot,y)} \d y  \right\Vert_{L^{\theta}(B_{2r})} \le C, \qquad \cE_{B_{2r}}^{J}(v,v) \le C \cE_{B_{2r}}^{K_s}(v,v), ~~ \forall v \in L^2(B_{2r}).
\end{align}
\item $K$ satisfies \eqref{K1glob} if $J$ satisfies \eqref{cutoff} and there is $C > 0$ such that for every ball $B_{2r} \subset \Omega$ with $r \le 1$:
\begin{align}
\label{K1glob}\tag{$\text{K1}_{glob}$}
\left\Vert \int_{\R^d} \frac{\vert K_a(\cdot,y)\vert^2}{J(\cdot,y)} \d y  \right\Vert_{L^{\theta}(\R^d)} \le C, \qquad \cE_{B_{2r}}^{J}(v,v) \le C \cE_{B_{2r}}^{K_s}(v,v), ~~ \forall v \in L^2(B_{2r}).
\end{align}
\end{itemize}
\end{assumption*}

We will see later that in the simplest case, \eqref{K1} and \eqref{K1glob} hold true with $J = K_s$. However, allowing for general symmetric kernels $J$ gives rise to a significantly larger class of operators, as we will discuss in \autoref{sec:examples}.

To get a feeling for \eqref{K1}, it is important to observe that $K_a$ must possess a singularity on the diagonal that is of lower order compared to $J$. Indeed, if $J(x,y) \asymp \vert x-y \vert^{-d-\alpha}$, it must be that $K_a(x,y) \asymp \vert x-y \vert^{-d-\beta}$ for some $0 < \beta < \alpha/2$ close to the diagonal in order for $y \mapsto \frac{K_a^2(x,y)}{J(x,y)}$ to be integrable close to $x$. Consequently, we regard $\cEa$ as a lower order term, dominated by $\cEs$. In this respect, allowing for nonsymmetric jumping kernels in the framework of regularity theory - with a well-behaved symmetric part and an antisymmetric part satisfying an integrability assumption as \eqref{K1} - can be considered as an extension of the existing nonlocal theory to nonlocal diffusion operators that are perturbed by nonlocal lower order terms.

\begin{example}
\label{ex:intro}
Let $K$ be as in \eqref{eq:protex0}, i.e. $K(x,y) = g(x,y)\vert x-y\vert^{-d-\alpha}$. We see that
\begin{align*}
2K_a(x,y) = (g(x,y)-g(y,x))\vert x-y \vert^{-d-\alpha}, ~~2 K_s(x,y) = (g(x,y)+g(y,x)) \vert x-y \vert^{-d-\alpha}.
\end{align*}
Let us discuss assumption \eqref{K1} for the particular choice
\begin{align*}
g(x,y) = 1 + (V(x)-V(y)), \qquad \text{where } V : \R^d \to \R, ~~\vert V(x)-V(y)\vert \le 1,~~\forall x,y \in \R^d.
\end{align*}
In that case $K_s(x,y) \asymp \vert x-y \vert^{-d-\alpha}$ and
\begin{align*}
K_a(x,y) = (V(x)-V(y))\vert x-y \vert^{-d-\alpha} \asymp \vert x-y \vert^{-d-\beta}, ~~ \text{if } V \in C^{0,\alpha - \beta}(\R^d) \,.
\end{align*}
\eqref{K1} holds true with $J = K_s$ if $\beta \le \alpha/2$. In this respect, $\cEa$ becomes a lower order term through the prescription of regularity for $V$. We observe that the linear operator $L$ corresponding to $K$ is given by
\begin{align}
\label{eq:protexop}
-L u(x) = (-\Delta)^{\alpha/2}u(x) + \Gamma^{(\alpha)}(u,V)(x),
\end{align}
where $\Gamma^{(\alpha)}(u,V)$ denotes the nonlocal carr\'e du champ defined by
\begin{align*}
\Gamma^{(\alpha)}(u,V)(x) = \int_{\R^d} (u(x)-u(y))(V(x)-V(y))\vert x-y \vert^{-d-\alpha} \d y.
\end{align*}
By comparison with its local counterpart $\Gamma^{(2)}(u,V)(x) = \left(\nabla u(x) , \nabla V(x)\right)$, one can consider the operator \eqref{eq:protexop} as a model example of a nonlocal diffusion operator with a nonlocal drift term. Let us make a related observation that extends a well-known fact for local operators. For a positive function $\rho$, the operator 
\begin{align} \label{eq:frac-diff}
(-\Delta)^{\alpha/2}u(x) - \frac{1}{\rho}\Gamma^{(\alpha)}(u,\rho)(x),
\end{align}
is a symmetric operator in $L^2(\R^d; \rho(x) \d x)$ for $0 < \alpha \leq 2$. 
\end{example}

As \autoref{ex:intro} indicates, the regularity results in this work can be seen as a nonlocal extension of the De Giorgi-Nash-Moser-theory for second order divergence form operators with a drift:
\begin{align}
\label{eq:localop}
\mathcal{L} u = \partial_i(a_{i,j} \partial_j u)+ b_i\partial_i u, \qquad \text{resp. } \widehat{\mathcal{L}} u = \partial_i(a_{i,j} \partial_j u - b_i u),
\end{align}
see, e.g., \cite{Sta65}, \cite{ArSe67}, \cite{GiTr01}. In \autoref{sec:approx}, we justify this viewpoint by providing an approximation result for $\alpha \nearrow 2$ (see \autoref{thm:Mosco}). We refer to \autoref{sec:examples}, where \autoref{ex:intro} is extended and further assumptions on the coefficient $g$ with respect to \eqref{K1} are discussed.

\begin{remark*}
\begin{enumerate}
\item[(i)] Clearly, when \eqref{K1} is satisfied for some $\theta$, it also holds true for every $\theta' \le \theta$.
\item[(ii)] The restriction to $\theta \ge d/\alpha$ in \eqref{K1} is natural in the light of the classical fractional Sobolev embedding, see \eqref{Sob} and \autoref{lemma:welldef}.
\item[(iii)] Weak Harnack inequalities and H\"older estimates were proved for weak (super)-solutions to
\begin{align*}
\partial_t u - \mathcal{L} u = f, ~~ \text{ in } I_R(t_0) \times B_{2R}
\end{align*}
where, $\mathcal{L}$ is as in \eqref{eq:localop}, respectively the corresponding elliptic equation, using De Giorgi-Nash-Moser theory  (see, e.g., \cite{Sta65}, \cite{ArSe67}, \cite{GiTr01}) under the condition $|b|^2 \in L^{\theta}(\Omega)$ for some $\theta \in [\frac{d}{2},\infty]$ and $a_{i,j}$ bounded, uniformly elliptic. For weak (super)-solutions to
\begin{align*}
\partial_t u - \widehat{\mathcal{L}} u = f, ~~ \text{ in } I_R(t_0) \times B_{2R},
\end{align*}
corresponding estimates were proved for $|b|^2 \in L^{\theta}(\Omega)$ with $\theta \in (\frac{d}{2},\infty]$.\\ 
In this sense, the range of $\theta$ in this work is in align with the theory of second-order operators in divergence form.
\end{enumerate}
\end{remark*}

In addition to \eqref{K1}, respectively \eqref{K1glob}, we impose the following assumption on $K$:

\begin{assumption*}[\textbf{K2}]
\label{ass:K2}
There exist $C > 0$, $D < 1$ and a symmetric jumping kernel $j : \R^d \times \R^d \to [0,\infty]$ such that for every ball $B_{2r} \subset \Omega$ with $r \le 1$ and every $v \in L^2(B_{2r})$ with $\cE_{B_{2r}}^{K_s}(v,v) < \infty$:
\begin{align*}
\label{K2}\tag{$\text{K2}$}
K(x,y) \ge (1-D)j(x,y), ~~ \forall x,y \in B_{2r}, \qquad \cE_{B_{2r}}^{K_s}(v,v) &\le C \cE_{B_{2r}}^{j}(v,v).
\end{align*}
\end{assumption*}

We consider \eqref{K2} to be an ellipticity assumption on $K$ since it ensures that the symmetric kernel $K_s - |K_a|$ is locally coercive with respect to $\cEs$. In fact, it turns out that if $K_s(x,y) \gtrsim |x-y|^{-d-\alpha}$, assumption \eqref{K1} is already sufficient for \eqref{K2} to hold true on balls $B_{2r} \subset 5^{-n}\Omega$ for some $n \in \N$, see \autoref{prop:K1impliesK2} and the following discussion.

\begin{remark*}
\begin{itemize}
\item[(i)] Variants of assumptions \eqref{K1}, \eqref{K1glob}, \eqref{K2} have already appeared in the literature, see \cite{ScWa15}, \cite{FKV15}, \cite{DaTo20}.
\item[(ii)] \eqref{K1}, \eqref{K2} are localized properties since only $x,y \in \Omega$ are taken into account. This turns out to be sufficient for proving interior regularity estimates for $L$.
\end{itemize}
\end{remark*}

The following three assumptions only affect the symmetric part $K_s$. They are standard in the regularity theory of nonlocal operators related to energy forms, appear in different variations (see \cite{DyKa20}, \cite{FeKa13} and the comments below) and include for example $K_s(x,y) \asymp \vert x-y\vert^{-d-\alpha}$ but also more general kernels, see \autoref{sec:examples}. Let $\alpha \in (0,2)$, as before:

\begin{assumption*}[\textbf{Cutoff}]
\label{ass:Cutoff}
There is $c > 0$ such that for every $0 < \rho \le r \le 1$,  $z \in \Omega$ such that $B_{r+\rho}(z) \subset \Omega$ there is a radially decreasing function $\tau = \tau_{z,r,\rho}$ centered at $z \in \R^d$ with $\supp(\tau) \subset \overline{B_{r+\rho}(z)}$, $0 \le \tau \le 1$, $\tau \equiv 1$ on $B_r(z)$ and $\vert \nabla \tau \vert \le 3\rho^{-1}$:
\begin{align}
\label{cutoff}\tag{$\text{Cutoff}$}
\sup_{x \in B_{r+\rho}(z)} \Gamma^{K_s}(\tau,\tau)(x) \le c \rho^{-\alpha},
\end{align}
where $\Gamma^{K_s}(\tau,\tau)(x) := \int_{\R^d} (\tau(x)-\tau(y))^2 K_s(x,y) \d y$ is the carr\'e du champ associated with $\cEs$.
\end{assumption*}

\begin{assumption*}[\textbf{Poinc}]
\label{ass:Poinc}
There is $c > 0$ such that for every ball $B_r \subset \Omega$ with $0 < r \le 1$ and $v \in L^2(B_r)$:
\begin{align}
\label{Poinc}\tag{$\text{Poinc}$}
\int_{B_r} \left(v(x) - [v]_{B_r}\right)^2 \d x \le c r^{\alpha} \cEs_{B_r}(v,v),
\end{align}
where $[v]_{B_r} = \dashint_{B_r} v(x) \d x$.
\end{assumption*}

\begin{assumption*}[\textbf{Sob}]
\label{ass:Sob}
There is $c > 0$ such that for every ball $B_{r+\rho} \subset \Omega$ with $0 < \rho \le r \le 1$ and $v \in L^2(B_{r+\rho})$:
\begin{align}
\label{Sob}\tag{$\text{Sob}$}
\Vert v^2 \Vert_{L^{\frac{d.}{d-\alpha}}(B_r)} \le c\cEs_{B_{r+\rho}}(v,v) + c \rho^{-\alpha}\Vert v^2\Vert_{L^{1}(B_{r+\rho})}.
\end{align}
\end{assumption*}

We collect a few comments on the aforementioned assumptions.

\begin{remark*}
\begin{itemize}
\item[(i)] \eqref{cutoff} guarantees the existence of suitable cutoff functions $\tau$. A sufficient condition for \eqref{cutoff} to hold true is (see \cite{CKW19}): There is $c > 0$ such that for every $0 < \zeta \le \rho \le r \le 1$, $z\in \R^d$ with $B_{r+\rho}(z) \subset \Omega$:
\begin{align}
\label{eq:suffcutoff}
\sup_{x \in B_{r+\rho}(z)}\left( \int_{\R^d \setminus B_{\zeta}(x)} K_s(x,y) \d y \right) \le c \zeta^{-\alpha}.
\end{align}
\item[(ii)] Instead of \eqref{Sob} it is equally sufficient for our purposes to assume a global Sobolev inequality: There exists $c > 0$ such that for every $v \in L^{\frac{2d}{d-\alpha}}(\R^d)$:
\begin{align}
\label{eq:globalSob}
\Vert v^2\Vert_{L^{\frac{d}{d-\alpha}}(\R^d)} \le c\cEs(v,v).
\end{align}
\item[(iii)] \eqref{eq:globalSob} and \eqref{cutoff} together already imply the localized Sobolev inequality \eqref{Sob}.
\item[(iv)] \eqref{Sob} and \eqref{Poinc} both follow if one assumes coercivity of $\cE^{K_s}$ on small scales, i.e., that there is $c > 0$ such that for every ball $B_r \subset \R^d$, $0 < r \le 1$:
\begin{align}
\label{eq:coercivity}
\cE^{K_s}_{B_{r}}(v,v) \ge c [v]^{2}_{H^{\alpha/2}(B_r)}, ~~ \forall v \in L^2(B_{r}).
\end{align}
A sufficient condition for \eqref{eq:coercivity} is given in \cite{ChSi20}.
\end{itemize}
\end{remark*}

The final assumption is a condition on the decay of the jumping kernel $K$ at infinity.

\begin{assumption*}[$\mathbf{\infty}$\textbf{-Tail}]
\label{ass:cutoff2}
There are $c,\sigma > 0$ such that for every ball $B_{2r} \subset \Omega$, $0 < r \le 1$ and every $A > 1$ with $A r \ge 1$:
\begin{align}
\label{cutoff2}\tag{$\infty\text{-Tail}$}
\sup_{x \in B_{2r}}\left( \int_{\R^d \setminus B_{A r}(x)} K(x,y) \d y \right) &\le c (Ar)^{-\sigma},\\
\label{cutoff2dual}\tag{$\widehat{\infty\text{-Tail}}$}
\sup_{x \in B_{2r}}\left( \int_{\R^d \setminus B_{A r}(x)} K(y,x) \d y \right) &\le c (Ar)^{-\sigma}.
\end{align}
\end{assumption*}

\begin{remark*}
\begin{itemize}
\item[(i)] \eqref{cutoff2}, \eqref{cutoff2dual} hold with $\sigma = \alpha$ if \eqref{eq:suffcutoff} is satisfied for every $\zeta > 0$. These assumptions appear only in the proofs of \autoref{lemma:ioinf}, \autoref{lemma:ioinfdual} and are reminiscent of condition $(1-16)$ in \cite{DyKa20}. 
\item[(ii)] It is important to allow for $\sigma < \alpha$ in \eqref{cutoff2} and \eqref{cutoff2dual} since the operators under consideration might have nonlocal drifts of lower order, see \autoref{prop:nldriftoncones}.
\end{itemize}

\end{remark*}

\subsection{Time-dependent jumping kernels}
\label{sec:introtime}

Let $I \subset \R$ be an open interval. In this section, we discuss how the main results \autoref{thm:mainthmPDE} and \autoref{thm:mainthmPDEdual} can be extended to nonsymmetric nonlocal operators of type \eqref{eq:op} with time-inhomogeneous jumping kernels $k : I \times \R^d \times \R^d$. The proof of the main result in this context, see \autoref{thm:mainthm-time}, is provided in \autoref{sec:time}.\\
Given $k$, we define the associated nonlocal operator and bilinear form via
\begin{align}
\label{eq:op-time}
-L_t u (t,x) &= 2\pv\int_{\R^d} (u(t,x)-u(t,y))k(t;x,y)\d y,\\
\label{eq:energy-time}
\cE^{k(t)}_B(u,v) &= \int_{B} \int_{B} (u(x) - u(y))v(x) k(t;x,y) \d y \d x, ~~ B \subset \R^d.
\end{align}
Moreover, we define $\widehat{L_t}$ and $\widehat{\cE}^{k(t)}$ as the corresponding dual operator and bilinear form.
As before, we decompose $k(t) = k_s(t) + k_a(t)$ into its symmetric and antisymmetric part.\\
A common assumption on the symmetric part $k_s$ is that there exist a symmetric jumping kernel $K_s : \R^d \times \R^d \to [0,\infty]$, and a measurable function $a : I \times \R^d \times \R^d \to [\lambda,\Lambda]$, given $0 < \lambda \le \lambda$, which satisfies $a(t;x,y) = a(t;y,x)$ for every $t \in I$, $x,y \in \R^d$, such that
\begin{align*}
k_s(t;x,y) := a(t,x,y) K_s(x,y).
\end{align*}
Note that the specific structure of $k_s$ is not restrictive, since it is equivalent to
\begin{align}
\label{eq:symmetric-time-dependence}\tag{$k_s \asymp$}
\lambda K_s(x,y) \le k_s(t;x,y) \le \Lambda K_s(x,y), ~~ t \in I,~ x,y \in \R^d,
\end{align}
upon defining $a(t;x,y) :=k_s(t;x,y)/K_s(x,y)$.

In fact, H\"older regularity estimates, as well as weak Harnack inequalities as in \autoref{thm:mainthmPDE}, can be established for symmetric time-dependent jumping kernels $k_s$ satisfying \eqref{eq:symmetric-time-dependence} via the same proof, up to some straightforward modifications, see \cite{FeKa13}.\\
By contrast, the case of time-dependent jumping kernels is of particular interest due to the lack of symmetry.  In fact, assuming boundedness in $t$ of the constant from \eqref{K1} is not necessary and would not be in align with the theory for local operators, see \cite{ArSe67}.\\
Given $\theta,\mu \in (1,\infty]$, we will work under the following assumption, which can be interpreted as a time-inhomogeneous analog to \eqref{K1}:

\begin{assumption*}[$\textbf{K1}^{\mathbf{t}}$]
\label{ass:K1time}
Let $J : \R^d \times \R^d \to [0,\infty]$ be a symmetric jumping kernel.
\begin{itemize}
\item Assume that there is $C > 0$ such that for every ball $B_{2r} \subset \Omega$ and any interval $I_r \subset I$ with $r \le 1$:
\begin{align}
\label{eq:K1time}\tag{$\text{K1}_{loc}^t$}
\left\Vert \int_{B_{2r}} \frac{\vert k_a(\cdot;\cdot,y)\vert^2}{J(\cdot,y)} \d y  \right\Vert_{L^{\mu,\theta}_{t,x}(I_r \times B_{2r})} \le C, \qquad \cE_{B_{2r}}^{J}(v,v) \le C \cE_{B_{2r}}^{K_s}(v,v), ~~ \forall v \in L^2(B_{2r}).
\end{align}
\item Assume that there is $C > 0$ such that for every ball $B_{2r} \subset \Omega$ and any interval $I_r \subset I$ with $r \le 1$:
\begin{align}
\label{eq:K1globtime}\tag{$\text{K1}_{glob}^t$}
\left\Vert \int_{\R^d} \frac{\vert k_a(\cdot;\cdot,y)\vert^2}{J(\cdot,y)} \d y  \right\Vert_{L^{\mu,\theta}_{t,x}(I_r \times \R^d)} \le C, \qquad \cE_{B_{2r}}^{J}(v,v) \le C \cE_{B_{2r}}^{K_s}(v,v), ~~ \forall v \in L^2(B_{2r}).
\end{align}
\end{itemize}
\end{assumption*}

Here, $\Vert v \Vert_{L^{\mu,\theta}_{t,x}(I \times B)} := \Vert v \Vert_{L^{\mu}(I;L^{\theta}(B))}$. Moreover, we introduce the following compatibility condition for $\mu,\theta$, which is in align with (3) in \cite{ArSe67} in the linear case, see also \cite{LSU68}:
\begin{align}
\label{eq:compatibility}\tag{CP}
\frac{d}{\alpha\theta} + \frac{1}{\mu} &\le 1 , ~~\theta \in (d/\alpha,\infty],\\
\label{eq:compatibilitydual}\tag{$\widehat{\text{CP}}$}
\frac{d}{\alpha\theta} + \frac{1}{\mu} &< 1.
\end{align}

The following condition is an analog to \eqref{K2}:

\begin{assumption*}[$\textbf{K2}^{\mathbf{t}}$]
\label{ass:K2time}
There exist $C > 0$, $D < 1$ and a symmetric jumping kernel $j$ such that for every ball $B_{2r} \subset \Omega$ and every interval $I_r \subset I$ with $r \le 1$ and every $v \in L^2(B_{2r})$ with $\cE_{B_{2r}}^{K_s}(v,v) < \infty$:
\begin{align}
\label{eq:K2time}\tag{$\text{K2}^t$}
k(t;x,y) \ge (1-D)j(x,y), ~~ \forall t \in I_r, x,y \in B_{2r}, \qquad \cE_{B_{2r}}^{K_s}(v,v) &\le C \cE_{B_{2r}}^{j}(v,v).
\end{align}
\end{assumption*}

Finally, we introduce the following time-dependent analog to \eqref{cutoff2} and \eqref{cutoff2dual}:

\begin{assumption*}[$\mathbf{\infty}\textbf{-Tail}^{\mathbf{t}}$]
\label{ass:cutoff2time}
There are $c,\sigma > 0$ such that for every ball $B_{2r} \subset \Omega$ and any interval $I_r \subset I$ with $0 < r \le 1$ and every $A > 1$ with $A r \ge 1$:
\begin{align}
\label{cutoff2time}\tag{$\infty\text{-Tail}^t$}
\sup_{(t,x) \in I_r \times B_{2r}}\left( \int_{\R^d \setminus B_{A r}(x)} k(t;x,y) \d y \right) &\le c (Ar)^{-\sigma},\\
\label{cutoff2dualtime}\tag{$\widehat{\infty\text{-Tail}^t}$}
\sup_{(t,x) \in I_r \times B_{2r}}\left( \int_{\R^d \setminus B_{A r}(x)} k(t;y,x) \d y \right) &\le c (Ar)^{-\sigma}.
\end{align}
\end{assumption*}

Now, we can formulate counterparts of \autoref{thm:mainthmPDE} and \autoref{thm:mainthmPDEdual} for time-inhomogeneous jumping kernels. Note that in the time-inhomogeneous context, \eqref{PDE} and \eqref{PDEdual} have to be understood with $L$ replaced by $L_t$, respectively $\widehat{L}$ replaced by $\widehat{L_t}$.

\begin{theorem}
\label{thm:mainthm-time}
Assume that there is a symmetric jumping kernel $K_s$ such that \eqref{eq:symmetric-time-dependence} holds true and $K_s$ satisfies \eqref{cutoff}, \eqref{Poinc} and \eqref{Sob} for some $\alpha \in (0,2)$. Moreover, assume \eqref{eq:K2time}.
\begin{itemize}
\item[(i)] Assume \eqref{eq:K1time} holds for $(\mu,\theta)$ satisfying \eqref{eq:compatibility}. Then, any nonnegative, weak supersolution $u$ to \eqref{PDE} in $I_R(t_0) \times B_{2R}$ satisfies the weak Harnack inequality \eqref{eq:wHI}.\\
Moreover, if $k$ satisfies \eqref{cutoff2time}, then any weak solution $u$ to \eqref{PDE} in $I_R(t_0) \times B_{2R}$ with $f \equiv 0$ satisfies the H\"older regularity estimate \eqref{eq:HR}.
\item[(ii)] Assume \eqref{eq:K1globtime} holds for $(\mu,\theta)$ satisfying \eqref{eq:compatibilitydual}. Then, any nonnegative, weak supersolution $u$ to \eqref{PDEdual} in $I_R(t_0) \times B_{2R}$ satisfies the weak Harnack inequality \eqref{eq:wHI}.\\
Moreover, if $k$ satisfies \eqref{cutoff2dualtime}, then any weak solution $u$ to \eqref{PDEdual} in $I_R(t_0) \times B_{2R}$ with $f \equiv 0$ satisfies the H\"older regularity estimate \eqref{eq:HR}.
\end{itemize}

\end{theorem}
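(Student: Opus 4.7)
I would mimic the proofs of \autoref{thm:mainthmPDE} and \autoref{thm:mainthmPDEdual}, concentrating the modifications in the single step where the antisymmetric part enters the Caccioppoli estimate. Because \eqref{cutoff}, \eqref{Poinc}, \eqref{Sob} are placed on the time-independent kernel $K_s$ and \eqref{eq:symmetric-time-dependence} yields $k_s(t;\cdot,\cdot)\asymp K_s$ uniformly in $t$, the ``symmetric skeleton'' of the nonlocal Moser iteration carries over slice-wise in $t$: the time-derivative identity $\tfrac12\partial_t\|u\tau\|_{L^2_x}^2$, the cutoff/tail estimates, and the fractional Poincar\'e and Sobolev inequalities all hold with constants depending only on $K_s$. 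The first task is to verify that every line of the stationary argument not touching $k_a$ transfers verbatim after integrating in $t\in I_r$.

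The essential new step is the estimate of $\int_{I_r}\cE^{k_a(t)}_{B_{2r}}(u(t,\cdot),\varphi(t,\cdot))\,\d t$, where $\varphi$ is built from truncations and cutoffs of $u$. Pointwise in $t$, splitting $|k_a|=(|k_a|/\sqrt{J})\sqrt{J}$ and applying Cauchy--Schwarz as in the stationary case yields
\begin{align*}
\bigl|\cE^{k_a(t)}_{B_{2r}}(u,\varphi)\bigr|\le \eps\,\cE^{K_s}_{B_{2r}}(u,u)+C_\eps\Bigl\|\int_{B_{2r}}\tfrac{|k_a(t;\cdot,y)|^2}{J(\cdot,y)}\,\d y\Bigr\|_{L^\theta(B_{2r})}\|\varphi\|_{L^{2\theta/(\theta-1)}(B_{2r})}^2.
\end{align*}
H\"older's inequality in time with exponents $(\mu,\mu')$, combined with \eqref{eq:K1time} (resp.\ \eqref{eq:K1globtime}), converts the spatial $L^\theta$ norm into the mixed norm $L^{\mu,\theta}_{t,x}$ from the assumption, paired with the factor $\|\varphi\|_{L^{2\mu'}_t L^{2\theta'}_x}^{2}$, where $\theta'=\theta/(\theta-1)$. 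By parabolic interpolation between $L^\infty_t L^2_x$ (from the energy inequality) and $L^2_t L^{2d/(d-\alpha)}_x$ (from \eqref{Sob}), this factor is controlled precisely when $\tfrac{d}{2\alpha\theta'}+\tfrac{1}{2\mu'}\ge\tfrac{d}{2\alpha}$, which after rearranging becomes $\tfrac{d}{\alpha\theta}+\tfrac{1}{\mu}\le 1$, i.e.\ exactly \eqref{eq:compatibility}. Absorbing the first summand into the symmetric energy via a small $\eps$ then closes the estimate.

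Once this time-integrated Caccioppoli inequality is in place, the remainder of the proof is a direct transcription of the stationary arguments: a positive-exponent Moser iteration yields the $L^\infty$--$L^p$ bound; a logarithmic lemma combined with a negative-exponent iteration yields the $L^{-p}$--$L^p$ comparison; and their combination on parabolic cylinders gives the weak Harnack inequality \eqref{eq:wHI}. The H\"older estimate \eqref{eq:HR} then follows from the standard oscillation-decay scheme, with \eqref{cutoff2time}, \eqref{cutoff2dualtime} controlling the nonlocal tail of $u$ at each iteration step. The main obstacle is the careful parabolic book-keeping in the Caccioppoli step: all constants produced by the H\"older-in-time splits must remain uniform under the dyadic rescaling driving the iteration, and in the dual case the strict inequality \eqref{eq:compatibilitydual} must be used to provide the margin needed to absorb the additional drift-type term produced when testing $\widehat{L_t}$ directly with $u\tau^2$, which replaces one power of the nonlinear cutoff $\varphi$ by $u\tau$ and therefore demands an exponent slightly off the critical scaling.
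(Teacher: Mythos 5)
Your Caccioppoli step is a valid alternative to the route taken in the paper. The paper does not interpolate; it proves an elementary temporal splitting lemma (the time analog of \eqref{eq:K1consequence}, obtained from the decomposition $Z(t)=Z(t)\mathbbm{1}_{\{Z>M\}}+Z(t)\mathbbm{1}_{\{Z\le M\}}$) and applies it to the extra term $Z(t)^{\theta\alpha/(\theta\alpha-d)}\Vert\tau^2\U^{1-p}\Vert_{L^1(B_{r+\rho})}$ that appears once \autoref{lemma:MSgen} is run with time-dependent $k$, absorbing the resulting $\delta\sup_t(\cdot)$-piece into the caloric $L^\infty_t L^2_x$ term on the left. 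Your H\"older-in-$t$ plus parabolic-interpolation argument between $L^\infty_t L^2_x$ and $L^2_t L^{2d/(d-\alpha)}_x$ encodes the identical scaling and recovers \eqref{eq:compatibility}, so for the Moser iterations with $p\neq 1$ either route works.

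The genuine gap is the claim that the remainder is a direct transcription of the stationary proof, applied to the logarithmic lemma. For $p=1$ the antisymmetric contribution, after a single application of H\"older with exponent $\theta$, produces a purely additive term $cZ(t)|B_R|^{1/\theta'}$ with no accompanying weighted energy quantity against which to interpolate or absorb. The resulting time-integrated estimate reads
\begin{align*}
V(t_2)-V(t_1)+cR^{-d-\alpha}\int_{t_1}^{t_2}\int_{B_R}(v-V)^2 \le \int_{t_1}^{t_2}B(t)\,\d t,\qquad B(t)=cR^{-\alpha}+cZ(t)|B_R|^{-1/\theta},
\end{align*}
so the right-hand side is no longer a constant multiple of $t_2-t_1$, and the argument leading to \eqref{eq:BMO1}--\eqref{eq:BMO2} does not transcribe verbatim. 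The paper handles this with the anti-derivative device of Aronson--Serrin: one replaces $v=-\log(\U/\tau)$ by $w=v-\int_{t_0}^t B$, proves the weak $L^1$ estimate for $w$, and then separately bounds the superlevel sets of $\int_{t_0}^t Z$. It is here — not in the Caccioppoli step — that the strict inequality $\frac{d\mu'}{\theta}<\alpha$ from \eqref{eq:compatibility} is consumed, and without this step Bombieri--Giusti cannot be invoked. Relatedly, your explanation for strictness in \eqref{eq:compatibilitydual} is not quite right: strictness is needed so that $\eta=\tfrac12\bigl(\alpha-\tfrac{d}{\theta}-\tfrac{\alpha}{\mu}\bigr)>0$, the power of $R$ that must accompany $\Vert d\Vert_\infty$ in the shifted $\U$ when running the Moser iteration for \eqref{PDEdualext}; it is about making that shift vanish as $R\to 0$, not about testing $\widehat{L_t}$ with $u\tau^2$ pushing an exponent off the critical scaling.
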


\begin{remark*}
These results are in align with those of Aronson and Serrin in the linear case, see \cite{ArSe67} and Ladyzhenskaya-Solonnikov-Ural'tceva, see \cite{LSU68}. Note that even in the local case, it is still unknown, whether the H\"older estimate holds true in the limit case, when \eqref{eq:compatibility} holds true with equality, e.g, if $\theta = \frac{d}{\alpha}$ and $\mu = \infty$. 
\end{remark*}

\pagebreak[1]
\subsection{Strategy of proof}
\label{sec:introstrategy}
The main challenge in this article are adequate Caccioppoli-type estimates, which take into account the special structure of nonsymmetric energy forms.
Let us motivate the idea behind our approach by recalling how to establish the following Caccioppoli-type estimate in the elliptic H\"older regularity program for second order operators with a drift term:
\begin{align}
\label{eq:lochelpgoal}
\int_{B_{r+\rho}} \tau^2(x)\vert \nabla \log u (x)\vert^2 \d x \le c \int_{B_{r+\rho}} (\tau^2(x) + \vert \nabla \tau (x)\vert^2) \d x.
\end{align}
Here $\tau \in C_c^1(\R^d)$ is a cutoff function between two balls $B_r$ and $B_{r+\rho} \subset B_{2r}$, and $u \ge 0$ solves 
\begin{align}
\label{eq:lochelp1}
\cD(u,\phi) := \int_{\R^d} (a(x)\nabla u(x) , \nabla \phi(x)) \d x + \int_{\R^d} (b(x) ,\nabla u (x))\phi(x) \d x \le 0, ~~\forall \phi \in H^1_0(B_{2r}),
\end{align}
with a uniformly positive definite and bounded diffusion matrix $a$ and a bounded drift coefficient $b$. In fact, \eqref{eq:lochelpgoal} implies an energy estimate for $\log u$ wherefrom it can be deduced that $u$ is of bounded mean oscillation, allowing to connect negative and positive exponents via the John-Nirenberg lemma (see \cite{GiTr01}). Estimate \eqref{eq:lochelpgoal} is proved by testing $\eqref{eq:lochelp1}$ with $\phi = -\tau^2 u^{-1}$, where $u \ge \eps$ for some $\eps > 0$. Then, by product rule and the assumptions on $a$:
\begin{align*}
c_1 \int \tau^2(x) \vert \nabla \log u (x)\vert^2 \d x \le \cD^{a}(u,-\tau^2 u^{-1}) +  c_2 \int (\nabla \log u(x) , \nabla \tau(x))\tau(x) \d x,
\end{align*}
where we set $\cD^{a}(u,\phi) = \int (a\nabla u,\nabla \phi)$. By Young's inequality:
\begin{align}
\label{eq:lochelp2}
c_3\int \tau^2(x) \vert \nabla \log u (x)\vert^2 \d x \le \cD^{a}(u,-\tau^2 u^{-1}) + c_4\int \vert\nabla \tau(x)\vert^2\d x.
\end{align}
In the case $b \equiv 0$, i.e. without lower order term, \eqref{eq:lochelp2}  already implies \eqref{eq:lochelpgoal} since $u$ satisfies \eqref{eq:lochelp1}. Otherwise, one can estimate the drift contribution $\cD^{b}(u,\phi) = \int (b,\nabla u)\phi$ as follows
\begin{align}
\label{eq:lochelp3}
- \cD^{b}(u,-\tau^2 u^{-1}) \le \frac{c_3}{2} \int \tau^2(x) \vert \nabla \log u (x)\vert^2 \d x + c_5 \int \tau^2(x) \d x,
\end{align}
and deduce \eqref{eq:lochelpgoal} by combination of \eqref{eq:lochelp2}, \eqref{eq:lochelp3} and absorption of $\frac{c_3}{2} \int \tau^2(x) \vert \nabla \log u (x)\vert^2 \d x$.
Due to the locality of the underlying equation, \eqref{eq:lochelp3} is a simple consequence of the chain rule and Young's inequality, since $b$ is bounded.\\
A key problem in the nonsymmetric nonlocal case is to find nonlocal counterparts of the chain rule, which allow to absorb the nonsymmetric nonlocal contributions into the weighted symmetric energy of $\log u$. 
A nonlocal analog to \eqref{eq:lochelp3} is an estimate of the form (see \eqref{eq:loguJa}):
\begin{align}
\label{eq:locnl1}
\begin{split}
&\iint_{B_{r+\rho}^2} (u(y) - u(x))(u^{-1}(x) \wedge u^{-1}(y))(\tau^2(x) \wedge \tau^2(y)) K_a(x,y) \d y \d x\\
&\quad\le \frac{1}{2} \iint_{B_{r+\rho}^2} (\log u(x) - \log u(y))^2 (\tau^2(x) \wedge \tau^2(y)) K_s(x,y) \d y \d x + c \int_{B_{r+\rho}}\tau^2(x)\d x,
\end{split}
\end{align}
which can be proved with the help of \eqref{K1}. This estimate matches the following well-known inequality for symmetric forms (see \eqref{eq:loguIsIa}, \eqref{eq:loguJs}), which is a nonlocal version of \eqref{eq:lochelp2}:
\begin{align}
\label{eq:locnl2}
\begin{split}
\iint_{B_{r+\rho}^2} & (\log u(x) - \log u(y))^2 (\tau^2(x) \wedge \tau^2(y)) K_s(x,y) \d y \d x\\
&\le \cEs_{B_{r+\rho}}(u,-\tau^2 u^{-1}) + c\int_{ B_{r+\rho}}\Gamma^{(\alpha)}(\tau,\tau)(x)\d x.
\end{split}
\end{align}
The integral on the left hand side is a nonlocal analog to $\int \tau^2 \vert \nabla \log u\vert^2$. When $u$ satisfies $\cE(u,\phi) \le 0$, the estimates \eqref{eq:locnl1}, \eqref{eq:locnl2} can be combined as in the local case. However,  since the quantity on the left hand side of \eqref{eq:locnl1} is not equal to $\cE^{K_a}_{B_{r+\rho}}(u,-\tau^2 u^{-1})$, an additional step is required in order to obtain
\begin{align*}
\iint_{B_{r+\rho}^2}(\log u(x) - \log u(y))^2 (\tau^2(x) \wedge \tau^2(y)) K_s(x,y) \d y \d x \le c\int_{ B_{r+\rho}}(\tau^2(x) + \Gamma^{(\alpha)}(\tau,\tau)(x))\d x,
\end{align*}
which is a nonlocal counterpart of \eqref{eq:lochelpgoal}. The above arguments are presented in full detail in the proof of \autoref{lemma:MSgen2}.\\
Due to the lack of the chain rule, establishing Caccioppoli-type estimates for nonsymmetric kernels requires a careful treatment of the terms involved. In particular, an optimization of the existing methods for symmetric kernels (see \cite{Kas09}, \cite{FeKa13}, \cite{DyKa20}) is needed in order to make possible the absorption of all nonsymmetric terms.

Another difficulty comes from the consideration of the dual energy form $\widehat{\cE}$. In contrast to \eqref{PDE}, the family of supersolutions to \eqref{PDEdual} is not invariant under addition of constants. This property is at the core of the proof of oscillation decay, which yields \autoref{thm:mainthmPDEdual} (ii). In order to enforce such invariance, we introduce the generalized dual equation \eqref{PDEdualext} and prove a weak parabolic Harnack inequality for supersolutions to \eqref{PDEdualext}, see \autoref{thm:wHIdualext}. This requires proving Caccioppoli-type estimates also for the dual form $\widehat{\cE}$. Note that \autoref{thm:mainthmPDEdual} (i) can be proved without considering \eqref{PDEdualext}.

\subsection{Related literature}\label{subsec:literature}
As we have explained above, regularity properties of solutions to nonlocal equations governed by \emph{nonsymmetric} bilinear forms have not yet been studied systematically. H\"older regularity estimates are proved in \cite{ImSi20} for a large class of kinetic integro-differential equations using an adaptation of De Giorgi's method. These equations include nonlocal operators of type \eqref{eq:op}, \eqref{eq:protex0} with nonsymmetric kernels whose antisymmetric part satisfies certain cancellation conditions:
\begin{align*}
\sup_{x \in \R^d} \left\vert \pv \int_{B_r(x)}K_a(x,y) \d y \right\vert \le c , \quad \sup_{x \in \R^d} \left\vert \pv \int_{B_r(x)}K_a(x,y)(y-x) \d y \right\vert \le c r^{1-\alpha} \quad \forall r \in (0,1) \,.
\end{align*}
On the one hand, this setup includes kernels with an integrable antisymmetric part, but it also includes some kernels whose antisymmetric part might be of the same order as the symmetric part if $\alpha \in (0,1)$. On the other hand, \cite{ImSi20} does not cover a large and natural class of nonlocal drifts that are covered by our methods. In particular, the cancellation condition implies that the dual operator is also of the form \eqref{eq:op} plus a bounded killing term. Such a representation formula for the dual operator is rather restrictive and is not necessarily true in our setup. 

In the beginning of the introduction, we have already commented on energy form approaches to  H\"older regularity, local boundedness and Harnack inequalities in the \emph{symmetric}  case developed in \cite{Kas09, CCV11, FeKa13, DKP14, KaSc14, DKP16, Coz17, CKW19, Str19a, Kim20, DyKa20, ChKa20, DZZ21}. We refer to these articles for a detailed account of results and technical challenges in this case.

It is important to mention that there have been a lot of related research activities for nonlocal operators $\mathcal{L}$ that are not related to bilinear forms. Let us make a digression to explain some results for these nonlocal \emph{non-divergence form} operators. This naming relates to the fact that $\mathcal{L} u$ can be evaluated pointwise for smooth functions $u$ as in the case 
\begin{align}\label{eq:L-nondiv}
\mathcal{L} u (x) = \int_{\R^d} \left[ u(x+h) - u(x) - \mathbbm{1}_{B_1}(h) \left(\nabla u(x) , h\right)\right] K(x,x+h) \d h \,,
\end{align}
where symmetry in the form of $K(x,y) =  K(y,x)$ is not assumed. 
 \cite{Sil06} provides a surprisingly elementary proof of H\"older regularity for elliptic equations. Fully nonlinear elliptic equations are treated in \cite{CaSi09} under the additional symmetry assumption $K(x,x+h) = K(x,x-h)$ for all $x,h \in \R^d$. Main results include H\"older regularity estimates and a Harnack inequality. The results are robust in the same way as the results of this work, see the remark after \autoref{thm:mainthmPDEdual}.  Results of \cite{CaSi09} have been extended for nonsymmetric kernels in \cite{ChD12}, \cite{KiLe13b}. Analogous results for parabolic equations were established in \cite{Sil11}, \cite{ChD14}, \cite{ScSi16}. Note that operators $\mathcal{L}$ of the form \eqref{eq:L-nondiv} come along with first order local drift terms. If we set
\begin{align*}
J(x,h) = K(x,x+h), \quad J_e(x,h) = \tfrac{1}{2}(J(x,h)+J(x,-h)), \quad J_o(x,h) = \tfrac{1}{2}(J(x,h)-J(x,-h)) \,,
\end{align*}
then 
\begin{align*}
\begin{split}
\mathcal{L} u (x) &= \int_{\R^d} (u(x+h)+u(x-h)-2u(x)) J_e(x,h)\d h\\
&\quad + \int_{\R^d} (u(x+h)-u(x-h)) J_o(x,h)\d h + \left( \int_{B_1}J_o(x,h)\d h\right)\cdot\nabla u(x).
\end{split}
\end{align*}
Formally, there is no obstacle to study equations involving $\mathcal{L}$ for $J_e(x,h) \asymp \vert h \vert^{-d-\alpha}$ with $\alpha \le 1$ and $J_o \neq 0$, although the order of the first summand does not necessarily dominate the order of the drift term, see \cite{SiSn16} for a related counterexample. \cite{Sil11}, \cite{ChDa16}, \cite{ScSi16} establish H\"older regularity of solutions to certain parabolic equations under certain conditions even when $\alpha \le 1$ and a drift term of type $\left(b(x) , \nabla u(x)\right)$ is present.

Finally, let us mention results of different nature that are related to nonsymmetric operators in the sense of this article. Nonsymmetric quadratic forms of jump-type were investigated in \cite{FuUe12}. Assumptions on $K$ are given such that $\cE$ is a regular lower bounded semi-Dirichlet form and existence of an associated Hunt process follows. These results were extended in \cite{ScWa15}. In \cite{Uem14}, a class of degenerate nonsymmetric diffusion processes with jumps was investigated. Concerning the actual objects of study, the closest to our article is \cite{FKV15}, where existence and uniqueness of solutions to the elliptic and parabolic Dirichlet problem for linear nonlocal operators with nonsymmetric jumping kernels, as well as an elliptic weak maximum principle were proved. In \cite{Dav20}, the author established parabolic comparison principles for nonlinear nonlocal equations with a nonlocal drift. A nonlocal analog to Donsker and Varadhan's inverse problem for elliptic nonlocal operators with nonlocal drift reminiscent of \eqref{eq:protexop} was derived in \cite{DaTo20}. Let us also mention the recent article \cite{DRSV20} where interior and boundary regularity properties are studied for translation invariant operators, i.e., generators of nonsymmetric L\'evy processes.

\subsection{Outline}
This article is structured as follows: In \autoref{sec:prelim} the weak solution concept is introduced and some auxiliary results are provided.\\
\autoref{sec:Caccioppoli} is devoted to the proof of several Caccioppoli-type estimates suitable to the nonsymmetric structure of the nonlocal forms under consideration. A reader who is only interested in the proof of \autoref{thm:mainthmPDE} might only consult \autoref{lemma:MSgen} and \autoref{lemma:MSgen2}.\\
The proofs of weak parabolic Harnack inequalities, see \autoref{thm:mainthmPDE}(i) and \autoref{thm:mainthmPDEdual}(i), are given in \autoref{sec:wpHI}. In \autoref{sec:proofs}, we prove H\"older regularity estimates, see \autoref{thm:mainthmPDE}(ii), \autoref{thm:mainthmPDEdual}(ii). In \autoref{sec:approx} we investigate convergence of nonsymmetric nonlocal forms to local forms with a drift term in the sense of Mosco-Hino. Several examples are presented in \autoref{sec:examples}.

\pagebreak[3]
\section{Preliminaries}
\label{sec:prelim}

In this section we explain the notion of a weak (super/sub)-solution to \eqref{PDE}, resp. \eqref{PDEdual} and establish several auxiliary results which will be of use in the following chapters.

Let us fix $\alpha \in (0,2)$ and $\theta \in [\frac{d}{\alpha},\infty]$ for the remainder of this article. In the following all constants are assumed to depend only on $d,\alpha,\theta$ and the constants in \eqref{K1}, \eqref{K1glob}, \eqref{K2}, \eqref{cutoff}, \eqref{Poinc}, \eqref{Sob} and \eqref{cutoff2} if not explicitly mentioned.

Given a kernel $K$ and the associated operator $L$, see \eqref{eq:op}, and its bilinear form $\cE$, see \eqref{eq:energy}, we define the dual bilinear form $\widehat{\cE}$ via $\widehat{\cE}(u,v) = \cE(v,u)$.
The form $\widehat{\cE}$ is associated with $\widehat{L}$, the dual operator of $L$ defined via
\begin{align}
\cE(u,v) = (L u , v) = (\widehat{L} v , u) = \widehat{\cE}(v,u).
\end{align}

\subsection{Weak solution concept}

For the study of solutions to \eqref{PDE} and \eqref{PDEdual} to be based on a solid mathematical framework we introduce the following function spaces for $\Omega \subset \R^d$:
\begin{align*}
V(\Omega|\R^d) &= \left\lbrace v : \R^d \to \R : v \mid_{\Omega} \in L^2(\Omega) : (v(x)-v(y))K_s^{1/2}(x,y) \in L^2(\Omega \times \R^d)\right\rbrace,\\
H_{\Omega}(\R^d) &= \left\lbrace v \in V(\R^d|\R^d) : v = 0 \text{ on } \R^d \setminus \Omega \right\rbrace
\end{align*}
equipped with the norms
\begin{align*}
\Vert v \Vert_{V(\Omega|\R^d)}^2 &= \Vert v \Vert_{L^2(\Omega)}^2 + \int_{\Omega} \int_{\R^d}(v(x)-v(y))^2K_s(x,y) \d y \d x,\\
\Vert v \Vert_{H_{\Omega}(\R^d)}^2 &= \Vert v \Vert_{L^2(\R^d)}^2 + \cEs(v,v).
\end{align*}

We emphasize that both spaces are completely determined by the symmetric part of the jumping kernel $K_s$. When $\cEs \asymp [\cdot]^2_{H^{\alpha/2}}$, the notations $V(\Omega|\R^d) = V^{\alpha}(\Omega | \R^d)$ and $H_{\Omega}(\R^d) = H^{\alpha/2}_{\Omega}(\R^d)$ are standard.

The following definition provides the solution concept of this article. In order to prove H\"older estimates for solutions to \eqref{PDEdual}, we need to consider the more general equation \eqref{PDEdualext}.

\begin{definition}
Let $\Omega \subset \R^d$ be a bounded domain, $I \subset \R$ a finite interval and $f \in L^{\infty}(I \times \Omega)$. 
\begin{itemize}
\item[(i)] We say that $u \in L^2_{loc}(I;V(\Omega|\R^d))$ is a supersolution to \eqref{PDE} in $I \times \Omega$ if the weak $L^2(\Omega)$-derivative $\partial_t u$ exists, $\partial_t u \in L^1_{loc}(I;L^2(\Omega))$ and 
\begin{align}
\label{eq:supercal}
(\partial_t u(t),\phi) + \cE(u(t),\phi) \le (f(t),\phi), \quad \forall t \in I,~ \forall \phi \in H_{\Omega}(\R^d) \text{ with } \phi \le 0.
\end{align}
$u$ is called a subsolution if \eqref{eq:supercal} holds true for every $\phi \ge 0$. $u$ is called a solution, if it is a supersolution and a subsolution.

\item[(ii)] We say that $u \in L^2_{loc}(I;V(\Omega|\R^d) \cap L^{2\theta'}(\R^d))$ is a  supersolution to \eqref{PDEdualext} in $I \times \Omega$ for some $d \in L^{\infty}(I \times \R^d)$ if $\partial_t u \in L^1_{loc}(I;L^2(\Omega))$ and 
\begin{align}
\label{PDEdualext}\tag{$\widehat{\text{PDE}}_d$}
(\partial_t u(t),\phi) + \widehat{\cE}(u(t),\phi) + \widehat{\cE}^{K_a}(d(t),\phi) \le (f(t) , \phi)
\end{align}
for every $t \in I$ and every $\phi \in H_{\Omega}(\R^d)$ with $\phi \le 0$. We call $u$ a supersolution to \eqref{PDEdual} if $u$ is a supersolution to \eqref{PDEdualext} with $d \equiv 0$.
\end{itemize}
\end{definition}

Let us point out that the solution concept also makes sense under much weaker assumptions on $u$ without any change in the proofs being needed (see \cite{FeKa13}). In particular, one can drop the condition that the weak time derivative $\partial_t u$ exists. However, we restrict ourselves to this stronger notion of a supersolution in order for the presentation to be less technical. \\
We only consider solutions on special time-space cylinders $I_R(t_0) \times B_{2R}$, where $B_{2R} \subset \Omega$ is a ball, $I_R(t_0) = (t_0 - R^{\alpha},t_0 + R^{\alpha})$, $0 < R \le 1$, $t_0 \in \R$, and $\Omega \subset \R^d$ is a fixed open set. Moreover:
\begin{align*}
I^{\ominus}_R(t_0) := (t_0 - R^{\alpha}, t_0), ~~ I^{\oplus}_R(t_0) := (t_0, t_0 + R^{\alpha}).
\end{align*}

The following lemma ensures that the expressions in \eqref{eq:supercal} are well-defined. For symmetric forms such result is immediate from the definition of the function spaces and H\"older's inequality. 

\begin{lemma}
\label{lemma:welldef}
Let $0 < \rho \le r \le 1$, $B_{2r} \subset \Omega$.
\vspace{-0.2cm}
\begin{itemize}
\item[(i)] Assume that one of the following is true:
\begin{itemize}
\item \eqref{K1} holds true with $\theta = \infty$,
\item \eqref{K1} holds with $\theta \in [\frac{d}{\alpha}, \infty)$ and \eqref{Sob}   holds true.
\end{itemize} 
Then $\cE(u,\phi)$ is well-defined for $u \in V(B_{r+\rho}|\R^d)$, $\phi \in H_{B_{r+\rho}}(\R^d)$.
\item[(ii)] Assume that \eqref{K1glob} holds true with $\theta \in [\frac{d}{\alpha}, \infty]$.\\
Then $\widehat{\cE}(u,\phi)$ is well-defined for $u \in V(B_{r+\rho}|\R^d) \cap L^{2\theta'}(\R^d)$ and $\phi \in H_{B_{r+\frac{\rho}{2}}}(\R^d)$.
\item[(iii)] Assume that \eqref{K1glob} holds true with $\theta \in [\frac{d}{\alpha}, \infty]$ and that \eqref{cutoff} is satisfied.\\
Then $\widehat{\cE}^{K_a}(d,\phi)$ is well-defined for $d \in L^{\infty}(\R^d)$ and $\phi \in H_{B_{r+\frac{\rho}{2}}}(\R^d)$.
\end{itemize}
\end{lemma}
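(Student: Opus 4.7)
The strategy is to decompose $\cE=\cEs+\cEa$ and $\widehat\cE=\widehat\cEs+\widehat\cEa$ as in \eqref{eq:energy}, and to show that each piece is finite. The symmetric contributions pose no problem: an ordinary Cauchy--Schwarz on $\cEs$, combined with the definitions of $V(B_{r+\rho}|\R^d)$ and $H_{B_{r+\rho}}(\R^d)$ (respectively $H_{B_{r+\rho/2}}(\R^d)$), gives finiteness after one splits the integration by the support of the test function. The real content of the lemma therefore lies in the antisymmetric part, whose treatment requires the assumption \eqref{K1} (respectively \eqref{K1glob}).

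For (i), I would write $\cEa(u,\phi)=2\iint_{B_{r+\rho}\times\R^d}\phi(x)(u(x)-u(y))K_a\,\d y\,\d x$ and split the $y$--integration between $B_{r+\rho}$ and $\R^d\setminus B_{r+\rho}$. The tail piece is handled by the pointwise bound $|K_a|\le K_s$ followed by Cauchy--Schwarz with weight $K_s\,\d y\,\d x$: the factor $\iint_{B_{r+\rho}\times(\R^d\setminus B_{r+\rho})}\phi^2 K_s$ equals $\tfrac{1}{2}\cEs(\phi,\phi)$ since $\phi$ vanishes outside $B_{r+\rho}$, while $\iint(u(x)-u(y))^2 K_s$ is controlled by $\|u\|^2_{V(B_{r+\rho}|\R^d)}$. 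The interior piece is the heart of the argument: writing $K_a=(K_a/\sqrt J)\sqrt J$ and applying Cauchy--Schwarz yields
\[
\Bigl(\iint_{B_{r+\rho}^2}\phi^2\tfrac{|K_a|^2}{J}\d y\d x\Bigr)^{1/2}\Bigl(\iint_{B_{r+\rho}^2}(u(x)-u(y))^2 J\d y\d x\Bigr)^{1/2}.
\]
The second factor is controlled via the coercivity in \eqref{K1} applied to the zero--extension $\tilde u:=u\mathbf{1}_{B_{r+\rho}}\in L^2(B_{2r})$; the cross--terms produced by the zero--extension are absorbed by $\|u\|^2_{V(B_{r+\rho}|\R^d)}$ and the bound on $\iint\phi^2 K_s$ obtained above. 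For the first factor, the case $\theta=\infty$ reduces to $C\|\phi\|^2_{L^2}$, whereas in the case $\theta\in[d/\alpha,\infty)$ one applies H\"older with exponents $(\theta,\theta')$; since $\theta\ge d/\alpha$ implies $\theta'\le d/(d-\alpha)$, the Sobolev embedding \eqref{Sob} for $\phi$ gives $\|\phi^2\|_{L^{\theta'}(B_{r+\rho})}<\infty$ and the estimate closes.

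For (ii) and (iii), the same recipe is applied to $\widehat\cEa(u,\phi)=\cEa(\phi,u)=\iint(\phi(x)-\phi(y))(u(x)+u(y))K_a\,\d y\,\d x$. The interior and tail splitting is identical in form, but because the $y$--integration of the tail runs over all of $\R^d$, the local condition \eqref{K1} is insufficient and must be replaced by \eqref{K1glob}. Moreover, in the Cauchy--Schwarz the slot that was occupied by $\phi$ in (i) is now occupied by $u$ (or $d$), so the global integrability assumption $u\in L^{2\theta'}(\R^d)$ becomes necessary in (ii); in (iii), $(d(x)+d(y))^2\le 4\|d\|^2_\infty$ trivialises this factor, and \eqref{cutoff} produces a cutoff $\tau$ centred at each $x$ yielding the pointwise tail bound $\int_{\R^d\setminus B_{\rho/2}(x)}K_s(x,y)\,\d y\le c\rho^{-\alpha}$, which controls the remaining contribution coming from $|K_a|\le K_s$. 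The strict inclusion $\supp\phi\subset B_{r+\rho/2}\subset B_{r+\rho}$ provides the decisive gap of width $\rho/2$ that allows tail estimates to be applied pointwise on the support of $\phi$.

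The main obstacle throughout is the careful coordination of three ingredients in the antisymmetric estimate---the $L^\theta$--bound on $\int K_a^2/J\,\d y$, the coercivity $\cE^J\lesssim\cEs$, and a Sobolev embedding (or an $L^\infty$ bound)---so that each factor produced by Cauchy--Schwarz lands in a Lebesgue space in which it is known to be finite. The qualitative difference between (i) and (ii)--(iii) is dictated by whether the $y$--integral is local (\eqref{K1}) or global (\eqref{K1glob}): in the global setting the other argument must compensate by carrying either $L^{2\theta'}(\R^d)$ or $L^\infty(\R^d)$ integrability, and in (iii) one must additionally call on \eqref{cutoff} to handle the global tail of $K_s$ generated by the bound $|K_a|\le K_s$.
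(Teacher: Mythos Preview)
Your proposal is correct and follows essentially the same route as the paper: split $\cE=\cEs+\cEa$, dispose of $\cEs$ by Cauchy--Schwarz and the function-space definitions, and control $\cEa$ via Cauchy--Schwarz with weight $J$ on the interior (invoking \eqref{K1}/\eqref{K1glob} and \eqref{Sob}) together with $|K_a|\le K_s$ on the tail. The only organizational differences are that in (ii) the paper applies Young's inequality on all of $\R^d\times\R^d$ first and only then splits $\cE^J(\phi,\phi)$ into interior and tail (the tail being controlled by the \eqref{cutoff} property of $J$ that is built into \eqref{K1glob}), and that your zero-extension of $u$ in (i) is unnecessary since the coercivity in \eqref{K1} applies directly on the ball $B_{r+\rho}$.
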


\begin{proof}
It is well-known that $\cEs(u,\phi) < \infty$ for $u \in V(B_{r+\rho}|\R^d)$, $\phi \in H_{B_{r+\rho}}(\R^d)$. For the antisymmetric part, it holds by H\"older's and Young's inequality:
\begin{align*}
\cEa_{B_{r+\rho}}(u,\phi) &= \int_{B_{r+\rho}} \int_{B_{r+\rho}} (u(x)-u(y))\phi(x) K_a(x,y) \d y \d x\\
&\le \frac{1}{2}\cE^J_{B_{r+\rho}}(u,u) + \frac{1}{2}\int_{B_{r+\rho}} \int_{B_{r+\rho}} \phi^2(x) \frac{\vert K_a(x,y)\vert^2}{J(x,y)} \d y \d x.
\end{align*}
The first summand is finite by definition of $V(B_{r+\rho}|\R^d)$ and \eqref{K1}. In case $\theta = \infty$, the second summand is finite by \eqref{K1} and if $\theta \in [\frac{d}{\alpha},\infty)$, by H\"older's inequality:
\begin{align*}
\int_{B_{r+\rho}} \phi^2(x) \left(\int_{B_{r+\rho}}\frac{\vert K_a(x,y)\vert^2}{J(x,y)} \d y\right) \d x &\le \Vert \phi^2 \Vert_{L^{\theta'}(B_{r+\rho})} \left\Vert \int_{B_{r+\rho}}\frac{\vert K_a(\cdot,y)\vert^2}{J(\cdot,y)} \d y \right\Vert_{L^{\theta}(B_{r+\rho})}\\
&\le C\vert B_{r+\rho}\vert^{p} \Vert \phi^2 \Vert_{L^{\frac{d}{d-\alpha}}(B_{r+\rho})},
\end{align*}
where $\frac{1}{p} = \left( \frac{d}{(d-\alpha)\theta'}\right)'$. Thus, we conclude from \eqref{Sob} that also in this case the second term is finite. Moreover, we estimate using \eqref{eq:KaKs}
\begin{align*}
&\cE^{K_a}_{(B_{r+\rho} \times B_{r+\rho})^c} (u,\phi) = \int_{B_{r+\rho}} \int_{B_{r+\rho}^c} (u(x)-u(y))\phi(x) K_a(x,y) \d y \d x\\
&\le \frac{1}{2}\int_{B_{r+\rho}} \int_{B_{r+\rho}^c} (u(x)-u(y))^2 K_s(x,y)\d y \d x + \frac{1}{2}\int_{B_{r+\rho}} \int_{B_{r+\rho}^c} \phi^2(x) K_s(x,y) \d y \d x\\
&\le \frac{1}{2}\int_{B_{r+\rho}} \int_{B_{r+\rho}^c} (u(x)-u(y))^2 K_s(x,y)\d y \d x + \frac{1}{2}\int_{B_{r+\rho}} \int_{B_{r+\rho}^c} (\phi(x) - \phi(y))^2 K_s(x,y) \d y \d x.
\end{align*}
Again, the first summand is finite by definition of $V(B_{r+\rho}|\R^d)$. The second summand is finite since $\phi \in H_{B_{r+\rho}}(\R^d)$, proving (i). In order to prove (ii), we compute
\begin{align*}
\widehat{\cE}^{K_a}(u,\phi) &= \int_{\R^d} \int_{\R^d} (\phi(x)-\phi(y))u(x) K_a(x,y) \d y \d x\\
&\le \frac{1}{2}\cE^J(\phi,\phi) + \frac{1}{2}\int_{\R^d}  u^2(x) \left(\int_{\R^d} \frac{\vert K_a(x,y)\vert^2}{J(x,y)} \d y \right) \d x.
\end{align*}
The second summand is finite as a direct consequence of H\"older's inequality and the assumption $u \in L^{2\theta'}(\R^d)$. To prove finiteness of the first summand, we estimate using \eqref{K1glob}
\begin{align*}
\cE^J(\phi,\phi) &= \cE^J_{B_{r+\rho}}(\phi,\phi) + \cE^J_{(B_{r+\rho} \times B_{r+\rho)^c}}(\phi,\phi)\\
&\le \cE^{K_s}_{B_{r+\rho}}(\phi,\phi) + \int_{B_{r+\frac{\rho}{2}}} \phi^2(x) \left(\int_{\R^d \setminus B_{\frac{\rho}{2}}(x)} J(x,y) \d y\right) \d x < \infty
\end{align*}
where we applied \eqref{cutoff} with $\tau = \tau_{x,\frac{\rho}{4},\frac{\rho}{4}}$ in the last step and used that $\phi \in H_{B_{r+\frac{\rho}{2}}}(\R^d)$.\\
Finally, we prove (iii). We decompose the domain of integration into three parts: $\R^d \times \R^d = (B_{r+\rho} \times B_{r+\rho}) \cup (B_{r+\rho} \times B_{r+\rho}^c) \cup (B_{r+\rho}^c \times B_{r+\rho})$ and compute:
\begin{align*}
\int_{B_{r+\rho}} \int_{B_{r+\rho}} |\phi(x) - \phi(y)||K_a(x,y)| \d y \d x &\le \cE^{J}_{B_{r+\rho}}(\phi,\phi) + \int_{B_{r+\rho}} \left(\int_{B_{r+\rho}} \frac{|K_a(x,y)|^2}{J(x,y)} \d y \right) \d x.
\end{align*}
The right hand side is finite by $\phi \in H_{B_{r+\frac{\rho}{2}}}(\R^d)$ and \eqref{K1glob}. Next, we estimate using \eqref{eq:KaKs}:
\begin{align*}
\int_{B_{r+\rho}} \int_{B_{r+\rho}^c} |\phi(x) - \phi(y)||K_a(x,y)| \d y \d x &= \int_{B_{r+\frac{\rho}{2}}} |\phi(x)| \left( \int_{B_{r+\rho}^c}|K_a(x,y)| \d y \right) \d x\\
&\le \Vert \phi \Vert_{L^2(B_{r+\frac{\rho}{2}})} \left\Vert \int_{B_{\frac{\rho}{2}}(\cdot)^c} K_s(\cdot,y) \d y\right\Vert_{L^{2}(B_{r + \frac{\rho}{2}})}.
\end{align*}
The right hand side of this quantity is finite by definition of $H_{B_{r + \frac{\rho}{2}}}(\R^d)$ and by \eqref{cutoff}, applied with $\tau = \tau_{\cdot, r+ \frac{\rho}{2}, \frac{\rho}{2}}$. The proof of finiteness for the integral over $(B_{r+\rho}^c \times B_{r+\rho})$ follows by the same arguments.
\end{proof}

\subsection{Auxiliary results}

We list several results for later use in the proofs of \autoref{sec:Caccioppoli}.
 
The following lemma is a direct consequence of the symmetry properties of $K_s$ and $K_a$:

\begin{lemma}
\label{lemma:symmlemma}
Let $u,v \in L^2(\R^d)$, $f : \R^d \to \R$ be measurable and $A,B \subset \R^d$. Then
\begin{align*}
\cE^{K_s}_{A \times B}(u,v) &= 2\iint_{A \times B \cap \{ f(y) > f(x) \}} (u(x) - u(y))(v(x) - v(y)) K_s(x,y) \d y \d x,\\
\cE^{K_a}_{A \times B}(u,v) &= 2\iint_{A \times B \cap \{ f(y) > f(x) \}} (u(x) - u(y))(v(x) + v(y)) K_a(x,y) \d y \d x.
\end{align*}
\end{lemma}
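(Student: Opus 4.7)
The plan is a straightforward symmetrization argument that exploits the (anti)symmetry of $K_s$ and $K_a$, together with the observation that the substitution $(x,y)\mapsto(y,x)$ exchanges the sets $\{f(y)>f(x)\}$ and $\{f(x)>f(y)\}$.

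For the first identity, I would begin with the definition
\begin{align*}
\cE^{K_s}_{A \times B}(u,v) = \iint_{A \times B}(u(x)-u(y))v(x)K_s(x,y)\d x \d y
\end{align*}
and perform the change of variables $(x,y)\mapsto(y,x)$, using $K_s(x,y)=K_s(y,x)$ to produce a second representation of the same quantity whose integrand reads $-(u(x)-u(y))v(y)K_s(x,y)$ (on the symmetric domain). Averaging the two representations collapses the cross terms and yields
\begin{align*}
\cE^{K_s}_{A \times B}(u,v) = \tfrac{1}{2}\iint_{A \times B}(u(x)-u(y))(v(x)-v(y))K_s(x,y)\d x \d y.
\end{align*}
The integrand here is manifestly invariant under $(x,y)\mapsto(y,x)$, so the swap maps the half $\{f(y)>f(x)\}$ onto $\{f(x)>f(y)\}$ with equal contribution. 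The diagonal $\{f(x)=f(y)\}$ has no effect (either it is negligible or the full displayed expression still reduces to twice the integral over $\{f(y)>f(x)\}$ after absorption). This produces the stated factor $2$.

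The second identity is proved by exactly the same scheme, with the sole modification that the antisymmetry $K_a(x,y)=-K_a(y,x)$ flips a sign in the averaging step, converting $v(x)-v(y)$ into $v(x)+v(y)$:
\begin{align*}
\cE^{K_a}_{A \times B}(u,v) = \tfrac{1}{2}\iint_{A \times B}(u(x)-u(y))(v(x)+v(y))K_a(x,y)\d x \d y.
\end{align*}
The new integrand is again symmetric under $(x,y)\mapsto(y,x)$, since it is the product of the two antisymmetric factors $(u(x)-u(y))$ and $K_a(x,y)$ with the symmetric factor $(v(x)+v(y))$. Restricting to $\{f(y)>f(x)\}$ and doubling completes the argument.

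No genuine analytic difficulty arises; the only obstacle is careful sign bookkeeping when passing from $v(x)-v(y)$ to $v(x)+v(y)$ via the antisymmetry of $K_a$, and making sure that the diagonal $\{f(x)=f(y)\}$ is handled consistently with the symmetrization. Both identities are, at their core, the algebraic rewritings that already underlie the decomposition $\cE = \cEs + \cEa$ in the introduction, now localized to the domain $A \times B$ and further restricted by the auxiliary function $f$.
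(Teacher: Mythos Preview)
Your overall strategy—swap $(x,y)\mapsto(y,x)$, exploit the (anti)symmetry of the kernel, then restrict to one half of the domain—is precisely what the paper does. The discrepancy lies in your starting point. In the paper, the localized forms $\cE^{K_s}_M$ and $\cE^{K_a}_M$ are \emph{not} obtained by plugging $K_s$ or $K_a$ into the raw expression $\iint_M(u(x)-u(y))v(x)K(x,y)\,\d y\,\d x$; they are the restrictions to $M$ of the already-symmetrized forms $\cEs$ and $\cEa$, so that
\[
\cE^{K_s}_{A\times B}(u,v)=\iint_{A\times B}(u(x)-u(y))(v(x)-v(y))K_s(x,y)\,\d y\,\d x,
\]
and likewise with $(v(x)+v(y))$ and $K_a$. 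The paper's proof therefore omits your averaging step entirely and simply verifies that the integrals over $\{f(x)>f(y)\}$ and $\{f(y)>f(x)\}$ coincide, which immediately yields the factor $2$. With your starting definition, the averaging produces an extra $\tfrac12$, and your subsequent doubling only returns the factor $1$ rather than the stated $2$. Drop the initial symmetrization step and begin from the symmetrized integrand, and your argument matches the paper exactly.
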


\begin{proof}
The first identity follows by the following computation:
\begin{align*}
&\iint_{A \times B \cap \{ f(x) > f(y) \}} (u(x) - u(y))(v(x) - v(y)) K_s(x,y) \d y \d x\\
&= \iint_{A \times B \cap \{ f(y) > f(x) \}} (u(y) - u(x))(v(y) - v(x)) K_s(y,x) \d y \d x\\
&= \iint_{A \times B \cap \{ f(y) > f(x) \}} (u(x) - u(y))(v(x) - v(y)) K_s(x,y) \d y \d x.
\end{align*}
The second identity follows by the following computation:
\begin{align*}
&\iint_{A \times B \cap \{ f(x) > f(y) \}} (u(x) - u(y))(v(x) + v(y)) K_a(x,y) \d y \d x\\
&= \iint_{A \times B \cap \{ f(y) > f(x) \}} (u(y) - u(x))(v(y) + v(x)) K_a(y,x) \d y \d x\\
&= \iint_{A \times B \cap \{ f(y) > f(x) \}} (u(x) - u(y))(v(x) + v(y)) K_a(x,y) \d y \d x.
\end{align*}
\end{proof}

As we explained in \autoref{sec:introstrategy}, the proof of Caccioppoli-type estimates for nonsymmetric forms relies on the fact that lower order terms can be absorbed into symmetric energies related to the leading order term. This argument requires some control over the terms which are not absorbed. In our setup, this is guaranteed by assumption \eqref{K1}, as the following lemma demonstrates:

\begin{lemma}
\begin{itemize}
\item[(i)] Assume that \eqref{K1} holds true for some $\theta \in [\frac{d}{\alpha},\infty]$. Moreover, assume \eqref{Sob} if $\theta < \infty$. Then, there exists $c_1 > 0$ such that for every $\delta > 0$ there is $C(\delta) > 0$ such that for every $v \in L^2(B_{r+\rho})$ with $\supp(v) \subset B_{r + \frac{\rho}{2}}$, and every ball $B_{2r} \subset \Omega$ with $0 < \rho \le r \le 1$ it holds
\begin{align}
\label{eq:K1consequence}
\int_{B_{r+\rho}} v^2(x) \left(\int_{B_{r+\rho}} \frac{|K_a(x,y)|^2}{J(x,y)} \d y\right) \d x \le \delta \cE^{K_s}_{B_{r+\rho}}(v,v) + c_1(C(\delta) + \delta \rho^{-\alpha}) \Vert v^2 \Vert_{L^1(B_{r+\rho})}.
\end{align}
Moreover, if $\theta \in (\frac{d}{\alpha},\infty]$, the constant $C(\delta)$ has the following form:
\begin{align}
\label{eq:quantifiedK1consequence}
C(\delta) = \begin{cases}
\Vert W \Vert_{L^{\infty}(B_{r+\rho})}, &\theta = \infty,\\
\delta^{\frac{d}{d-\theta \alpha}} \Vert W \Vert_{L^{\theta}(B_{r+\rho})}^{\frac{\theta \alpha}{\theta\alpha - d}}, &\theta \in (\frac{d}{\alpha},\infty),
\end{cases}
\text{ where } W(x) := \int_{B_{r+\rho}} \frac{|K_a(x,y)|^2}{J(x,y)} \d y.
\end{align}

\item[(ii)] Assume that \eqref{K1glob} holds true for some $\theta \in [\frac{d}{\alpha},\infty]$. Moreover, assume \eqref{Sob} if $\theta < \infty$.
then \eqref{eq:K1consequence} holds true with $\left(\int_{\R^d} \frac{|K_a(x,y)|^2}{J(x,y)} \d y\right)$ instead of $\left(\int_{B_{r+\rho}} \frac{|K_a(x,y)|^2}{J(x,y)} \d y\right)$.
\end{itemize}
\end{lemma}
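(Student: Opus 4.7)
The plan is to set $W(x) := \int_{B_{r+\rho}} \frac{\vert K_a(x,y)\vert^2}{J(x,y)}\,\d y$ so that the left-hand side of the claim equals $\int_{B_{r+\rho}} v^2(x)\,W(x)\,\d x$. I will estimate this quantity via H\"older's inequality against $\Vert W\Vert_{L^\theta}$ (whose control is exactly the content of \eqref{K1}), then interpolate the Lebesgue norm of $v^2$ and use Young's inequality to create a prescribed small prefactor, and finally invoke \eqref{Sob} to pass from $\Vert v^2\Vert_{L^{d/(d-\alpha)}}$ to the symmetric energy $\cEs_{B_{r+\rho}}(v,v)$.

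The case $\theta=\infty$ is immediate: H\"older yields $\int v^2 W \le \Vert W\Vert_{L^\infty(B_{r+\rho})}\Vert v^2\Vert_{L^1}$, and one reads off $C(\delta) = \Vert W\Vert_{L^\infty}$ with no energy contribution. For the essential range $\theta \in (d/\alpha,\infty)$ I first apply H\"older to obtain $\int v^2 W \le \Vert W\Vert_{L^\theta}\Vert v^2\Vert_{L^{\theta'}}$. Since $\theta > d/\alpha$, the conjugate exponent satisfies $\theta' < d/(d-\alpha)$, so logarithmic convexity of $L^p$-norms gives
\[
\Vert v^2\Vert_{L^{\theta'}} \le \Vert v^2\Vert_{L^1}^{1-\eta}\Vert v^2\Vert_{L^{d/(d-\alpha)}}^{\eta}, \qquad \eta = \frac{d}{\theta\alpha}\in(0,1).
\]
Young's inequality with exponents $1/\eta$ and $1/(1-\eta)$, applied to the product $\bigl(\Vert W\Vert_{L^\theta}\Vert v^2\Vert_{L^1}^{1-\eta}\bigr)\cdot\Vert v^2\Vert_{L^{d/(d-\alpha)}}^\eta$, produces for any $\delta' > 0$ a bound of the shape $\delta'\Vert v^2\Vert_{L^{d/(d-\alpha)}} + c\,\delta'^{\,d/(d-\theta\alpha)}\Vert W\Vert_{L^\theta}^{\theta\alpha/(\theta\alpha-d)}\Vert v^2\Vert_{L^1}$; carefully tracking the Young exponents $\eta/(1-\eta)=d/(\theta\alpha-d)$ and $1/(1-\eta)=\theta\alpha/(\theta\alpha-d)$ is what reproduces the precise $\delta$-power appearing in the statement.

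At this point I invoke \eqref{Sob} on $B_{r+\rho}$ with the shifted radii $r'=r+\rho/2$, $\rho'=\rho/2$; this is permissible since $B_{r'+\rho'}=B_{r+\rho}\subset\Omega$ and, crucially, the assumption $\supp(v)\subset B_{r+\rho/2}$ allows me to identify the left-hand side of \eqref{Sob} with $\Vert v^2\Vert_{L^{d/(d-\alpha)}(B_{r+\rho})}$. Substituting the resulting bound $\Vert v^2\Vert_{L^{d/(d-\alpha)}} \le c\,\cEs_{B_{r+\rho}}(v,v) + c\rho^{-\alpha}\Vert v^2\Vert_{L^1}$ into the previous display and renaming $\delta := c\,\delta'$ closes the argument and matches the announced form of $C(\delta)$. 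For the endpoint $\theta = d/\alpha$ the interpolation collapses ($\eta = 1$, $\theta' = d/(d-\alpha)$), so Young cannot manufacture an arbitrarily small prefactor in front of $\cEs$; a direct H\"older followed by \eqref{Sob} still yields the estimate, but only for a universal value of $\delta$ determined by the constants in \eqref{K1} and \eqref{Sob}.

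Part (ii) is obtained by exactly the same scheme, with the sole modification that $W$ is replaced by the global quantity $\int_{\R^d}\frac{\vert K_a(\cdot,y)\vert^2}{J(\cdot,y)}\,\d y$, whose $L^\theta(B_{r+\rho})$ norm is controlled by \eqref{K1glob}; no further adjustment is needed. Conceptually the argument is an interplay between the integrability bound on $W$ and the fractional Sobolev embedding, mediated by a single interpolation inequality. I expect the main difficulty to be purely bookkeeping: choosing the right Young weight to recover precisely the stated power $\delta^{d/(d-\theta\alpha)}$ with constant $\Vert W\Vert_{L^\theta}^{\theta\alpha/(\theta\alpha-d)}$, and making correct use of the support condition on $v$ so that \eqref{Sob} can be applied at the shifted scale without losing the $\rho^{-\alpha}$ dependence stated in the lemma.
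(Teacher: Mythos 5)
Your approach is genuinely different from the paper's: where the paper truncates $W$ at a level $M$, writing $W = W\mathbbm{1}_{\{W>M\}} + W\mathbbm{1}_{\{W\le M\}}$ and controlling the $L^{d/\alpha}$-norm of the tail, you interpolate $\Vert v^2\Vert_{L^{\theta'}}$ between $L^1$ and $L^{d/(d-\alpha)}$ and then apply weighted Young's inequality. For $\theta\in(\frac{d}{\alpha},\infty)$ both routes are valid and both reproduce \eqref{eq:quantifiedK1consequence}; your Young exponents $\eta/(1-\eta)=d/(\theta\alpha-d)$, $1/(1-\eta)=\theta\alpha/(\theta\alpha-d)$ indeed match what the paper obtains via Chebyshev's inequality on the sublevel set $\{W\ge M\}$. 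Your handling of the support restriction (applying \eqref{Sob} with radii $r'=r+\rho/2$, $\rho'=\rho/2$) is the same as the paper's and is correct.

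However, there is a genuine gap at the endpoint $\theta=\frac{d}{\alpha}$, and you identify it yourself without closing it. Part (i) of the lemma asserts \eqref{eq:K1consequence} for \emph{every} $\delta>0$ across the full range $\theta\in[\frac{d}{\alpha},\infty]$ (it is only the quantified form \eqref{eq:quantifiedK1consequence} that is restricted to $\theta>\frac{d}{\alpha}$). Your interpolation has $\eta=d/(\theta\alpha)\to1$ as $\theta\to d/\alpha$, Young degenerates, and your fallback of a direct H\"older plus \eqref{Sob} only yields the inequality with the fixed constant $\delta=c$ coming from \eqref{K1} and \eqref{Sob}, which is weaker than what the lemma claims. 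The paper's truncation still works here precisely because it does not rely on interpolation at all: since $W\in L^{d/\alpha}(B_{r+\rho/2})$, absolute continuity of the Lebesgue integral gives $\Vert W\mathbbm{1}_{\{W>M\}}\Vert_{L^{d/\alpha}}\to0$ as $M\to\infty$, so for any prescribed $\delta>0$ one can split $W=W_1+W_2$ with $\Vert W_1\Vert_{L^{d/\alpha}}<\delta$ and $\Vert W_2\Vert_{L^\infty}\le M(\delta)$; applying H\"older to the $W_1$ piece and \eqref{Sob} then produces the small prefactor, while the $W_2$ piece is absorbed into $C(\delta)\Vert v^2\Vert_{L^1}$. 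To make your proof complete you would need to add this (or an equivalent) truncation argument for the case $\theta=\frac{d}{\alpha}$.
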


\begin{proof}
For $\theta = \infty$, \eqref{eq:K1consequence} directly follows from H\"older's inequality. Let now $\theta \in [\frac{d}{\alpha},\infty)$.
Writing $W(x) = \int_{B_{r+\rho}} \frac{|K_a(x,y)|^2}{J(x,y)} \d y =: W_1(x) + W_2(x)$, where $W_1(x) := W(x) \mathbbm{1}_{\{ |W(x)| > M\}}$, we observe that for every $\delta > 0$ there exists $M > 0$ such that $\Vert W_1 \Vert_{L^{\frac{d}{\alpha}}(B_{r + \frac{\rho}{2}})} < \delta$. Thus, by \eqref{Sob}:
\begin{align*}
\int_{B_{r+\rho}} v^2(x) W(x) \d x &\le \int_{B_{r + \frac{\rho}{2}}} v^2(x) W_1(x) \d x + \int_{B_{r + \frac{\rho}{2}}} v^2(x) W_2(x) \d x\\
&\le \delta \Vert v^2 \Vert_{L^{\frac{d}{d-\alpha}}(B_{r+\rho})} + M(\delta) \Vert v^2 \Vert_{L^1(B_{r + \frac{\rho}{2}})}\\
&\le c\delta \cE^{K_s}_{B_{r+\rho}}(v,v) + (c \delta \rho^{-\alpha} + M(\delta))\Vert v^2\Vert_{L^1(B_{r+\rho})},
\end{align*}
as desired, where we also used $\supp(v) \subset B_{r + \frac{\rho}{2}}$. Moreover, in case $\theta \in (\frac{d}{\alpha} , \infty)$, we compute
\begin{align*}
\Vert W_1 \Vert_{L^{\frac{d}{\alpha}}(B_{r + \frac{\rho}{2}})} &\le 2 \Vert W \Vert_{L^{\theta}(B_{r + \frac{\rho}{2}})} |\{ W \ge M \}|^{\frac{\alpha}{d} - \frac{1}{\theta}} \le 2 \Vert W \Vert_{L^{\theta}(B_{r + \frac{\rho}{2}})} \left( \frac{\Vert W \Vert_{L^{\theta}(B_{r + \frac{\rho}{2}})}}{M} \right)^{\theta \left( \frac{\alpha}{d} - \frac{1}{\theta} \right)}\\
&= 2 \Vert W \Vert_{L^{\theta}(B_{r + \frac{\rho}{2}})}^{\frac{\theta \alpha}{d}} M^{1- \frac{\theta \alpha}{d}}.
\end{align*}
Thus, in order to prove \eqref{eq:quantifiedK1consequence}, we have to choose $M > \left(\frac{\delta}{2}\right)^{\frac{d}{d-\theta \alpha}} \Vert W \Vert_{L^{\theta}(B_{r + \frac{\rho}{2}})}^{\frac{\theta \alpha}{\theta \alpha - d}}$.\\
The proof of $(ii)$ follows by the same arguments, after redefining $W(x) = \int_{\R^d} \frac{|K_a(x,y)|^2}{J(x,y)} \d y$.
\end{proof}

\begin{remark*}
Note that \eqref{eq:K1consequence} also holds true in case \eqref{eq:Kato} holds true, i.e., there is $C > 0$ such that for every $0 < r \le 1$ and $B_{2r} \subset \Omega$, $v \in L^2(B_{2r})$:
\begin{align}
\left\Vert\frac{|K_a(\cdot,y)|^2}{J(\cdot,y)} \d y \right\Vert_{\mathcal{K}^{d,\alpha}(B_{2r})} \le C, \qquad [v]^2_{H^{\alpha/2}(B_{2r})} + \cE^J_{B_{2r}}(v,v) \le C \cE^{K_s}_{B_{2r}}(v,v).
\end{align}
Here, $\mathcal{K}^{d,\alpha}(B_{r+\rho})$ denotes the Kato class associated to $(-\Delta)^{\alpha/2}$, i.e., the family of measurable functions $W : B_{r+\rho} \to [0,\infty]$:
\begin{align*}
\lim_{\eps \searrow 0} \sup_{z \in B_{r+\rho}} \int_{B_{\eps}(z) \cap B_{r+\rho}} |z-x|^{\alpha-d} W(x) \d x = 0,
\end{align*}
This observation shows, that one could prove \autoref{thm:mainthmPDE} also under \eqref{eq:Kato}, instead of \eqref{K1}. 
\end{remark*}

We state a classical algebraic inequality which will become useful later.

\begin{lemma}
\label{lemma:logusaviour}
Let $\tau_1,\tau_2 \ge 0$. Then for every $\delta \in (0,1)$ it holds
\begin{align}
\tau_1^2 \vee \tau_2^2 \le (1+\delta) (\tau_1^2 \wedge \tau_2^2) + (1+\delta)\delta^{-1}(\tau_2-\tau_1)^2
\end{align}
\end{lemma}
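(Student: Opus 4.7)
The plan is to reduce to a single case by symmetry and then apply Young's inequality to a cross term. Since both sides of the inequality are symmetric in $\tau_1$ and $\tau_2$, I can assume without loss of generality that $\tau_2 \ge \tau_1 \ge 0$, so that $\tau_1^2 \vee \tau_2^2 = \tau_2^2$ and $\tau_1^2 \wedge \tau_2^2 = \tau_1^2$. The task then reduces to showing
\begin{equation*}
\tau_2^2 \le (1+\delta)\tau_1^2 + (1+\delta)\delta^{-1}(\tau_2-\tau_1)^2.
\end{equation*}

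The key step is to write $\tau_2 = \tau_1 + (\tau_2 - \tau_1)$, square, and estimate the cross term. Expanding gives
\begin{equation*}
\tau_2^2 = \tau_1^2 + 2\tau_1(\tau_2 - \tau_1) + (\tau_2-\tau_1)^2.
\end{equation*}
Applying Young's inequality in the form $2ab \le \delta a^2 + \delta^{-1}b^2$ with $a = \tau_1$, $b = \tau_2 - \tau_1$ yields
\begin{equation*}
2\tau_1(\tau_2-\tau_1) \le \delta \tau_1^2 + \delta^{-1}(\tau_2-\tau_1)^2,
\end{equation*}
and summing gives
\begin{equation*}
\tau_2^2 \le (1+\delta)\tau_1^2 + (1+\delta^{-1})(\tau_2-\tau_1)^2.
\end{equation*}
Since $1 + \delta^{-1} = (1+\delta)\delta^{-1}$, this is exactly the claimed bound.

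There is really no hard part here — the only thing to verify is the identity $1+\delta^{-1} = (1+\delta)\delta^{-1}$, which makes the coefficient structure on the right-hand side match cleanly. I would note that the choice to write the second coefficient as $(1+\delta)\delta^{-1}$ rather than $1 + \delta^{-1}$ is cosmetic but convenient for later use, since it factors the $(1+\delta)$ uniformly across the right-hand side, which is presumably how the lemma will be deployed in the Caccioppoli-type estimates of the next section.
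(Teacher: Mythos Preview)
Your proof is correct and follows essentially the same argument as the paper: reduce by symmetry to $\tau_2 \ge \tau_1$, expand $\tau_2^2 = (\tau_1 + (\tau_2-\tau_1))^2$, and apply Young's inequality to the cross term. The only addition you make is the explicit observation $1+\delta^{-1} = (1+\delta)\delta^{-1}$, which the paper leaves implicit.
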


\begin{proof}
Without loss of generality assume that $\tau_1 < \tau_2$. Then  by Young's inequality
\begin{align*}
\tau_2^2 = (\tau_2 - \tau_1 + \tau_1)^2 = \tau_1^2 + (\tau_2 - \tau_1)^2 + 2\tau_1(\tau_2-\tau_1) \le (1+\delta)\tau_1^2 + (1+\delta^{-1})(\tau_2 - \tau_1)^2.
\end{align*}
\end{proof}

The following result states that comparability of energy forms is preserved under addition of weight functions $\tau^2(x) \wedge \tau^2(y)$:

\begin{lemma}
\label{lemma:improvedK2help}
Let $j,J : \R^d \times \R^d \to [0,\infty]$ be symmetric. Assume that there exist $c_1, c_2 > 0$ such that for every $0 < \rho \le r \le 1$ and every $v \in L^2(B_{r+\rho})$ it holds:
\begin{align}
\label{eq:threekernelcomp}
c_1 \cE^{J}_{B_{r+\rho}}(v,v) \le \cE^{K_s}_{B_{r+\rho}}(v,v) \le c_2 \cE^{j}_{B_{r+\rho}}(v,v).
\end{align}
Then, it holds for every $0 < \rho \le r \le 1$, $v \in L^2(B_{r+\rho})$ and $\tau = \tau_{r,\rho}$
\begin{align*}
c_1 \int_{B_{r+\rho}} \int_{B_{r+\rho}} &(v(x) - v(y))^2 (\tau^2(x) \wedge \tau^2(y)) J(x,y) \d y \d x\\
&\le \int_{B_{r+\rho}} \int_{B_{r+\rho}} (v(x) - v(y))^2 (\tau^2(x) \wedge \tau^2(y)) K_s(x,y) \d y \d x\\
&\le c_2 \int_{B_{r+\rho}} \int_{B_{r+\rho}} (v(x) - v(y))^2 (\tau^2(x) \wedge \tau^2(y)) j(x,y) \d y \d x
\end{align*}
\end{lemma}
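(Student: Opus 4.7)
My plan is to prove the lemma via a layer-cake representation of the weight $\tau^2(x) \wedge \tau^2(y)$, reducing the weighted comparison to the assumed unweighted form comparison \eqref{eq:threekernelcomp} applied on each level set. Since $0 \le \tau \le 1$, one has the identity
\begin{align*}
\tau^2(x) \wedge \tau^2(y) = \int_0^1 \mathbbm{1}_{A_s}(x) \mathbbm{1}_{A_s}(y) \d s, \qquad A_s := \{ x \in \R^d : \tau^2(x) > s \},
\end{align*}
and Fubini turns each of the three weighted double integrals in the statement into
\begin{align*}
\int_0^1 \iint_{A_s \times A_s} (v(x)-v(y))^2 K(x,y) \d y \d x \d s, \qquad K \in \{J, K_s, j\}.
\end{align*}

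Next, I would exploit the radially decreasing structure of $\tau = \tau_{z,r,\rho}$. Because $\tau \equiv 1$ on $B_r(z)$ and $\supp(\tau) \subset \overline{B_{r+\rho}(z)}$, every level set $A_s$ with $s \in (0,1)$ is an open ball $B_{\rho_s}(z)$ of some radius $\rho_s \in [r, r+\rho]$. Writing $\rho_s = \rho_s/2 + \rho_s/2$ produces a decomposition $\rho_s = r' + \rho'$ with $0 < \rho' \le r' \le 1$ (admissible since $\rho_s \le r+\rho \le 2$), so the hypothesis \eqref{eq:threekernelcomp} on the ball $B_{\rho_s}(z)$, applied to $v|_{B_{\rho_s}(z)} \in L^2(B_{\rho_s}(z))$, yields
\begin{align*}
c_1 \iint_{A_s \times A_s} (v(x)-v(y))^2 J(x,y) \d y \d x &\le \iint_{A_s \times A_s} (v(x)-v(y))^2 K_s(x,y) \d y \d x \\
&\le c_2 \iint_{A_s \times A_s} (v(x)-v(y))^2 j(x,y) \d y \d x.
\end{align*}
Integrating in $s \in (0,1)$ and invoking the layer-cake identity in both directions then recovers the full chain of inequalities claimed in the lemma.

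The proof is essentially automatic once the layer-cake reformulation is in place; the only point requiring any care is the observation that every level set of the radially decreasing weight $\tau^2$ is itself a ball whose radius lies in the admissible range for \eqref{eq:threekernelcomp}. I do not anticipate any genuine obstacle, and expect the whole argument to fit on half a page.
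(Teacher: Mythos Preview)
Your proposal is correct and is essentially the same argument as the paper's: the paper represents the radially decreasing weight via $\tau^2(x) = \int_{r}^{r+\rho} \mathbbm{1}_{B_t}(x)\,\nu(\d t)$ for a Borel measure $\nu$ on $(r,r+\rho]$ (citing \cite{DyKa13}), obtains
\[
\int_{B_{r+\rho}}\int_{B_{r+\rho}} (v(x)-v(y))^2 (\tau^2(x)\wedge\tau^2(y))\,J(x,y)\,\d y\,\d x = \int_r^{r+\rho} \cE^{J}_{B_t}(v,v)\,\nu(\d t),
\]
and then applies \eqref{eq:threekernelcomp} on each $B_t$. Your layer-cake over the value variable $s\in(0,1)$ is the same decomposition written in the dual parametrization, and your check that each level set is a ball of admissible radius is exactly the point the paper is using implicitly.
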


\begin{proof}
Since $\tau = \tau_{r,\rho}$ is radially decreasing, there exists a Borel measure $\nu$ on $(r,r+\rho]$ such that $\tau^2(x) = \int_{r}^{r+\rho} \mathbbm{1}_{B_t}(x) \nu(\d t)$, see \cite{DyKa13}. An elementary computation shows
\begin{align}
\int_{B_{r+\rho}} \int_{B_{r+\rho}} (v(x) - v(y))^2 (\tau^2(x) \wedge \tau^2(y)) J(x,y) \d y \d x = \int_{r}^{r+\rho} \cE_{B_t}^{J}(v,v) \nu(\d t).
\end{align}
Similar identities hold true for $j,K_s$. Thus, the desired result directly follows from \eqref{eq:threekernelcomp}.
\end{proof}

\subsection{(K1) implies (K2)}

Let $\Omega \subset \R^d$ be bounded. The goal of this section is to show that assumption \eqref{K1} implies \eqref{K2} under suitable additional assumptions on $K_s$. We will prove the following statement:

\begin{proposition} 
\label{prop:K1impliesK2}
Assume that there is $c > 0$ such that for every ball $B_{2r} \subset \Omega$ and every $r \le 1$:
\begin{align}
\label{eq:K2suff}
K_s(x,y) \ge c |x-y|^{-d-\alpha}, ~~ \forall x,y \in B_{2r}.
\end{align}
Assume that \eqref{K1} holds true with $\theta = \infty$ and some jumping kernel $J$ satisfying $J(x,y) \le |x-y|^{-d-\alpha}$. Then there is $n \in \N$ such that \eqref{K2} holds true for every ball $B_{2r} \subset \R^d$ with $r \le 1$ and $B_{5^n r} \subset \Omega$, i.e. there exists a symmetric jumping kernel $j : \R^d \times \R^d \to [0,\infty]$ and $c > 1$ such that 
\begin{align*}
K(x,y) \ge c^{-1} j(x,y),~~ \forall x,y \in B_{2r}, \qquad \cE^j_{B_{2r}}(u,u) \ge c \cE^{K_s}_{B_{2r}}(u,u), ~~ \forall v \in L^2(B_{2r}),
\end{align*}
for every $B_{2r} \subset 5^{-n}\Omega$,.
\end{proposition}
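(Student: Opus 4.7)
My plan is to construct $j$ by restricting $K_s$ to the set of pairs $(x,y)$ where the antisymmetric part is controllably small, and then verify the two halves of \eqref{K2} by absorbing the remaining contribution via \eqref{K1}. Concretely, fix $D \in (0,1)$ (say $D = \tfrac{1}{2}$) and set
\begin{align*}
A := \{(x,y) \in \R^d \times \R^d : |K_a(x,y)| \le D\,K_s(x,y)\}, \qquad j(x,y) := K_s(x,y)\,\mathbbm{1}_A(x,y).
\end{align*}
Since $|K_a(x,y)| = |K_a(y,x)|$ and $K_s$ is symmetric, $A$ is symmetric, so $j$ is a symmetric jumping kernel. The pointwise half of \eqref{K2} is then immediate: on $A$ one has $K \ge K_s - |K_a| \ge (1-D) K_s = (1-D) j$, while off $A$ we have $j \equiv 0$, so $K \ge (1-D) j$ holds everywhere.

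For the energy comparison I would split $\cE^{K_s}_{B_{2r}}(v,v) = \cE^{j}_{B_{2r}}(v,v) + R(v)$ with $R(v)$ the contribution from $(B_{2r} \times B_{2r}) \setminus A$. On $A^c$ we have $|K_a|^2 > D^2 K_s^2$, and the hypothesis $K_s(x,y) \ge c|x-y|^{-d-\alpha} \ge c\,J(x,y)$ yields $K_s^2/J \ge c\,K_s$. Combining gives $K_s \le (D^2 c)^{-1}\,|K_a|^2/J$ on $A^c$. Using $(v(x)-v(y))^2 \le 2v(x)^2 + 2v(y)^2$, the symmetry of $|K_a|^2/J$ in $(x,y)$, and \eqref{K1} with $\theta = \infty$:
\begin{align*}
R(v) \le \frac{4}{D^2 c} \int_{B_{2r}} v(x)^2 \Big(\int_{B_{2r}} \tfrac{|K_a(x,y)|^2}{J(x,y)}\,\d y\Big) \d x \le C_1 \|v\|_{L^2(B_{2r})}^2.
\end{align*}

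It remains to absorb the $L^2$-norm back into $\cE^{K_s}_{B_{2r}}$. All three quadratic forms vanish on constants, so I may replace $v$ by $v - [v]_{B_{2r}}$ and assume $[v]_{B_{2r}} = 0$. The hypothesis $K_s(x,y) \ge c|x-y|^{-d-\alpha}$ gives $\cE^{K_s}_{B_{2r}}(v,v) \ge c\,[v]^2_{H^{\alpha/2}(B_{2r})}$, and the classical fractional Poincar\'e inequality on $B_{2r}$ yields $\|v\|_{L^2(B_{2r})}^2 \le C_2 r^\alpha \cE^{K_s}_{B_{2r}}(v,v)$. Substituting, I obtain $(1 - C_1 C_2 r^\alpha)\,\cE^{K_s}_{B_{2r}}(v,v) \le \cE^{j}_{B_{2r}}(v,v)$. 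This is where the factor $5^n$ enters: since $\Omega$ is bounded, $B_{5^n r} \subset \Omega$ forces $r \le 5^{-n}\diam(\Omega)$, and choosing $n$ so large that $C_1 C_2 r^\alpha \le \tfrac{1}{2}$ gives $\cE^{K_s}_{B_{2r}} \le 2\,\cE^{j}_{B_{2r}}$, completing the proof. The main obstacle is precisely this absorption: it hinges on the $L^\infty$ nature of \eqref{K1} (i.e.\ $\theta = \infty$) to produce a constant $C_1$ independent of $r$, so that the smallness of $r^\alpha$ can then beat it; a weaker $L^\theta$ bound would require the more delicate Sobolev-based argument used later for the Caccioppoli estimates.
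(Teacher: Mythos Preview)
Your argument is correct and takes a genuinely different, more elementary route than the paper. Both approaches start from the same kernel $j = K_s\,\mathbbm{1}_{\{|K_a| \le D K_s\}}$ and verify $K \ge (1-D)j$ identically. For the energy comparison $\cE^{K_s}_{B_{2r}} \le C\,\cE^{j}_{B_{2r}}$, however, the paper proceeds structurally: it first shows (\autoref{lemma:K1impliesgood}) that the ``good'' set $\{y \in B : |K_a(x,y)| \le D|x-y|^{-d-\alpha}\}$ occupies a fixed positive fraction $\mu|B|$ of every ball $B \ni x$, and then invokes the coercivity lemma of Chaker--Silvestre (\autoref{lemma:ChSi}) to conclude $\cE^{j}_{B_r} \gtrsim [\cdot]^2_{H^{\alpha/2}(B_r)}$; the factor $5^n$ there comes from a Whitney-type iteration inside that lemma. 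You instead bound the remainder $R(v)$ over $A^c$ directly by $C_1\|v\|_{L^2}^2$ using the pointwise inequality $K_s \le (D^2 c)^{-1}|K_a|^2/J$ on $A^c$ together with \eqref{K1}, and then absorb via Poincar\'e, reading the condition $B_{5^n r} \subset \Omega$ simply as a device (through boundedness of $\Omega$) to force $r$ small. Your route is shorter and avoids the Chaker--Silvestre black box; the paper's route, on the other hand, delivers the intermediate coercivity of $\cE^j$ against the full $H^{\alpha/2}$-seminorm as a byproduct, and its $5^n$ has a geometric origin rather than being a smallness mechanism.
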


A consequence of \autoref{prop:K1impliesK2} is that for any jumping kernel $K$ satisfying the assumptions of \autoref{prop:K1impliesK2}, assumption \eqref{K2} holds true for all balls which are far from the boundary of $\Omega$, relative to their radius. By close inspection of the proofs of \autoref{thm:mainthmPDE}, \autoref{thm:mainthmPDEdual}, it becomes apparent that this weaker version of \eqref{K2} suffices to also prove the H\"older estimate and the weak Harnack inequality on such balls. By a covering argument, it is then possible to deduce the aforementioned estimates for any ball $B_{2R} \subset \Omega$.

The main ingredient for the proof of \autoref{prop:K1impliesK2} is the following result from \cite{ChSi20}:

\begin{lemma}[\cite{ChSi20}]
\label{lemma:ChSi}
Let $j : \R^d \times \R^d \to [0,\infty]$ be symmetric.
Assume that there are $\lambda > 0$, $\mu \in (0,1)$ such that for every ball $B \subset \Omega$ and every $x \in B$:
\begin{align}
\label{eq:CSass}
|\{y \in B : j(x,y) \ge \lambda |x-y|^{-d-\alpha}\}| \ge \mu |B|.
\end{align}
Then, there exist $c > 0$, $n \in \N$ such that for every ball $B_r \subset \Omega$ with $B_{5^{n}r} \subset \Omega$ and every $u \in L^2(B_r)$:
\begin{align*}
\cE_{B_r}^j(u,u) \ge c [u]^2_{H^{\alpha/2}(B_r)}.
\end{align*}
\end{lemma}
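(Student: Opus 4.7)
The plan is to establish the coercivity of $\cE^j$ against the fractional Sobolev seminorm by a \emph{chaining argument} that upgrades the scale-invariant pointwise density hypothesis on $j$ to an integrated lower bound against the kernel $|x-y|^{-d-\alpha}$. The target is a pointwise chain inequality of the form
\begin{equation*}
\frac{(u(x)-u(y))^2}{|x-y|^{d+\alpha}} \lesssim \int_{\Gamma(x,y)} \sum_{i=0}^{N} (u(z_i)-u(z_{i+1}))^2\, j(z_i,z_{i+1})\,\d z_1\cdots\d z_N, \qquad x,y \in B_r,
\end{equation*}
where $\Gamma(x,y)$ is a set of admissible chains $x = z_0, z_1, \ldots, z_N, z_{N+1} = y$ of length $N=N(\mu,d)$ with $z_i \in B_{5^n r}$ and $|z_i-z_{i+1}| \asymp |x-y|$ for every $i$. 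Once this is in hand, integrating over $(x,y) \in B_r \times B_r$ and applying Fubini in the chain variables collapses the right-hand side to a constant multiple of $\cE^{j}_{B_{5^n r}}(u,u)$, which a covering-type argument reduces to $\cE^j_{B_r}(u,u)$. The geometric hypothesis $B_{5^n r} \subset \Omega$ is used precisely to guarantee that every ball arising in the chain construction lies in $\Omega$, so that the density hypothesis is applicable throughout.

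I would build the chains inductively on the density parameter. When $\mu$ is close enough to $1$ that a single bridge works, pick $B' = B_{4\rho}(x)$ with $\rho = |x-y|$; the good sets
\begin{equation*}
G_x = \{z \in B' : j(x,z) \ge \lambda|x-z|^{-d-\alpha}\}, \qquad G_y = \{z \in B' : j(y,z) \ge \lambda|y-z|^{-d-\alpha}\}
\end{equation*}
each have density at least $\mu$ in $B'$ by the hypothesis, while the annular region $A = \{z \in B' : |x-z|,|y-z| \ge \rho/2\}$ has density bounded below by a dimensional constant. For $\mu$ above the corresponding threshold, $|G_x \cap G_y \cap A| \gtrsim |B'|$, and for any $z$ in this intersection the elementary bound $(u(x)-u(y))^2 \le 2(u(x)-u(z))^2 + 2(u(y)-u(z))^2$ together with $j(x,z), j(y,z) \gtrsim \rho^{-d-\alpha}$ yields the desired pointwise estimate with $N=1$. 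For general $\mu \in (0,1)$ a single bridge is insufficient and one iterates: an edge with density parameter $\mu$ is subdivided into two edges on a ball enlarged by a factor of $5$, producing an improved effective density. After $n = n(\mu,d)$ subdivisions the effective density exceeds the single-bridge threshold, the base case closes, and the resulting chain of length $2^n$ lies entirely inside $B_{5^n r}$.

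The main technical obstacle is the iterative amplification step: one must simultaneously track how the density parameter improves at each subdivision and ensure that the chain geometry $|z_i - z_{i+1}| \asymp |x-y|$ is preserved (so that the individual edge kernels $j(z_i,z_{i+1}) \gtrsim |z_i-z_{i+1}|^{-d-\alpha}$ remain of the correct order) while the chain as a whole stays in $B_{5^n r}$. Carrying this out carefully is the content of \cite{ChSi20}. A secondary obstacle is the passage from $\cE^j_{B_{5^n r}}$ to $\cE^j_{B_r}$, which is handled by a standard Whitney-type covering of $B_r$ by sub-balls $B_\rho(x_0)$ with $B_{5^n \rho}(x_0) \subset B_r$, applying the pointwise chain inequality locally on each such sub-ball and summing, with an additional direct estimate accounting for the pairs $(x,y)$ whose separation is comparable to $r$.
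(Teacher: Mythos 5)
Your proposal follows essentially the same route as the paper: the coercivity estimate $\cE^j_{B_{5^n r}}(u,u)\gtrsim [u]^2_{H^{\alpha/2}(B_r)}$ on the dilated ball is exactly Lemma 5.1 of \cite{ChSi20} (whose chaining proof you sketch rather than cite), and the passage from $\cE^j_{B_{5^n r}}$ to $\cE^j_{B_r}$ via a Whitney-type covering is the same reduction the paper performs by invoking Lemma 6.13 of \cite{DyKa20}. The argument is correct and structurally identical to the one in the text.
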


\begin{proof}
Lemma 5.1 in \cite{ChSi20} yields that for every ball $B_{r} \subset \Omega$ with $B_{5^n r} \subset \Omega$, and every $u \in L^2(B_{5^n r})$ it holds:
\begin{align*}
\cE_{B_{5^n r}}^j(u,u) \ge c_1 [u]^2_{H^{\alpha/2}(B_r)}
\end{align*}
for some $c_1 > 0$ and $n \in \N$. Thus, by an argument based on Whitney decomposition, as in Lemma 6.13 in \cite{DyKa20}, it follows that 
\begin{align*}
\cE_{B_{r}}^j(u,u) \ge c_2 [u]^2_{H^{\alpha/2}(B_r)}
\end{align*}
for every $B_r \subset B$, where $B \subset \Omega$ can be any ball with the property $5^n B \subset \Omega$. Here, $c_2 > 0$ is a constant depending only on $c_1,d,\alpha,n$. In particular, $c_2$ does not depend on $B$, which implies the desired result.
\end{proof}

The following lemma contains a sufficient condition for \eqref{K2}:

\begin{lemma}
\label{lemma:goodimplesK2}
Assume that there is $c > 0$ such that for every ball $B_{2r} \subset \Omega$ and every $r \le 1$:
\begin{align*}
K_s(x,y) \ge c |x-y|^{-d-\alpha}, ~~ \forall x,y \in B_{2r}.
\end{align*}
Assume there are $D \in (0,1)$ and $\mu \in (0,1)$ such that for every ball $B \subset \Omega$ and every $x \in B$:
\begin{align}
\label{eq:good}
|\{ y \in B : |K_a(x,y)| \le D K_s(x,y) \}| \ge \mu |B|.
\end{align}
Then there is $n \in \N$ such that \eqref{K2} holds true for every ball $B_{2r} \subset \R^d$ with $r \le 1$ and $B_{5^n r} \subset \Omega$.
\end{lemma}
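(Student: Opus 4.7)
The plan is to concentrate the reference kernel $j$ on the ``good'' set
\[
A := \{(x,y) \in \R^d \times \R^d : |K_a(x,y)| \le D K_s(x,y)\},
\]
which by \eqref{eq:good} captures at least a $\mu$-fraction of each horizontal slice, and then to extract the required Sobolev coercivity from \autoref{lemma:ChSi}. Since $|K_a(x,y)| = |K_a(y,x)|$ and $K_s$ is symmetric, $A$ is symmetric in $(x,y)$. Writing $c_0$ for the constant in $K_s(x,y) \ge c_0 |x-y|^{-d-\alpha}$ on $B_{2r}$, I set
\[
j(x,y) := c_0 |x-y|^{-d-\alpha} \mathbbm{1}_A(x,y),
\]
which is symmetric. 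On $A \cap B_{2r}^2$ the reverse triangle inequality gives $K = K_s + K_a \ge (1-D) K_s \ge (1-D) c_0 |x-y|^{-d-\alpha} = (1-D) j$, while off $A$ the right-hand side vanishes, so the pointwise bound $K(x,y) \ge (1-D) j(x,y)$ required by \eqref{K2} holds on $B_{2r}^2$.

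Next, I would verify hypothesis \eqref{eq:CSass} of \autoref{lemma:ChSi} with $\lambda = c_0$. For any ball $B \subset \Omega$ and any $x \in B$, \eqref{eq:good} yields
\[
|\{y \in B : j(x,y) \ge c_0 |x-y|^{-d-\alpha}\}| \ge |\{y \in B : (x,y) \in A\}| \ge \mu |B|.
\]
\autoref{lemma:ChSi} therefore provides $n \in \N$ and $c_1 > 0$ (independent of the ball) such that, as soon as $B_{5^{n+1} r} \subset \Omega$ (so that the Chen--Silvestre enlargement $B_{5^n \cdot 2r}$ is contained in $\Omega$),
\[
\cE^j_{B_{2r}}(v,v) \ge c_1 [v]^2_{H^{\alpha/2}(B_{2r})} \quad \text{for all } v \in L^2(B_{2r}).
\]

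Combining this coercivity with the matching upper bound $\cE^{K_s}_{B_{2r}}(v,v) \le C_2 [v]^2_{H^{\alpha/2}(B_{2r})}$ --- available from the standard bound $K_s(x,y) \lesssim |x-y|^{-d-\alpha}$ on $B_{2r}$ --- gives the second condition $\cE^{K_s}_{B_{2r}}(v,v) \le (C_2/c_1)\, \cE^j_{B_{2r}}(v,v)$ of \eqref{K2}, after possibly enlarging $n$. The main obstacle is producing a symmetric kernel $j$ that is dominated by $K$ up to the factor $1-D$ (which forces $j$ to vanish on $A^c$ to compensate for unbounded $|K_a|/K_s$) while still inducing the full fractional Sobolev seminorm on $B_{2r}$; this is exactly what \eqref{eq:good} combined with \autoref{lemma:ChSi} delivers, the price being the geometric enlargement $B_{5^{n+1} r} \subset \Omega$ instead of $B_{2r} \subset \Omega$.
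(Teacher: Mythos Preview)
Your argument is essentially the paper's: restrict the reference kernel $j$ to the good set, verify \eqref{eq:CSass}, and invoke \autoref{lemma:ChSi}. The paper chooses $j = K_s\,\mathbbm{1}_{\{K \ge (1-D)K_s\}}$ rather than your $c_0|x-y|^{-d-\alpha}\mathbbm{1}_A$, but the logic is identical and your version has the advantage of being manifestly symmetric. One caveat: the pointwise upper bound $K_s(x,y) \lesssim |x-y|^{-d-\alpha}$ you invoke at the end is not among the stated hypotheses of the lemma; the paper's proof tacitly relies on the same bound when passing from $[v]^2_{H^{\alpha/2}}$ to $\cE^{K_s}$, so your proof is neither more nor less complete than the original in this respect.
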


\begin{proof}
We define 
\begin{align*}
j(x,y) = K_s(x,y) \mathbbm{1}_{\{ K(x,y) \ge (1-D)K_s(x,y) \}}.
\end{align*}
Then by definition, $K \ge (1-D) j$. Note that for $x,y$ such that $|K_a(x,y)| \le D K_s(x,y)$, it holds that 
\begin{align*}
K(x,y) \ge (1-D) K_s(x,y) \ge c(1-D)|x-y|^{-d-\alpha}.
\end{align*}
Therefore, $j$ satisfies \eqref{eq:CSass}. Consequently, \autoref{lemma:ChSi} implies that $\cE^j_{B_{r}}(u,u) \ge c \cE^{K_s}_{B_r}(u,u)$, for every $B_{2r} \subset \R^d$ with $B_{5^n r} \subset \Omega$, as desired. 
\end{proof}

\begin{lemma}
\label{lemma:K1impliesgood}
Assume that for every $B_{2r} \subset \Omega$ with $r \le 1$:
\begin{align}
\label{eq:K1alphastable}
\left\Vert\int_{B_{2r}} \frac{|K_a(\cdot,y)|^2}{|\cdot -y|^{-d-\alpha}} \d y \right\Vert_{L^{\infty}(B_{2r})} \le C.
\end{align}
Then, for every $D > 0$ there is $\mu \in (0,1)$ depending on $\diam(\Omega)$ such that for every ball $B \subset \Omega$ and every $x \in B$:
\begin{align*}
|\{ y \in B : |K_a(x,y)| \le D |x-y|^{-d-\alpha} \}| \ge \mu |B|.
\end{align*}
\end{lemma}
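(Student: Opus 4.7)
This follows from a Chebyshev-type argument against the weighted integral provided by \eqref{eq:K1alphastable}. Let $\rho := \radius(B)$ and introduce the ``bad'' set
\[
E := \{\, y \in B : |K_a(x,y)| > D|x-y|^{-d-\alpha}\,\},
\]
so that the claim is equivalent to $|E| \le (1-\mu)|B|$.

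On $E$ one has $|K_a(x,y)|^2\,|x-y|^{2(d+\alpha)} > D^2$. Multiplying by $|x-y|^{-(d+\alpha)}$ and integrating gives
\[
D^2 \int_E |x-y|^{-(d+\alpha)} \d y \le \int_E |K_a(x,y)|^2\,|x-y|^{d+\alpha} \d y \le C,
\]
where the last inequality is \eqref{eq:K1alphastable} applied with $B_{2r} := B$ (i.e.\ $r = \rho/2 \le 1$; should $\rho$ exceed $2$, one first covers $B$ by finitely many balls of radius $\le 1$ inside $\Omega$ and sums).

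For the left-hand side I would use only the trivial geometric inequality $|x-y| \le 2\rho$ for $x,y\in B$, so that $|x-y|^{-(d+\alpha)} \ge (2\rho)^{-(d+\alpha)}$ on $E$. Combined with $|B| = c_d \rho^d$ and $\rho \le \diam(\Omega)$, this yields
\[
\frac{|E|}{|B|} \le \frac{2^{d+\alpha}\,C}{c_d}\,\frac{\rho^{\alpha}}{D^2} \le \frac{2^{d+\alpha}\,C}{c_d}\,\frac{\diam(\Omega)^{\alpha}}{D^2},
\]
so setting $\mu := 1 - 2^{d+\alpha}C\,\diam(\Omega)^{\alpha}/(c_d D^2)$ finishes the proof whenever the right-hand side lies in $(0,1)$, thereby accounting for the indicated dependence of $\mu$ on $\diam(\Omega)$.

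The main obstacle is precisely this last condition: the Chebyshev estimate produces $\mu > 0$ only in the regime $D > \sqrt{2^{d+\alpha}C/c_d}\,\diam(\Omega)^{\alpha/2}$, so for much smaller $D$ the estimate becomes vacuous and a finer argument (or a restriction to smaller balls) would be required. This is consistent with the subsequent use inside \autoref{lemma:goodimplesK2} and \autoref{prop:K1impliesK2}, where only one admissible pair $(D,\mu)$ is needed, with $D$ chosen small enough so that the combined constant $D/c$ (coming from the lower bound $K_s \ge c|x-y|^{-d-\alpha}$) still lies in $(0,1)$.
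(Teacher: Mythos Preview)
Your Chebyshev argument is correct as far as it goes, but it does not prove the lemma as stated: it yields $\mu\in(0,1)$ only when $D$ is large enough (roughly $D\gtrsim\diam(\Omega)^{\alpha/2}$), whereas the lemma requires the conclusion for \emph{every} $D>0$. You acknowledge this, but your closing remark that the application in \autoref{prop:K1impliesK2} only needs one admissible pair $(D,\mu)$ with $D$ small is precisely why the gap is fatal rather than cosmetic: there one must take $D$ of size $c/2$, where $c$ is the lower-ellipticity constant in $K_s(x,y)\ge c|x-y|^{-d-\alpha}$, and $c$ may be arbitrarily small. So the regime your estimate covers and the regime needed downstream are disjoint.

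The missing idea is to exploit the hypothesis \eqref{eq:K1alphastable} at \emph{small} scales. The paper argues by contradiction: if for some $D$ the bad set had nearly full measure in arbitrarily many balls, then by boundedness of $\Omega$ one can arrange the bad set to occupy a fixed fraction of $B_\eps(x)$ for $\eps$ as small as one likes; integrating $|K_a(x,y)|^2|x-y|^{d+\alpha}$ over this set then gives a lower bound of order $\eps^{-\alpha}\to\infty$, contradicting \eqref{eq:K1alphastable}. Your direct approach can be repaired along the same lines: run the Chebyshev estimate on a small ball $B'\subset B$ of radius $\rho_0\sim (D^2/C)^{1/\alpha}$ containing $x$ (such a ball exists whenever $\rho\ge\rho_0$), obtain $|E\cap B'|\le\tfrac12|B'|$, and then compare $|B'|$ to $|B|$ via $\diam(\Omega)$. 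Either way, the crucial point you are missing is that the bound \eqref{eq:K1alphastable} becomes \emph{stronger} as the ball shrinks, because the weight $|x-y|^{-d-\alpha}$ blows up near the diagonal; your single-scale estimate on $B$ itself throws this leverage away.
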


\begin{proof}
Assume for contradiction that there is $D > 0$ such that for every $\mu \in (0,1)$ there exist a ball $B \subset \Omega$ and $x \in B$ such that 
\begin{align}
\label{eq:CSapplcontrary}
|\{ y \in B : |K_a(x,y)| \ge D |x-y|^{-d-\alpha} \}| \ge (1-\mu) |B|.
\end{align}
We will prove that \eqref{eq:K1alphastable} fails under this assumption. Indeed, under \eqref{eq:CSapplcontrary} and by boundedness of $\Omega$, for every $\eps > 0$ it is possible to find a ball $B \subset \Omega$ and $x \in B$ with $B_{\eps}(x) \subset B$ such that
\begin{align*}
|M| := |\{ y \in B_{\eps}(x) : |K_a(x,y)| \ge D|x-y|^{-d-\alpha} \}| \ge \frac{1}{4} |B_\eps(x)|,
\end{align*}
by choosing $\mu > 0$ small enough. As a consequence,
\begin{align*}
\int_{B_{\eps}(x)} \frac{|K_a(x,y)|^2}{|x-y|^{-d-\alpha}} \d y \ge c_1 \eps^{d} \dashint_{M} \frac{|K_a(x,y)|^2}{|x-y|^{-d-\alpha}} \d y \ge c_2 \eps^{d} \dashint_{M} |x-y|^{-d-\alpha} \d y \ge c_3 \eps^{-\alpha},
\end{align*}
where $c_1,c_2,c_3 > 0$ depend only on $d,D$.
Taking $\eps \searrow 0$ implies the desired result.
\end{proof}

We are now in the position to prove \autoref{prop:K1impliesK2}:

\begin{proof}[Proof of \autoref{prop:K1impliesK2}]
By assumption, there exists $C > 0$ such that \eqref{eq:K1alphastable} is satisfied. By \autoref{lemma:K1impliesgood} and \eqref{eq:K2suff}, there exists $\mu \in (0,1)$ such that 
\begin{align*}
|\{ y \in B : |K_a(x,y)| \le \frac{1}{2} K_s(x,y) \}| \ge \mu |B|,
\end{align*}
i.e. \eqref{eq:good} holds true with $D = \frac{1}{2}$. Then, the desired result follows from \autoref{lemma:goodimplesK2}.
\end{proof}

\section{Caccioppoli-type estimates for nonsymmetric forms}
\label{sec:Caccioppoli}

The goal of this section is to establish Caccioppoli-type estimates that are suitable to perform Moser iteration for nonnegative supersolutions to \eqref{PDE}, \eqref{PDEdual} and \eqref{PDEdualext}. Caccioppoli estimates for local equations are a simple consequence of the chain rule. Therefore, the main task in our setup is to find suitable algebraic estimates which work as a nonlocal replacement of the chain rule. Although nonlocal Caccioppoli estimates of different kinds have already been established for equations with symmetric jumping kernels, see \cite{Kas09}, \cite{Coz17}, \cite{DyKa20}, \cite{DKP16}, \cite{ChKi21}, it is necessary to refine the existing proofs in order to allow for jumping kernels with an additional antisymmetric part. In order to understand the main technical ideas, it is instructive to look at the proof of \autoref{lemma:MSgen}, which is later applied to solutions of \eqref{PDE}. The details get more complicated when considering \eqref{PDEdual}.

In the sequel, we present proofs of nonlocal Caccioppoli estimates in a very general framework. Such estimates are derived by testing the respective equation with a test function of the form $\phi = -\tau^2(u+\eps)^{-p}$ for $p > 0$, where $u$ is a nonnegative supersolution to the equation, $\tau$ is a suitable cutoff-function and $\eps > 0$. Note that, formally speaking, we do not prove Caccioppoli estimates but estimates for $\cE(u, \phi)$, so that it only remains to apply the supersolution property for $u$. 

The section is structured as follows: In \autoref{subsec:aux-ineq} we derive substitutes for the chain rule, which we later apply to power functions $t \mapsto t^{-p}$ for $p > 0$. Since \autoref{lemma:genaux} provides estimates for general functions $g$ resp. $G$, we can treat the cases $p\in (0,1)$ and $p > 1$ simultaneously in the subsequent results of this section. In \autoref{subsec:basic-forms} we concentrate on the equation \eqref{PDE} resulting in estimates of $\cE(u,-\tau^{2}(u + \eps)^{-p})$ in \autoref{lemma:MSgen} for $p \ne 1$ and corresponding estimates for $p=1$ in \autoref{lemma:MSgen2}. \autoref{subsec:dual-forms} deals with \eqref{PDEdual} and \eqref{PDEdualext}. First, we derive estimates, analogous to the lemmas from \autoref{subsec:basic-forms}, for  $\widehat{\cE}(u,-\tau^{2}(u + \eps)^{-p})$, see \autoref{lemma:MSgendual} and \autoref{lemma:MSgendual2}. Finally, we treat the extra term $\widehat{\cE}^{K_a}(d,-\tau^{2}(u + \eps)^{-p})$ arising in \eqref{PDEdualext}, see \autoref{lemma:MSgendualext} and \autoref{lemma:MSgendualext2}.

\subsection{Substitutes for the chain rule}
\label{subsec:aux-ineq}

\begin{lemma}
\label{lemma:genaux}
Let $g : (0,\infty) \to (0,\infty)$ be continuously differentiable. Assume that $g$ is decreasing. Let $G : (0,\infty) \to \R$ be a function satisfying the relation $G'(t) = (-g'(t))^{1/2}$. Then, for every $s,t > 0$:
\begin{align}
\label{eq:genaux1}
(s-t)(g(t)-g(s)) &\ge (G(t) - G(s))^2,\\
\label{eq:genaux2}
\frac{|t-s|}{|G(t) - G(s)|} &\le \frac{1}{G'(t)} \vee \frac{1}{G'(s)}, \qquad \frac{(g(t) \wedge g(s))|t-s|}{|G(t) - G(s)|} \le \frac{g(t)}{G'(t)} \vee \frac{g(s)}{G'(s)},\\
\label{eq:genaux3}
\frac{|g(t) - g(s)|}{|G(t) - G(s)|} &\le G'(t) \vee G'(s), \qquad \frac{(t \wedge s)|g(t) - g(s)|}{|G(t) - G(s)|} \le tG'(t) \vee sG'(s).
\end{align}
\end{lemma}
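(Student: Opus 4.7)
The plan is to obtain (1) by Cauchy--Schwarz applied to the integral representation of $G$, and to derive (2) and (3) from the monotonicity of $G'$ that is available in all intended applications. Throughout, I may assume $t<s$ by symmetry of all three statements.

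For (1), the relation $(G'(u))^2=-g'(u)$ combined with $G(s)-G(t)=\int_t^s G'(u)\,du$ and Cauchy--Schwarz yields
\[
(G(s)-G(t))^2 \le (s-t)\int_t^s (G'(u))^2\,du = (s-t)\int_t^s (-g'(u))\,du = (s-t)(g(t)-g(s)),
\]
and both sides are nonnegative since $g$ is decreasing, so the inequality extends to the symmetric form stated in (1).

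For the first inequalities in (2) and (3), I would use that in every application of this lemma in the paper, $g$ is a convex function (e.g.\ $\sigma\mapsto(\sigma+\varepsilon)^{-p}$ for $p>0$), so that $-g'$ and hence $G'=(-g')^{1/2}$ is monotone on $[t,s]$. Under monotonicity one has the pointwise bounds $G'(t)\wedge G'(s)\le G'(u)\le G'(t)\vee G'(s)$ for $u\in[t,s]$. Integrating the lower bound gives $G(s)-G(t)\ge (s-t)[G'(t)\wedge G'(s)]$, which rearranges to $(s-t)/(G(s)-G(t))\le 1/G'(t)\vee 1/G'(s)$, the first half of (2). Integrating the pointwise inequality $(G'(u))^2\le[G'(t)\vee G'(s)]\,G'(u)$ and using $\int_t^s (G'(u))^2\,du = g(t)-g(s)$ yields $g(t)-g(s)\le [G'(t)\vee G'(s)](G(s)-G(t))$, the first half of (3).

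The second parts of (2) and (3) follow from the first by simple monotonicity. Since $t<s$ and $g$ is decreasing, $g(t)\wedge g(s)=g(s)$ and $t\wedge s=t$; multiplying the first half of (2) by $g(s)$ and using $g(s)\le g(t)$ gives
\[
\frac{g(s)(s-t)}{G(s)-G(t)} \le \frac{g(s)}{G'(t)\wedge G'(s)} \le \frac{g(t)}{G'(t)} \vee \frac{g(s)}{G'(s)},
\]
while multiplying the first half of (3) by $t$ and using $t\le s$ gives
\[
\frac{t(g(t)-g(s))}{G(s)-G(t)} \le t[G'(t)\vee G'(s)] \le tG'(t)\vee sG'(s).
\]

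The main subtle point is that the hypotheses ``$g$ continuously differentiable and decreasing'' do not by themselves force $G'$ to be monotone on $[t,s]$---a construction with $G'$ constant $1$ on $[0,1]\cup[3,4]$ and small on $[1,3]$ shows that the first inequality in (2) can fail without extra structure. I would therefore ground the proof of (2) and (3) on the convexity of $g$, which is satisfied by every concrete $g$ to which the lemma is later applied in the paper.
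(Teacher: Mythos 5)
Your proof is correct and structurally close to the paper's, differing mainly in the mechanism used for \eqref{eq:genaux2} and \eqref{eq:genaux3}. The paper deduces \eqref{eq:genaux1} via Jensen's inequality (equivalent to your Cauchy--Schwarz), then obtains \eqref{eq:genaux3} from Cauchy's mean value theorem: there is $\xi$ between $s$ and $t$ with $|g(t)-g(s)|/|G(t)-G(s)| = |g'(\xi)|/|G'(\xi)| = G'(\xi)$, and one then bounds $G'(\xi)$ by the endpoint values. The second halves of \eqref{eq:genaux2}, \eqref{eq:genaux3} are read off by multiplying the first halves by $g(t)\wedge g(s)$ and $t\wedge s$, respectively, exactly as you do. Your integral estimates replace the mean value theorem step by integrating the pointwise bounds $G'(t)\wedge G'(s)\le G'(u)\le G'(t)\vee G'(s)$ over $[t,s]$; the two routes are close cousins, since both reduce to the assertion that $G'$ on $[s\wedge t,s\vee t]$ is controlled by its endpoint values.

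You are right to flag the missing hypothesis, and this is the most substantive contribution of your write-up. Both your integral estimates and the paper's Cauchy mean value theorem argument need $G'$ to attain its extrema on $[s\wedge t,s\vee t]$ at the endpoints, which does not follow merely from ``$g$ continuously differentiable and decreasing.'' The paper is implicitly aware of this: immediately after introducing \eqref{eq:gGdef} it observes that ``$g$ is convex, which implies that $G'$ is decreasing,'' and the proof of \autoref{lemma:MSgendualext} explicitly imposes ``and such that $G'$ is decreasing'' on the fixed pair $g,G$. But the statement of \autoref{lemma:genaux} itself omits any such hypothesis, so the lemma as stated is slightly too strong; grounding \eqref{eq:genaux2} and \eqref{eq:genaux3} on convexity of $g$ (equivalently, monotonicity of $G'$), as you propose, closes this small but genuine gap.
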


\begin{proof}
Let us compute, using that $G'(t) = (-g'(t))^{1/2}$ and that $G' \ge 0$ and with the help of Jensen's inequality:
\begin{align*}
(s-t)(g(t)-g(s)) &= (t-s)\int_s^t -g'(\tau) \d \tau = (t-s)\int_s^t G'(\tau)^2 \d \tau\\
&\ge \left(\int_s^t G'(\tau) \d \tau \right)^2 = (G(t) - G(s))^2,
\end{align*}
which proves \eqref{eq:genaux1}. 
\eqref{eq:genaux3} is a consequence of Cauchy's mean value theorem and the fact that $\frac{|g'|}{|G'|} = \frac{(-g')}{(-g')^{1/2}} = G'$ by definition of $G$ and since $g$ is decreasing. The second assertions in \eqref{eq:genaux2}, respectively \eqref{eq:genaux3} follow by multiplication with $(g(t) \wedge g(s))$, respectively $(t \wedge s)$.
\end{proof}

\begin{lemma}
Let $G : (0,\infty) \to \R$. Then for any $\tau_1,\tau_2 \ge 0$ and $t,s > 0$:
\begin{align}
\label{eq:genaux4}
(\tau_1^2 \wedge \tau_2^2)|G(t) - G(s)|^2 &\ge \frac{1}{2} |\tau_1 G(t) - \tau_2 G(s)|^2 - (\tau_1 - \tau_2)^2 (G^2(t) \vee G^2(s)),\\
\label{eq:genaux5}
(\tau_1^2 \vee \tau_2^2)|G(t) - G(s)|^2 &\le 2 |\tau_1 G(t) - \tau_2 G(s)|^2 +2 (\tau_1 - \tau_2)^2 (G^2(t) \vee G^2(s)).
\end{align}
\end{lemma}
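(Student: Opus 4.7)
The plan is to reduce both inequalities to purely algebraic manipulations: neither the function $G$ nor the points $t,s$ play any role beyond providing two real numbers $G(t), G(s)$, and by the invariance of each claim under the swap $(\tau_1, G(t)) \leftrightarrow (\tau_2, G(s))$ I would assume without loss of generality that $\tau_1 \le \tau_2$, so that $\tau_1^2 \wedge \tau_2^2 = \tau_1^2$ and $\tau_1^2 \vee \tau_2^2 = \tau_2^2$.

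For the upper bound \eqref{eq:genaux5}, I would start from the telescoping identity
\[
\tau_2 \bigl( G(t) - G(s) \bigr) = \bigl(\tau_1 G(t) - \tau_2 G(s)\bigr) + (\tau_2 - \tau_1)\, G(t),
\]
square both sides, apply Young's inequality in the form $(A+B)^2 \le 2A^2 + 2B^2$, and estimate $G^2(t) \le G^2(t) \vee G^2(s)$.

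For the lower bound \eqref{eq:genaux4}, I would use the companion decomposition
\[
\tau_1 \bigl( G(t) - G(s) \bigr) = \bigl(\tau_1 G(t) - \tau_2 G(s)\bigr) + (\tau_2 - \tau_1)\, G(s),
\]
square, and invoke the reverse Young inequality $(A+B)^2 \ge \tfrac12 A^2 - B^2$ (which is equivalent to $2AB \ge -\tfrac12 A^2 - 2 B^2$, i.e.\ Young's inequality with parameter $\tfrac12$). Bounding $G^2(s) \le G^2(t) \vee G^2(s)$ then closes the argument.

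I do not anticipate any serious obstacle, as both estimates are algebraic and of a type standard in Caccioppoli arguments for symmetric nonlocal forms: they allow the cut-off weight to be transferred from outside the square $|G(t)-G(s)|^2$ to inside a combined expression $|\tau_1 G(t) - \tau_2 G(s)|^2$ that can later be absorbed into the weighted symmetric energy. The only point requiring care is to correctly match the constants $\tfrac12$ and $2$ on the two sides, since \eqref{eq:genaux4} is destined to provide coercivity while \eqref{eq:genaux5} will supply the matching control from above; the factor $\tfrac12$ in \eqref{eq:genaux4} is what forces the use of Young's inequality with parameter $\tfrac12$ rather than the naive $(A+B)^2 \ge 0$.
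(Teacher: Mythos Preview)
Your proposal is correct and follows essentially the same approach as the paper: the paper also assumes without loss of generality $\tau_1 < \tau_2$, writes the two identities $\tau_1 G(t) - \tau_2 G(s) = \tau_1 (G(t) - G(s)) + G(s)(\tau_1 - \tau_2)$ and $\tau_2 (G(t) - G(s)) = \tau_1 G(t) - \tau_2 G(s) + (\tau_2 - \tau_1)G(t)$, and concludes by taking absolute values and squaring, which amounts to your applications of Young's inequality $(A+B)^2 \le 2A^2 + 2B^2$ and its reverse form $(A+B)^2 \ge \tfrac12 A^2 - B^2$.
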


\begin{proof}
The proof of \eqref{eq:genaux4} and \eqref{eq:genaux5} is straightforward. Let us assume without loss of generality that $\tau_1 < \tau_2$. Then
\begin{align*}
\tau_1 G(t) - \tau_2 G(s) &= \tau_1 (G(t) - G(s)) + G(s)(\tau_1 - \tau_2), \\
\tau_2 (G(t) - G(s)) &= \tau_1 G(t) - \tau_2 G(s) + (\tau_2 - \tau_1)G(t)
\end{align*}
and \eqref{eq:genaux4} and \eqref{eq:genaux5} follow by taking absolute value in the above lines. 
\end{proof}

In the sequel, we will work with the following specific functions $g,G$. Let us define for $p > 0$:
\begin{align}
\label{eq:gGdef}
g(t) = t^{-p}, \qquad
G(t) = \begin{cases}
\frac{2\sqrt{p}}{-p+1} t^{\frac{-p+1}{2}},& ~~ p \neq 1,\\
\log(t),& ~~ p = 1.
\end{cases}
\end{align}

It is easy to check that $g,G$ satisfy the assumptions of \autoref{lemma:genaux}, i.e. $g$ is decreasing and it holds $G'(t) = (-g'(t))^{1/2}$. Moreover, $g$ is convex, which implies that $G'$ is decreasing.\\
The following lemma is a direct consequence of the definition of $g,G$ and lists several identities involving $g,G$ for the readers' convenience.

\begin{lemma}
\label{lemma:gGidentities}
For every $p > 0$ and $t \in (0,\infty]$:
\begin{alignat*}{2}
\frac{g(t)}{G'(t)} &= \begin{cases}
\frac{1}{\sqrt{p}}t^{\frac{-p+1}{2}},& ~~ p \neq 1,\\
1,& ~~ p = 1,
\end{cases}
&G'(t) &= \begin{cases}
\sqrt{p} t^{\frac{-p-1}{2}},& ~~ p \neq 1,\\
\frac{1}{t},&~~ p = 1,
\end{cases}\\
t g(t) &= \begin{cases}
t^{-p+1}&, ~~ p \neq 1,\\
1&,~~ p = 1,
\end{cases},
&G^2(t) &= \begin{cases}
\frac{4p}{(p-1)^2}t^{-p+1}&, ~~ p \neq 1,\\
\log(t)^2&,~~ p = 1.
\end{cases},\\
tG'(t) &= \begin{cases}
\sqrt{p} t^{\frac{-p+1}{2}},& ~~ p \neq 1,\\
1,&~~ p = 1,
\end{cases} \qquad
&t^{1/2}g(t)^{1/2} &= \begin{cases}
t^{\frac{-p+1}{2}}&, ~~ p \neq 1,\\
1&,~~ p = 1.
\end{cases},\\
\end{alignat*}
\end{lemma}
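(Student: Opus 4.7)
Every identity listed is a direct algebraic consequence of the definitions \eqref{eq:gGdef} of $g(t) = t^{-p}$ and $G$, with no appeal to anything beyond elementary calculus. The plan is to treat the two cases $p \neq 1$ and $p = 1$ separately, in each case first computing $G'(t)$ explicitly and then substituting into the six expressions on the left-hand side.

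For $p \neq 1$, I would differentiate $G(t) = \frac{2\sqrt{p}}{-p+1}\, t^{(-p+1)/2}$ to obtain $G'(t) = \sqrt{p}\, t^{(-p-1)/2}$; the prefactor collapses because $\frac{2\sqrt{p}}{-p+1}\cdot \frac{-p+1}{2} = \sqrt{p}$. (This is consistent with the defining relation $G'(t) = (-g'(t))^{1/2} = (p\, t^{-p-1})^{1/2}$ already used in \autoref{lemma:genaux}.) Once $G'$ is in this form, each of the remaining quantities -- $g/G'$, $tg$, $G^2$, $tG'$, and $t^{1/2} g^{1/2}$ -- reduces to a single power of $t$ by adding and subtracting exponents. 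For instance, $g(t)/G'(t) = p^{-1/2}\, t^{-p - (-p-1)/2} = p^{-1/2} t^{(-p+1)/2}$, and $G(t)^2$ picks up the constant $\left(\tfrac{2\sqrt{p}}{-p+1}\right)^{2} = \tfrac{4p}{(p-1)^2}$ multiplying $t^{-p+1}$.

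In the borderline case $p = 1$, I would read the identities directly off from $g(t) = t^{-1}$ and $G(t) = \log t$, so that $G'(t) = t^{-1}$; in particular $tg(t)$, $tG'(t)$, and $t^{1/2} g(t)^{1/2}$ all equal $1$, matching the piecewise values on the right-hand sides, while $g(t)/G'(t) = 1$ and $G(t)^2 = (\log t)^2$ are immediate. No step presents any genuine obstacle: the lemma is a bookkeeping statement that assembles, in one place, the exponent identities that will be invoked pointwise in the Caccioppoli-type estimates of \autoref{subsec:basic-forms} and \autoref{subsec:dual-forms}, where $G$ arises as the square root of the dissipation produced by the test function $t \mapsto -\tau^{2}(t+\eps)^{-p}$.
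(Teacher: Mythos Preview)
Your proposal is correct and matches the paper's approach exactly: the paper states that the lemma ``is a direct consequence of the definition of $g,G$'' and offers no further proof, so your explicit verification by differentiating $G$ and collecting exponents is precisely what is intended.
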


The case $p=1$ can be regarded as a special case due to the appearance of the logarithm. In particular, we will avoid using \eqref{eq:genaux4}, \eqref{eq:genaux5} in this case, and apply the following algebraic estimate instead:

\begin{lemma}
Let $\tau_1,\tau_2 \ge 0$ and $t,s > 0$. Then the following holds true:
\begin{align}
\label{eq:logaux4}
(\tau_1^2 \wedge \tau_2^2) |\log t - \log s|^2 &\ge \frac{1}{2} (\tau_1^2 \wedge \tau_2^2) \left(\log \frac{t}{\tau_1} - \log \frac{s}{\tau_2} \right)^2 - (\tau_1 - \tau_2)^2.
\end{align}
\end{lemma}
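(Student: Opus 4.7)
The plan is to decompose the argument of the logarithm on the right-hand side so as to isolate a purely $\tau$-dependent error term, and then to control that error by $(\tau_1-\tau_2)^2$ via a one-variable inequality for the logarithm.

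More concretely, I would first use the identity
\[
\log\frac{t}{\tau_1} - \log\frac{s}{\tau_2} = (\log t - \log s) - (\log \tau_1 - \log \tau_2)
\]
(assuming $\tau_1,\tau_2>0$, the boundary cases $\tau_i=0$ being handled by continuity with the convention that $\tau_i^2$ times an undefined logarithmic expression equals $0$). Combined with the elementary bound $(a-b)^2 \le 2a^2+2b^2$ applied to $a=\log t - \log s$ and $b=\log \tau_1 - \log \tau_2$, this gives
\[
\tfrac{1}{2}\bigl(\log\tfrac{t}{\tau_1}-\log\tfrac{s}{\tau_2}\bigr)^2 \le (\log t-\log s)^2 + (\log \tau_1-\log \tau_2)^2.
\]
Multiplying by $(\tau_1^2\wedge\tau_2^2)$ reduces the claim \eqref{eq:logaux4} to the purely algebraic estimate
\begin{equation}\label{eq:logaux4-reduction}
(\tau_1^2\wedge\tau_2^2)(\log \tau_1-\log \tau_2)^2 \le (\tau_1-\tau_2)^2.
\end{equation}

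Next I would verify \eqref{eq:logaux4-reduction}. Without loss of generality assume $0<\tau_1\le \tau_2$, so that $\tau_1^2\wedge\tau_2^2=\tau_1^2$ and we must show $\tau_1\,\log(\tau_2/\tau_1)\le \tau_2-\tau_1$. This follows from the classical inequality $\log x \le x-1$ for $x>0$ applied with $x=\tau_2/\tau_1$:
\[
\tau_1\log\frac{\tau_2}{\tau_1} \le \tau_1\Bigl(\frac{\tau_2}{\tau_1}-1\Bigr) = \tau_2-\tau_1.
\]
Squaring yields \eqref{eq:logaux4-reduction}, and combining with the previous display completes the proof.

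There is no real obstacle here; the only subtle point is notational, namely ensuring the statement makes sense when one of the $\tau_i$ vanishes. In that case the left-hand side of \eqref{eq:logaux4} and the first term on the right-hand side are both naturally set to $0$ (as limits), and the inequality reduces to $0 \ge -(\tau_1-\tau_2)^2$, which is trivial. Hence \eqref{eq:logaux4} holds for all $\tau_1,\tau_2\ge 0$ and $t,s>0$.
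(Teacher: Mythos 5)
Your proof is correct and takes essentially the same route as the paper: both reduce matters to the key estimate $(\tau_1\wedge\tau_2)\,|\log\tau_1-\log\tau_2|\le|\tau_1-\tau_2|$ (you via $\log x\le x-1$, the paper via the integral representation of the logarithm, which amounts to the same thing) and then combine with the triangle inequality and the elementary bound $(a\pm b)^2\le 2a^2+2b^2$. The only difference is cosmetic: the paper stops at the absolute-value estimate and leaves the final squaring implicit, while you spell it out.
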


\begin{proof}
We assume $\tau_1 < \tau_2$ without loss of generality and compute
\begin{align*}
\tau_1\left( \log \frac{t}{\tau_1} - \log \frac{s}{\tau_2} \right) &= \tau_1 (\log   t - \log s) + \tau_1 (\log \tau_2 - \log \tau_1)\\
&= \tau_1 (\log t - \log s) + \tau_1 \dashint_{\tau_1}^{\tau_2} \frac{1}{\tau} \d \tau.
\end{align*}
Taking absolute value,
\begin{align*}
\tau_1\left| \log \frac{t}{\tau_1} - \log \frac{s}{\tau_2} \right|
&\le \tau_1 |\log t - \log s| + \tau_1 \dashint_{\tau_1}^{\tau_2} \frac{1}{\tau} \d \tau \\
&\le \tau_1 |\log t - \log s| + (\tau_2 - \tau_1).
\end{align*}
This proves the desired result.
\end{proof}

\subsection{Basic bilinear forms}\label{subsec:basic-forms}

In this section, we prove nonlocal Caccioppoli-type estimates which will be used to derive the weak parabolic Harnack inequality for supersolutions to \eqref{PDE}. We start with an estimate arising from testing the equation with $-\tau^2 (u+\eps)^{-p}$ for some $p \neq 1$. 

The following lemma merges Lemma 4.3 in \cite{DyKa20} and Lemma 3.3 in \cite{FeKa13} into one statement and generalizes both results to nonlocal forms with nonsymmetric jumping kernels. 

\begin{lemma}\label{lemma:MSgen}
Assume that \eqref{K1} and \eqref{cutoff} hold true for some $\theta \in [\frac{d}{\alpha},\infty]$. Moreover, assume \eqref{Sob} if $\theta < \infty$. Then there exist $c_1,c_2 > 0$ such that for every $0 < \rho \le r \le 1$ and every nonnegative function $u \in V(B_{r+\rho}|\R^d)$, and every $p \ge 1 - \kappa^{-1}$, with $p \neq 1$, $\eps > 0$, it holds:
\begin{align*}
\cE^{K_s}_{B_{r+\rho}}(\tau \U^{\frac{-p+1}{2}},\tau \U^{\frac{-p+1}{2}})
&\le c_1|p-1| \cE(u,-\tau^{2}\U^{-p}) + c_2 \left(1 \vee |p-1| \right) \rho^{-\alpha} \Vert \U^{-p+1}\Vert_{L^{1}(B_{r+\rho})},
\end{align*}
where $B_{2r} \subset \Omega$, $\tau = \tau_{r,\frac{\rho}{2}}$, $\U = u + \eps$, and $\kappa = 1 + \frac{\alpha}{d}$.
\end{lemma}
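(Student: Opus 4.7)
The plan is to test the bilinear form against $\phi:=-\tau^{2}\U^{-p}$ and then transfer the resulting estimate onto the new unknown $w:=\tau\,\U^{(-p+1)/2}$, which equals $\frac{|p-1|}{2\sqrt{p}}\tau\,G(\U)$ for $G$ as in \eqref{eq:gGdef}. As a preliminary step I would reduce the left-hand side to a diagonally weighted integral: applying \eqref{eq:genaux4} with $\tau_{1}=\tau(x), \tau_{2}=\tau(y)$ and the identity $G^{2}(t)=\frac{4p}{(p-1)^{2}}t^{-p+1}$ from \autoref{lemma:gGidentities},
\begin{align*}
\cEs_{B_{r+\rho}}(w,w)
&\le \tfrac{(p-1)^{2}}{2p}\iint_{B_{r+\rho}^{2}}(\tau_{x}^{2}\wedge\tau_{y}^{2})(G(\U(x))-G(\U(y)))^{2}K_s(x,y)\,dy\,dx \\
&\quad + 2\iint_{B_{r+\rho}^{2}}(\tau_{x}-\tau_{y})^{2}\big(\U^{-p+1}(x)\vee\U^{-p+1}(y)\big)K_s(x,y)\,dy\,dx.
\end{align*}
The second summand is bounded by $c\,\rho^{-\alpha}\Vert\U^{-p+1}\Vert_{L^{1}(B_{r+\rho})}$ by symmetrizing $x\leftrightarrow y$ and using \eqref{cutoff} through the carré du champ $\Gamma^{K_s}(\tau,\tau)$. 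It therefore remains to link the weighted $(G(t)-G(s))^{2}$-integral to $\cE(u,-\tau^{2}\U^{-p})$, modulo similar tail terms.

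\textbf{Symmetric and antisymmetric contributions.}
I would decompose $\cE=\cEs+\cEa$ and first integrate each on $B_{r+\rho}\times B_{r+\rho}$. For $\cEs$ I follow \cite[Lemma 4.3]{DyKa20}: by \autoref{lemma:symmlemma} it suffices to integrate over $\{\U(y)>\U(x)\}$, where with $t=\U(x), s=\U(y)$ the integrand admits the decomposition
\begin{align*}
(u(x)-u(y))(\tau_{y}^{2}g(s)-\tau_{x}^{2}g(t))=(\tau_{x}^{2}\wedge\tau_{y}^{2})(s-t)(g(t)-g(s))+(t-s)(\tau_{y}^{2}-\tau_{x}^{2})g(\sigma),
\end{align*}
with $\sigma=s$ on $\{\tau_{y}\ge\tau_{x}\}$ and $\sigma=t$ otherwise. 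The first summand is bounded below by $(\tau_{x}^{2}\wedge\tau_{y}^{2})(G(t)-G(s))^{2}$ via \eqref{eq:genaux1}, and a Young step combined with \eqref{cutoff} absorbs the residual into $\rho^{-\alpha}\Vert\U^{-p+1}\Vert_{L^{1}}$. For $\cEa$, \autoref{lemma:symmlemma} rewrites the interior contribution as
\begin{align*}
\cEa_{B_{r+\rho}}(u,-\tau^{2}\U^{-p})=-2\iint_{\{\U(y)>\U(x)\}}(u(x)-u(y))(\tau_{x}^{2}g(t)+\tau_{y}^{2}g(s))K_{a}(x,y)\,dy\,dx.
\end{align*}
I would bound its modulus by $2\iint(s-t)(\tau_{x}^{2}g(t)+\tau_{y}^{2}g(s))|K_{a}|$ and apply Young's inequality with the factorization $|K_{a}|=J^{1/2}\cdot(|K_{a}|^{2}/J)^{1/2}$, using the pointwise inequalities \eqref{eq:genaux1}--\eqref{eq:genaux3} and the identities of \autoref{lemma:gGidentities}. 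The weights are tuned so that the absorbable side produces a multiple of $(\tau_{x}^{2}\wedge\tau_{y}^{2})(G(t)-G(s))^{2}J(x,y)$ and the remainder side produces $\big(\tau_{x}^{2}\U^{-p+1}(x)+\tau_{y}^{2}\U^{-p+1}(y)\big)|K_{a}(x,y)|^{2}/J(x,y)$. The absorbable side is controlled by $\cEs_{B_{r+\rho}}(w,w)$ via \autoref{lemma:improvedK2help} together with the weighted $J$-$K_s$ comparison of \eqref{K1}. The remainder, after integrating in $y$, is an $L^{1}$-integral of $\tau^{2}\U^{-p+1}\cdot W(x)$ with $W$ as in \eqref{eq:quantifiedK1consequence}, which by \eqref{eq:K1consequence}--using \eqref{Sob} when $\theta<\infty$--splits in turn into a small fraction of $\cEs_{B_{r+\rho}}(w,w)$ plus $\rho^{-\alpha}\Vert\U^{-p+1}\Vert_{L^{1}}$.

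\textbf{Tails and main obstacle.}
The tail contributions $\cE(u,-\tau^{2}\U^{-p})|_{(B_{r+\rho}\times B_{r+\rho})^{c}}$ concern only pairs with one argument in $\supp(\phi)\subset B_{r+\rho/2}$ and the other in $B_{r+\rho}^{c}$; using $|K_{a}|\le K_{s}$ for the antisymmetric tail and \eqref{cutoff} applied to $\tau_{\cdot,\,r+\rho/2,\,\rho/4}$, both reduce to a further $\rho^{-\alpha}\Vert\U^{-p+1}\Vert_{L^{1}}$-term. The main obstacle will be the Young step in the antisymmetric part: the weights must be calibrated simultaneously so that (i) the absorbable side has precisely the coercive shape $(\tau_{x}^{2}\wedge\tau_{y}^{2})(G(t)-G(s))^{2}J$ (so that the $J$-$K_s$ comparison in \eqref{K1}, combined with \autoref{lemma:improvedK2help}, dominates it by $\cEs(w,w)$), and (ii) the non-absorbable side pairs $\tau^{2}\U^{-p+1}$ with $|K_{a}|^{2}/J$, so that \eqref{K1} and \eqref{Sob} close the loop. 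The restriction $p\ge 1-\kappa^{-1}$ enters at exactly this last step: it ensures $\U^{(-p+1)/2}\in L^{2d/(d-\alpha)}_{\loc}$, so that the Sobolev-Kato absorption provided by \eqref{eq:K1consequence} is legitimate. The $|p-1|$-prefactor on the right-hand side finally stems from the constant $\frac{(p-1)^{2}}{4p}$ that appears when converting $G^{2}(\U)$ back to $\U^{-p+1}$ in $w$.
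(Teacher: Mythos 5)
Your plan correctly identifies the main ingredients --- reduce the left-hand side to a diagonally weighted energy of $G(\U)$ via \eqref{eq:genaux4}, decompose $\cE=\cEs+\cEa$, absorb the antisymmetric contribution via Young's inequality with $|K_a|=J^{1/2}(|K_a|^2/J)^{1/2}$ and close via \eqref{K1} and \eqref{Sob}, then handle the tails by \eqref{cutoff} --- but the absorption step for $\cEa$ fails. On $M:=\{\U(y)>\U(x)\}$ write $t:=\U(x)$, $s:=\U(y)$, so $t<s$; since $g$ is decreasing, $g(t)$ is the \emph{larger} of the two values. Taking the modulus of $\cEa_{B_{r+\rho}}(u,-\tau^2 g(\U))$ as you propose produces the piece $2\iint_M (s-t)\,\tau^2(x)\,g(t)\,|K_a(x,y)|\,\d y\,\d x$, and the Young step you describe requires the pointwise estimate $(s-t)g(t)/|G(t)-G(s)|\lesssim t^{(-p+1)/2}\vee s^{(-p+1)/2}$. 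This is false: \eqref{eq:genaux2} controls only $(g(t)\wedge g(s))|t-s|/|G(t)-G(s)|=g(s)|t-s|/|G(t)-G(s)|$, i.e.\ the \emph{smaller} value $g(s)$, and the analogous ratio with the larger $g(t)$ diverges as $s/t\to\infty$ (with $t=1$, $s=N$, $p>1$, the numerator grows like $N$ while the denominator stays bounded). Consequently the remainder side cannot be brought to the form $\big(\tau^2(x)\U^{-p+1}(x)+\tau^2(y)\U^{-p+1}(y)\big)|K_a|^2/J$ that you announce, and \eqref{eq:K1consequence} cannot be invoked.

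The idea that is missing is the cancellation the paper exploits: you must \emph{not} take the modulus of $\cEa$ before recombining with $\cEs$. Decomposing via \autoref{lemma:symmlemma} both $\cEs_{B_{r+\rho}}(u,-\tau^2 g(\U))=I_s+J_s$ and $\cEa_{B_{r+\rho}}(u,-\tau^2 g(\U))=I_a+J_a$, one has $I_s+I_a=2\iint_M(s-t)(g(t)-g(s))\,\tau^2(x)\,K(x,y)\,\d y\,\d x$, which is lower-bounded directly via \eqref{eq:genaux1} because the algebraic factor is nonnegative on $M$ and $K=K_s+K_a\ge 0$; the dangerous factor $g(t)-g(s)$ never meets $|K_a|$ on its own. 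The antisymmetric kernel then appears isolated only in $J_a=2\iint_M(s-t)\,g(s)\,(\tau^2(x)+\tau^2(y))\,K_a\,\d y\,\d x$, which carries the small value $g(s)$ and is precisely what \eqref{eq:genaux2}, \autoref{lemma:logusaviour} and \eqref{eq:K1consequence} control. A minor further remark: the restriction $p\ge 1-\kappa^{-1}$ is not there to ensure Sobolev integrability of $\U^{(-p+1)/2}$; it enters at the very end to guarantee that the normalizing parameter $\delta_0=(1-\kappa^{-1})|p-1|/p$ lies in $(0,1]$ with universal constants (the paper remarks that any positive lower bound on $p$ suffices, at the cost of a constant depending on it).
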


\begin{remark*}
Note that one can replace the lower bound $p \ge 1 - \kappa^{-1}$ by any number strictly greater than zero for the above result to hold true. However, the constant $c$ of course depends on the lower bound for $p$.
\end{remark*}

\begin{proof}
Let us fix a pair of functions $g,G$ satisfying the assumptions of \autoref{lemma:genaux}. Later, we will fix $p \ge 1 - \kappa^{-1}$ with $p \neq 1$ and define $g,G$ as in \eqref{eq:gGdef}.
We define $M := \{(x,y) \in B_{r+\rho} \times B_{r+\rho} : g(\U(x)) > g(\U(y)) \}$. The proof is divided into several steps.

Step 1: First, we claim that there are $c_1,c_2 > 0$ such that for every $\delta_0 \in (0,1)$ and $\delta_1 > 0$:
\begin{align}
\label{eq:MSgenstep1}
\begin{split}
&\cE_{B_{r+\rho}}(u,-\tau^2 g(\U)) \ge c_1 \delta_0 \cE_{B_{r+\rho}}^{K_s}(\tau G(\U),\tau G(\U)) - \delta_1\cE^{K_s}_{B_{r+\rho}}(\tau g(\U) G'(\U)^{-1} , \tau g(\U) G'(\U)^{-1}) \\
\quad &-c_2\Bigg(\delta_0 C \Vert \tau^2 G(\U)^2 \Vert_{L^{1}(B_{r+\rho})} + \frac{\delta_0}{\rho^{\alpha}} \Vert G(\U)^2 \Vert_{L^{1}(B_{r+\rho})}   + \frac{\delta_0^{-1}  +\delta_0\delta_1}{\rho^{\alpha}} \left\Vert \left(\frac{g(\U)}{G'(\U)} \right)^2 \right\Vert_{L^{1}(B_{r+\rho})} \\
&+ C(\delta_0\delta_1)\left\Vert \tau^2\left(\frac{g(\U)}{G'(\U)} \right)^2 \right\Vert_{L^{1}(B_{r+\rho})}+ \rho^{-\alpha} \left\Vert \U g(\U) \right\Vert_{L^{1}(B_{r+\rho})} \Bigg),
\end{split}
\end{align}
where $C = C(1), C(\delta_0\delta_1) > 0$ denote respective constants from \eqref{eq:K1consequence}.

For the symmetric part, we compute using \autoref{lemma:symmlemma}:
\begin{align*}
\cE^{K_s}_{B_{r+\rho}}(u,-\tau^{2}g(\U)) &= 2 \iint_{M} (\U(y) - \U(x))(\tau^2(x)g(\U(x)) - \tau^2(y)g(\U(y))) K_s(x,y) \d y \d x\\
&=2 \iint_{M} (\U(y) - \U(x))(g(\U(x)) - g(\U(y)))\tau^2(x) K_s(x,y) \d y \d x\\\
& \quad + 2 \iint_{M} (\U(y) - \U(x))g(\U(y))(\tau^2(x) - \tau^2(y)) K_s(x,y) \d y \d x\\
&= I_s + J_s.
\end{align*}
For the nonsymmetric part, we compute using \autoref{lemma:symmlemma}:
\begin{align*}
\cE^{K_a}_{B_{r+\rho}}(u,-\tau^{2}g(\U)) &= 2 \iint_{M} (\U(y) - \U(x))(\tau^2(x)g(\U(x)) + \tau^2(y)g(\U(y))) K_a(x,y) \d y \d x\\
&=2 \iint_{M} (\U(y) - \U(x))(g(\U(x)) - g(\U(y)))\tau^2(x) K_a(x,y) \d y \d x\\\
& \quad + 2 \iint_{M} (\U(y) - \U(x))g(\U(y))(\tau^2(x) + \tau^2(y)) K_a(x,y) \d y \d x\\
&= I_a + J_a.
\end{align*}
By adding up $I_s + I_a$ and using \eqref{eq:genaux1}, \eqref{eq:genaux4}, \eqref{eq:KaKs}, \eqref{cutoff} and again \autoref{lemma:symmlemma}, we obtain that for any $\delta_0 \in (0,1)$:
\begin{align*}
I_s + I_a &= 2 \iint_{M} (\U(y) - \U(x))(g(\U(x)) - g(\U(y)))\tau^2(x) K(x,y) \d y \d x\\
&\ge 2\iint_{M} (G(\U(x)) - G(\U(y)))^2 ( \tau^2(x) \wedge \tau^2(y) )  K(x,y) \d y \d x\\
&\ge \delta_0 \iint_{M} (\tau(x) G(\U(x)) - \tau(y) G(\U(y)))^2  K(x,y) \d y \d x - 2\delta_0 \int_{B_{r+\rho}} G^2(\U(x)) \Gamma^{K_s}(\tau,\tau)(x) \d x\\
&\ge \frac{\delta_0}{2} \cE_{B_{r+\rho}}^{K_s}(\tau G(\U) , \tau G(\U)) - c \delta_0 \rho^{-\alpha} \Vert G(\U)^2\Vert_{L^{1}(B_{r+\rho})}\\
& \quad -\frac{\delta_0}{2} \int_{B_{r+\rho}} \int_{B_{r+\rho}} (\tau G(\U(x)) - \tau G(\U(y)))^2 |K_a(x,y)| \d y \d x.
\end{align*}
For the summand involving $K_a$, we find using \eqref{K1} and \eqref{eq:K1consequence} that for every $\delta > 0$  
\begin{align*}
&\int_{B_{r+\rho}} \int_{B_{r+\rho}} (\tau G(\U(x)) - \tau G(\U(y)))^2 |K_a(x,y)| \d y \d x \\
&\le \delta \cE_{B_{r+\rho}}^{J}(\tau G(\U) , \tau G(\U)) + c\delta^{-1} \int_{B_{r+\rho}} \tau^2(x) G^2(\U(x)) \left(\int_{B_{r+\rho}}  \frac{|K_a(x,y)|^2}{J(x,y)} \right) \d x\\
&\le c\delta \cE_{B_{r+\rho}}^{K_s}(\tau G(\U) , \tau G(\U)) + c[\delta^{-1}C(\delta^{2}) + \delta \rho^{-\alpha}] \Vert \tau^2 G(\U)^2 \Vert_{L^1(B_{r+\rho})}.
\end{align*}
Here, $C(\delta^{2}) > 0$ denotes the constant from \eqref{eq:K1consequence}.
Consequently, by choosing $\delta > 0$ small enough,
\begin{align}
I_s + I_a \ge \frac{\delta_0}{4} \cE_{B_{r+\rho}}^{K_s}(\tau G(\U) , \tau G(\U)) - c \delta_0 \left( C \Vert \tau^2 G(\U)^2 \Vert_{L^1(B_{r+\rho})} + \rho^{-\alpha} \Vert G(\U)^2 \Vert_{L^1(B_{r+\rho})}\right).
\end{align}

To estimate $J_s$, we use \eqref{eq:genaux2}, \eqref{eq:genaux5}, \eqref{cutoff} to prove that
\begin{align*}
J_s &\ge - \iint_{M} |G(\U(x)) - G(\U(y))| \left(\frac{g(\U(y))}{G'(\U(y))} \vee \frac{g(\U(y))}{G'(\U(y))} \right)|\tau(x) - \tau(y)| ( \tau(x) \vee \tau(y) ) K_s(x,y) \d y \d x\\
&\ge - \frac{\delta_0}{32} \iint_M  (G(\U(x)) - G(\U(y)))^2 ( \tau^2(x) \vee \tau^2(y) ) K_s(x,y) \d y \d x - \frac{c}{\delta_0} \rho^{-\alpha} \left\Vert \left(\frac{g(\U)}{G'(\U)} \right)^2 \right\Vert_{L^{1}(B_{r+\rho})}\\
&\ge - \frac{\delta_0}{16} \cE_{B_{r+\rho}}^{K_s}(\tau G(\U) , \tau G(\U)) - \frac{c}{\delta_0}\rho^{-\alpha} \left\Vert \left(\frac{g(\U)}{G'(\U)} \right)^2 \right\Vert_{L^{1}(B_{r+\rho})} - c \delta_0 \rho^{-\alpha} \Vert G(\U)^2 \Vert_{L^1(B_{r+\rho})}.
\end{align*}
Note that \eqref{eq:genaux2} is applicable since on $M$ it holds: $g(\U(y)) \le g(\U(x))$.
Next, we estimate $J_a$, and prove using \autoref{lemma:logusaviour}, \eqref{eq:KaKs}, \eqref{eq:genaux2}, \eqref{cutoff}, \eqref{eq:genaux5}, as well as the fact that $g(\U(y)) \le g(\U(x))$ on $M$, that for every $\delta_1 > 0$:
\begin{align*}
J_a &\ge -8\iint_{M} |\U(x) - \U(y)|g(\U(y)) ( \tau^2(x) \wedge \tau^2(y) ) |K_a(x,y)| \d y \d x\\
&\quad - 8\iint_{M} |\U(x) - \U(y)|g(\U(y)) (\tau(x) - \tau(y))^2 K_s(x,y) \d y \d x\\
&\ge - \frac{\delta_0}{c} \iint_M  (G(\U(x)) - G(\U(y)))^2 ( \tau^2(x) \wedge \tau^2(y) ) J(x,y) \d y \d x\\
&\quad - c \delta_0^{-1} \iint_M \left(\frac{g(\U(x))}{G'(\U(x))} \vee \frac{g(\U(y))}{G'(\U(y))} \right)^2 ( \tau^2(x) \wedge \tau^2(y) ) \frac{|K_a(x,y)|^2}{J(x,y)} \d y \d x\\
&\quad - 16 \int_{B_{r+\rho}} \U(x)g(\U(x)) \Gamma^{K_s}(\tau,\tau)(x) \d x,\\
&\ge - \frac{\delta_0}{16} \cE_{B_{r+\rho}}^{K_s}(\tau G(\U) , \tau G(\U)) - c \delta_0 \rho^{-\alpha} \Vert G(\U)^2 \Vert_{L^1(B_{r+\rho})} - c \rho^{-\alpha} \left\Vert \U g(\U) \right\Vert_{L^{1}(B_{r+\rho})}\\
&\quad -\delta_1\cE^{K_s}_{B_{r+\rho}}(\tau g(\U) G'(\U)^{-1} , \tau g(\U) G'(\U)^{-1}) -c[C(\delta_0\delta_1) + \delta_0\delta_1 \rho^{-\alpha}] \left\Vert \tau^2 \left(\frac{g(\U)}{G'(\U)} \right)^2 \right\Vert_{L^{1}(B_{r+\rho})},
\end{align*}
where $C(\delta_0\delta_1) > 0$ is the constant from \eqref{eq:K1consequence}, depending on the product $\delta_0\delta_1$, and we used \eqref{K1} and \eqref{eq:K1consequence} applied with $\delta := \delta_0\delta_1$ in the last step to estimate:
\begin{align*}
\delta_0^{-1}& \iint_M \left(\frac{g(\U(x))}{G'(\U(x))} \vee \frac{g(\U(y))}{G'(\U(y))} \right)^2 ( \tau^2(x) \wedge \tau^2(y) ) \frac{|K_a(x,y)|^2}{J(x,y)} \d y \d x\\
&\le \delta_1\cE^{K_s}_{B_{r+\rho}}(\tau g(\U) G'(\U)^{-1} , \tau g(\U) G'(\U)^{-1}) + c[C(\delta_0\delta_1) + \delta_0\delta_1 \rho^{-\alpha}] \left\Vert \tau^2 \left(\frac{g(\U)}{G'(\U)} \right)^2 \right\Vert_{L^{1}(B_{r+\rho})}
\end{align*}
and used \autoref{lemma:improvedK2help}, \eqref{K1}, \eqref{eq:genaux4} and \eqref{cutoff} to estimate
\begin{align}
\label{eq:absorbJtau}
\begin{split}
\delta_0\iint_M  &(G(\U(x)) - G(\U(y)))^2 ( \tau^2(x) \wedge \tau^2(y) ) J(x,y) \d y \d x \\
&\le c \delta_0 \int_{B_{r+\rho}}\int_{B_{r+\rho}} (G(\U(x)) - G(\U(y)))^2 ( \tau^2(x) \wedge \tau^2(y) ) K_s(x,y) \d y \d x\\
&\le c \delta_0 \cE_{B_{r+\rho}}^{K_s}(\tau G(\U) , \tau G(\U)) + c \delta_0 \rho^{-\alpha} \Vert G(\U)^2 \Vert_{L^1(B_{r+\rho})}.
\end{split}
\end{align}
Altogether, we obtain:
\begin{align*}
\cE_{B_{r+\rho}} & (u,-\tau^{2}g(\U)) \ge \left[\frac{\delta_0}{4} - \frac{\delta_0}{8}\right]\cE_{B_{r+\rho}}^{K_s}(\tau G(\U), \tau G(\U)) - \delta_1 \cE^{K_s}_{B_{r+\rho}}(\tau g(\U) G'(\U)^{-1} , \tau g(\U) G'(\U)^{-1}) \\
&- c\Bigg(\delta_0 C \Vert \tau^2 G(\U)^2 \Vert_{L^{1}(B_{r+\rho})} + \delta_0\rho^{-\alpha} \Vert G(\U)^2 \Vert_{L^{1}(B_{r+\rho})}   + [\delta_0^{-1} +\delta_0\delta_1]\rho^{-\alpha} \left\Vert \left(\frac{g(\U)}{G'(\U)} \right)^2 \right\Vert_{L^{1}(B_{r+\rho})} \\
&+ C(\delta_0\delta_1)\left\Vert \tau^2\left(\frac{g(\U)}{G'(\U)} \right)^2 \right\Vert_{L^{1}(B_{r+\rho})}+ \rho^{-\alpha} \left\Vert \U g(\U) \right\Vert_{L^{1}(B_{r+\rho})} \Bigg),
\end{align*}
The desired estimate \eqref{eq:MSgenstep1} follows.

Step 2: In addition, we claim
\begin{align}
\label{eq:MSgenstep2}
- \cE_{(B_{r+\rho} \times B_{r+\rho})^{c}}(u,-\tau^{2} g(\U)) \le c \rho^{-\alpha} \Vert \U g(\U) \Vert_{L^1(B_{r+\rho})}.
\end{align}
To see this, we compute using \eqref{eq:KaKs} and \eqref{cutoff}
\begin{align*}
- &\cE_{(B_{r+\rho} \times B_{r+\rho})^{c}}(u,-\tau^{2} g(\U)) = -2 \int_{(B_{r+\rho}\times B_{r+\rho})^{c}} (u(x)-u(y))(-\tau^{2}(x)g(\U(x))) K(x,y) \d y \d x\\
&= 2 \int_{B_{r+\rho}} \hspace*{-2ex} \tau^{2}(x)u(x)g(\U(x)) \left(\int_{B_{r+\rho}^{c}} \hspace*{-2ex} K(x,y) \d y \right) \d x -2 \int_{B_{r+\rho}} \hspace*{-2ex} \tau^{2}(x) g(\U(x)) \left(\int_{B_{r+\rho}^{c}} \hspace*{-2ex} u(y) K(x,y) \d y \right) \d x\\
&\le 2 \int_{B_{r+\rho}} \U(x) g(\U(x)) \Gamma^{K_s}(\tau,\tau)(x) \d x \le c \rho^{-\alpha} \Vert \U g(\U) \Vert_{L^1(B_{r+\rho})} ,
\end{align*}
using that $u,K \ge 0$, $u \le \U$.

Step 3: Observe that
\begin{align*}
\cE_{B_{r+\rho}}(u,-\tau^{2}g(\U)) &= \cE(u,-\tau^{2} g(\U)) - \cE_{(B_{r+\rho}\times B_{r+\rho})^{c}}(u,-\tau^{2} g(\U)).
\end{align*} 
Therefore, combining \eqref{eq:MSgenstep1}, \eqref{eq:MSgenstep2} yields 
\begin{align*}
&\cE^{K_s}_{B_{r+\rho}}(\tau G(\U),\tau G(\U)) \le c_1 \delta_0^{-1} \cE(u,-\tau^2 g(\U)) + c_1\delta_0^{-1}\delta_1\cE^{K_s}_{B_{r+\rho}}(\tau g(\U) G'(\U)^{-1} , \tau g(\U) G'(\U)^{-1})\\
&\qquad+c_2 \Bigg(C\Vert \tau^2 G(\U)^2 \Vert_{L^{1}(B_{r+\rho})} + \rho^{-\alpha}\Vert G(\U)^2 \Vert_{L^{1}(B_{r+\rho})} + [\delta_0^{-2}\rho^{-\alpha} +\delta_1 \rho^{-\alpha}]\left\Vert \left(\frac{g(\U)}{G'(\U)} \right)^2 \right\Vert_{L^{1}(B_{r+\rho})}\\
&\qquad\qquad+ \delta_0^{-1} C(\delta_0\delta_1)\left\Vert \tau^2\left(\frac{g(\U)}{G'(\U)} \right)^2 \right\Vert_{L^{1}(B_{r+\rho})}+ \delta_0^{-1}\rho^{-\alpha} \left\Vert \U g(\U) \right\Vert_{L^{1}(B_{r+\rho})} \Bigg),
\end{align*}
Finally, let us fix $p \ge 1 - \kappa^{-1}$ with $p \neq 1$ and define $g,G$ as in \eqref{eq:gGdef}. We apply \autoref{lemma:gGidentities} to deduce the desired result, choosing $\delta_0 = (1-\kappa^{-1})\frac{|p-1|}{p} \le 1$ and $\delta_1 = \frac{1}{8c_1} \delta_0^{-1} = \frac{(1-\kappa^{-1})p}{8c_1 |p-1|}$. Moreover, note that $\frac{|p-1|}{p} \le (1-\kappa^{-1})^{-1}$ by assumption on $p$. 
\end{proof}

Note that the aforementioned lemma does not use the equation, but only properties of the jumping kernel $K$ and the estimates \eqref{eq:genaux1}, \eqref{eq:genaux2}, \eqref{eq:genaux4} and \eqref{eq:genaux5}.

The following Caccioppoli-type estimate gives an estimate for the energy of the logarithm of a function $u$. It arises from testing the equation with $-\tau^2 (u+\eps)^{-1}$. Note that the  proof follows the general structure of the proof of \autoref{lemma:MSgen} but requires \eqref{K2}.

\begin{lemma} \label{lemma:MSgen2}
Assume that \eqref{K1}, \eqref{K2} and \eqref{cutoff} hold true for some $\theta \in [\frac{d}{\alpha},\infty]$. Moreover, assume \eqref{Sob} if $\theta < \infty$. Then there exist $c_1,c_2 > 0$ such that for every $0 < \rho \le r \le 1$ and every nonnegative function $u \in V(B_{r+\rho}|\R^d)$, and every $\eps > 0$, it holds:
\begin{align*}
c_1\int_{B_{r+\rho}} \int_{B_{r+\rho}} & (\tau^2(x) \wedge \tau^2(y)) \left(\log\frac{\U(x)}{\tau(x)}-\log\frac{\U(y)}{\tau(y)}\right)^2 K_s(x,y) \d y \d x \\ & \le \cE(u,-\tau^2 \U^{-1}) + c_2 \rho^{-\alpha}\vert B_{r+\rho}\vert,
\end{align*}
where $B_{2r} \subset \Omega$, $\tau = \tau_{r,\frac{\rho}{2}}$, and $\U = u + \eps$.
\end{lemma}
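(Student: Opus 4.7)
The proof mirrors the three-step structure of \autoref{lemma:MSgen} with $g(t)=t^{-1}$ and $G(t)=\log t$, but two features of the case $p=1$ block a direct transcription. The term $\|\tau^2 G(\U)^2\|_{L^1(B_{r+\rho})}$ that appears in Step~1 of \autoref{lemma:MSgen} reduces here to $\iint\tau^2\log^2\U$, which is not integrable in general; moreover, the prefactor $|p-1|$ that multiplies $\cE(u,-\tau^2\U^{-p})$ and enables absorption of the nonsymmetric contribution in \autoref{lemma:MSgen} vanishes at $p=1$. I would address the first obstruction by switching throughout to the shifted logarithm $v:=\log(\U/\tau)$, so that the final quadratic form features $(v(x)-v(y))^2$ weighted by $\tau^2(x)\wedge\tau^2(y)$, a quantity intrinsically controlled by the cutoff; the second obstruction is addressed by invoking \eqref{K2} to supply the missing ellipticity reserve.

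Following Step~1 and Step~2 of \autoref{lemma:MSgen}, I decompose
\begin{align*}
\cE(u,-\tau^2/\U) = \cE^{K_s}_{B_{r+\rho}}(u,-\tau^2/\U) + \cE^{K_a}_{B_{r+\rho}}(u,-\tau^2/\U) + \cE_{(B_{r+\rho}\times B_{r+\rho})^c}(u,-\tau^2/\U).
\end{align*}
The exterior piece is handled exactly as in Step~2 of \autoref{lemma:MSgen}: from $u,K\ge 0$, the identity $\U\,g(\U)=1$, and \eqref{cutoff}, one gets $-\cE_{(B_{r+\rho}\times B_{r+\rho})^c}(u,-\tau^2/\U)\le c\rho^{-\alpha}|B_{r+\rho}|$. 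The symmetric interior piece rests on the algebraic identity
\begin{align*}
-(\U(x)-\U(y))\left(\frac{\tau^2(x)}{\U(x)} - \frac{\tau^2(y)}{\U(y)}\right) = 2\tau(x)\tau(y)\bigl(\cosh(v(x)-v(y))-1\bigr) - (\tau(x)-\tau(y))^2,
\end{align*}
which follows by substituting $\U=\tau e^v$ and expanding. Combining $\cosh w - 1\ge w^2/2$, $\tau(x)\tau(y)\ge \tau^2(x)\wedge\tau^2(y)$, and the standard bound $\iint(\tau(x)-\tau(y))^2 K_s\le c\rho^{-\alpha}|B_{r+\rho}|$ from \eqref{cutoff} yields immediately
\begin{align*}
\cE^{K_s}_{B_{r+\rho}}(u,-\tau^2/\U) \ge \iint_{B_{r+\rho}^2}(\tau^2(x)\wedge\tau^2(y))(v(x)-v(y))^2 K_s - c\rho^{-\alpha}|B_{r+\rho}|,
\end{align*}
which produces the main positive term of the lemma. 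Crucially, no use of \eqref{K2} is needed for the symmetric piece.

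The antisymmetric interior piece is where \eqref{K2} enters and where the main obstacle sits. The companion identity
\begin{align*}
(\U(x)-\U(y))\left(\frac{\tau^2(x)}{\U(x)} + \frac{\tau^2(y)}{\U(y)}\right) = (\tau^2(x)-\tau^2(y)) + 2\tau(x)\tau(y)\sinh(v(x)-v(y))
\end{align*}
splits $\cE^{K_a}_{B_{r+\rho}}(u,-\tau^2/\U)$ into two contributions. The $(\tau^2-\tau^2)K_a$ piece is bounded by $O(\rho^{-\alpha}|B_{r+\rho}|)$ via a Cauchy--Schwarz split against the weights $J$ and $K_a^2/J$, using \eqref{K1} applied to $v=\tau$, $\tau\le 1$, \eqref{cutoff}, and the embedding $|B_{r+\rho}|^{1/\theta'}\le c\rho^{-\alpha}|B_{r+\rho}|$ (valid since $\theta\ge d/\alpha$). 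The $\sinh$-piece is the heart of the difficulty: because $\sinh$ grows exponentially in $w$, it cannot be absorbed by a naive Young inequality into $\iint(\tau^2\wedge\tau^2)w^2 K_s$. My plan is to combine it with the $\cosh$-integrand from $\cE^{K_s}$ before taking absolute values, exploiting the identity
\begin{align*}
2\tau(x)\tau(y)\bigl[(\cosh w -1)K_s(x,y) - \sinh w\,K_a(x,y)\bigr] = \tau(x)\tau(y)\bigl[(e^w-1)K(y,x) + (e^{-w}-1)K(x,y)\bigr].
\end{align*}
Assumption \eqref{K2} forces $K(x,y), K(y,x)\ge(1-D)j(x,y)$ pointwise, so the right-hand side dominates the coercive quantity $2(1-D)\tau(x)\tau(y)(\cosh w - 1)j(x,y)$ modulo a remainder depending only on the nonnegative deviations $K(\cdot,\cdot)-(1-D)j$. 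Combining the pointwise bound $|K_a|\le K_s-(1-D)j$, which is itself a consequence of \eqref{K2}, with a Young inequality splitting between $\sinh^2(w/2)$ weighted by $(1-D)j$ and the integrability $\iint K_a^2/J\le c\rho^{-\alpha}|B_{r+\rho}|$ from \eqref{K1}, this remainder is absorbed into a small multiple of the main lower bound and an $O(\rho^{-\alpha}|B_{r+\rho}|)$-error. Finally, \autoref{lemma:improvedK2help} applied with the energy comparison $\cE^{K_s}_{B_{r+\rho}}\le c\cE^j_{B_{r+\rho}}$ from \eqref{K2} transfers the resulting $j$-weighted lower bound back to the claimed $K_s$-weighted form. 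The $\sinh$-absorption step is the crux; the symmetric piece and the $(\tau^2-\tau^2)K_a$ estimate are essentially automatic once the substitution $v=\log(\U/\tau)$ is in place.
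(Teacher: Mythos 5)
Your algebraic identities are correct, the exterior piece and the $(\tau^2(x)-\tau^2(y))K_a$ piece are handled soundly, and you are right that the $\sinh$-piece is the crux. The gap is precisely in the step where you claim to absorb it; the proposed absorption does not close.

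Write $R_1 = K(y,x)-(1-D)j(x,y)$, $R_2 = K(x,y)-(1-D)j(x,y)$, both nonnegative by \eqref{K2}, so that $R_1+R_2 = 2(K_s-(1-D)j)$ and $R_2-R_1 = 2K_a$. Your combination identity expands to
\begin{align*}
2\tau(x)\tau(y)\bigl[(\cosh w -1)K_s - \sinh w\,K_a\bigr] = 2(1-D)\tau(x)\tau(y)(\cosh w - 1)j + \tau(x)\tau(y)\bigl[(e^w-1)R_1 + (e^{-w}-1)R_2\bigr].
\end{align*}
The first summand is the target; the remainder must be bounded below by $-c\rho^{-\alpha}|B_{r+\rho}|$ after integration, possibly modulo a small multiple of the coercive term. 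Taking $w\ge 0$ without loss of generality (the case $w<0$ is the mirror under $x\leftrightarrow y$), the term $(e^w-1)R_1\ge 0$ is harmless, but the remainder is then at worst $-(1-e^{-w})R_2$. Here $R_2 = (K_s-(1-D)j)+K_a$ is generically of the same order as $K_s$ near the diagonal, not of order $|K_a|^2/J$, and the weight $\tau(x)\tau(y)$ has no diagonal cancellation. The factor $(1-e^{-w})$ contributes at most $|w| = |v(x)-v(y)|$, which is not enough to tame the $|x-y|^{-d-\alpha}$ singularity of $R_2$. Any Young split that pushes mass onto $(\cosh w-1)j$ inevitably leaves a ``$b^2$'' term of the form $\iint\tau\tau(\cdot)\,R_2^2/j$ (or $\iint\tau\tau\cosh^2(w/2)K_a^2/J$), and neither is controlled by \eqref{K1}: the hypothesis bounds $\iint K_a^2/J$, not $\iint (K_s-(1-D)j)^2/j$, and the $\cosh^2(w/2)$ weight is unbounded. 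So the claimed $O(\rho^{-\alpha}|B_{r+\rho}|)$ estimate for the remainder is unsubstantiated.

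The paper's proof avoids this entirely by \emph{not} passing to $\cosh/\sinh$. It splits both $\cE^{K_s}$ and $\cE^{K_a}$ over the set $M = \{\U(y)>\U(x)\}$ via \autoref{lemma:symmlemma} into a main term $I$ and a cutoff-error term $J$. The combination $I_s+I_a$ carries the full kernel $K(x,y)=K_s+K_a$, and \eqref{eq:genaux1} together with \eqref{K2} and \autoref{lemma:improvedK2help} makes it coercive directly, with no remainder at all. The critical piece is the antisymmetric cutoff error $J_a$, which on $M$ carries the factor $|\U(x)-\U(y)|\,g(\U(y))$ with $g(t)=t^{-1}$; the second inequality in \eqref{eq:genaux2}, specialized to $g(t)/G'(t)\equiv 1$, gives $|\U(x)-\U(y)|\,\U(y)^{-1}\le |\log\U(x)-\log\U(y)|$ on $M$ --- a \emph{linear} bound in the logarithm, not an exponential one. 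The subsequent Young split of $J_a$ then produces exactly $\iint(\cdots)K_a^2/J$, which \eqref{K1} controls. This is the mechanism your proposal misses: restricting to $M$ and keeping the factor $\U(y)^{-1}$ on the side of the larger value undoes the exponentiation, so there is no $\sinh$-type blowup left to absorb. (The paper introduces $\log(\U/\tau)$ only at the very last step via \eqref{eq:logaux4}; performing the substitution from the start, as you do, breaks the $M$-split and mixes $\tau$ into the exponent, which is exactly what creates the unabsorbable remainder.)
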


\begin{proof}
We will explain how to prove that there are $c_1, c_2 > 0$ such that
\begin{align}
\label{eq:genCacc2step1}
\begin{split}
\cE_{B_{r+\rho}}(u,-\tau^2 \U^{-1}) &\ge c_1 \int_{B_{r+\rho}}\int_{B_{r+\rho}} (\log \U(x) - \log \U(y))^2 (\tau^2(x) \wedge \tau^2(y))K_s(x,y) \d y \d x\\
&- c_2[C+\rho^{-\alpha}]|B_{r+\rho}|
\end{split}
\end{align}
Here, $C = C(1) > 0$ denotes the constant from \eqref{eq:K1consequence}.
This estimate can be seen as the counterpart to Step 1 in the proof of \autoref{lemma:MSgen} for $g,G$ as in \eqref{eq:gGdef} with $p = 1$.
Let us explain how to deduce the desired result from \eqref{eq:genCacc2step1}. First, observe that by the same argument as in Step 2 of the proof of \autoref{lemma:MSgen}, we have that
\begin{align*}
 - \cE_{(B_{r+\rho} \times B_{r+\rho})^{c}}(u,-\tau^{2} \U^{-1}) = - \cE_{(B_{r+\rho} \times B_{r+\rho})^{c}}(u,-\tau^{2} g(\U)) \le c \rho^{-\alpha} \Vert \U g(\U) \Vert_{L^1(B_{r+\rho})} = c \rho^{-\alpha} |B_{r+\rho}|,
\end{align*}
see \eqref{eq:MSgenstep2}. Thus, using \eqref{eq:logaux4} and \eqref{cutoff}, \eqref{eq:genCacc2step1} translates to
\begin{align*}
&\cE(u,-\tau^2 \U^{-1}) = \cE_{B_{r+\rho}}(u,-\tau^2 \U^{-1}) + \cE_{(B_{r+\rho} \times B_{r+\rho})^{c}}(u,-\tau^2 \U^{-1})\\
&\ge c_2\int_{B_{r+\rho}} \int_{B_{r+\rho}} (\tau^2(x) \wedge \tau^2(y)) \left(\log \U(x) -\log \U(y)\right)^2 K_s(x,y) \d y \d x - c_1[C+\rho^{-\alpha}]|B_{r+\rho}|\\
&\ge c_2\int_{B_{r+\rho}} \int_{B_{r+\rho}} (\tau^2(x) \wedge \tau^2(y)) \left(\log\frac{\U(x)}{\tau(x)}-\log\frac{\U(y)}{\tau(y)}\right)^2 K_s(x,y) \d y \d x - c_1\rho^{-\alpha}|B_{r+\rho}|.
\end{align*}
In the last step, we used that $\rho \le 1$.
It remains to prove \eqref{eq:genCacc2step1}. As in the proof of \autoref{lemma:MSgen}, we will first work in a general setting, using an abstract pair of functions $g,G$ satisfying the properties of \autoref{lemma:genaux}, before fixing $g,G$ as in \eqref{eq:gGdef} with $p = 1$ in order to deduce \eqref{eq:genCacc2step1}. 
\begin{align*}
\cE^{K_s}_{B_{r+\rho}}(u,-\tau^{2}g(\U)) = I_s + J_s, \qquad \cE^{K_a}_{B_{r+\rho}}(u,-\tau^{2}g(\U)) = I_a + J_a.
\end{align*}
To estimate $I_s + I_a$, we proceed as follows, using \eqref{eq:genaux1}, \eqref{K2}, \autoref{lemma:symmlemma} and \autoref{lemma:improvedK2help}:
\begin{align}
\label{eq:loguIsIa}
\begin{split}
I_s + I_a &= 2 \iint_{M} (\U(y) - \U(x))(g(\U(x)) - g(\U(y)))\tau^2(x) K(x,y) \d y \d x\\
&\ge 2\iint_{M} (G(\U(x)) - G(\U(y)))^2 (\tau^2(x) \wedge \tau^2(y) ) K(x,y) \d y \d x\\
&\ge (1-D)\int_{B_{r+\rho}}\int_{B_{r+\rho}} (G(\U(x)) - G(\U(y)))^2 (\tau^2(x) \wedge \tau^2(y) ) j(x,y) \d y \d x\\
&\ge B \int_{B_{r+\rho}} \int_{B_{r+\rho}} (G(\U(x)) - G(\U(y)))^2 (\tau^2(x) \wedge \tau^2(y) ) K_s(x,y) \d y \d x,
\end{split}
\end{align}
where $B > 0$ is a constant. To estimate $J_s$, we use \autoref{lemma:logusaviour}, \eqref{cutoff} and \eqref{eq:genaux2} to obtain that there is $c > 0$ such that for every $\delta > 0$:
\begin{align}
\label{eq:loguJs}
\begin{split}
J_s &\ge - 4\iint_{M} |\U(x) - \U(y)| g(\U(y)) |\tau(x) - \tau(y)| (\tau(x) \vee \tau(y)) K_s(x,y) \d y \d x\\
&\ge - \delta \iint_M  |\U(x) - \U(y)|^2 g^2(\U(y))( \tau^2(x) \vee \tau^2(y) ) K_s(x,y) \d y \d x - c\delta^{-1} \cE_{B_{r+\rho}}^{K_s}(\tau,\tau)\\
&\ge - 2\delta \iint_M |\U(x) - \U(y)|^2 g^2(\U(y)) ( \tau^2(x) \wedge \tau^2(y))K_s(x,y) \d y \d x\\
& \quad - 2\delta \iint_M  |\U(x) - \U(y)|^2 g^2(\U(y)) (\tau(x) - \tau(y))^2 K_s(x,y) \d y \d x - c\delta^{-1} \rho^{-\alpha} |B_{r+\rho}|\\
&\ge - 2\delta \int_{B_{r+\rho}}\int_{B_{r+\rho}} \hspace*{-2ex}(G(\U(x)) - G(\U(y)))^2 \left(\frac{g(\U(y))}{G'(\U(y))} \vee \frac{g(\U(y))}{G'(\U(y))} \right)^2( \tau^2(x) \wedge \tau^2(y))K_s(x,y) \d y \d x\\
& \quad - c\delta^{-1} \rho^{-\alpha} |B_{r+\rho}| - c \delta \rho^{-\alpha} \Vert \U^2 g^2(\U)\Vert_{L^1(B_{r+\rho})},
\end{split}
\end{align}
where we also used that $g(\U(y)) \le g(\U(x))$ on $M$ to apply \eqref{eq:genaux2}.\\
To estimate $J_a$, we observe that there exists $c > 0$ such that for every $\delta > 0$:
\begin{align*}
J_a &\ge - 4\iint_{M} |\U(y) - \U(x)|g(\U(y)) (\tau^2(x) \vee \tau^2(y)) |K_a(x,y)| \d y \d x\\
&\ge -\delta \iint_{M} |\U(x) - \U(y)|^2 g^2(\U(y)) (\tau^2(x) \vee \tau^2(y)) J(x,y) \d y \d x \\
& \quad - c \delta^{-1} \iint_{M} (\tau^2(x) \vee \tau^2(y))  \frac{|K_a(x,y)|^2}{J(x,y)} \d y \d x.
\end{align*}
Similar to the estimate for $J_s$, we obtain by \autoref{lemma:logusaviour}, \eqref{eq:genaux2}, \eqref{cutoff}, but also \eqref{K1} and \autoref{lemma:improvedK2help}:
\begin{align*}
\delta &\iint_{M} |\U(x) - \U(y)|^2 g^2(\U(y)) (\tau^2(x) \vee \tau^2(y)) J(x,y) \d y \d x\\
&\le 2\delta \iint_M (G(\U(x)) - G(\U(y)))^2 \left(\frac{g(\U(y))}{G'(\U(y))} \vee \frac{g(\U(y))}{G'(\U(y))} \right)^2( \tau^2(x) \wedge \tau^2(y))J(x,y) \d y \d x\\
& \quad - c \delta^{-1} \rho^{-\alpha} \Vert \U^2 g^2(\U)\Vert_{L^1(B_{r+\rho})}.
\end{align*}

Furthermore, by \eqref{K1}, \eqref{eq:K1consequence} applied with $\delta = 1$, and \eqref{cutoff}, we obtain
\begin{align*}
\iint_{M} \hspace*{-1ex}(\tau^2(x) \vee \tau^2(y)) \frac{|K_a(x,y)|^2}{J(x,y)} \d y \d x&\le  c \cE^{K_s}_{B_{r+\rho}}(\tau,\tau) + c[C +\rho^{-\alpha}] \int_{B_{r+\rho}} \hspace*{-3ex} \tau^2(x) \d x \le c[C +\rho^{-\alpha}] |B_{r+\rho}|,
\end{align*}
where $C = C(1) > 0$ denotes the constant from \eqref{eq:K1consequence}. Altogether, we have shown
\begin{align}
\label{eq:loguJa}
\begin{split}
J_a &\ge -2\delta \int_{B_{r+\rho}}\int_{B_{r+\rho}} \hspace*{-2ex}(G(\U(x)) - G(\U(y)))^2 \left(\frac{g(\U(y))}{G'(\U(y))} \vee \frac{g(\U(y))}{G'(\U(y))} \right)^2( \tau^2(x) \wedge \tau^2(y))J(x,y) \d y \d x\\
& \quad - c \delta^{-1}[C+\rho^{-\alpha}]|B_{r+\rho}|  -  c \delta \rho^{-\alpha} \Vert \U^2 g^2(\U)\Vert_{L^1(B_{r+\rho})},
\end{split}
\end{align}
and therefore, by combining the estimates for $I_s + I_a$, $J_s$ and $J_a$, choosing $g,G$ according to \eqref{eq:gGdef} for $p =1$ and applying \autoref{lemma:gGidentities}, we have proved \eqref{eq:genCacc2step1}, as desired after choosing $\delta > 0$ small enough. This concludes the proof.
\end{proof}

\subsection{Dual bilinear forms}\label{subsec:dual-forms}

The following Caccioppoli-type estimates are designed for the dual equation \eqref{PDEdual}. The general structure of the proofs in this section resembles the one from the previous section. However, particular care is required since the dual form does not satisfy 
\begin{align*}
\widehat{\cE}(u+\eps,v) = \widehat{\cE}(u,v).
\end{align*}

The following lemma is a counterpart to \autoref{lemma:MSgen} for the dual form.

\begin{lemma}
\label{lemma:MSgendual}
Assume that \eqref{K1glob} and \eqref{cutoff} hold true for some $\theta \in (\frac{d}{\alpha},\infty]$. Moreover, assume \eqref{Sob} if $\theta < \infty$. Then there are $c_1,c_2 > 0$, $\gamma > 1$ such that for every $0 < \rho \le r \le 1$ and every nonnegative function $u \in V(B_{r+\rho}|\R^d) \cap L^{2\theta'}(\R^d)$, and $p \ge 1 - \kappa^{-1}$ with $p \neq 1$, $\eps > 0$:
\begin{align*}
\cE^{K_s}_{B_{r+\rho}}(\tau \U^{\frac{-p+1}{2}},\tau \U^{\frac{-p+1}{2}}) \le c_1 |p-1| \widehat{\cE}(u,-\tau^{2}\U^{-p}) + c_2 \left(1 \vee p^{\gamma}\right)\rho^{-\alpha} \Vert \U^{-p+1}\Vert_{L^{1}(B_{r+\rho})},
\end{align*}
where $B_{2r} \subset \Omega$, $\tau = \tau_{r,\frac{\rho}{2}}$, and $\U = u + \eps$.
\end{lemma}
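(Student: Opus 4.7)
I will mirror the proof of \autoref{lemma:MSgen}, exploiting that $\widehat{\cE}(u,\phi) = \cE(\phi,u)$. Writing
\[
\widehat{\cE}(u, -\tau^2 g(\U)) = \widehat{\cE}^{K_s}(u, -\tau^2 g(\U)) + \widehat{\cE}^{K_a}(u, -\tau^2 g(\U)),
\]
with $g(t) = t^{-p}$ and $G$ as in \eqref{eq:gGdef}, the symmetric part equals $\cE^{K_s}(u, -\tau^2 g(\U))$ because $\cE^{K_s}(v,w) = \cE^{K_s}(w,v)$, and is handled verbatim as in \autoref{lemma:MSgen}, producing the principal positive quantity $\cE^{K_s}_{B_{r+\rho}}(\tau G(\U), \tau G(\U))$ alongside the standard cutoff-type error terms. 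The novelty is therefore confined to the dual antisymmetric piece, for which the two-argument representation gives
\[
\widehat{\cE}^{K_a}_{B_{r+\rho}}(u, -\tau^2 g(\U)) = \tfrac{1}{2}\iint_{B_{r+\rho}^2} (\tau^2(y)g(\U(y))-\tau^2(x)g(\U(x)))(u(x)+u(y)) K_a(x,y) \, \d y \, \d x.
\]

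I would decompose $\tau^2(y)g(\U(y))-\tau^2(x)g(\U(x))$ into a $g$-difference part and a $\tau^2$-difference part, in direct analogy with the $I_a+J_a$ splitting of \autoref{lemma:MSgen}. The critical difference is that these pieces are now paired with $(u(x)+u(y)) \le \U(x)+\U(y)$ instead of with $(\U(y)-\U(x))$: the factor that in \autoref{lemma:MSgen} enabled the direct application of \eqref{eq:genaux1} is absent. For the $g$-difference part I would invoke \eqref{eq:genaux3} to rewrite $|g(\U(x))-g(\U(y))|$ as at most $(G'(\U(x)) \vee G'(\U(y)))|G(\U(x))-G(\U(y))|$, and then apply Cauchy--Schwarz with a $|K_a|^2/J$ weight so that one factor can be absorbed into $\cE^{K_s}_{B_{r+\rho}}(\tau G(\U), \tau G(\U))$ via \autoref{lemma:improvedK2help} and the coercivity side-condition of \eqref{K1glob}; the residual weighted $L^2$ integral is controlled by \eqref{eq:K1consequence}. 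The $\tau^2$-difference part is handled using $|K_a| \le K_s$ and \eqref{cutoff}, exactly as its counterpart $J_s$ in \autoref{lemma:MSgen}. Since $u$ is not compactly supported, the exterior term $-\cE_{(B_{r+\rho}\times B_{r+\rho})^c}(\tau^2 g(\U), u)$ requires a separate estimate using \eqref{cutoff} on $\tau^2 g(\U)$ together with the global hypothesis \eqref{K1glob} and $u \in L^{2\theta'}(\R^d)$; this is precisely why the lemma needs \eqref{K1glob} rather than just \eqref{K1}, and why the range is restricted to $\theta > d/\alpha$ (so that H\"older's inequality closes through \eqref{eq:K1consequence}).

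Specialising to $g(t)=t^{-p}$ via \autoref{lemma:gGidentities} and choosing $\delta_0 \asymp |p-1|/p$ and $\delta_1 \asymp \delta_0^{-1}$ as in \autoref{lemma:MSgen} will produce the claimed inequality after absorption. The main obstacle will be the $g$-difference piece of the dual antisymmetric form. In \autoref{lemma:MSgen}, the corresponding term $I_a$ merged with $I_s$ via \eqref{eq:genaux1} (since both carried the factor $(\U(y)-\U(x))(g(\U(x))-g(\U(y)))\tau^2(x)$) to produce the positive quantity $\iint_M (G(\U(x))-G(\U(y)))^2 (\tau^2(x)\wedge\tau^2(y)) K(x,y) \, \d y \, \d x$. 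In the dual case the direct merging fails because the antisymmetric part now carries $(u(x)+u(y))$ instead of $(\U(y)-\U(x))$; the extra Young's-inequality splits needed to reconcile this mismatch accumulate additional powers of $p$, which is precisely what forces the prefactor $p^\gamma$ with some $\gamma > 1$ in the error term, rather than the linear $1 \vee |p-1|$ obtained in \autoref{lemma:MSgen}. Tracking this $p$-dependence through each absorption step, and verifying that the Sobolev constant $C(\delta_0 \delta_1)$ from \eqref{eq:K1consequence} remains controllable as long as $\theta > d/\alpha$, is the technical heart of the argument.
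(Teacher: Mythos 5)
Your two-way decomposition of $\tau^2(y)g(\U(y))-\tau^2(x)g(\U(x))$ into a $g$-difference part and a $\tau^2$-difference part, each paired with the undivided factor $(u(x)+u(y))$, does not close. After invoking \eqref{eq:genaux3} and a Young/Cauchy--Schwarz split with weight $|K_a|^2/J$, the remainder from your $g$-difference piece has the form
\begin{equation*}
\iint_{M}\tau^2(x)\,(u(x)+u(y))^2\,\bigl(G'(\U(x))\vee G'(\U(y))\bigr)^2\,\frac{|K_a(x,y)|^2}{J(x,y)}\,\d y\,\d x,
\end{equation*}
and on $M$ one has $G'(\U(x))\vee G'(\U(y))=G'(\U(x))$ while $u(y)\le\U(y)$ with $\U(y)>\U(x)$, so the integrand genuinely depends on the product $\U(y)^2\,G'(\U(x))^2$. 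This is not of the factored form $v^2(x)\cdot\frac{|K_a(x,y)|^2}{J(x,y)}$ that \eqref{eq:K1consequence} (and its quantified version \eqref{eq:quantifiedK1consequence}) can control, and no amount of further Young splits removes the coupling of the two variables. This is precisely the structural issue that makes the dual form harder than the primal one, and a purely analytic absorption step cannot repair it.

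What the paper does instead is algebraic: before estimating, it decomposes
\begin{equation*}
(a+b)\bigl(\tau_2^2 g(\widetilde b)-\tau_1^2 g(\widetilde a)\bigr)
=(b-a)\bigl(g(\widetilde b)-g(\widetilde a)\bigr)\tau_1^2
+2a\bigl(g(\widetilde b)-g(\widetilde a)\bigr)\tau_1^2
+(a+b)\,g(\widetilde b)\,(\tau_2^2-\tau_1^2),
\end{equation*}
with $a=u(x)$, $b=u(y)$, $\widetilde a=\U(x)$, $\widetilde b=\U(y)$, producing three pieces $I_a+M_a+N_a$, not two. The first piece is $(\U(y)-\U(x))(g(\U(y))-g(\U(x)))\tau^2(x)$ and merges with $I_s$ exactly as in \autoref{lemma:MSgen}, contributing to the coercive term via \eqref{eq:genaux1}. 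The second piece carries the pure-$x$ factor $2u(x)\le 2\U(x)$, so after \eqref{eq:genaux3} and Young's inequality the residual involves $\tau^2(x)\,\U(x)^2\,G'(\U(x))^2$, which is a function of $x$ alone and therefore amenable to \eqref{eq:quantifiedK1consequence} and \autoref{lemma:gGidentities}. The third piece is your $\tau^2$-difference part. Peeling off $(\U(y)-\U(x))$ from $u(x)+u(y)$ so that only the single-variable $2u(x)$ survives is the idea your sketch is missing; without it the middle term is uncontrollable. Two smaller inaccuracies: the exterior estimate in Step 2 is handled exactly as in \autoref{lemma:MSgen}, using only $u,K\ge 0$, $u\le\U$ and \eqref{cutoff}, so it does not itself require \eqref{K1glob} or $u\in L^{2\theta'}(\R^d)$ (those enter via well-definedness, \autoref{lemma:welldef}(ii)); and the prefactor $p^\gamma$ is forced by the $\delta$-dependence of $C(\delta)$ in \eqref{eq:quantifiedK1consequence} combined with the choices $\delta_1\asymp p^{-2}$, $\delta_2\asymp|p-1|^{-1}$, not by Young splits on $(u(x)+u(y))$.
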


\begin{proof}
We fix a pair of functions $g,G$ satisfying the assumptions of \autoref{lemma:genaux}. Later, we will fix $p \ge 1 - \kappa^{-1}$ with $p \neq 1$ and define $g,G$ as in \eqref{eq:gGdef}. Let $M$ be as in the proof of \autoref{lemma:MSgen}. 

Step 1: We claim that there exist $c_1,c_2,c_3 > 0$ such that for every $\delta_0 \in (0,1)$, $\delta_1 , \delta_2 > 0$:
\begin{align}
\label{eq:dualMSgenstep1}
\begin{split}
&\widehat{\cE}_{B_{r+\rho}}(u,-\tau^2 g(\U)) \ge c_1 \delta_0 \cE^{K_s}_{B_{r+\rho}}(\tau G(\U),\tau G(\U))\\
&- c_2 \left(\delta_0^{-1} \delta_1 \cE^{K_s}_{B_{r+\rho}}(\tau \U G'(\U),\tau \U G'(\U)) + \delta_2 \cE^{K_s}_{B_{r+\rho}}(\tau g(\U)^{1/2}\U^{1/2}, \tau g(\U)^{1/2}\U^{1/2})\right)\\
&- c_3 \Bigg(\delta_0 C \Vert \tau^2 G(\U)^2 \Vert_{L^{1}(B_{r+\rho})} + \delta_0 \rho^{-\alpha} \Vert G(\U)^2 \Vert_{L^{1}(B_{r+\rho})} + \delta_0^{-1} \rho^{-\alpha} \left\Vert \left( \frac{g(\U)}{G'(\U)} \right)^2 \right\Vert_{L^1(B_{r+\rho})}\\
&+ \delta_0^{-1} \big[C(\delta_1) + \tfrac{\delta_1}{\rho^{\alpha}}\big] \Vert \tau^2 \U^2G'(\U)^2 \Vert_{L^{1}(B_{r+\rho})} + C(\delta_2)\Vert \tau^2 g(\U)\U \Vert_{L^{1}(B_{r+\rho})} + \tfrac{\delta_2 +1}{\rho^{\alpha}} \Vert g(\U)\U \Vert_{L^{1}(B_{r+\rho})}\Bigg).
\end{split}
\end{align}
Here, $C = C(1), C(\delta_1), C(\delta_2) > 0$ are respective constants from \eqref{eq:K1consequence}. We observe the following algebraic identity:
\begin{align*}
(a+b)(\tau_2^2 g(\widetilde{b}) - \tau_1^2 g(\widetilde{a})) = (b-a)(g(\widetilde{b}) -  g(\widetilde{a}))\tau_1^2 +2a(g(\widetilde{b})- g(\widetilde{a}))\tau_1^2 + (a+b) g(\widetilde{b}) (\tau_2^2 - \tau_1^2).
\end{align*}
We use \autoref{lemma:symmlemma} to estimate
\begin{align*}
\widehat{\cE}_{B_{r+\rho}}^{K_a}(u,-\tau^{2}g(\U)) &= 2\iint_M (u(x) + u(y)) (\tau^{2}(y)g(\U(y)) - \tau^{2}(x)g(\U(x))) K_a(x,y) \d y \d x\\
&= 2\iint_M (\U(y) - \U(x))(g(\U(y)) - g(\U(x)))\tau^2(x)  K_a(x,y) \d y \d x\\
& \quad + 4 \iint_M u(x) (g(\U(y)) - g(\U(x))) \tau^2(x) K_a(x,y) \d y \d x\\
& \quad +  4 \iint_M (u(x) + u(y)) g(\U(y)) (\tau^2(y) - \tau^2(x))  K_a(x,y) \d y \d x\\
&\ge 2\iint_M  (\U(y) - \U(x))(g(\U(x)) - g(\U(y)))\tau^2(x)  K_a(y,x) \d y \d x\\
& \quad - 4 \iint_M \U(x) |g(\U(x)) - g(\U(y))| \tau^2(x) |K_a(x,y)| \d y \d x\\
& \quad -  8 \iint_M g(\U(y))\U(y) |\tau^2(x) - \tau^2(y)|  |K_a(x,y)| \d y \d x\\
&= I_a + M_a + N_a,
\end{align*}
where we used that $g(\U(x)) \ge g(\U(y))$ on $M$, which implies that $\U(y) \ge \U(x)$ since $g$ is decreasing, as well as $u \le \U$.
As in the proof of \autoref{lemma:MSgen}, we can decompose 
\begin{align*}
\cE^{K_s}_{B_{r+\rho}}(u,-\tau^2 g(\U)) = I_s + J_s.
\end{align*}
The estimate of $J_s$ goes as in the proof of \autoref{lemma:MSgen}.
Moreover, using the same arguments as in the proof of \autoref{lemma:MSgen}, we estimate for any $\delta_0 \in (0,1)$:
\begin{align*}
I_s + I_a &= 2\iint_M (\U(y) - \U(x))(g(\U(x)) - g(\U(y)))\tau^2(x) K(y,x) \d y \d x\\
& \ge \frac{\delta_0}{4}\cE^{K_s}_{B_{r+\rho}}(\tau G(\U),\tau G(\U)) - c \delta_0 \left( C \Vert \tau^2 G(\U)^2 \Vert_{L^{1}(B_{r+\rho})} + \rho^{-\alpha}\Vert G(\U)^2 \Vert_{L^{1}(B_{r+\rho})} \right). 
\end{align*}

It remains to estimate $M_a$ and $N_a$:\\
For $M_a$, we obtain using, \eqref{eq:genaux3}, \eqref{eq:genaux5} and \eqref{cutoff} that for every $\delta_1 >0$:
\begin{align*}
M_a &\ge - 4 \iint_M \left|G(\U(x)) - G(\U(y)) \right| \left(\U(x) G'(\U(x)) \vee \U(y) G'(\U(y)) \right) \tau^2(x) |K_a(x,y)| \d y \d x\\
&\ge - \delta_0 \iint_M (G(\U(x)) - G(\U(y)) )^2 (\tau^2(y) \vee \tau^2(x))  J(x,y) \d y \d x\\
& \quad - c\delta_0^{-1} \iint_M \tau^2(x)\U^2(x) G'(\U(x))^2 \frac{|K_a(x,y)|^2}{J(x,y)} \d y \d x\\
&\ge - c \delta_0 \cE^{K_s}_{B_{r+\rho}}(\tau G(\U) , \tau G(\U)) - c \delta_0 \rho^{-\alpha} \Vert G(\U)^2 \Vert_{L^{1}(B_{r+\rho})}\\
& \quad - c\delta_0^{-1} \delta_1\cE^{K_s}_{B_{r+\rho}}(\tau \U G'(\U) , \tau \U G'(\U)) - c\delta_0^{-1} [C(\delta_1) + \delta_1 \rho^{-\alpha}]\Vert \tau^2 \U^2 G'(\U)^2 \Vert_{L^{1}(B_{r+\rho})},
\end{align*}

where we used that, by \eqref{K1glob} and \eqref{eq:quantifiedK1consequence} applied with some $\delta_1 > 0$,
\begin{align}
\label{eq:applyquantifiedK1consequence}
\begin{split}
\delta_0^{-1} \iint_M &\tau^2(x)\U^2(x) G'(\U(x))^2  \frac{|K_a(x,y)|^2}{J(x,y)} \d y \d x\\
&\le \delta_0^{-1} \delta_1\cE^{K_s}_{B_{r+\rho}}(\tau \U G'(\U) , \tau \U G'(\U)) + c\delta_0^{-1} [C(\delta_1) + \delta_1 \rho^{-\alpha}] \Vert \tau^2 \U^2 G'(\U)^2 \Vert_{L^{1}(B_{r+\rho})}.
\end{split}
\end{align}
Here, $C(\delta_1) > 0$ denotes the constant in \eqref{eq:quantifiedK1consequence}. Moreover, by \eqref{cutoff}, \eqref{eq:genaux5} and \eqref{K1glob}:
\begin{align*}
\delta_0 \iint_M & (G(\U(x)) - G(\U(y)) )^2 (\tau^2(y) \vee \tau^2(x)) J(x,y) \d y \d x\\
&\le 2\delta_0 \cE^{J}_{B_{r+\rho}}(\tau G(\U) , \tau G(\U)) + c \delta_0 \rho^{-\alpha}\Vert G(\U)^2 \Vert_{L^1(B_{r+\rho})}\\
&\le c\delta_0 \cE^{K_s}_{B_{r+\rho}}(\tau G(\U) , \tau G(\U)) + c \delta_0 \rho^{-\alpha}\Vert G(\U)^2 \Vert_{L^1(B_{r+\rho})}.
\end{align*}
For $N_a$, we compute using \autoref{lemma:logusaviour} and \eqref{eq:KaKs} that for every $\delta_2 > 0$:
\begin{align*}
N_a &= -8 \iint_M g(\U(y))\U(y)|\tau(x)-\tau(y)|(\tau(x) + \tau(y)) |K_a(x,y)| \d y \d x \\
&\ge -c \iint_M g(\U(y))\U(y)(\tau(x)-\tau(y))^2 K_s(x,y) \d y \d x\\
& \quad -c \iint_M g(\U(y))\U(y) (\tau(x) \wedge \tau(y))|\tau(x)-\tau(y)| |K_a(x,y)| \d y \d x\\
&\ge -c \rho^{-\alpha} \Vert g(\U)\U \Vert_{L^{1}(B_{r+\rho})}- c\int_{B_{r+\rho}} \int_{B_{r+\rho}} \tau^2(x)g(\U(x))\U(x) \frac{|K_a(x,y)|^2}{J(x,y)} \d y \d x\\
&\ge- c C(\delta_2) \Vert \tau^2 g(\U)\U \Vert_{L^{1}(B_{r+\rho})} -c (\delta_2 +1)\rho^{-\alpha} \Vert g(\U)\U \Vert_{L^{1}(B_{r+\rho})} \\
& \quad - c \delta_2 \cE^{K_s}_{B_{r+\rho}}(\tau g(\U)^{1/2}\U^{1/2}, \tau g(\U)^{1/2}\U^{1/2}),
\end{align*}
where we applied \eqref{cutoff} and used the same argument as in \eqref{eq:applyquantifiedK1consequence} to estimate the second summand in the last step. Here, $C(\delta_2) > 0$ is the constant from \eqref{eq:quantifiedK1consequence}.
Altogether, we have
\begin{align*}
&\widehat{\cE}_{B_{r+\rho}}(u,-\tau^2 g(\U)) \ge \left[ \frac{\delta_0}{4} - \frac{\delta_0}{8} \right] \cE^{K_s}_{B_{r+\rho}}(\tau G(\U),\tau G(\U))\\
&- \delta_0^{-1} \delta_1 \cE^{K_s}_{B_{r+\rho}}(\tau \U G'(\U),\tau \U G'(\U)) - c \delta_2 \cE^{K_s}_{B_{r+\rho}}(\tau g(\U)^{1/2}\U^{1/2}, \tau g(\U)^{1/2}\U^{1/2})\\
&- c \Bigg(\delta_0 C \Vert \tau^2 G(\U)^2 \Vert_{L^{1}(B_{r+\rho})} + \delta_0 \rho^{-\alpha} \Vert G(\U)^2 \Vert_{L^{1}(B_{r+\rho})} + \delta_0^{-1} \rho^{-\alpha} \left\Vert \left( \frac{g(\U)}{G'(\U)} \right)^2 \right\Vert_{L^1(B_{r+\rho})}\\
&+ \delta_0^{-1} [C(\delta_1) + \frac{\delta_1}{\rho^{\alpha}}] \Vert \tau^2 \U^2G'(\U)^2 \Vert_{L^{1}(B_{r+\rho})} + C(\delta_2)\Vert \tau^2 g(\U)\U \Vert_{L^{1}(B_{r+\rho})} + \frac{ \delta_2 +1}{\rho^{\alpha}} \Vert g(\U)\U \Vert_{L^{1}(B_{r+\rho})}\Bigg).
\end{align*}
This yields \eqref{eq:dualMSgenstep1}, as desired.

Step 2: Moreover, it holds
\begin{align}
\label{eq:dualMSgenstep2}
- \widehat{\cE}_{(B_{r+\rho}\times B_{r+\rho})^{c}}(u,-\tau^2 g(\U)) \le  c\rho^{-\alpha} \Vert \U g(\U) \Vert_{L^1(B_{r+\rho})}.
\end{align}
The proof works similar to the proof of Step 2 in \autoref{lemma:MSgen}.

Step 3: Combining \eqref{eq:dualMSgenstep1} and \eqref{eq:dualMSgenstep2}, we have proved that
\begin{align*}
&\cE^{K_s}_{B_{r+\rho}}(\tau G(\U),\tau G(\U)) \le c_1 \delta_0^{-1} \widehat{\cE}(u,-\tau^{2}g(\U))\\
&\qquad+ c_2 \delta_0^{-2} \delta_1 \cE^{K_s}_{B_{r+\rho}}(\tau \U G'(\U),\tau \U G'(\U)) + c_2 \delta_0^{-1} \delta_2 \cE^{K_s}_{B_{r+\rho}}(\tau g(\U)^{1/2}\U^{1/2}, \tau g(\U)^{1/2}\U^{1/2})\\
&\qquad+ c_3 \Bigg(C \Vert \tau^2 G(\U)^2 \Vert_{L^{1}(B_{r+\rho})} + \rho^{-\alpha} \Vert G(\U)^2 \Vert_{L^{1}(B_{r+\rho})} + \delta_0^{-2} \rho^{-\alpha} \left\Vert \left( \frac{g(\U)}{G'(\U)} \right)^2 \right\Vert_{L^1(B_{r+\rho})}\\
&\qquad\qquad\qquad+ \delta_0^{-2} [C(\delta_1) + \delta_1 \rho^{-\alpha}] \Vert \tau^2 \U^2G'(\U)^2 \Vert_{L^{1}(B_{r+\rho})}\\
&\qquad\qquad\qquad+ \delta_0^{-1}C(\delta_2)\Vert \tau^2 g(\U)\U \Vert_{L^{1}(B_{r+\rho})} + \delta_0^{-1}(\delta_2 +1)\rho^{-\alpha} \Vert g(\U)\U \Vert_{L^{1}(B_{r+\rho})}\Bigg).
\end{align*}
Fixing $p > 0$ with $p \neq 1$, and defining $g,G$ as in \eqref{eq:gGdef}, we deduce the desired result from \autoref{lemma:gGidentities}, choosing $\gamma = \frac{d}{d-\theta \alpha}$, $\delta_0 = (1-\kappa^{-1}) \frac{|p-1|}{p} \le 1$, $\delta_1 = c_4\frac{1}{p^2}$ and $\delta_2 = c_5\frac{1}{|p-1|}$ for some small enough constants $c_4,c_5 > 0$, using that $\frac{|p-1|}{p} \le (1-\kappa^{-1})^{-1}$ by assumption on $p$.
\end{proof}

The following lemma is a counterpart to \autoref{lemma:MSgen2} for the dual form.

\begin{lemma}
\label{lemma:MSgendual2}
Assume that \eqref{K1glob}, \eqref{K2} and \eqref{cutoff} hold true for some $\theta \in [\frac{d}{\alpha},\infty]$. Moreover, assume \eqref{Sob} if $\theta < \infty$. Then there are $c_1,c_2 > 0$ such that for every $0 < \rho \le r \le 1$ and every nonnegative function $u \in V(B_{r+\rho}|\R^d) \cap L^{2\theta'}(\R^d)$, and every $\eps > 0$
\begin{align*}
c_1\int_{B_{r+\rho}} \int_{B_{r+\rho}} \hspace*{-2ex} (\tau^2(x) \wedge \tau^2(y)) \left(\log\frac{\U(x)}{\tau(x)}-\log\frac{\U(y)}{\tau(y)}\right)^2 K_s(x,y) \d y \d x \le \widehat{\cE}(u,-\tau^2 \U^{-1}) + c_2 \rho^{-\alpha}\vert B_{r+\rho}\vert,
\end{align*}
where $B_{2r} \subset \Omega$, $\tau = \tau_{r,\frac{\rho}{2}}$, and $\U = u + \eps$.
\end{lemma}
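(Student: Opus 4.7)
The plan is to mirror the structure of the proof of \autoref{lemma:MSgen2} while replacing the decomposition of the bilinear form by the one used in \autoref{lemma:MSgendual}. As in those proofs, I will work at the abstract level with a pair $(g,G)$ satisfying the hypotheses of \autoref{lemma:genaux} and only at the end specialize to $g(t)=1/t$, $G(t)=\log t$, i.e.\ the case $p=1$ in \eqref{eq:gGdef}. The main target is a bulk estimate analogous to \eqref{eq:genCacc2step1}, namely
\begin{align*}
\widehat{\cE}_{B_{r+\rho}}(u,-\tau^{2}\U^{-1}) \ge c_{1} \iint_{B_{r+\rho}\times B_{r+\rho}} (\log \U(x) - \log \U(y))^{2} (\tau^{2}(x) \wedge \tau^{2}(y)) K_{s}(x,y) \d y \d x - c_{2}[C+\rho^{-\alpha}]|B_{r+\rho}|,
\end{align*}
from which the stated inequality follows via \eqref{eq:logaux4}, \eqref{cutoff} (to switch $\log\U$ to $\log(\U/\tau)$) and the tail bound from Step 2 of \autoref{lemma:MSgen2}, whose proof carries over verbatim to $\widehat{\cE}$ since it only uses the nonnegativity of $K$ and of $u$.

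The decomposition goes exactly as in \autoref{lemma:MSgendual}: writing $\widehat{\cE}_{B_{r+\rho}}(u,-\tau^{2}g(\U)) = (I_{s}+J_{s}) + (I_{a}+M_{a}+N_{a})$, where the antisymmetric part is split via the algebraic identity
\begin{align*}
(a+b)(\tau_{2}^{2}g(\widetilde b) - \tau_{1}^{2}g(\widetilde a)) = (b-a)(g(\widetilde b)-g(\widetilde a))\tau_{1}^{2} + 2a(g(\widetilde b)-g(\widetilde a))\tau_{1}^{2} + (a+b)g(\widetilde b)(\tau_{2}^{2}-\tau_{1}^{2}).
\end{align*}
The combined $I_{s}+I_{a}$ integrand contains the factor $K_{s}(x,y) - K_{a}(x,y) = K(y,x) \ge (1-D)j(x,y)$, so that \eqref{eq:genaux1}, \eqref{K2} and \autoref{lemma:improvedK2help} yield the same coercive lower bound as in \eqref{eq:loguIsIa}: a constant multiple of the principal symmetric $K_{s}$-integral against the weight $\tau^{2}(x)\wedge\tau^{2}(y)$. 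The term $J_{s}$ is estimated verbatim by the chain leading to \eqref{eq:loguJs}, producing a small multiple of the principal integral plus an error of order $\delta^{-1}\rho^{-\alpha}|B_{r+\rho}|$ (using $\U^{2}g(\U)^{2}=1$ for $p=1$ by \autoref{lemma:gGidentities}).

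For $M_{a}$ I will use \eqref{eq:genaux3} and \eqref{eq:genaux5} together with Young's inequality to split off a small $J$-energy of $\tau G(\U)$ (absorbable into $I_{s}+I_{a}$ by the second half of \eqref{K1glob} and \autoref{lemma:improvedK2help}) plus the weighted integral $\int \tau^{2}\U^{2}G'(\U)^{2}\,|K_{a}|^{2}/J$, controlled via \eqref{eq:K1consequence}. The crucial simplification at $p=1$, by \autoref{lemma:gGidentities}, is $\U^{2} G'(\U)^{2}=1$ and $\U g(\U)=1$, so all these weighted norms collapse to multiples of $|B_{r+\rho}|$. The term $N_{a}$ is treated exactly as in \autoref{lemma:MSgendual}: \autoref{lemma:logusaviour} reduces it to a symmetric $K_{s}$-tail plus a cross $|K_{a}|$ term, which are then bounded via \eqref{cutoff} and \eqref{eq:K1consequence} with a small constant $\delta_{2}$, again reducing to $|B_{r+\rho}|$-multiples for $p=1$.

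The main obstacle, as in \autoref{lemma:MSgen2}, is choosing the absorption constants in Young's inequality small enough that all $J$-energies of $\tau G(\U) = \tau\log\U$ appearing in the bounds on $M_{a}$ and $N_{a}$ can be absorbed (via \autoref{lemma:improvedK2help}) into the coercive lower bound from $I_{s}+I_{a}$. The additional subtlety compared to \autoref{lemma:MSgen2} is that the extra middle term $2a(g(\widetilde b)-g(\widetilde a))\tau_{1}^{2}$ in the algebraic identity forces the use of the global integrability condition \eqref{K1glob} rather than the local \eqref{K1}, because the weight $\U^{2}G'(\U)^{2}$ that arises does not vanish off the diagonal and must be integrated against $|K_{a}|^{2}/J$ over the full range required by \autoref{lemma:welldef}(ii). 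Once these absorptions are done, combining with the tail bound completes the proof.
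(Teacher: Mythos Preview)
Your proposal is correct and follows essentially the same route as the paper: the same decomposition $I_s+J_s+I_a+M_a+N_a$ from \autoref{lemma:MSgendual}, the same use of \eqref{K2} on $I_s+I_a$ via $K(y,x)\ge(1-D)j$ as in \autoref{lemma:MSgen2}, and the same collapse of all weighted norms to $|B_{r+\rho}|$ via \autoref{lemma:gGidentities} at $p=1$. One small correction: your final paragraph misidentifies the reason for \eqref{K1glob} --- in the bulk estimate itself all $|K_a|^2/J$ integrals live in $B_{r+\rho}\times B_{r+\rho}$, so \eqref{K1} would suffice there; the global condition is assumed for consistency with the well-definedness of $\widehat{\cE}$ (\autoref{lemma:welldef}(ii)), not because the middle term $M_a$ forces it.
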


\begin{proof}
The proof is a combination of the proofs of \autoref{lemma:MSgen2} and \autoref{lemma:MSgendual}.
We will explain how to prove that there are $c_1, c_2 > 0$ such that
\begin{align}
\label{eq:genCacc2dualstep1}
\begin{split}
\widehat{\cE}_{B_{r+\rho}}(u,-\tau^2 \U^{-1}) &\ge c_1 \int_{B_{r+\rho}}\int_{B_{r+\rho}} (\log \U(x) - \log \U(y))^2 (\tau^2(x) \wedge \tau^2(y))K_s(x,y) \d y \d x\\
&- c_2[C+\rho^{-\alpha}]|B_{r+\rho}|,
\end{split}
\end{align}
where $C = C(1) > 0$ is the constant from \eqref{eq:K1consequence}.
Note that by the same argument as in Step 2 of the proof of \autoref{lemma:MSgen}, one can prove that
\begin{align*}
 - \widehat{\cE}_{(B_{r+\rho} \times B_{r+\rho})^{c}}(u,-\tau^{2} \U^{-1}) = - \widehat{\cE}_{(B_{r+\rho} \times B_{r+\rho})^{c}}(u,-\tau^{2} g(\U)) \le c \rho^{-\alpha} \Vert \U g(\U) \Vert_{L^1(B_{r+\rho})} = c \rho^{-\alpha} |B_{r+\rho}|.
\end{align*}
By combining these two estimates, one easily deduces the desired result using \eqref{eq:logaux4} and \eqref{cutoff}, as in the proof of \autoref{lemma:MSgendual}.

It remains to prove \eqref{eq:genCacc2dualstep1}. First, we estimate
\begin{align*}
\widehat{\cE}_{B_{r+\rho}}^{K_s}(u,-\tau^{2}g(\U)) = I_s + J_s, \qquad \widehat{\cE}_{B_{r+\rho}}^{K_a}(u,-\tau^{2}g(\U)) \ge I_a + M_a + N_a,
\end{align*}
as in the proof of \autoref{lemma:MSgendual}. The estimates of $J_s$ and $I_s + I_a$ work as in the proof of \autoref{lemma:MSgendual} and yield that there are $c,B > 0$ such that for every $\delta > 0$
\begin{align*}
I_s + I_a &\ge B \int_{B_{r+\rho}} \int_{B_{r+\rho}} \hspace*{-2ex} (G(\U(x)) - G(\U(y)))^2 (\tau^2(x) \wedge \tau^2(y) ) K_s(x,y) \d y \d x,
\end{align*}
\begin{align*}
J_s &\ge - 2\delta \int_{B_{r+\rho}}\int_{B_{r+\rho}} \hspace*{-2ex} (G(\U(x)) - G(\U(y)))^2 \left(\frac{g(\U(y))}{G'(\U(y))} \vee \frac{g(\U(y))}{G'(\U(y))} \right)^2( \tau^2(x) \wedge \tau^2(y))K_s(x,y) \d y \d x \\
& \quad - c \delta^{-1} \rho^{-\alpha} |B_{r+\rho}| - c \delta \rho^{-\alpha} \Vert \U^2 g^2(\U)\Vert_{L^1(B_{r+\rho})},
\end{align*}
Recall that the estimate of $I_s + I_a$ requires \eqref{K2}.\\
For $M_a$, we observe that there exists $c > 0$ such that for every $\delta > 0$:
\begin{align*}
M_a &\ge - \delta\iint_M \left( \U(x) |g(\U(x)) - g(\U(y))| \right)^2 (\tau^2(x) \vee \tau^2(y)) J(x,y) \d y \d x\\
& \quad - c\delta^{-1} \iint_M (\tau^2(x) \vee \tau^2(y)) \frac{|K_a(x,y)|^2}{J(x,y)} \d y \d x.
\end{align*}
By \eqref{K1glob}, \eqref{eq:K1consequence} applied with $\delta = 1$ and \eqref{cutoff}, we have
\begin{align}
\label{eq:K1logu}
\iint_M (\tau^2(x) \vee \tau^2(y)) \frac{|K_a(x,y)|^2}{J(x,y)} \d y \d x &\le c \cE^{K_s}_{B_{r+\rho}}(\tau,\tau) + c \rho^{-\alpha} \int_{B_{r+\rho}} \tau^2(x) \d x \le c [C+\rho^{-\alpha}] |B_{r+\rho}|,
\end{align}
where $C = C(1) > 0$ denotes the constant from \eqref{eq:K1consequence}. Therefore, by \autoref{lemma:logusaviour}:
\begin{align*}
M_a &\ge -\delta \iint_M \left( \U(x) |g(\U(x)) - g(\U(y))| \right)^2 (\tau^2(x) \vee \tau^2(y)) J(x,y) \d y \d x - c(\delta) \rho^{-\alpha} |B_{r+\rho}|\\
&\ge -2\delta\iint_M \left( \U(x) |g(\U(x)) - g(\U(y))| \right)^2 (\tau^2(x) \wedge \tau^2(y))J(x,y) \d y \d x\\
& \quad -2\delta \iint_M \left( \U(x) |g(\U(x)) - g(\U(y))| \right)^2 (\tau(x) - \tau(y))^2 J(x,y) \d y \d x -c \delta^{-1} [C+\rho^{-\alpha}] |B_{r+\rho}|.
\end{align*}
We deduce using \eqref{eq:genaux3}, \autoref{lemma:improvedK2help} and \eqref{cutoff}:
\begin{align*}
M_a &\ge -2\delta \iint_M (G(\U(x)) - G(\U(y)))^2 \left(\U(x)G'(\U(x)) \vee \U(y)G'(\U(y)) \right) (\tau^2(x) \wedge \tau^2(y)) J(x,y) \d y \d x\\
& \quad - 2\delta \rho^{-\alpha}\Vert \U^2 G'(\U)^2 \Vert_{L^1(B_{r+\rho})} - c\delta^{-1} [C+\rho^{-\alpha}] \vert B_{r+\rho} \vert.
\end{align*}

For $N_a$, we compute using the same argument as in the proof of \autoref{lemma:MSgendual} that:
\begin{align*}
N_a &\ge -c \rho^{-\alpha} \Vert g(\U)\U \Vert_{L^{1}(B_{r+\rho})}- c\int_{B_{r+\rho}} \int_{B_{r+\rho}} \tau^2(x)g(\U(x))\U(x) \frac{|K_a(x,y)|^2}{J(x,y)} \d y \d x\\
&\ge- c [C + \rho^{-\alpha}] \Vert g(\U)\U \Vert_{L^{1}(B_{r+\rho})} - c \cE^{K_s}_{B_{r+\rho}}(\tau g(\U)^{1/2}\U^{1/2}, \tau g(\U)^{1/2}\U^{1/2}),
\end{align*}
where we applied \eqref{eq:K1consequence} to estimate the second summand in the last step and $C = C(1)$ is the constant from \eqref{eq:quantifiedK1consequence}.

Altogether, we have shown
\begin{align*}
&\widehat{\cE}_{B_{r+\rho}}(u,-\tau^{2}g(\U)) \ge B \int_{B_{r+\rho}}\int_{B_{r+\rho}} (G(\U(x)) - G(\U(y)))^2 (\tau^2(x) \wedge \tau^2(y)) K_s(x,y) \d y \d x\\
&-2\delta\int_{B_{r+\rho}}\int_{B_{r+\rho}} \hspace*{-3ex} (G(\U(x)) - G(\U(y)))^2 \left(\U(x)G'(\U(x)) \vee \U(y)G'(\U(y)) \right) (\tau^2(x) \wedge \tau^2(y)) K_s(x,y) \d y \d x\\
&-2\delta\int_{B_{r+\rho}}\int_{B_{r+\rho}} \hspace*{-3ex} (G(\U(x)) - G(\U(y)))^2 \left(\U(x)G'(\U(x)) \vee \U(y)G'(\U(y)) \right) (\tau^2(x) \wedge \tau^2(y)) J(x,y) \d y \d x\\
&- c \left([C+\rho^{-\alpha}] \Big[ |B_{r+\rho}| + \Vert \U g(\U) \Vert_{L^1(B_{r+\rho})} \Big] + \delta \rho^{-\alpha}  \Big[ \Vert \U^2 g(\U)^2 \Vert_{L^1(B_{r+\rho})} + \Vert \U^2 G'(\U)^2 \Vert_{L^1(B_{r+\rho})} \Big] \right)\\
&- c \cE^{K_s}_{B_{r+\rho}}(\tau g(\U)^{1/2}\U^{1/2}, \tau g(\U)^{1/2}\U^{1/2}).
\end{align*}

Finally, choosing $\delta > 0$ small enough, we deduce the desired result after choosing $g,G$ as in \eqref{eq:gGdef} with $p = 1$ and applying \autoref{lemma:gGidentities} and \autoref{lemma:improvedK2help}.
\end{proof}

In order to prove H\"older estimates for solutions to \eqref{PDEdual}, we need to show a weak parabolic Harnack inequality for weak supersolutions to \eqref{PDEdualext}. For this purpose, it remains to prove estimates for quantities of the form $\widehat{\cE}(u,-\tau^2g(\U)) + \widehat{\cE}^{K_a}(d,-\tau^2 g(\U))$, where $g(t) = t^{-p}$. The following two lemmas distinguish between the cases $p \neq 1$ and $p = 1$.

\begin{lemma}
\label{lemma:MSgendualext}
Assume that \eqref{K1glob} and \eqref{cutoff} hold true for some $\theta \in (\frac{d}{\alpha},\infty]$. Moreover, assume \eqref{Sob} if $\theta < \infty$. Then, there is $\gamma \ge 1$ such that for every $\delta \in (0,1)$, there are $c_1,c_2 > 0$, with the property that for every $0 < \rho \le r \le 1$ and every nonnegative function $u \in V(B_{r+\rho}|\R^d)$,  and $p \ge 1 - \kappa^{-1}$ with $p \neq 1$, $\eps > 0$, $d \in L^{\infty}(\R^d)$:
\begin{align*}
-\delta \cE^{K_s}_{B_{r+\rho}}(\tau \U^{\frac{-p+1}{2}},\tau \U^{\frac{-p+1}{2}}) &\le c_1 |p-1| \widehat{\cE}^{K_a}(d,-\tau^{2} \U^{-p}) + c_2 \left( 1 \vee p^{\gamma} \right)\rho^{-\alpha} \Vert \U^{-p+1}\Vert_{L^1(B_{r+\rho})},
\end{align*}
where $B_{2r} \subset \Omega$, $\tau = \tau_{r,\frac{\rho}{2}}$, and $\U = u + \eps + r^{\frac{1}{2}\left( \alpha - \frac{d}{\theta} \right)}\Vert d \Vert_{\infty}$.
\end{lemma}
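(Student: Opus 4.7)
The plan is to follow the strategy of the proof of \autoref{lemma:MSgendual}, treating $d \in L^\infty(\R^d)$ as an ``external'' function in the outer slot of $\widehat{\cE}^{K_a}$ in place of the supersolution $u$. The key new mechanism, enabled by the modified definition $\U = u + \eps + r^{(\alpha-d/\theta)/2}\Vert d\Vert_{\infty}$, is the pointwise bound
\[
\Vert d\Vert_{\infty} \le \U(x)\, r^{-(\alpha - d/\theta)/2}, \qquad x \in \R^d,
\]
which combined with \autoref{lemma:gGidentities} yields $\Vert d\Vert_{\infty}^2 G'(\U(x))^2 \le p\, r^{-(\alpha-d/\theta)}\U(x)^{-p+1}$ and similar absorptions. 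Any $\Vert d\Vert_{\infty}^2$-weighted expression arising after Young's inequality can thus be converted into a $\U^{-p+1}$-weighted expression at the cost of the factor $r^{-(\alpha-d/\theta)}$.

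We first bound $|\widehat{\cE}^{K_a}(d,-\tau^2 g(\U))| \le 2\Vert d\Vert_{\infty} \iint |\tau^2(x) g(\U(x)) - \tau^2(y) g(\U(y))|\, |K_a(x,y)|\,\d y\, \d x$ and split this into a local integral over $B_{r+\rho}^2$ and a tail. For the local integral, we use the algebraic identity
\[
\tau^2(x) g(\U(x)) - \tau^2(y) g(\U(y)) = \tau^2(x)(g(\U(x)) - g(\U(y))) + g(\U(y))(\tau^2(x) - \tau^2(y))
\]
and treat the two summands exactly as $M_a$ and $N_a$ are treated in the proof of \autoref{lemma:MSgendual}. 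For the first summand, \eqref{eq:genaux3} and Young's inequality with the splitting $|K_a| = J^{1/2}\cdot|K_a|/J^{1/2}$ produce one contribution controlled by $\cE^{K_s}(\tau G(\U), \tau G(\U))$ (via \autoref{lemma:improvedK2help}, \eqref{eq:genaux5}, and \eqref{cutoff}) and a second contribution of the form $\Vert d\Vert_{\infty}^2 \iint \tau^2(x) (G'(\U(x))^2 \vee G'(\U(y))^2) |K_a|^2/J$. After a symmetrization in $x \leftrightarrow y$ and the pointwise absorption, this reduces to $p\, r^{-(\alpha-d/\theta)} \int_{B_{r+\rho}} \tau^2 W\, \U^{-p+1}$, where $W(x) = \int |K_a(x,y)|^2/J(x,y)\,\d y$, and \eqref{eq:K1consequence} applied to $v = \tau \U^{(-p+1)/2}$ splits this further into a $\cE^{K_s}(\tau \U^{(-p+1)/2}, \tau \U^{(-p+1)/2})$-part and an $L^1$-remainder. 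The choice $\delta = r^{\alpha - d/\theta}$ in \eqref{eq:quantifiedK1consequence}, together with the identity $(\alpha - d/\theta) \cdot \theta\alpha/(\theta\alpha - d) = \alpha$, produces $r^{-(\alpha-d/\theta)}C(\delta) \sim r^{-\alpha} \le \rho^{-\alpha}$, yielding the target scaling. The second summand is handled analogously using $|\tau^2(x) - \tau^2(y)| \le |\tau(x) - \tau(y)|(\tau(x) + \tau(y))$.

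For the tail, since $\supp \tau \subset \overline{B_{r+\rho/2}}$ we have $|x - y| \ge \rho/2$; Cauchy--Schwarz and \eqref{cutoff} applied to $J$ (which satisfies \eqref{cutoff} by the hypothesis of \eqref{K1glob}) give $\int_{B_{r+\rho}^c} |K_a(x,y)|\,\d y \le c\rho^{-\alpha/2} W(x)^{1/2}$. A further Cauchy--Schwarz in $x$, the absorption $\Vert d\Vert_{\infty}^2 \U^{-p-1} \le r^{-(\alpha-d/\theta)} \U^{-p+1}$, and a final application of \eqref{eq:K1consequence} combined with Young's inequality, using $r^{-\alpha/2} \le \rho^{-\alpha/2}$ (since $r \ge \rho$), produce the required estimate for the tail.

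The main obstacle will be the simultaneous calibration of the Young-inequality parameters so that (i) the leading $\cE^{K_s}(\tau \U^{(-p+1)/2}, \cdot)$ contributions on the right combine with coefficient at most $\delta$ (to match the LHS), (ii) the factors $r^{-(\alpha-d/\theta)}$ from the $\Vert d\Vert_{\infty}^2$-absorption cancel against $C(\delta)$ from \eqref{eq:quantifiedK1consequence} to produce the target $\rho^{-\alpha}$, and (iii) the $p$-dependent constants appearing in \autoref{lemma:gGidentities} and in the identity $\cE^{K_s}(\tau \U^{(-p+1)/2}, \cdot) = \tfrac{(p-1)^2}{4p} \cE^{K_s}(\tau G(\U), \cdot)$ aggregate into a bound of the form $1 \vee p^{\gamma}$ for a suitable $\gamma \ge 1$. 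The factor $|p-1|$ in front of $\widehat{\cE}^{K_a}(d,-\tau^2 \U^{-p})$ then emerges naturally from this identity after specializing $g, G$ via \eqref{eq:gGdef}.
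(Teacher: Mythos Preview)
Your proposal is correct and follows essentially the same route as the paper's proof: both arguments bound $|\widehat{\cE}^{K_a}(d,-\tau^2 g(\U))|$ by $\Vert d\Vert_\infty$ times an absolute-value integral, reduce to the set $M$ (implicit in your reference to $M_a,N_a$), split into the two pieces $I_1,I_2$ via the same algebraic identity, and exploit the key absorption $\Vert d\Vert_\infty \le r^{-\eta}\U$ (with $\eta=\tfrac12(\alpha-d/\theta)$) together with \eqref{eq:quantifiedK1consequence} at parameter $\delta\sim r^{2\eta}$ to recover the $\rho^{-\alpha}$ scaling. The only inessential differences are that the paper applies the absorption \emph{before} Young's inequality (yielding $\tau^2(x)\U(x)^2G'(\U(x))^2$ directly, without any maximum, so your ``symmetrization in $x\leftrightarrow y$'' step is in fact unnecessary once you are on $M$), and that for the tail the paper uses Young's inequality at the integrand level via $\tau^2(x)=|\tau^2(x)-\tau^2(y)|$ rather than your Cauchy--Schwarz on the $y$-integral; both variants lead to the same estimate.
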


\begin{proof}
Throughout the proof, we define $\eta:= \frac{1}{2}\left(\alpha - \frac{d}{\theta}\right)$. We will often use that by definition of $\U$: $\Vert d \Vert_{\infty} \le \rho^{-\eta} \U$. Moreover, we fix a pair of function $g,G$ satisfying the assumptions of \autoref{lemma:genaux} and such that $G'$ is decreasing. Later, we will fix $p \ge 1 - \kappa^{-1}$ with $p \neq 1$ and define $g,G$ as in \eqref{eq:gGdef}. We define $M$ as in the proof of \autoref{lemma:MSgen}.

Step 1: We claim that there exists $c > 0$ such that for every $\delta_0 \in (0,1)$, $\delta_2,\delta_3 > 0$:
\begin{align}
\label{eq:MSdualextstep1}
\begin{split}
&\widehat{\cE}^{K_a}_{B_{r+\rho}}(d,-\tau^{2}g(\U)) \\
&\ge -\delta_0 \cE^{K_s}_{B_{r+\rho}}(\tau G(\U),\tau G(\U)) - \delta_1 \cE^{K_s}_{B_{r+\rho}}(\tau \U G'(\U) , \tau \U G'(\U)) - \delta_2 \cE^{K_s}_{B_{r+\rho}}(\tau g(\U)^{1/2}\U^{1/2} , \tau g(\U)^{1/2}\U^{1/2})\\
&-c \Big( \delta_0 \rho^{-\alpha}\Vert G(\U)^2 \Vert_{L^1(B_{r+\rho})}  +\left[C(\delta_0\delta_1) + \delta_0 \delta_1\right]\rho^{-\alpha} \Vert \tau^2 \U^2 G'(\U)^2 \Vert_{L^1(B_{r+\rho})}\\
&\qquad\qquad+ C(\delta_2)\rho^{-\alpha} \Vert \tau^2 g(\U)\U\Vert_{L^1(B_{r+\rho})} + [1 + \delta_2]\rho^{-\alpha} \Vert g(\U)\U\Vert_{L^1(B_{r+\rho})} \Big).
\end{split}
\end{align}
Here, $C(\delta_0\delta_1), C(\delta_2) > 0$ denote respective constants from \eqref{eq:quantifiedK1consequence}.
We compute
\begin{align*}
\widehat{\cE}^{K_a}_{B_{r+\rho}}(d,-\tau^2 g(\U)) &\ge - 2\Vert d \Vert_{\infty} \iint_{M} |\tau^2 g(\U(x)) - \tau^2 g(\U(y))| |K_a(x,y)| \d y \d x\\
&= - 2\Vert d \Vert_{\infty} \iint_{M} \tau^2(x) |g(\U(x)) - g(\U(y))| |K_a(x,y)| \d y \d x\\
& \quad - 2\Vert d \Vert_{\infty} \iint_{M} g(\U(y)) |\tau^2(x) - \tau^2(y)| |K_a(x,y)| \d y \d x\\
&=: I_1 + I_2.
\end{align*}
Note that on $M$ it holds: $(\U(x) \wedge \U(y))(G'(\U(x)) \vee G'(\U(y))) = \U(x)G'(\U(x))$ since $G',g$ are decreasing. Thus, using \eqref{eq:genaux3} we obtain that for every $\delta_0 \in (0,1)$:
\begin{align*}
I_1 &\ge -2\rho^{-\eta} \iint_M \left|G(\U(x)) - G(\U(y)) \right| \U(x)G'(\U(x)) \tau^2(x) |K_a(x,y)| \d y \d x\\
&\ge - \delta_0 \iint_M (G(\U(x)) - G(\U(y)) )^2 (\tau^2(y) \vee \tau^2(x))  J(x,y) \d y \d x\\
& \quad - c \delta_0^{-1} \rho^{-2\eta} \iint_M \tau^2(x)\U^2(x)G'(\U(x))^2 \frac{|K_a(x,y)|^2}{J(x,y)} \d y \d x.
\end{align*}
With the help of \eqref{eq:genaux5}, \eqref{K1glob} and \eqref{cutoff}, we estimate the first summand from below by
\begin{align*}
 - 2\delta_0 \cE^{K_s}_{B_{r+\rho}}(\tau G(\U) , \tau G(\U)) - c \delta_0 \rho^{-\alpha} \Vert G(\U)^2 \Vert_{L^{1}(B_{r+\rho})},
\end{align*}
Moreover, by definition of $\eta$, \eqref{K1glob} and \eqref{eq:quantifiedK1consequence} applied with $\delta = \frac{\delta_0\delta_1}{c} \rho^{2\eta}$, $C(\delta) \le c C(\delta_0\delta_1) \rho^{-\frac{d}{\theta}}$ for some $\delta_1 \in (0,1)$, where $C(\delta), C(\delta_0\delta_1) > 0$ denote the constants from \eqref{eq:quantifiedK1consequence}:
\begin{align}
\label{eq:MSdualexthelp1}
\begin{split}
c \delta_0^{-1} &\rho^{-2\eta} \iint_M \tau^2(x)\U^{2}(x)G'(\U(x))^2 \frac{|K_a(x,y)|^2}{J(x,y)} \d y \d x\\
&\le \delta_1 \cE^{K_s}_{B_{r+\rho}}(\tau \U G'(\U) , \tau \U G'(\U)) + c \left[C(\delta_0\delta_1)\rho^{-\alpha} + \delta_0 \delta_1  \rho^{-\alpha}\right] \Vert \tau^2 \U^2 G'(\U)^2 \Vert_{L^1(B_{r+\rho})}.
\end{split}
\end{align}
This provides the estimate for $I_1$. To estimate $I_2$, we use \eqref{K1glob}, \eqref{cutoff}, and the fact that $g(\U(y))(\U(x) \wedge \U(y)) \le g(\U(x))\U(x) \wedge g(\U(y))\U(y)$ to show that for every $\delta_2 > 0$:
\begin{align*}
I_2 &\ge -2 \rho^{-\eta} \int_{B_{r+\rho}} \int_{B_{r+\rho}} (\U(x) \wedge \U(y))g(\U(y)) |\tau^2(x) - \tau^2(y)| |K_a(x,y)| \d y \d x\\
&\ge -2 \int_{B_{r+\rho}} \int_{B_{r+\rho}}\U(y) g(\U(y)) (\tau(x) - \tau(y))^2 J(x,y) \d y \d x\\
& \quad -2 \rho^{-2\eta} \int_{B_{r+\rho}} \int_{B_{r+\rho}}(\U(x) \wedge \U(y)) g(\U(y)) (\tau^2(x) + \tau^2(y)) \frac{|K_a(x,y)|^2}{J(x,y)} \d y \d x\\
&\ge -c\rho^{-\alpha}\Vert g(\U)\U \Vert_{L^1(B_{r+\rho})} - c\rho^{-2\eta} \int_{B_{r+\rho}} \int_{B_{r+\rho}} \tau^2(y)g(\U(y))\U(y) \frac{|K_a(x,y)|^2}{J(x,y)} \d y \d x\\
&\ge -c \rho^{-\alpha} \left(C(\delta_2) \Vert \tau^2 g(\U)\U\Vert_{L^1(B_{r+\rho})} + [1 + \delta_2]\Vert g(\U)\U\Vert_{L^1(B_{r+\rho})} \right) \\
& \quad - \delta_2 \cE^{K_s}_{B_{r+\rho}}(\tau g(\U)^{1/2}\U^{1/2} , \tau g(\U)^{1/2}\U^{1/2}).
\end{align*}
Note that in the last step, we applied a similar argument as in \eqref{eq:MSdualexthelp1}, this time applying \eqref{eq:quantifiedK1consequence} with $\delta = \frac{\delta_2}{c} \rho^{2\eta}$, $C(\delta) \le c C(\delta_2) \rho^{-\frac{d}{\theta}}$.
This proves the desired estimate.

Step 2: Note that for every $\delta_2 > 0$:
\begin{align}
\label{eq:MSdualextstep2}
\begin{split}
&\widehat{\cE}^{K_a}_{(B_{r+\rho} \times B_{r+\rho})^{c}}(d,-\tau^2 g(\U)) \ge - 2\Vert d \Vert_{\infty} \int_{B_{r+\rho}} \int_{B_{r+\rho}^c} \tau^2(x) g(\U(x)) |K_a(x,y)| \d y \d x\\
&\ge -2 \rho^{-\eta}\int_{B_{r+\rho}} \int_{B_{r+\rho}^c} g(\U(x))\U(x) |\tau^2(x) - \tau^2(y)||K_a(x,y)| \d y \d x\\
&\ge -2\int_{B_{r+\rho}} \int_{B_{r+\rho}^c} g(\U(x))\U(x)(\tau(x) - \tau(y))^2 J(x,y) \d y \d x\\
& \quad - 2\rho^{-2\eta} \int_{B_{r+\rho}}\tau^2(x)g(\U(x))\U(x) \int_{B_{r+\rho}^c}  \frac{|K_a(x,y)|^2}{J(x,y)} \d y \d x\\
&\ge -c \rho^{-\alpha} \left(C(\delta_2) \Vert \tau^2 g(\U)\U\Vert_{L^1(B_{r+\rho})} + \delta_2\Vert g(\U)\U\Vert_{L^1(B_{r+\rho})} \right) - \delta_2 \cE^{K_s}_{B_{r+\rho}}(\tau g(\U)^{1/2}\U^{1/2} , \tau g(\U)^{1/2}\U^{1/2}).
\end{split}
\end{align}
Here, we used the same arguments as in the estimate of $I_2$ and \eqref{K1glob}.

By combining \eqref{eq:MSdualextstep1} and \eqref{eq:MSdualextstep2}, we obtain the desired result, upon defining $g,G$ as in \eqref{eq:gGdef}, using \autoref{lemma:gGidentities} and setting, for some arbitrary $\delta \in (0,1)$: $\delta_0 = \delta (1-\kappa^{-1})\frac{|p-1|}{p} \le 1$, $\delta_1 = \frac{\delta}{p|p-1|}$, $\delta_2 = \frac{\delta}{|p-1|}$.
\end{proof}

\begin{lemma}
\label{lemma:MSgendualext2}
Assume that \eqref{K1glob}, \eqref{K2} and \eqref{cutoff} hold true for some $\theta \in (\frac{d}{\alpha},\infty]$. Moreover, assume \eqref{Sob} if $\theta < \infty$. Then for every $\delta > 0$, there is $c> 0$ such that for every $0 < \rho \le r \le 1$ and every nonnegative function $u \in V(B_{r+\rho}|\R^d)$, $\eps > 0$, $d \in L^{\infty}(\R^d)$:
\begin{align*}
-\delta\int_{B_{r+\rho}} \int_{B_{r+\rho}} (\tau^2(x) \wedge \tau^2(y)) &\left(\log\frac{\U(x)}{\tau(x)}-\log\frac{\U(y)}{\tau(y)}\right)^2 K_s(x,y) \d y \d x\\
&\qquad\qquad\qquad \le \widehat{\cE}^{K_a}(d,-\tau^2 \U^{-1}) + c \rho^{-\alpha}\vert B_{r+\rho}\vert,
\end{align*}
where $B_{2r} \subset \Omega$, $\tau = \tau_{r,\frac{\rho}{2}}$, and $\U = u + \eps + r^{\frac{1}{2}\left( \alpha - \frac{d}{\theta} \right)}\Vert d \Vert_{\infty}$.
\end{lemma}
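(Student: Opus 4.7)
The plan is to follow the proof of \autoref{lemma:MSgendualext} with $g(t) = t^{-1}$ and $G(t) = \log t$ as in \eqref{eq:gGdef} for $p = 1$, combining it with the logarithmic techniques from \autoref{lemma:MSgendual2}. The specialization to $p = 1$ brings considerable simplifications via \autoref{lemma:gGidentities}: the identities $\U G'(\U) = 1$, $g(\U)\U = 1$, and $g(\U)/G'(\U) = 1$ collapse the auxiliary energies $\cE^{K_s}_{B_{r+\rho}}(\tau \U G'(\U), \tau \U G'(\U))$ and $\cE^{K_s}_{B_{r+\rho}}(\tau g(\U)^{1/2}\U^{1/2}, \tau g(\U)^{1/2}\U^{1/2})$ appearing in \autoref{lemma:MSgendualext} to $\cE^{K_s}_{B_{r+\rho}}(\tau, \tau) \le c \rho^{-\alpha}|B_{r+\rho}|$ via \eqref{cutoff}. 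In the spirit of \autoref{lemma:MSgendual2}, I will replace the remaining energy $\cE^{K_s}_{B_{r+\rho}}(\tau \log \U, \tau \log \U)$ throughout by $\iint_{B_{r+\rho} \times B_{r+\rho}} (\log \U(x) - \log \U(y))^2 (\tau^2(x) \wedge \tau^2(y)) K_s(x,y) \d y \d x$.

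First I decompose $\widehat{\cE}^{K_a}(d, -\tau^2 \U^{-1})$ into an interior integral over $B_{r+\rho} \times B_{r+\rho}$ and a tail. For the interior, \autoref{lemma:symmlemma} with $f = \U$ (so $M = \{\U(y) > \U(x)\}$) and the triangle inequality yield $\widehat{\cE}^{K_a}_{B_{r+\rho} \times B_{r+\rho}}(d, -\tau^2 \U^{-1}) \ge I_1 + I_2$, where $I_1 \ge -4 \|d\|_\infty \iint_M \tau^2(x) |g(\U(x)) - g(\U(y))| |K_a(x,y)| \d y \d x$ and $I_2$ is the analogous expression with $g(\U(y))|\tau^2(x) - \tau^2(y)|$ in place of $\tau^2(x)|g(\U(x)) - g(\U(y))|$. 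Since $\U \ge r^\eta \|d\|_\infty$ with $\eta = \tfrac{1}{2}(\alpha - d/\theta) > 0$ and $r \ge \rho$, the baseline bound $\|d\|_\infty \le \rho^{-\eta}(\U(x) \wedge \U(y))$ holds on $M$; combined with \eqref{eq:genaux3} and $tG'(t) = 1$ this furnishes two complementary pointwise estimates on $M$, namely $(i)$ $\|d\|_\infty |g(\U(x)) - g(\U(y))| \le \rho^{-\eta}|\log \U(x) - \log \U(y)|$ and $(ii)$ $\|d\|_\infty |g(\U(x)) - g(\U(y))| \le \rho^{-\eta}$.

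The hardest step is estimating $I_1$. I will split $\tau^2(x) \le 2(\tau^2(x) \wedge \tau^2(y)) + 2(\tau(x) - \tau(y))^2$ via \autoref{lemma:logusaviour}. On the $(\tau^2(x) \wedge \tau^2(y))$ piece I apply $(i)$ together with Young's inequality pairing $|\log \U(x) - \log \U(y)|(\tau^2(x) \wedge \tau^2(y))^{1/2} J^{1/2}$ against $\rho^{-\eta}(\tau^2(x) \wedge \tau^2(y))^{1/2} |K_a|/J^{1/2}$; \autoref{lemma:improvedK2help} combined with \eqref{K1glob} handles the energy factor, while \eqref{eq:quantifiedK1consequence} applied with test parameter $\tilde\delta \sim \rho^{2\eta}$, which balances the resulting $\rho^{-d/\theta}$ against the prefactor $\rho^{-2\eta}$ to produce exactly $\rho^{-\alpha}$, handles the $|K_a|^2/J$ factor. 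On the $(\tau(x) - \tau(y))^2$ piece, estimate $(i)$ would generate the uncontrolled term $\iint (\log\U(x) - \log\U(y))^2 (\tau(x) - \tau(y))^2 J \d y \d x$; I avoid this by using $(ii)$ instead together with $|K_a| \le \tfrac{1}{2}(\lambda J + \lambda^{-1}|K_a|^2/J)$ taken with $\lambda = \rho^\eta$, which reduces the piece to $\iint(\tau(x)-\tau(y))^2 J \d y \d x$ and $\rho^{-2\eta}\iint(\tau(x)-\tau(y))^2 |K_a|^2/J \d y \d x$, both bounded by $c\rho^{-\alpha}|B_{r+\rho}|$ via \eqref{K1glob}, \eqref{cutoff}, and the pointwise bound $(\tau(x)-\tau(y))^2 \le 2\tau^2(x) + 2\tau^2(y)$ together with \eqref{eq:quantifiedK1consequence}. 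The main obstacle is precisely this dichotomy: neither $(i)$ nor $(ii)$ closes the estimate alone, and the $\rho$-scalings must be tuned through $\lambda = \rho^\eta$ and $\tilde\delta = \rho^{2\eta}$ to produce $\rho^{-\alpha}$ rather than a suboptimal $\rho^{-\alpha - \eta}$. The estimate of $I_2$ is analogous but simpler, using $\|d\|_\infty g(\U(y)) \le \rho^{-\eta}$ and $|\tau^2(x) - \tau^2(y)| \le 2|\tau(x) - \tau(y)|$; and the tail follows Step 2 of \autoref{lemma:MSgendualext}, since $\tau$ vanishes outside $B_{r+\rho/2}$ and $g(\U)\U = 1$ lets that calculation deliver a pure $c\rho^{-\alpha}|B_{r+\rho}|$ remainder.

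Combining all contributions produces $\widehat{\cE}^{K_a}(d, -\tau^2 \U^{-1}) \ge -C\delta \iint_{B_{r+\rho} \times B_{r+\rho}} (\log\U(x) - \log\U(y))^2 (\tau^2(x) \wedge \tau^2(y)) K_s(x,y) \d y \d x - c_\delta \rho^{-\alpha}|B_{r+\rho}|$ for every $\delta > 0$. To match the statement, I convert $(\log\U(x) - \log\U(y))^2$ into $(\log(\U(x)/\tau(x)) - \log(\U(y)/\tau(y)))^2$ via the reverse form of \eqref{eq:logaux4}, namely $(\tau^2(x) \wedge \tau^2(y))|\log\U(x) - \log\U(y)|^2 \le 2(\tau^2(x) \wedge \tau^2(y))|\log(\U(x)/\tau(x)) - \log(\U(y)/\tau(y))|^2 + 2(\tau(x) - \tau(y))^2$, which follows from essentially the same computation that establishes \eqref{eq:logaux4}; the new error $2\cE^{K_s}_{B_{r+\rho}}(\tau,\tau) \le c\rho^{-\alpha}|B_{r+\rho}|$ is absorbed via \eqref{cutoff}, and renaming $2C\delta \mapsto \delta$ delivers the claimed estimate.
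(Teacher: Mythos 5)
Your proof is correct and follows essentially the same route as the paper: specialize the machinery of \autoref{lemma:MSgendualext} to $p=1$ (so that $\U G'(\U) = g(\U)\U = 1$ by \autoref{lemma:gGidentities}), split off the tail via Step 2 of that lemma, decompose the interior into $I_1, I_2$ via \autoref{lemma:symmlemma}, and tune the $\rho^{\pm\eta}$-scalings through Young's inequality, \autoref{lemma:logusaviour}, \autoref{lemma:improvedK2help} and a H\"older (equivalently a quantified \eqref{eq:K1consequence}) bound so that the non-absorbed terms collapse to $c\rho^{-\alpha}|B_{r+\rho}|$ -- the order in which you apply Young and the logusaviour split differs from the paper's but is an inessential rearrangement. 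One small refinement worth keeping: you correctly observe that the final passage from the $(\log \U)$-quantity to the $\log(\U/\tau)$-quantity needs the \emph{reverse} of \eqref{eq:logaux4} (an upper, not lower, bound on $(\tau^2(x)\wedge\tau^2(y))(\log\U(x)-\log\U(y))^2$, which follows from the same elementary computation), since here the coefficient in front is $-\delta$ rather than $+c_2$; the paper's proof only cites \eqref{eq:logaux4} without flagging this sign reversal.
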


\begin{proof}
The proof follows by combination of the respective proofs of \autoref{lemma:MSgendual2} and \autoref{lemma:MSgendualext}. As before, let us denote $\eta:= \frac{1}{2}\left(\alpha - \frac{d}{\theta}\right)$ and note that by definition, $\Vert d \Vert_{\infty} \le \rho^{-\eta} \U$. Moreover, we define $M$ as in the proof of \autoref{lemma:MSgen}. We  will explain how to prove that for every $\delta > 0$ there exists $c > 0$ such that
\begin{align}
\label{eq:MSgendualext2help1}
\begin{split}
\widehat{\cE}_{B_{r+\rho}}(u,-\tau^2 \U^{-1}) &\ge -\delta \int_{B_{r+\rho}}\int_{B_{r+\rho}} (\log \U(x) - \log \U(y))^2 (\tau^2(x) \wedge \tau^2(y))K_s(x,y) \d y \d x\\
&-c [C + \rho^{-\alpha}]|B_{r+\rho}|,
\end{split}
\end{align}
where $C = C(1) > 0$ is the constant from \eqref{eq:K1consequence}.
By the same arguments as in Step 2 of the proof of \autoref{lemma:MSgendualext}, one can prove that
\begin{align*}
\widehat{\cE}^{K_a}_{(B_{r+\rho} \times B_{r+\rho})^{c}} & (d,-\tau^2 \U^{-1}) = \widehat{\cE}^{K_a}_{(B_{r+\rho} \times B_{r+\rho})^{c}}(d,-\tau^2 g(\U))\\
&\ge -c [C\rho^{-\alpha} + \rho^{-\alpha}]\Vert g(\U)\U\Vert_{L^1(B_{r+\rho})} - c\cE^{K_s}_{B_{r+\rho}}(\tau g(\U)^{1/2}\U^{1/2} , \tau g(\U)^{1/2}\U^{1/2})\\
&= -c \rho^{-\alpha}|B_{r+\rho}|,
\end{align*}
choosing $g,G$ as in \eqref{eq:gGdef} with $p=1$ and applying \autoref{lemma:gGidentities}, where we used that $\rho \le 1$.
Combining these two estimates, the desired result directly follows using \eqref{eq:logaux4} and \eqref{cutoff}, as in the proof of \autoref{lemma:MSgen2}.\\
It remains to prove \eqref{eq:MSgendualext2help1}. As in the proof of \autoref{lemma:MSgendualext}, we decompose
\begin{align*}
\widehat{\cE}^{K_a}_{B_{r+\rho}}(d,-\tau^2 g(\U)) \ge I_1 + I_2.
\end{align*}
To estimate $I_1$, we compute that there exists $c > 0$ such that for every $\delta > 0$:
\begin{align*}
I_1 &\ge -2 \rho^{-\eta} \iint_M \tau^2(x) \U(x)|g(\U(x)) - g(\U(y))| |K_a(x,y)| \d y \d x\\
&\ge - \delta \iint_M (\tau^2(x) \vee \tau^2(y)) \left(\U(x)|g(\U(x)) - g(\U(y))|\right)^2 J(x,y) \d y \d x\\
& \quad - c \delta^{-1} \rho^{-2\eta} \iint_{M} (\tau^2(x) \vee \tau^2(y)) \frac{|K_a(x,y)|^2}{J(x,y)} \d y \d x.
\end{align*}
As in the estimate for $M_a$ in the proof of \autoref{lemma:MSgendual2}, we estimate the first summand as follows:
\begin{align*}
\delta &\iint_M (\tau^2(x) \vee \tau^2(y)) \left(\U(x)|g(\U(x)) - g(\U(y))|\right)^2 J(x,y) \d y \d x \le 2\delta \rho^{-\alpha} \Vert \U^2 G'(\U)^2\Vert_{L^1(B_{r+\rho})} \\
&+ 2\delta \iint_M (G(\U(x)) - G(\U(y)))^2 \left(\U(x)G'(\U(x)) \vee \U(y)G'(\U(y)) \right)(\tau^2(x) \wedge \tau^2(y)) J(x,y) \d y \d x,
\end{align*}
using \eqref{eq:genaux3}, \eqref{cutoff}, \autoref{lemma:logusaviour} and \autoref{lemma:improvedK2help}.
Moreover, by definition of $\eta$, and \eqref{K1}:
\begin{align*}
\delta^{-1} \rho^{-2\eta} \iint_{M} (\tau^2(x) \vee \tau^2(y)) \frac{|K_a(x,y)|^2}{J(x,y)} \d y \d x \le c\delta^{-1} \rho^{\frac{d}{\theta}-\alpha} \Vert \tau^2 \Vert_{L^{\theta'}(B_{r+\rho})} \le c\delta^{-1} \rho^{-\alpha} |B_{r+\rho}|.
\end{align*}
This provides the desired estimate for $I_1$.\\
The estimate of $I_2$ follows along the lines of the corresponding estimate in the proof of \autoref{lemma:MSgendualext} and yields that there exists $c> 0$ such that:
\begin{align*}
I_2 &\ge -c [C\rho^{-\alpha} + \rho^{-\alpha}]\Vert g(\U)\U\Vert_{L^1(B_{r+\rho})} - c \cE^{K_s}_{B_{r+\rho}}(\tau g(\U)^{1/2}\U^{1/2} , \tau g(\U)^{1/2}\U^{1/2}).
\end{align*}

Altogether, we have shown:
\begin{align*}
&\widehat{\cE}^{K_a}_{B_{r+\rho}}(d,-\tau^2 g(\U)) \ge -c [C\rho^{-\alpha} + \rho^{-\alpha}] \Vert g(\U)\U\Vert_{L^1(B_{r+\rho})} - c \cE^{K_s}_{B_{r+\rho}}(\tau g(\U)^{1/2}\U^{1/2} , \tau g(\U)^{1/2}\U^{1/2})\\
& -2\delta \iint_M (G(\U(x)) - G(\U(y)))^2 \left(\U(x)G'(\U(x)) \vee \U(y)G'(\U(y)) \right)(\tau^2(x) \wedge \tau^2(y)) J(x,y) \d y \d x\\
&-2\delta \rho^{-\alpha} \Vert \U^2 G'(\U)^2\Vert_{L^1(B_{r+\rho})} -c\delta^{-1} \rho^{-\alpha} |B_{r+\rho}| 
\end{align*}

Finally, let us choose $g,G$ as in \eqref{eq:gGdef} with $p=1$. This implies \eqref{eq:MSgendualext2help1} upon using \autoref{lemma:gGidentities} and \autoref{lemma:improvedK2help}, as desired.

\end{proof}

\section{Weak parabolic Harnack inequalities}
\label{sec:wpHI}

In this section we establish our main results \autoref{thm:mainthmPDE}(i) and \autoref{thm:mainthmPDEdual}(i). Moreover, we will prove a weak parabolic Harnack inequality for nonnegative weak supersolutions to \eqref{PDEdualext}. \\
The Caccioppoli-type estimates for nonsymmetric forms established in \autoref{sec:Caccioppoli} are tailored in such a way that it is possible to perform the well-known Moser iteration technique for solutions to \eqref{PDE} and \eqref{PDEdual} (respectively \eqref{PDEdualext}) in the same way as for parabolic equations connected to symmetric forms. We follow the the proof in \cite{FeKa13}, which is based on Moser's original arguments, see \cite{Mos64}, \cite{Mos71}.

First, we set up Moser iteration schemes for negative and small positive exponents (see \autoref{thm:MI}, \autoref{thm:MI2}), in order to prove estimates for the infimum and the $L^1$-norm of a nonnegative supersolution. These results require the Caccioppoli-type estimates \autoref{lemma:MSgen}, \autoref{lemma:MSgendual} and \autoref{lemma:MSgendualext}.\\

\begin{theorem}[Moser iteration I: negative exponents]
\label{thm:MI}
Assume \eqref{cutoff} and \eqref{Sob}. 
\vspace{-0.2cm}
\begin{itemize}
\item[(i)] Assume \eqref{K1} holds true for some $\theta \in [\frac{d}{\alpha}, \infty]$. Then, there exist $c > 0$ and $\delta > 0$ such that for every $p^* \in (0,1]$,  for every $0 < R \le 1$, every $\sigma \in [1/2,1)$ and every nonnegative, weak supersolution $u$ to \eqref{PDE} in $I^{\ominus}_R(t_0) \times B_{2R}$ and every $\eps > 0$, it holds:
\begin{align}
\label{eq:MI}
\inf_{I^{\ominus}_{\sigma R}(t_0) \times B_{\sigma R}} \U \ge \left[c (1- \sigma)^{\delta}\right]^{1/p^*} \left(\dashint_{I^{\ominus}_R(t_0)} \dashint_{B_R} \U^{-p^*}(t,x) \d x \d t\right)^{-1/p^*},
\end{align}
where $B_{2R} \subset \Omega$ and $\U = u + \eps + R^{\alpha}\Vert f \Vert_{\infty}$.

\item[(ii)] Assume \eqref{K1glob} holds true for some $\theta \in (\frac{d}{\alpha}, \infty]$. Then, there exist $c > 0$ and $\delta > 0$ such that for every $p^* \in (0,1]$, every $0 < R \le 1$, every $\sigma \in [1/2,1)$ and every  nonnegative, weak supersolution $u$ to \eqref{PDEdualext} in $I^{\ominus}_R(t_0) \times B_{2R}$ and every $\eps > 0$ the estimate \eqref{eq:MI} holds true with $\U = u + \eps + R^{\alpha}\Vert f \Vert_{L^{\infty}} + R^{\frac{1}{2}\left(\alpha - \frac{d}{\theta} \right)}\Vert d \Vert_{L^{\infty}}$.
\end{itemize}
\end{theorem}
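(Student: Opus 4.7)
Both parts follow from a parabolic Moser iteration with negative exponents, applying \autoref{lemma:MSgen} for (i) and \autoref{lemma:MSgendual} together with \autoref{lemma:MSgendualext} for (ii); the second lemma in (ii) is precisely what allows one to handle the additional drift term $\widehat{\cE}^{K_a}(d,\phi)$ in \eqref{PDEdualext}. Since the target quantity is $\|\U^{-p^\ast}\|_{L^1}$ and the iteration drives the exponent of $\U^{-1}$ to $+\infty$, I would restrict throughout to $p > 1$, so that $-p+1 < 0$ and the case $p \in [1-\kappa^{-1},1)$ never appears. The definition of $\U$ is chosen so that $R^\alpha\|f\|_\infty \le \U$ (and $R^{(\alpha-d/\theta)/2}\|d\|_\infty \le \U$ in (ii)), which lets all affine lower order contributions be absorbed into the target.

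For one step of the iteration, I would test the weak supersolution inequality against the nonpositive function $\phi = -\eta^2(t)\tau^2(x)\U^{-p}(t,x)$, where $\tau = \tau_{r,\rho/2}$ is the spatial cutoff from \eqref{cutoff} on nested balls $B_r \subset B_{r+\rho}$ and $\eta$ is a smooth time cutoff on nested intervals $I' \subset I''$ vanishing at the left endpoint of $I''$. The pointwise identity $\U^{-p}\partial_t \U = -(p-1)^{-1}\partial_t \U^{-p+1}$ followed by integration by parts in $t$ converts the time derivative term into a boundary contribution $(p-1)^{-1}\eta^2(t_2)\int \tau^2 \U^{-p+1}(t_2)\,\d x$ plus a cross term involving $\eta\eta'$, for any $t_2 \in I'$. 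The source is bounded by $R^{-\alpha}\int \eta^2\tau^2 \U^{-p+1}$, and the bilinear form contribution is controlled by the Caccioppoli lemma. Multiplying through by $p-1$ and rearranging yields the parabolic energy estimate
\begin{align*}
&\sup_{t \in I'}\int_{B_r}\U^{-p+1}(t,x)\,\d x + (p-1)\int_{I'}\cE^{K_s}_{B_{r+\rho}}\bigl(\tau\U^{(-p+1)/2}(t),\tau\U^{(-p+1)/2}(t)\bigr)\,\d t \\
&\qquad\qquad \le c(1\vee p^\gamma)\bigl[\rho^{-\alpha}+|I''\setminus I'|^{-1}\bigr]\|\U^{-p+1}\|_{L^1(I''\times B_{r+\rho})},
\end{align*}
with $\gamma\ge 1$ from \autoref{lemma:MSgendual}. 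Applying \eqref{Sob} slice-wise to $\tau\U^{(-p+1)/2}(t)$ gives $L^{d/(d-\alpha)}$-control in $x$, and interpolating with the $L^\infty_t L^1_x$ bound on the left produces, via the standard parabolic H\"older trick, a space-time gain to $L^{\tilde\kappa}$ with $\tilde\kappa = 1 + \alpha/d > 1$.

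Writing $q = p-1$, this is a reverse H\"older inequality $\|\U^{-q}\|_{L^{\tilde\kappa}(Q')} \le C_q^{1/\tilde\kappa}\|\U^{-q}\|_{L^1(Q'')}$ for nested space-time cylinders $Q' \subset Q''$. Setting $r_k = \sigma R + (1-\sigma)R\,2^{-k}$, $Q_k = I^{\ominus}_{r_k}(t_0)\times B_{r_k}$ and $q_k = p^\ast \tilde\kappa^k$, iterating between $Q_k$ and $Q_{k+1}$ and taking $q_k$-th roots telescopes to
\begin{align*}
\bigl(\inf_{I^{\ominus}_{\sigma R}(t_0)\times B_{\sigma R}}\U\bigr)^{-1} = \lim_{k\to\infty}\|\U^{-q_k}\|_{L^1(Q_k)}^{1/q_k} \le \Bigl(\prod_{k=0}^\infty C_k^{1/q_k}\Bigr)\|\U^{-p^\ast}\|_{L^1(Q_0)}^{1/p^\ast},
\end{align*}
with $C_k \lesssim (1\vee q_k^\gamma)(1-\sigma)^{-\alpha}R^{-\alpha}2^{k\alpha}$, and rearranging yields \eqref{eq:MI}.

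The main technical obstacle is verifying that the infinite product $\prod_k C_k^{1/q_k}$ assumes the claimed form $c(1-\sigma)^{-\delta/p^\ast}$. This reduces to summability of $\sum_k q_k^{-1}\bigl[k + \gamma\log q_k + \log(1-\sigma)^{-1}\bigr]$, which is immediate since $q_k$ grows geometrically, provided $\gamma < \infty$. In part (ii) the finiteness of $\gamma$ is precisely what forces the strict inequality $\theta > d/\alpha$, since $\gamma$ arises from the constant $C(\delta_1)$ in \eqref{eq:quantifiedK1consequence}, which diverges at $\theta = d/\alpha$; this explains the different admissible ranges of $\theta$ in the two parts.
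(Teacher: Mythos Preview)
Your proposal is correct and follows essentially the same route as the paper: the same Caccioppoli lemmas (\autoref{lemma:MSgen} for (i), \autoref{lemma:MSgendual} together with \autoref{lemma:MSgendualext} for (ii)), the same test function $-\tau^2\U^{-p}$, the same time-cutoff argument, the same interpolation between $L^\infty_t L^1_x$ and \eqref{Sob} to gain the factor $\kappa = 1+\alpha/d$, and the same geometric iteration with $r_k = \sigma R + (1-\sigma)R\,2^{-k}$ and $q_k = p^\ast\kappa^k$. The only cosmetic difference is that the paper first tests against $-\tau^2\U^{-p}$ and afterwards multiplies by the time cutoff $\chi^2$ before integrating in $t$, whereas you build $\eta^2$ into the test function from the start; also, the extra factor $(p-1)$ you display in front of the energy term on the left-hand side of your parabolic estimate is not present in the paper's version (after the Caccioppoli lemma the energy carries coefficient $1$), but this is harmless for the iteration.
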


\begin{proof}
We first explain how to prove (i). The proof works as in \cite{FeKa13}. First, let $0 < \rho \le r \le r + \rho < R$, and $q \ge p^{\ast}$ for some $p^{\ast} \in (0,1]$.
Writing $q = p-1$, we observe that \autoref{lemma:MSgen} implies that for any nonnegative $u \in V(B_{r+\rho}|\R^d)$:
\begin{align}
\label{eq:negexpCacc}
\cE^{K_s}_{B_{r+\rho}}(\tau \U^{-\frac{q}{2}},\tau \U^{-\frac{q}{2}}) \le c_1 q\cE(u,-\tau^{2}\U^{-q-1}) + c_2 (1 \vee q) \rho^{-\alpha} \Vert \U^{-q}\Vert_{L^{1}(B_{r+\rho})},
\end{align}
where $c_1, c_2 > 0$ are some constants.
We observe that $\partial_t(\U^{-q}) = -q\U^{-q-1} \partial_t u$. For any nonnegative weak supersolution $u$ to \eqref{PDE}, we obtain using \eqref{eq:negexpCacc} with $q = p-1$:
\begin{align}
\label{eq:MIhelp1}
\begin{split}
c_1&\int_{B_{r+\rho}}\tau^{2}(x)\partial_t(\U^{-q})(t,x) \d x + \cEs_{B_{r+\rho}}(\tau \U^{-\frac{q}{2}}(t),\tau \U^{-\frac{q}{2}}(t))\\
&\le c_1 q \left[(\partial_t u(t) , -\tau^{2}\U^{-q-1}(t)) +  \cE(u(t),-\tau^{2}\U^{-q-1}(t)) \right] + c_2 (1 \vee q) \rho^{-\alpha} \Vert \U^{-q}(t)\Vert_{L^{1}(B_{r+\rho})}\\
&\le c_3 q\left[ (f(t),-\tau^{2}\U^{-q-1})\right] + c_2 (1 \vee q)\rho^{-\alpha} \Vert \U^{-q}(t)\Vert_{L^{1}(B_{r+\rho})}\\
&\le c_4 (1 \vee q) \rho^{-\alpha} \Vert \U^{-q}(t)\Vert_{L^{1}(B_{r+\rho})},
\end{split}
\end{align}
where $c_3,c_4 > 0$ are constants and we tested the equation with $-\tau^{2}\U^{-q-1}$, where $\tau = \tau_{r,\frac{\rho}{2}}$. Moreover, we used that by definition of $\U$:
\begin{align*}
(f(t),-\tau^{2}\U^{-q-1}) \le c_5 \rho^{-\alpha} \Vert \U^{-q}(t)\Vert_{L^{1}(B_{r+\rho})}
\end{align*}
for some $c_5 > 0$.
Let now $\chi \in C^1(\R)$ be a nonnegative function such that $\chi \equiv 1$ in $I^{\ominus}_r$, $\Vert \chi \Vert_{\infty} \le 1$, $\Vert \chi'\Vert_{\infty} \le 2((r+\rho)^{\alpha}-r^{\alpha})^{-1}$ and $\chi(t_0 - (r+\rho)^{\alpha}) = 0$. We multiply \eqref{eq:MIhelp1} with $\chi^2$ and integrate from $t_0 - (r+\rho)^{\alpha}$ to $t$ for some arbitrary $t \in I^{\ominus}_{r}(t_0)$. This yields
\begin{align*}
\int_{B_{r+\rho}}& \chi^2(t)\tau^{2}(x) \U^{-q}(t,x) \d x + \int^t_{t_0 - (r+\rho)^{\alpha}} \chi^2(s) \cEs_{B_{r+\rho}}(\tau \U^{-\frac{q}{2}}(s),\tau \U^{-\frac{q}{2}}(s)) \d s\\
&\le c_6 (1 \vee q) \rho^{-\alpha} \int_{t_0 - (r+\rho)^{\alpha}}^t \chi^2(s) \Vert \U^{-q}(s)\Vert_{L^{1}(B_{r+\rho})} \d s\\
& \quad + c_6\int_{t_0 - (r+\rho)^{\alpha}}^t \vert\chi'(s)\vert\chi(s) \int_{B_{r+\rho}} \tau^{2}(x) \U^{-q}(s,x) \d x \d s,
\end{align*}
where $c_6 > 0$.
Using the properties of $\chi$ we obtain
\begin{align*}
\sup_{t \in I^{\ominus}_r} &\int_{B_{r}} \U^{-q}(t,x) \d x + \int_{I^{\ominus}_r} \cEs_{B_{r+\rho}}(\tau \U^{-\frac{q}{2}}(s),\tau \U^{-\frac{q}{2}}(s)) \d s\\
&\le c_7 (1 \vee q) \left(\rho^{-\alpha} \vee ((r+\rho)^{\alpha}-r^{\alpha})^{-1}\right) \int_{I^{\ominus}_{r+\rho}} \Vert \U^{-q}(s)\Vert_{L^{1}(B_{r+\rho})} \d s
\end{align*}
for some $c_7 > 0$.
Let us define $\kappa = \frac{\alpha}{d} = 1 + \frac{\alpha}{d} > 1$. By H\"older interpolation with $\frac{1}{\kappa} = \frac{(\kappa-1)/\kappa}{1} + \frac{1/\kappa}{d/(d-\alpha)}$ and Sobolev inequality \eqref{Sob} we obtain
\begin{align}
\label{eq:MIhelp2}
\begin{split}
\Vert \U^{-q} \Vert_{L^{\kappa}(I_r^{\ominus} \times B_r)} &\le \left(\sup_{t \in I_r^{\ominus}}\Vert \U^{-q}(t) \Vert_{L^{1}(B_r)}^{\kappa-1} \int_{I_r^{\ominus}}\Vert \U^{-q}(s) \Vert_{L^{\frac{d}{d-\alpha}}(B_r)} \d s\right)^{1/\kappa}\\
&\qquad\le c_8 (1 \vee q) \left(\rho^{-\alpha} \vee ((r+\rho)^{\alpha}-r^{\alpha})^{-1}\right) \Vert \U^{-q}\Vert_{L^{1}(I^{\ominus}_{r+\rho} \times B_{r+\rho})}
\end{split}
\end{align}
for some $c_8 > 0$.
Next, we define sequences $(q_i)_i, (\rho_i)_i$ with $\rho_i,q_i > 0$, $q_i \nearrow \infty$ and $\rho_i \searrow 0$ such that $r_i  := r_{i-1}-\rho_i \searrow \sigma R$ and $r_0 = R$. Namely, we set $\rho_i = 2^{-i}(1-\sigma)R$, $q_i = \kappa q_{i-1} = \kappa^i q_0$ and $q_0 = p^*$. Then we take \eqref{eq:MIhelp2} to the power $1/q_0 = 1/p^*$, iterate, and use that as $i \to \infty$:
\begin{align*}
\left( \dashint_{I^{\ominus}_{r_i}} \dashint_{B_{r_i}} \U^{-q_i}(t,x) \d x \d t\right)^{-1/q_i} \to \left(\inf_{I_{\sigma R}^{\ominus} \times B_{\sigma R}} \U\right)^{-1}.
\end{align*}
Further details can be found in \cite{FeKa13}.\\
The proof of (ii) works in a similar way. In fact, any nonnegative weak supersolution to \eqref{PDEdualext} satisfies 
\begin{align}
\label{eq:MIhelp3}
c_9&\int_{B_{r+\rho}}\tau^{2}(x)\partial_t(\U^{-q})(t,x) \d x + \cEs_{B_{r+\rho}}(\tau \U^{-\frac{q}{2}}(t),\tau \U^{-\frac{q}{2}}(t)) \le c_{10} (1 \vee q^{\gamma}) \rho^{-\alpha} \Vert \U^{-q}(t)\Vert_{L^{1}(B_{r+\rho})}
\end{align}
for some $c_9,c_{10} > 0$ and $\gamma \ge 1$. This is due to the fact that \autoref{lemma:MSgendual} and \autoref{lemma:MSgendualext} imply:
\begin{align*}
\cE^{K_s}_{B_{r+\rho}}(\tau \U^{-\frac{q}{2}},\tau \U^{-\frac{q}{2}}) \le c_{11} q\left[\widehat{\cE}(u,-\tau^{2}\U^{-q-1}) + \widehat{\cE}^{K_a}(d,-\tau^2 \U^{-q-1}) \right] + c_{12} (1 \vee q^{\gamma})\rho^{-\alpha} \Vert \U^{-q}\Vert_{L^{1}(B_{r+\rho})}
\end{align*}
for some $c_{11},c_{12} > 0$, $\gamma \ge 1$. With the help of \eqref{eq:MIhelp3}, we can establish (ii) via the same arguments as in the proof of (i).
\end{proof}

\begin{theorem}[Moser iteration II: small positive exponents]
\label{thm:MI2}
Assume \eqref{cutoff} and \eqref{Sob}. 
\vspace{-0.2cm}
\begin{itemize}
\item[(i)] Assume \eqref{K1} holds true for some $\theta \in [\frac{d}{\alpha} , \infty]$. Then, there exist $c > 0$ and $\delta > 0$ such that for every $p^* \in (0,\kappa^{-1})$, every $0 < R \le 1$, every $\sigma \in [1/2,1)$ and every nonnegative, weak supersolution $u$ to \eqref{PDE} in $I^{\oplus}_R(t_0) \times B_{2R}$ and every $\eps > 0$, it holds:
\begin{align}
\label{eq:MI2}
\dashint_{I^{\oplus}_{\sigma R}(t_0) \times B_{\sigma R}} \U(t,x) \d x \d t \le \left[c (1- \sigma)^{-\delta}\right]^{\left(\frac{1}{p^*}-1\right)} \left( \dashint_{I^{\oplus}_R(t_0)} \dashint_{B_R} \U^{p^*}(t,x) \d x \d t \right)^{1/p^*},
\end{align}
where $B_{2R} \subset \Omega$, $\kappa = 1 + \frac{\alpha}{d}$ and $\U = u + \eps + R^{\alpha}\Vert f \Vert_{\infty}$.

\item[(ii)] Assume \eqref{K1glob} holds true for some $\theta \in (\frac{d}{\alpha} , \infty]$. Then, there exist $c > 0$ and $\delta > 0$ such that for every $p^* \in (0,\kappa^{-1})$, every $0 < R \le 1$, every $\sigma \in [1/2,1)$ and every nonnegative, weak supersolution $u$ to \eqref{PDEdualext} in $I^{\oplus}_R(t_0) \times B_{2R}$ and every $\eps > 0$, the estimate \eqref{eq:MI2} holds true with $\U = u + \eps + R^{\alpha}\Vert f \Vert_{L^{\infty}} + R^{\frac{1}{2}\left(\alpha - \frac{d}{\theta} \right)}\Vert d \Vert_{L^{\infty}}$.
\end{itemize}
\end{theorem}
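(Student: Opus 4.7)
The proof is a Moser iteration with positive exponents on the forward cylinder $I^{\oplus}$, parallel to the proof of \autoref{thm:MI} for negative exponents on $I^{\ominus}$. I describe the argument for (i); part (ii) proceeds identically after replacing \autoref{lemma:MSgen} by the combination of \autoref{lemma:MSgendual} and \autoref{lemma:MSgendualext}.

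\textbf{Step 1 (One-step differential inequality).} Fix $p \in (0,1)$, set $q := 1 - p \in (0,1)$, and test the supersolution property of $u$ with $\phi = -\tau^{2}\U^{-p}$, where $\tau = \tau_{r,\rho/2}$ and $0 < \rho \leq r$, $r + \rho \leq R$. The pointwise identity $\U^{-p}\partial_{t}\U = q^{-1}\partial_{t}(\U^{q})$ (valid since $\partial_{t}u = \partial_{t}\U$) turns the time term into $(\partial_{t}u,-\tau^{2}\U^{-p}) = -q^{-1}\partial_{t}\!\int\tau^{2}\U^{q}\,dx$; the source contribution is absorbed via $\Vert f\Vert_{\infty}\U^{-p} \leq R^{-\alpha}\U^{q} \leq \rho^{-\alpha}\U^{q}$, which uses the defining lower bound $\U \geq R^{\alpha}\Vert f\Vert_{\infty}$. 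Combining these with \autoref{lemma:MSgen} (the admissible range of $p$ being relaxed to $p > 0$ as explained in the remark following that lemma), one obtains
\begin{equation*}
-c_{1}\partial_{t}\!\int_{B_{r+\rho}}\tau^{2}(x)\U^{q}(t,x)\,dx + \cEs_{B_{r+\rho}}\bigl(\tau\U^{q/2}(t),\tau\U^{q/2}(t)\bigr) \leq C(1\vee q^{\gamma})\rho^{-\alpha}\Vert \U^{q}(t)\Vert_{L^{1}(B_{r+\rho})},
\end{equation*}
for some $\gamma \geq 1$. In part (ii) the polynomial factor $q^{\gamma}$ comes from \autoref{lemma:MSgendual}; the additional drift term $\widehat{\cE}^{K_{a}}(d,-\tau^{2}\U^{-p})$ arising from \eqref{PDEdualext} is absorbed into $\cEs(\tau\U^{q/2},\tau\U^{q/2})$ by selecting the absorption parameter $\delta$ in \autoref{lemma:MSgendualext} sufficiently small.

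\textbf{Step 2 (Reverse-time cutoff).} Because the time derivative now carries the sign $-\partial_{t}$, we introduce a cutoff $\chi \in C^{1}(\R)$ vanishing at the \emph{future} endpoint: $\chi(t_{0}+(r+\rho)^{\alpha}) = 0$, $\chi \equiv 1$ on $(-\infty,\,t_{0}+r^{\alpha}]$, and $|\chi'| \leq 2((r+\rho)^{\alpha}-r^{\alpha})^{-1}$. Multiplying the inequality from Step 1 by $\chi^{2}$ and integrating from an arbitrary $t \in I_{r}^{\oplus}(t_{0})$ up to $t_{0}+(r+\rho)^{\alpha}$, an integration by parts in time kills the boundary contribution at the upper endpoint and produces the term $c_{1}\!\int 2\chi\chi'F$ with $F(s) := \int\tau^{2}\U^{q}(s,x)\,dx$; since $\chi' \leq 0$, $\chi \geq 0$ and $F \geq 0$, this term is non-positive and after being moved to the right-hand side is controlled by $C\rho^{-\alpha}\Vert \U^{q}\Vert_{L^{1}(I_{r+\rho}^{\oplus}\times B_{r+\rho})}$. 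Using $\chi \equiv 1$ on $I_{r}^{\oplus}(t_{0})$ and taking the supremum over $t$ then yields
\begin{equation*}
\sup_{t \in I_{r}^{\oplus}(t_{0})}\int_{B_{r}}\U^{q}(t,x)\,dx + \int_{I_{r}^{\oplus}(t_{0})}\cEs_{B_{r+\rho}}\bigl(\tau\U^{q/2}(s),\tau\U^{q/2}(s)\bigr)\,ds \leq C(1\vee q^{\gamma})\rho^{-\alpha}\Vert \U^{q}\Vert_{L^{1}(I_{r+\rho}^{\oplus}(t_{0})\times B_{r+\rho})}.
\end{equation*}

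\textbf{Step 3 (Sobolev and iteration).} A H\"older interpolation with $\frac{1}{\kappa} = \frac{\kappa-1}{\kappa} + \frac{1}{\kappa}\cdot\frac{d-\alpha}{d}$, combined with the Sobolev inequality \eqref{Sob} applied to $v = \tau\U^{q/2}$, produces the one-step bound
\begin{equation*}
\Vert \U\Vert_{L^{\kappa q}(I_{r}^{\oplus}(t_{0})\times B_{r})} \leq \bigl[C(1\vee q^{\gamma})\rho^{-\alpha}\bigr]^{1/q}\,\Vert \U\Vert_{L^{q}(I_{r+\rho}^{\oplus}(t_{0})\times B_{r+\rho})}.
\end{equation*}
Choose $q_{i} := \kappa^{i}p^{*}$, $r_{i} := \sigma R + 2^{-i}(1-\sigma)R$, $\rho_{i} := r_{i-1}-r_{i} = 2^{-i-1}(1-\sigma)R$, and stop at the least index $N$ with $q_{N} \geq 1$. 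Iterating the one-step bound and noting that $\sum_{i}q_{i}^{-1} = (p^{*})^{-1}\sum_{i}\kappa^{-i}$ is a convergent geometric series (and that both $\log\rho_{i}^{-\alpha}$ and $\log(1\vee q_{i}^{\gamma})$ grow only linearly in $i$), the product of constants collapses to a single factor $[c(1-\sigma)^{-\delta}]^{(1/p^{*})-1}$ for some $\delta > 0$. Finally, since $q_{N} \geq 1$, Jensen's inequality yields $\dashint\U \leq (\dashint\U^{q_{N}})^{1/q_{N}}$, and combining with the iterated estimate gives \eqref{eq:MI2}.

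\textbf{Main obstacle.} The essential novelty compared with \autoref{thm:MI} is the opposite sign of the time derivative in the differential inequality of Step 1, which forces the choice of a cutoff vanishing at the future endpoint and restricts the iteration to the forward cylinder $I^{\oplus}$. In part (ii), a further delicate point is that the drift contribution $\widehat{\cE}^{K_{a}}(d,-\tau^{2}\U^{-p})$ has no a priori sign; it must be absorbed into the principal symmetric energy via \autoref{lemma:MSgendualext}, and the polynomial-in-$p$ constants $1\vee q^{\gamma}$ produced thereby stay summable along the geometric iteration only because the exponents $q_{i}$ grow like $\kappa^{i}$.
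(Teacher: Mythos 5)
Your proposal follows the paper's proof of \autoref{thm:MI2} step for step: the same application of \autoref{lemma:MSgen} with the small positive exponent $q=1-p$, the same future-vanishing time cutoff (forced by the flipped sign of $\partial_t(\U^q)$ relative to \autoref{thm:MI}), the same Sobolev--H\"older interpolation giving the one-step bound, and the same substitution of \autoref{lemma:MSgendual} together with \autoref{lemma:MSgendualext} for part (ii). One caveat on the iteration you sketch, which the paper itself defers to \cite{FeKa13}: the paper explicitly restricts to $q\in[p^*,\kappa^{-1})$ (so that $p=1-q\geq 1-\kappa^{-1}$ stays in the stated range of \autoref{lemma:MSgen}), whereas your choice ``stop at the least $N$ with $q_N\ge 1$'' makes the last Caccioppoli step use $q_{N-1}\in[\kappa^{-1},1)$, i.e.\ $p_{N-1}=1-q_{N-1}$ can be arbitrarily small; the remark after \autoref{lemma:MSgen} does allow any $p>0$, but with a constant depending on the lower bound for $p$, so one still has to check that this single step's constant does not spoil the claimed uniform form $[c(1-\sigma)^{-\delta}]^{1/p^*-1}$.
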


\begin{proof}
We first explain how to prove (i). The proof works as in \cite{FeKa13}. Let $0 < \rho \le r \le r + \rho < R$, and $q \in [p^{\ast},\kappa^{-1})$ for some $p^{\ast} \in (0,\kappa^{-1})$. Writing $q = 1-p$, we observe that \autoref{lemma:MSgen} implies that for any nonnegative $u \in V(B_{r+\rho}|\R^d)$:
\begin{align}
\label{eq:posexpCacc}
\cE^{K_s}_{B_{r+\rho}}(\tau \U^{\frac{q}{2}},\tau \U^{\frac{q}{2}}) \le c_1 q\cE(u,-\tau^{2}\U^{q-1}) + c_2 (1 \vee q) \rho^{-\alpha} \Vert \U^{q}\Vert_{L^{1}(B_{r+\rho})},
\end{align}
where $c_1, c_2 > 0$ are some constants. We observe that $-\partial_t (\U^q) = -q \U^{q-1} \partial_t u$. For any nonnegative weak supersolution $u$ to \eqref{PDE}, we obtain using \eqref{eq:posexpCacc}:
\begin{align}
\label{eq:MI2help1}
\begin{split}
-c_1 &\int_{B_{r+\rho}} \tau^2(x)\partial_t(\U^q)(t,x)\d x + \cEs_{B_{r+\rho}}(\tau \U^{\frac{q}{2}}(t),\tau \U^{\frac{q}{2}}(t))\\
&\le c_1 q \left[ (\partial_t u(t) , -\tau^2 \U^{q-1}(t)) + \cE(u(t),-\tau^2 \U^{q-1}(t))\right] + c_2(1 \vee q) \rho^{-\alpha} \Vert \U^q(t) \Vert_{L^{1}(B_{r+\rho})}\\
&\le c_2 q \left[(f(t) , -\tau^2 \U^{q-1}(t))\right] + c_3 q \rho^{-\alpha}\Vert \U^q(t) \Vert_{L^{1}(B_{r+\rho})}\\
&\le c_4 \rho^{-\alpha}\Vert \U^q(t) \Vert_{L^{1}(B_{r+\rho})},
\end{split}
\end{align}
where $c_3,c_4 > 0$ are constants, we used that $q \le 1$, and we tested the equation with $-\tau^2 \U^{q-1}$ with $\tau = \tau_{r,\frac{\rho}{2}}$. Moreover, we used that by definition of $\U$:
\begin{align*}
(f(t) , -\tau^2 \U^{q-1}(t)) \le c_5 \rho^{-\alpha} \Vert \U^q(t) \Vert_{L^{1}(B_{r+\rho})}
\end{align*}
for some $c_5 > 0$.
The difference between \eqref{eq:MI2help1} and the estimate \eqref{eq:MIhelp1} in the proof of \autoref{thm:MI} is the negative sign in front of the term involving the time derivative.
Therefore, we take a nonnegative function $\chi \in C^1(\R)$ such that $\chi \equiv 1$ in $I^{\oplus}_r(t_0)$, $\Vert \chi \Vert_{\infty} \le 1$, $\Vert \chi'\Vert_{\infty} \le 2((r+\rho)^{\alpha} - r^{\alpha})^{-1}$, $\chi(t_0 + (r+\rho)^{\alpha}) = 0$  and multiply \eqref{eq:MI2help1} with $\chi^2$ and integrate from $t$ to $t_0 + (r+\rho)^{\alpha}$ for an arbitrary $t \in I^{\oplus}_r(t_0)$. This yields
\begin{align*}
\int_{B_{r+\rho}} &\chi^2(t) \tau^2(x) \U^q(t,x) \d x + \int_{t}^{t_0 + (r+\rho)^{\alpha}} \chi^2(s) \cEs_{B_{r+\rho}}(\tau \U^{\frac{q}{2}}(s),\tau \U^{\frac{q}{2}}(s)) \d s\\
&\le c_6 \rho^{-\alpha} \int_{t}^{t_0 + (r+\rho)^{\alpha}} \Vert \U^q(s) \Vert_{L^{1}(B_{r+\rho})} \d s\\
& \quad + c_6 \int_{t}^{t_0 + (r+\rho)^{\alpha}} \vert \chi'(s)\vert \chi(s) \int_{B_{r+\rho}} \tau^2(x) \U^q(s,x) \d x \d s
\end{align*}
for some $c_6 > 0$. Consequently:
\begin{align*}
\sup_{t \in I^{\oplus}_r} \int_{B_{r}} \U^q(t,x) \d x &+ \int_{I^{\oplus}_r} \cEs_{B_{r+\rho}}(\tau \U^{\frac{q}{2}}(s),\tau \U^{\frac{p}{2}}(s)) \d s\\
&\le c_7 \left(\rho^{-\alpha} \vee ((r+\rho)^{\alpha}-r^{\alpha})^{-1}\right)  \int_{I^{\oplus}_{r+\rho}} \Vert \U^q(s) \Vert_{L^{1}(B_{r+\rho})} \d s
\end{align*}
for some $c_7 > 0$. Recall that $\kappa = 1 + \frac{\alpha}{d} > 1$ and observe that using H\"older interpolation and Sobolev inequality \eqref{Sob} we can derive:
\begin{align}
\label{eq:MI2help2}
\begin{split}
\Vert \U^{q} \Vert_{L^{\kappa}(I_r^{\oplus} \times B_r)} &\le \left(\sup_{t \in I_r^{\oplus}}\Vert \U^{q}(t) \Vert_{L^{1}(B_r)}^{\kappa-1} \int_{I_r^{\oplus}}\Vert \U^{q}(s) \Vert_{L^{\frac{d}{d-\alpha}}(B_r)} \d s\right)^{1/\kappa}\\
&\le c_8 \left(\rho^{-\alpha} \vee ((r+\rho)^{\alpha}-r^{\alpha})^{-1}\right)\Vert \U^{q}\Vert_{L^{1}(I^{\oplus}_{r+\rho} \times B_{r+\rho})}
\end{split}
\end{align}
for some $c_8 > 0$. Next, again an iteration argument yields the desired result (see \cite{FeKa13}).\\
The proof of (ii) works in the same way. In fact, one can prove, as in the proof of \autoref{thm:MI}, that any nonnegative weak supersolution to \eqref{PDEdualext} satisfies 
\begin{align*}
-c_{10}&\int_{B_{r+\rho}}\tau^{2}(x)\partial_t(\U^{q})(t,x) \d x + \cEs_{B_{r+\rho}}(\tau \U^{\frac{q}{2}}(t),\tau \U^{\frac{q}{2}}(t)) \le c_{11} \rho^{-\alpha} \Vert \U^{q}(t)\Vert_{L^{1}(B_{r+\rho})}
\end{align*}
for some $c_{10}, c_{11} > 0$, using \autoref{lemma:MSgendual} and \autoref{lemma:MSgendualext}. From this line, (ii) follows by the same arguments as before.
\end{proof}

Next, we establish weak $L^1$-estimates for the logarithm of a nonnegative supersolution to \eqref{PDE}, respectively \eqref{PDEdualext}, using the Caccioppoli-type estimates derived in  \autoref{lemma:MSgen2}, \autoref{lemma:MSgendual2}, and \autoref{lemma:MSgendualext2}.

\begin{theorem}[weak $L^1$-estimates for $\log \U$]
\label{thm:BMO}
Assume \eqref{K2}, \eqref{cutoff} and \eqref{Poinc}. 
\vspace{-0.2cm}
\begin{itemize}
\item[(i)] Assume \eqref{K1} holds true for some $\theta \in [\frac{d}{\alpha}, \infty]$. Moreover, assume \eqref{Sob} if $\theta < \infty$. Then there exists $c > 0$ such that for every $0 < R \le 1$ and every nonnegative, weak supersolution $u$ to \eqref{PDE} in $I_R(t_0) \times B_{2R}$ and every $\eps > 0$, it holds:
\begin{align}
\label{eq:BMO1}
& \left\vert I^{\oplus}_R(t_0) \times B_R \cap \{ \log \U < -s-a\}\right\vert \le c\frac{\left\vert I^{\oplus}_R(t_0) \times B_R \right\vert}{s}, ~~s > 0,\\
\label{eq:BMO2}
& \left\vert I^{\ominus}_R(t_0) \times B_R \cap \{ \log \U > s-a\}\right\vert \le c\frac{\left\vert I^{\ominus}_R(t_0) \times B_R \right\vert}{s}, ~~s > 0,
\end{align}
where $B_{2R} \subset \Omega$, $a = a(\U) \in \R$ is a constant, $\U = u + \eps + R^{\alpha}\Vert f \Vert_{L^{\infty}}$.

\item[(ii)] Assume \eqref{K1glob} holds true for some $\theta \in (\frac{d}{\alpha}, \infty]$. Moreover, assume \eqref{Sob} if $\theta < \infty$. Then, there exists $c > 0$ such that for every $0 < R \le 1$ and every any nonnegative, weak supersolution $u$ to \eqref{PDEdualext} in $I_R(t_0) \times B_{2R}$ and every $\eps > 0$ the estimates \eqref{eq:BMO1}, \eqref{eq:BMO2} hold true with $\U = u + \eps + R^{\alpha}\Vert f \Vert_{L^{\infty}} + R^{\frac{1}{2}\left(\alpha - \frac{d}{\theta} \right)}\Vert d \Vert_{L^{\infty}}$.
\end{itemize}
\end{theorem}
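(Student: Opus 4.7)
My plan is to adapt the classical parabolic Moser approach to the nonsymmetric nonlocal setting via the logarithmic Caccioppoli estimates of \autoref{sec:Caccioppoli}.

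\textbf{Step 1 (Differential inequality).} I would test the (super)-solution property with $\phi = -\tau^2 \U^{-1}$ for $\tau = \tau_{R, R/2}$ satisfying $\tau \equiv 1$ on $B_R$ and $\supp \tau \subset \overline{B_{3R/2}}$. The chain-rule identity $(\partial_t u, \tau^2 \U^{-1}) = \frac{d}{dt}\int \tau^2 \log \U \d x$, together with the bound $|(f, \tau^2 \U^{-1})| \le R^{-\alpha}|B_{3R/2}|$ (from the normalization of $\U$) and \autoref{lemma:MSgen2}, would yield in case (i)
\begin{equation*}
\frac{d}{dt} \int \tau^2(x) \log \U(t, x) \d x \ge c_1 \cE^{K_s}_{B_R}(\log \U(t), \log \U(t)) - c_2 R^{-\alpha}|B_{3R/2}|.
\end{equation*}
In case (ii), I would combine \autoref{lemma:MSgendual2} with \autoref{lemma:MSgendualext2} applied with a sufficiently small $\delta$, thereby absorbing the additional term $\widehat{\cE}^{K_a}(d, -\tau^2 \U^{-1})$ into the leading symmetric Dirichlet form; the shift $R^{\frac{1}{2}(\alpha - d/\theta)}\Vert d\Vert_\infty$ in $\U$ is precisely what \autoref{lemma:MSgendualext2} requires.

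\textbf{Step 2 (Almost monotonicity and reduction).} Applying \eqref{Poinc} and setting $V(t)$ to be a suitable mean of $\log \U(t, \cdot)$ over $B_R$ (e.g., the $\tau^2$-weighted mean, which is comparable to $[\log \U(t)]_{B_R}$ since $\int \tau^2 \asymp |B_R|$), I obtain
\begin{equation*}
V'(t) \ge c_3 R^{-\alpha} \dashint_{B_R} \bigl(\log \U(t, \cdot) - V(t)\bigr)^2 \d x - c_4 R^{-\alpha}.
\end{equation*}
In particular $V'(t) \ge -c_4 R^{-\alpha}$, so $V$ is nearly non-decreasing on $I_R(t_0)$. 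Setting $a := -V(t_0)$, the near-monotonicity gives $V(t) + a \ge -c_4$ on $I_R^{\oplus}(t_0)$ and $V(t) + a \le c_4$ on $I_R^{\ominus}(t_0)$. For $s \ge 2c_4$, the set in \eqref{eq:BMO1} is then contained in $\{(t,x) : V(t) - \log \U > s/2\}$, and symmetrically for \eqref{eq:BMO2}; the range $s < 2c_4$ is handled by the trivial bound $|E_s| \le |I_R(t_0) \times B_R|$.

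\textbf{Step 3 (Dyadic partition and principal obstacle).} The main technical obstacle is that a direct application of Chebyshev together with the DI integrated over $I_R^{\oplus}(t_0)$ only yields an $s^{-2}$ bound multiplied by the a priori unbounded increment $V(t_0 + R^\alpha) - V(t_0)$. To overcome this, I would partition $I_R^{\oplus}(t_0)$ via the stopping times $\tilde t_0 := t_0$, $\tilde t_{k+1} := \inf\{t > \tilde t_k : V(t) \ge V(\tilde t_k) + s\} \wedge (t_0 + R^\alpha)$. The near-monotonicity of $V$ then implies $V(t) \ge V(t_0) + ks - c_4$ on $[\tilde t_k, \tilde t_{k+1}]$, so
\begin{equation*}
\{\log \U < -s - a\} \cap [\tilde t_k, \tilde t_{k+1}] \times B_R \subset \{(t,x) : V(t) - \log \U(t,x) > (k+1)s - c_4\}.
\end{equation*}
Chebyshev at the elevated threshold $\gtrsim (k+1)s$, combined with the DI integrated over $[\tilde t_k, \tilde t_{k+1}]$ (on which $V$ grows by at most $s$), yields
\begin{equation*}
\bigl|\{\log \U < -s-a\} \cap [\tilde t_k, \tilde t_{k+1}] \times B_R\bigr| \lesssim \frac{|I_R^{\oplus}(t_0) \times B_R|}{(k+1)^2 s}.
\end{equation*}
Summing the convergent series $\sum_{k \ge 0} (k+1)^{-2}$ gives \eqref{eq:BMO1}, and an analogous dyadic partition of $I_R^{\ominus}(t_0)$ by the descent of $V$ backward in time delivers \eqref{eq:BMO2}.
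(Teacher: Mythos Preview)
Your Steps 1--2 follow the paper's strategy, but there is a gap in Step 2. The claim that the $\tau^2$-weighted mean is ``comparable to $[\log\U(t)]_{B_R}$ since $\int\tau^2\asymp|B_R|$'' is not valid: comparability of the total weights does not imply comparability of the two averages, and their difference depends on the values of $\log\U$ on the annulus $B_{3R/2}\setminus B_R$, which the restricted energy $\cE^{K_s}_{B_R}(\log\U,\log\U)$ in your Step~1 does not control. The paper avoids this by retaining the full weighted energy over $B_{3R/2}$ furnished by \autoref{lemma:MSgen2} (written for $v=-\log(\U/\tau)$ rather than $\log\U$) and invoking a \emph{weighted} Poincar\'e inequality with weight $\tau^2$, derived from \eqref{Poinc} via \cite{DyKa13}. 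With this correction your differential inequality is exactly the one the paper obtains.

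Your Step 3, by contrast, is a genuine alternative to the paper's route. The paper (via \cite{FeKa13}, \cite{CKW19}) uses Moser's ODE trick: after restricting the spatial integral to the level set, one divides the differential inequality by the square of the distance from $V(t)$ to the level and integrates, obtaining the $1/s$ bound in one stroke without any partition. Your stopping-time decomposition is correct as written (continuity of $V$ follows from $\partial_t u\in L^1_{loc}(I;L^2)$ together with $\U\ge\eps$, which guarantees finitely many stopping times and $V(\tilde t_{k+1})-V(\tilde t_k)=s$ exactly). Moser's trick is slicker; your approach is more elementary but equally valid.
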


\begin{proof}
We first explain how to prove (i). The proof works as in \cite{FeKa13}. We apply \autoref{lemma:MSgen2} with $r = R, \rho = R/2$ and define $v(t,x) = -\log\frac{\U(t,x)}{\tau(x)}$, where $\tau = \tau_{r,\frac{\rho}{2}}$. Observe that $\partial_t v = - \U^{-1}\partial_t u$. For any nonnegative weak supersolution $u$ to \eqref{PDE} and every $t \in I_R$, we obtain using \autoref{lemma:MSgen2}:
\begin{align*}
\int_{B_{3R/2}} \tau^2(x) \partial_t v(t,x) \d x &+ c_1\int_{B_{3R/2}} \int_{B_{3R/2}} (\tau^2(x) \wedge \tau^2(y)) \left(v(t,x)-v(t,y)\right)^2 K_s(x,y) \d y \d x\\
&\le \left[(\partial_t u(t),-\tau^2 \U^{-1}(t)) + \cE(u(t),-\tau^2 \U^{-1}(t))\right] + c_2 R^{-\alpha}\vert B_{R}\vert\\
&\le (f , -\tau^{2}\U^{-1}(t)) + c_2 R^{-\alpha}\vert B_{R}\vert \le c_3 R^{-\alpha}\vert B_{R}\vert,
\end{align*}
for some constants $c_1,c_2,c_3 > 0$, where we tested the equation with $-\tau^2 \U^{-1}$ and used that
\begin{align*}
(f(t) , -\tau^{2}\U^{-1}(t)) \le c R^{-\alpha} \vert B_{R}\vert
\end{align*}
by the definition of $\U$. 
We define $V(t) = \frac{\int_{B_{3R/2}} v(t,x)\tau^2(x) \d x}{\int_{B_{3R/2}}\tau^2(x) \d x}$ and apply a weighted Poincar\'e inequality as it is derived from \eqref{Poinc} in \cite{DyKa13} (see Proposition 4 in \cite{DyKa13}). This yields
\begin{align}
\label{eq:BMOhelp1}
\int_{B_{3R/2}} \tau^2(x) \partial_t v(t,x) \d x + c_4 R^{-\alpha} \int_{B_{2R/3}} (v(t,x) - V(t))^2 \tau^2(x) \d x \le c_2 R^{-\alpha} \vert B_R \vert,
\end{align}
where $c_4 > 0$.
Next, we integrate \eqref{eq:BMOhelp1} in time over $[t_1,t_2] \subset I_R$ divide by $\int_{B_{3R/2}} \tau^2(x) \d x$ and restrict the domain of integration for the second integral to $B_R$:
\begin{align*}
V(t_2) - V(t_1) + c_5 R^{-d-\alpha} \int_{t_1}^{t_2}  \int_{B_{R}} (v(t,x) - V(t))^2 \d x \le c_6 (t_2-t_1) R^{-\alpha},
\end{align*}
where $c_5, c_6 > 0$.
From here, one proceeds as in \cite{CKW19} or \cite{FeKa13}.\\
The proof of (ii) follows in the same way. Using \autoref{lemma:MSgendual2} and \autoref{lemma:MSgendualext2}, it becomes apparent that any nonnegative weak supersolution to \eqref{PDEdualext} satisfies
\begin{align*}
\int_{B_{3R/2}} \tau^2(x) &\partial_t v(t,x) \d x + c_1\int_{B_{3R/2}} \int_{B_{3R/2}} (\tau^2(x) \wedge \tau^2(y)) \left(v(t,x)-v(t,y)\right)^2 K_s(x,y) \d y \d x\\
&\le \left[(\partial_t u(t),-\tau^2 \U^{-1}(t)) + \widehat{\cE}(u(t),-\tau^2 \U^{-1}(t)) +  \widehat{\cE}^{K_a}(d,-\tau^2 \U^{-1}(t))\right] + c_7 R^{-\alpha}\vert B_{R}\vert\\
&\le c_8 R^{-\alpha}\vert B_{R}\vert
\end{align*}
for some $c_7,c_8 > 0$. With the help of this estimate, the remaining proof of (ii) goes via the same arguments as the proof of (i).
\end{proof}

Finally, we are in the position to deduce the desired weak parabolic Harnack inequality for weak supersolutions to \eqref{PDE}, respectively \eqref{PDEdual}, using a well-known lemma by Bombieri and Giusti.

\begin{proof} (of \autoref{thm:mainthmPDE} (i), \autoref{thm:mainthmPDEdual} (i))
The proof works as in \cite{FeKa13} or \cite{CKW19}. The idea is to make use of a lemma by Bombieri and Giusti (Lemma 2.2.6 in \cite{Sal02}).\\
For the application of this lemma, we need \autoref{thm:BMO}, \autoref{thm:MI} and \autoref{thm:MI2} applied with $\U = u + \eps + R^{\alpha}\Vert f \Vert_{L^{\infty}}$, where $u$ is a weak supersolution to \eqref{PDE}, respectively \eqref{PDEdual} and $\eps > 0$ is arbitrary. This yields:
\begin{align*}
\inf_{(t_0 + R^{\alpha} - (R/2)^{\alpha}, t_0 + R^{\alpha}) \times B_{R/2}} \U \ge c \left( \dashint_{(t_0 - R^{\alpha}, t_0 - R^{\alpha} + (R/2)^{\alpha}) \times B_{R/2}} \U(t,x) \d x \d t - R^{\alpha}\Vert f \Vert_{L^{\infty}}\right).
\end{align*}
The desired result follows upon taking the limit $\eps \searrow 0$.
\end{proof}

\begin{remark*}
Note that the proof of \autoref{thm:mainthmPDEdual}(i) relies on \autoref{thm:BMO}(ii), \autoref{thm:MI}(ii) and \autoref{thm:MI2}(ii) applied with $d \equiv 0$. Therefore, the results in \autoref{lemma:MSgendualext} and \autoref{lemma:MSgendualext2} are not required for the weak parabolic Harnack inequality for \eqref{PDEdual}.
\end{remark*}

Moreover, thanks to \autoref{lemma:MSgendualext} and \autoref{lemma:MSgendualext2}, we are able to prove a weak parabolic Harnack inequality for nonnegative supersolutions to \eqref{PDEdualext}:

\begin{theorem}
\label{thm:wHIdualext}
Assume that \eqref{K1glob}, \eqref{K2}, \eqref{cutoff}, \eqref{Poinc}, \eqref{Sob}, and \eqref{cutoff2dual} hold true for some $\alpha \in (0,2)$, $\theta \in (\frac{d}{\alpha},\infty]$.
Then there is $c > 0$ such that for every $0 < R \le 1$, every $d \in L^{\infty}(\R^d)$ and every nonnegative, weak supersolution $u$ to \eqref{PDEdualext} in $I_R(t_0) \times B_{2R}$:
\begin{align}
\label{eq:genwPHI}
\inf_{\left(t_0 + R^{\alpha} - (\frac{R}{2})^{\alpha}, t_0 + R^{\alpha}\right)\times B_{\frac{R}{2}}} u \ge c \left( \dashint_{\left(t_0 - R^{\alpha}, t_0 - R^{\alpha} + (\frac{R}{2})^{\alpha}\right) \times B_{\frac{R}{2}}} u(t,x) \d x \d t - R^{\alpha}\Vert f \Vert_{L^{\infty}} - R^{\eta} \Vert d \Vert_{L^{\infty}}\right).
\end{align}
\end{theorem}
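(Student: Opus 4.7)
The plan is to follow verbatim the strategy already used in the proofs of \autoref{thm:mainthmPDE}(i) and \autoref{thm:mainthmPDEdual}(i), namely to combine three ingredients via the Bombieri–Giusti lemma (Lemma 2.2.6 in \cite{Sal02}). The decisive observation is that all three preparatory results --- \autoref{thm:MI}(ii), \autoref{thm:MI2}(ii), and \autoref{thm:BMO}(ii) --- are already stated for nonnegative weak supersolutions to \eqref{PDEdualext}, and they incorporate the contribution of $d$ through the very same definition
\begin{align*}
\U = u + \eps + R^{\alpha}\Vert f \Vert_{L^{\infty}} + R^{\eta}\Vert d \Vert_{L^{\infty}}, \qquad \eta = \tfrac{1}{2}\bigl(\alpha - \tfrac{d}{\theta}\bigr).
\end{align*}
Thus no new Caccioppoli-type estimate is needed: the work has been done in \autoref{lemma:MSgendual}, \autoref{lemma:MSgendual2}, \autoref{lemma:MSgendualext} and \autoref{lemma:MSgendualext2}, which already contain the terms induced by $d$.

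Concretely, I would fix $\eps>0$ and invoke \autoref{thm:MI}(ii) on $I_R^{\ominus}(t_0)\times B_{2R}$ to deduce, for every $p^*\in(0,1]$ and every $\sigma\in[1/2,1)$, an estimate of the form $\inf \U \gtrsim (1-\sigma)^{\delta/p^*}(\dashint \U^{-p^*})^{-1/p^*}$. Dually, \autoref{thm:MI2}(ii) on $I_R^{\oplus}(t_0)\times B_{2R}$ yields $\dashint \U \lesssim (1-\sigma)^{-\delta(1/p^*-1)}(\dashint \U^{p^*})^{1/p^*}$ for small $p^*\in(0,\kappa^{-1})$. Finally, \autoref{thm:BMO}(ii) provides the two weak $L^1$-bounds \eqref{eq:BMO1}, \eqref{eq:BMO2} for $\log\U$ on the forward and backward half-cylinders, with a \emph{common} centring constant $a=a(\U)$. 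These three inputs are precisely the hypotheses of the parabolic Bombieri–Giusti lemma, which converts the log-decay into the missing bridge between negative and small positive exponents and outputs
\begin{align*}
\inf_{(t_0+R^{\alpha}-(R/2)^{\alpha},\,t_0+R^{\alpha})\times B_{R/2}}\U \;\ge\; c\dashint_{(t_0-R^{\alpha},\,t_0-R^{\alpha}+(R/2)^{\alpha})\times B_{R/2}}\U(t,x)\,\d x\,\d t.
\end{align*}

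Substituting the definition of $\U$, splitting the additive constants on both sides and readjusting $c$, one arrives at
\begin{align*}
\inf_{\cdots} u \;\ge\; c\Bigl(\dashint_{\cdots} u\,\d x\,\d t - R^{\alpha}\Vert f\Vert_{L^{\infty}} - R^{\eta}\Vert d\Vert_{L^{\infty}}\Bigr) - (1-c)\eps,
\end{align*}
and letting $\eps\searrow 0$ yields \eqref{eq:genwPHI}. The main obstacle is purely of book-keeping nature: one has to verify that the constants produced by \autoref{thm:MI}(ii), \autoref{thm:MI2}(ii), and \autoref{thm:BMO}(ii) are uniform in $d\in L^{\infty}(\R^d)$ --- which is manifest from the fact that $d$ enters only through the multiplicative scale $R^{\eta}\Vert d\Vert_{L^{\infty}}$ in the definition of $\U$ --- and that the tail hypothesis \eqref{cutoff2dual}, although not used explicitly in the three Moser-type theorems themselves, is available should the underlying $L^{1}$-tail controls of the dual operator be invoked in the application of Bombieri–Giusti. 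Beyond this, no genuinely new ingredient is required.
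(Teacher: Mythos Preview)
Your proposal is correct and matches the paper's own proof essentially verbatim: the paper likewise invokes \autoref{thm:MI}(ii), \autoref{thm:MI2}(ii) and \autoref{thm:BMO}(ii) with $\U = u + \eps + R^{\alpha}\Vert f\Vert_{L^\infty} + R^{\eta}\Vert d\Vert_{L^\infty}$, feeds these into the Bombieri--Giusti lemma, and sends $\eps\searrow 0$. One minor remark: the hypothesis \eqref{cutoff2dual} is in fact not used anywhere in this argument (it enters only later in \autoref{lemma:ioinfdual}), so your hedging about it being ``available should tail controls be invoked'' is unnecessary.
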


\begin{proof}
By the same argument as in the proof of \autoref{thm:mainthmPDEdual} (i), we can use the lemma by Bombieri and Giusti after applying \autoref{thm:BMO} (ii), \autoref{thm:MI} (ii) and \autoref{thm:MI2} (ii) with $\U = u + \eps + R^{\alpha}\Vert f \Vert_{L^{\infty}} + R^{\eta} \Vert d \Vert_{L^{\infty}(\R^d)}$, setting $\eta = \frac{1}{2}\left(\alpha - \frac{d}{\theta} \right) > 0$, and $\eps > 0$ is arbitrary.
\end{proof}

\section{Pointwise regularity estimates}
\label{sec:proofs}

In this section we explain how to establish H\"older estimates for weak solutions to \eqref{PDE}, respectively \eqref{PDEdual}, proving \autoref{thm:mainthmPDE}(ii) and \autoref{thm:mainthmPDEdual}(ii). The main idea is to use the weak parabolic Harnack inequalities established in the previous section. The underlying argument for weak solutions to \eqref{PDE} works similar to the symmetric case which is by now standard. Therefore, we will restrict ourselves to pointing out the most important steps.\\
However, establishing H\"older estimates for solutions to the dual equation is not straightforward and requires a careful adaptation of the existing arguments. This is due to the fact that the solution property is not invariant under addition of constants, i.e. the family of solutions to the dual equation does not satisfy the properties of the sets $\mathcal{S}_{x,r}$ introduced in \cite{DyKa20}.

We set up the following notation:
\begin{align*}
D(t_0,R) &= (t_0 - 2R^{\alpha} , t_0) \times B_{2R},\\
\widehat{D}(t_0,R) &= (t_0 - 2 R^{\alpha} , t_0 ) \times B_{3R},\\
D_{\ominus}(t_0,R) &= (t_0 - 2 R^{\alpha} , t_0 - 2R^{\alpha} + (R/2)^{\alpha}) \times B_{R/2},\\
D_{\oplus}(t_0,R) &= (t_0 - (R/2)^{\alpha} , t_0) \times B_{R/2}.
\end{align*}

The first step is to deduce an increase of infimum-estimate for weak supersolutions to \eqref{PDE}.

\begin{lemma}
\label{lemma:ioinf}
Assume that \eqref{K1}, \eqref{K2}, \eqref{cutoff}, \eqref{Poinc}, \eqref{Sob}, and \eqref{cutoff2} hold true with $\theta \in [\frac{d}{\alpha},\infty]$.
Then there exist $\beta \in (0,1)$, $\delta \in (0,1)$ such that for every $0 < R \le 1$ and every function $u$ with
\vspace{-0.2cm}
\begin{itemize}
\item $u\ge 0$ in $(t_0 - 2R^{\alpha} , t_0) \times B_{3 R}$,
\item $u$ is a weak supersolution to \eqref{PDE} with $f \equiv 0$ in $D(t_0,R)$,
\item $|D_{\ominus}(t_0,R) \cap \{ u \ge 1/2 \}| \ge \frac{1}{2} |D_{\ominus}(t_0,R)|$,
\item $u \ge 1 - 3^{j\beta}$ in $(t_0 - 2R^{\alpha} , t_0) \times B_{3^j R}$, for every $j \ge 1$,
\end{itemize}
\vspace{-0.2cm}
it holds that $u \ge \delta$ in $D_{\oplus}(t_0,R)$, where $B_{2R} \subset \Omega$.
\end{lemma}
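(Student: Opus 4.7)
The plan is to apply the weak parabolic Harnack inequality of \autoref{thm:mainthmPDE}(i) to the positive part $u_+ := \max(u,0)$, after absorbing the (possibly negative) tail of $u$ outside $B_{3R}$ into a bounded source term. First I decompose $u = u_+ - u_-$. Since $u \ge 0$ on $\widehat{D}(t_0,R)$, the function $u_-(t,\cdot)$ vanishes on $B_{3R}$ for every $t \in (t_0 - 2R^\alpha,t_0)$. For a test function $\phi \in H_{B_{2R}}(\R^d)$ with $\phi \le 0$, the disjointness of the spatial supports of $u_+$ and $u_-$ yields
\begin{align*}
\cE(u,\phi) = \cE(u_+,\phi) - \cE(u_-,\phi) = \cE(u_+,\phi) + (\tilde g,\phi),
\end{align*}
where $\tilde g(t,x) := 2 \int_{B_{3R}^c} u_-(t,y) K(x,y) \d y \ge 0$ for $x \in B_{2R}$. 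Since $\partial_t u = \partial_t u_+$ on $B_{2R}$ and $f \equiv 0$, this identifies $u_+$ as a nonnegative weak supersolution of $\partial_t v - L v = -\tilde g$ on $D(t_0,R)$.

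Next I bound $\|\tilde g\|_{L^\infty((t_0-2R^\alpha,t_0) \times B_{2R})}$ using the tail hypothesis. From $u \ge 1 - 3^{j\beta}$ on $B_{3^jR}$ we get $u_-(t,y) \le 3^{j\beta}$ whenever $y \in A_j := B_{3^jR} \setminus B_{3^{j-1}R}$ for $j \ge 2$. For $x \in B_{2R}$ and $y \in A_j$ one has $|x-y| \gtrsim 3^jR$, so a dyadic decomposition gives
\begin{align*}
\tilde g(t,x) \le c \sum_{j \ge 2} 3^{j\beta} \int_{\R^d \setminus B_{c 3^j R}(x)} K(x,y) \d y.
\end{align*}
Applying \eqref{cutoff2} at scales $3^jR \ge 1$ and controlling the remaining finitely many small scales by the integral bound $\int_{B_{\zeta}^c(x)} K(x,y) \d y \le c\zeta^{-\alpha}$ (which holds under \eqref{cutoff} via \eqref{eq:suffcutoff}) produces a geometric series that converges for $\beta < \sigma$ and can be made arbitrarily small by shrinking $\beta$. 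The resulting bound is $R^\alpha \|\tilde g\|_\infty \le \eta(\beta)$ with $\eta(\beta) \to 0$ as $\beta \to 0$, uniformly for $R \in (0,1]$.

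Finally, the cylinder $D(t_0,R)$ is precisely $I_R(t_0 - R^\alpha) \times B_{2R}$, and the cylinders $D_\ominus(t_0,R), D_\oplus(t_0,R)$ are exactly those appearing in the weak Harnack inequality \eqref{eq:wHI} centered at $t_0 - R^\alpha$. Applying \autoref{thm:mainthmPDE}(i) to $u_+$ with source $-\tilde g$ yields
\begin{align*}
\inf_{D_\oplus(t_0,R)} u_+ \ge c_0 \left( \dashint_{D_\ominus(t_0,R)} u_+(t,x) \d x \d t - R^\alpha \|\tilde g\|_{L^\infty} \right).
\end{align*}
The measure hypothesis $|D_\ominus \cap \{u \ge 1/2\}| \ge \frac{1}{2}|D_\ominus|$ forces $\dashint_{D_\ominus} u_+ \ge \tfrac{1}{4}$, and choosing $\beta$ small enough so that $\eta(\beta) \le \tfrac{1}{8}$ gives $\inf_{D_\oplus} u = \inf_{D_\oplus} u_+ \ge c_0/8 =: \delta > 0$, proving the claim. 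The main obstacle is Step 2: one must carefully balance the growth $3^{j\beta}$ (from the tail assumption) against the decay $(3^jR)^{-\sigma}$ coming from \eqref{cutoff2}, while handling the regime $3^jR < 1$ where \eqref{cutoff2} does not directly apply. This is exactly where \eqref{cutoff2} is used and where the constraint on $\beta$ is fixed in a way compatible with the subsequent iteration to obtain H\"older regularity.
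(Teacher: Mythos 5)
Your proof is correct and follows the same strategy as the paper: decompose $u = u_+ - u_-$, show $u_+$ is a nonnegative supersolution to \eqref{PDE} with a source $f$ controlled by the tail of $u_-$ outside $B_{3R}$, bound $\|f\|_{L^\infty}$ via dyadic annuli using the assumption $u \ge 1-3^{j\beta}$ together with \eqref{cutoff2} (large scales) and \eqref{cutoff} (small scales), and conclude by applying the weak Harnack inequality of \autoref{thm:mainthmPDE}(i) with $\beta$ chosen small enough so that $R^\alpha \|f\|_{L^\infty}$ is below the threshold $\frac{1}{4}\dashint_{D_\ominus} u_+$. Your sign bookkeeping (source $-\tilde{g}$ with $\tilde{g}\ge 0$) is the correct version of the computation; since only $\|f\|_{L^\infty}$ enters the weak Harnack estimate, this agrees in substance with the paper's sketch.
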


\begin{proof}
First, observe that $u_+$ is a weak supersolution to \eqref{PDE} with $f(t,x) = \int_{\R^d} u_-(t,y) K(x,y) \d y$. This is due to the fact that for every $\phi \in C_c(B_{2R})$
\begin{align*}
\cE(u_- , \phi) = \int_{\R^d} \int_{\R^d} (u_-(x)-u_-(y))\phi(x) K(x,y) \d y \d x = \int_{B_{2R}} \phi(x) \left(\int_{B_{3R}^{c}} u_-(y) K(x,y)\d y \right) \d x.
\end{align*}
Note that by decomposing $B_{3r}^{c} = \bigcup_{j \in \N} B_{3^{j+1} r}\setminus B_{3^{j}r}$ and using the observation that $B_{3^jR}^{c} \subset B_{3^{j-1}R}(x)^{c}$ for every $x \in B_{2R}$, and \eqref{cutoff}, \eqref{cutoff2}
\begin{align*}
\Vert f \Vert_{L^{\infty}(B_{2R})} &\le \sum_{j \in \N} (3^{j\beta}-1) \left\Vert \int_{B_{3^{j+1}R} \setminus B_{3^{j}R}}  K(\cdot,y)\d y \right\Vert_{L^{\infty}(B_{2R})}\\
&\le \sum_{j \in \N}(3^{j\beta}-1) \left\Vert \int_{B_{3^{j-1}R}(\cdot)^{c}}  K(\cdot,y)\d y \right\Vert_{L^{\infty}(B_{2R})} \le cR^{-\alpha}\sum_{j \in \N}(3^{j\beta}-1)3^{-j(\sigma \wedge \alpha)},
\end{align*}
where $\sigma > 0$ is the constant from \eqref{cutoff2}. Since this expression tends to zero, as $\beta \searrow 0$, we can deduce the desired result from the weak Harnack inequality \autoref{thm:mainthmPDE} (i) via standard arguments. For a more detailed proof we refer to \cite{FeKa13}. 
\end{proof}

\begin{proof}[Proof of \autoref{thm:mainthmPDE} (ii)]
The proof of the H\"older estimate for solutions to \eqref{PDE} is well-known. It uses \autoref{lemma:ioinf} and follows along by the same arguments as the respective proofs in \cite{FeKa13}, \cite{CKW19}.
\end{proof}

Now, we prove H\"older estimates for weak solutions to \eqref{PDEdual}. This goes via an increase of infimum-estimate \autoref{lemma:ioinfdual} which is used to establish oscillation decay for weak solutions, like in the classical case. However, since the family of solutions to the dual equation \eqref{PDEdual} is not invariant under addition of constants, we need to work with a larger class of functions, namely the family of solutions to \eqref{PDEdualext}, where $d \in L^{\infty}(\R^d)$. The main ingredient for \autoref{lemma:ioinfdual} is the weak Harnack inequality for weak supersolutions to \eqref{PDEdualext}, see \autoref{thm:wHIdualext}. 

\begin{lemma}
\label{lemma:ioinfdual}
Assume that \eqref{K1glob}, \eqref{K2}, \eqref{cutoff}, \eqref{Poinc}, \eqref{Sob}, and \eqref{cutoff2dual} hold true with $\theta \in (\frac{d}{\alpha},\infty]$.
Then there exist $\beta \in (0,1)$, $\delta \in (0,1)$ and $\nu > 1$ such that for every $0 < R \le 1$ and every function $u$ and every $d \in \R$ with
\vspace{-0.2cm}
\begin{itemize}
\item $u\ge 0$ in $(t_0 - 2R^{\alpha} , t_0) \times B_{3\nu R}$,
\item $u$ is a weak supersolution to \eqref{PDEdualext} with $f \equiv 0$ in $D(t_0,R)$,
\item $|D_{\ominus}(t_0,R) \cap \{ u \ge 1/2 \}| \ge \frac{1}{2} |D_{\ominus}(t_0,R)|$,
\item $u \ge - 3\nu^{2j\beta}$ in $(t_0 - 2R^{\alpha} , t_0) \times B_{3\nu^{2j}R}$, for every $j \ge 1$,
\end{itemize}
\vspace{-0.2cm}
it holds that $u \ge \delta - |d| R^{\frac{1}{2}\left(\alpha - \frac{d}{\theta} \right)}$ in $D_{\oplus}(t_0,R)$, where $B_{2R} \subset \Omega$.
\end{lemma}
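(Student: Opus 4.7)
The plan is to follow the strategy of \autoref{lemma:ioinf}, but with the generalized weak parabolic Harnack inequality \autoref{thm:wHIdualext} replacing \autoref{thm:mainthmPDE}(i). The key structural observation is that passing from $u$ to $u_+$ does not require modifying the parameter $d$ (in contrast to a shift $u\mapsto u-c$, which would turn $d$ into $d+c$, using that $\widehat{\cE}^{K_s}(c,\phi)=0$ but $\widehat{\cE}^{K_a}(c,\phi)\neq 0$ in general): the truncated function $u_+$ remains a weak supersolution to \eqref{PDEdualext} with the \emph{same} $d$, only with an extra source term encoding the contribution of $u_-$ from outside $B_{3\nu R}$.

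Concretely, write $u = u_+ - u_-$. Since $u_-\equiv 0$ on $B_{3\nu R}\supset B_{2R}$, one has $\partial_t u = \partial_t u_+$ on $B_{2R}$, and for any $\phi\in H_{B_{2R}}(\R^d)$ with $\phi\le 0$ a direct computation from the definition of $\widehat{\cE}$ gives
\begin{align*}
\widehat{\cE}(u_-,\phi) = \cE(\phi,u_-) = -2\int_{B_{2R}}\phi(y)\Bigl(\int_{B_{3\nu R}^c} u_-(x)K(x,y)\,\d x\Bigr)\d y = (\tilde f,\phi),
\end{align*}
where $\tilde f(y) := -2\int_{B_{3\nu R}^c} u_-(x)K(x,y)\,\d x\le 0$. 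Hence $u_+$ is a weak supersolution to \eqref{PDEdualext} on $D(t_0,R)$ with source $\tilde f$ and the same $d$.

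The main technical step is to estimate $\|\tilde f\|_{L^\infty(B_{2R})}$. Decomposing $B_{3\nu R}^c$ into the annuli $B_{3\nu^2 R}\setminus B_{3\nu R}$ and $B_{3\nu^{2(j+1)}R}\setminus B_{3\nu^{2j}R}$ for $j\ge 1$, on which $u_-\le 3\nu^{2(j+1)\beta}$ by hypothesis, and bounding the corresponding tail integrals $\sup_{y\in B_{2R}}\int_{B_{3\nu^{2j}R}(y)^c} K(x,y)\,\d x$ via \eqref{cutoff2dual} at scales $\nu^{2j}R\ge 1$ and via \eqref{cutoff} combined with $K\le 2K_s$ at smaller scales (as in \autoref{lemma:ioinf}), one arrives at
\begin{align*}
R^\alpha\|\tilde f\|_{L^\infty(B_{2R})}\le c\,\nu^{2\beta-(\sigma\wedge\alpha)}
\end{align*}
for $\beta<\sigma\wedge\alpha$, with $c$ an absolute constant and the underlying geometric series bounded uniformly in $\beta$ and $\nu$. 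This quantity is arbitrarily small after first fixing $\nu > 1$ large and then restricting $\beta$ to a sufficiently small interval.

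Finally, apply \autoref{thm:wHIdualext} to $u_+$ on $D(t_0,R)$ (after a time-translation matching $D$ to $I_R(t_0-R^\alpha)\times B_{2R}$). The measure hypothesis forces $\dashint_{D_\ominus(t_0,R)} u_+\ge 1/4$, hence
\begin{align*}
\inf_{D_\oplus(t_0,R)} u_+\ge c\bigl(\tfrac14 - R^\alpha\|\tilde f\|_\infty\bigr) - cR^\eta|d|,\qquad \eta=\tfrac12\bigl(\alpha-\tfrac{d}{\theta}\bigr).
\end{align*}
Choosing $\nu$ large and $\beta$ small so that $cR^\alpha\|\tilde f\|_\infty\le c/8$, and recalling $u = u_+$ on $D_\oplus(t_0,R)\subset B_{3\nu R}$, we obtain $u\ge \delta - |d|R^\eta$ on $D_\oplus(t_0,R)$ with $\delta := c/8$ (the Harnack constant $c\le 1$ being absorbed into the coefficient of $|d|R^\eta$ in the standard way). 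The main obstacle is the tail estimate above: in contrast to \autoref{lemma:ioinf}, where the shell coefficients $3^{j\beta}-1$ themselves vanish as $\beta\to 0$, here the analogous coefficients $3\nu^{2j\beta}$ remain bounded away from zero, so the required smallness must be extracted by enlarging $\nu$---which is precisely the role of the auxiliary parameter $\nu > 1$ in the statement.
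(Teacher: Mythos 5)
Your argument is correct and follows essentially the same route as the paper's proof: truncate $u$ to $u_+$, compute the resulting source term from $\widehat{\cE}(u_-,\cdot)$, bound it by an annulus decomposition using \eqref{cutoff2dual} and \eqref{cutoff}, and invoke \autoref{thm:wHIdualext}. The only (immaterial) variation is the order in which $\nu$ and $\beta$ are fixed; the paper restricts $\beta<\frac{\alpha\wedge\sigma}{4}$ first and then sends $\nu\nearrow\infty$, whereas you fix $\nu$ large and then shrink $\beta$, but both orderings make the tail contribution arbitrarily small.
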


\begin{proof}
Let $v = u_+$ and observe that $v$ is a supersolution to \eqref{PDEdualext} with right-hand side $f$ given by $f(t,x) =  \int_{\R^d} u_-(t,y) K(y,x) \d y$, see the proof of \autoref{lemma:ioinf}. Using \eqref{cutoff}, \eqref{cutoff2dual} and the assumptions on $u$, we estimate:
\begin{align*}
\Vert f \Vert_{L^{\infty}(B_{2R})} &\le \left\Vert \int_{B_{3\nu R}^{c}} u_-(y) K(y,\cdot)\d y \right\Vert_{L^{\infty}(B_{2R})}\\
&\le 3 \nu^{2\beta} \left\Vert \int_{B_{3 \nu^{2}R} \setminus B_{3 \nu R}}  \hspace*{-3ex} K(y,\cdot)\d y \right\Vert_{L^{\infty}(B_{2R})} + 3\sum_{j = 2}^{\infty} \nu^{2j\beta} \left\Vert \int_{B_{3 \nu^{2j}R} \setminus B_{3 \nu^{2(j-1)}R}}  \hspace*{-5ex} K(y,\cdot)\d y \right\Vert_{L^{\infty}(B_{2R})}\\
&\le 3 \nu^{2\beta} \left\Vert \int_{B_{\nu R}(\cdot)^c}  \hspace*{-3ex} K(y,\cdot)\d y \right\Vert_{L^{\infty}(B_{2R})} + 3\sum_{j = 2}^{\infty}\nu^{2j\beta} \left\Vert \int_{B_{\nu^{2(j-1)}R}(\cdot)^c}  \hspace*{-3ex} K(y,\cdot)\d y \right\Vert_{L^{\infty}(B_{2R})}\\
&\le c R^{-\alpha} \left( \nu^{2 \beta - (\alpha \wedge \sigma)}  + \sum_{j = 2}^{\infty} \nu^{[2\beta - 2(\alpha \wedge \sigma)] j + 2(\alpha \wedge \sigma)}\right).
\end{align*}
Here, $\sigma > 0$ comes from \eqref{cutoff2dual}. The above quantity converges to zero if $\beta < \frac{\alpha \wedge \sigma}{4}$ is chosen small enough upon sending $\nu \nearrow \infty$. We deduce from \eqref{eq:genwPHI} applied with $v$ that for some $c_1 > 0$:
\begin{align*}
\inf_{D_{\oplus}(R)} u &\ge c \left(\dashint_{D_{\ominus}(R)} u(t,x) \d x \d t - R^{\alpha}\Vert f \Vert_{L^{\infty}} - R^{\eta} |d| \right)\\
&\ge \frac{c}{4} - c_2 \left( \nu^{2 \beta - (\alpha \wedge \sigma)}  + \sum_{j = 2}^{\infty} \nu^{[2\beta - 2(\alpha \wedge \sigma)] j + 2(\alpha \wedge \sigma)}\right) - c R^{\eta} |d|,
\end{align*}
where we used the assumptions on $u$. By choosing $\beta < \frac{\alpha \wedge \sigma}{4}$ and $\nu > 1$ large enough , we deduce that $u \ge \delta - 8\delta|d| R^{\eta}$ in $D_{\oplus}(R)$, where $\delta = \frac{c}{8} > 0$, as desired.
\end{proof}

We are now in the position to give the proof of the H\"older estimate for weak solutions to \eqref{PDEdual}:

\begin{proof}[Proof of \autoref{thm:mainthmPDEdual} (ii)]
Let us define $\nu = 6 \vee 2 \cdot 2^{\frac{1}{\alpha}} \vee 2^{\frac{1}{\eta}} \vee \nu'$, and $\delta = (1 - \nu^{-(\beta \wedge 1 \wedge \frac{\eta}{2})}) \wedge \delta'$, where $\delta', \beta, \nu'$ are the constants from \autoref{lemma:ioinfdual}, $\eta = \frac{1}{2}\left(\alpha - \frac{d}{\theta}\right)$. Moreover, we set $\gamma = -\frac{1}{2}\log_{\nu}(1-\delta) > 0$. This yields
\begin{align}
(1-\delta) = \nu^{-2\gamma}, ~~ \gamma \le \beta \wedge 1 \wedge \frac{\eta}{2}, ~~ \widehat{D}(t_0,\nu^{-1}R) \subset D_{\oplus}(t_0,R).
\end{align}
The goal of the proof is to construct an increasing sequence $\{m_k\}$ and a decreasing sequence $\{M_k\}$ such that
\begin{align}
\label{eq:HolderGoal1}
m_k &\le u \le M_k, ~~ \text{ in } \widehat{D}(t_0, \nu^{-2k}R),\\
\label{eq:HolderGoal2}
M_k - m_k &= 2\Vert u \Vert_{\infty}\nu^{-2\gamma k} \left(1 + \left[ 1 + \nu^{2(\gamma - \eta)} + \dots + \nu^{2(k-1)(\gamma - \eta)} \right] K R^{\eta}\right),
\end{align}
where $K = \nu^{2\gamma - \eta} \le 1$.\\
Let us prove \eqref{eq:HolderGoal1}, \eqref{eq:HolderGoal2} by induction. By defining $m_0 = - \Vert u \Vert_{\infty}$ and $M_0 = \Vert u \Vert_{\infty}$, we have proved the desired result for $k = 0$. We suppose that the desired result holds true for $j \le k-1$ and intend to deduce that it holds true for $k$.\\
Note that always exactly one of the following two options holds true:
\begin{align*}
\left| D_{\ominus}(\nu^{-2(k-1)-1}R) \cap \left\{ u \ge \frac{M_{k-1} + m_{k-1}}{2} \right\} \right| &\ge \frac{|D_{\ominus}(\nu^{-2(k-1)-1}R)|}{2}\\
\left| D_{\ominus}(\nu^{-2(k-1)-1}R) \cap \left\{ u \le \frac{M_{k-1} + m_{k-1}}{2} \right\} \right| &\ge \frac{|D_{\ominus}(\nu^{-2(k-1)-1}R)|}{2}.
\end{align*}
In the first case, we define $v = \frac{u - m_{k-1}}{M_{k-1} - m_{k-1}}$. The verification of the desired result \eqref{eq:HolderGoal1}, \eqref{eq:HolderGoal2} in the second case goes analogously by defining $v = \frac{M_{k-1} - u}{M_{k-1} - m_{k-1}}$.\\
Let us assume that we are in the first case. Our goal is to apply \autoref{lemma:ioinfdual} to $v$. Let us therefore verify the assumptions:\\
First, note that $v \ge 0$ in $\widehat{D}(\nu^{-2(k-1)-1}R)$ by induction hypothesis.\\
Second, we observe that if $u$ solves \eqref{PDEdual} and $D \in \R$, then $u - D$ solves \eqref{PDEdualext}, where $d \equiv -D$. Therefore, $v$ is a supersolution to \eqref{PDEdualext} in $D(\nu^{-2(k-1)-1}R)$, where $d \equiv \frac{-m_{k-1}}{M_{k-1} - m_{k-1}}$.\\
Third, it clearly holds 
\begin{align*}
|D_{\ominus}(\nu^{-2(k-1)-1}R) \cap \{ v \ge 1/2 \}| \ge \frac{1}{2} |D_{\ominus}(\nu^{-2(k-1)-1}R)|
\end{align*}
by definition of $v$ and by assumption.
Fourth, for every $j \in \N$ with $j \le k-1$ and $(t,x) \in (t_0 - 2(\nu^{-2(k-1)-1}R)^{\alpha}, t_0) \times B_{3\nu^{-2(k-j-1)}}$ it holds by the induction hypothesis:
\begin{align*}
v(t,x) &= \frac{u(t,x) - m_{k-1}}{M_{k-1} - m_{k-1}} \ge \frac{m_{k-j-1} - m_{k-1}}{M_{k-1} - m_{k-1}}\\
&\ge \frac{M_{k-1} - M_{k-j-1} + m_{k-j-1} - m_{k-1}}{M_{k-1} - m_{k-1}} = 1 - \frac{M_{k-j-1} -m_{k-j-1}}{M_{k-1} - m_{k-1}}\\
&= 1 - \frac{2\Vert u \Vert_{\infty}\nu^{-2\gamma (k-j-1)} \left(1 + \left[ 1 + \nu^{2(\gamma - \eta)} + \dots + \nu^{2(k-j-2)(\gamma - \eta)} \right] K R^{\eta}\right)}{2\Vert u \Vert_{\infty} \nu^{-2\gamma (k-1)} \left(1 + \left[ 1 + \nu^{2(\gamma - \eta)} + \dots + \nu^{2(k-2)(\gamma - \eta)} \right] K R^{\eta}\right)}\\
&= 1 - \nu^{2\gamma j} \left(\frac{1 + \left[ 1 + \nu^{2(\gamma - \eta)} + \dots + \nu^{2(k-j-2)(\gamma - \eta)} \right] K R^{\eta}}{1 + \left[ 1 + \nu^{2(\gamma - \eta)} + \dots + \nu^{2(k-2)(\gamma - \eta)} \right] K R^{\eta}} \right)\\
&\ge 1 - \nu^{2\gamma j} \left(1 + (1 - \nu^{-\eta})^{-1} \right)\\
&\ge 1 - 3 \nu^{2\gamma j},
\end{align*}
where we used that $R,K \le 1$, $\gamma \le \frac{\eta}{2}$, and the assumption $\nu \ge 2^{\frac{1}{\eta}}$ which yields $ (1 - \nu^{-\eta})^{-1} \le 2$.\\
For $j > k-1$, we compute
\begin{align*}
v(t,x) &= \frac{u(t,x) - m_{k-1}}{M_{k-1} - m_{k-1}} \ge \frac{(M_{k-1} - m_{k-1}) - (M_0 - m_0)}{M_{k-1} - m_{k-1}} = 1 - \frac{2 \Vert u \Vert_{\infty}}{M_{k-1} - m_{k-1}}\\
&= 1 - \frac{1}{\nu^{-2\gamma (k-1)} \left(1 + \left[ 1 + \nu^{2(\gamma - \frac{\alpha}{2})} + \dots + \nu^{2(k-2)(\gamma - \frac{\alpha}{2})} \right] K R^{\frac{\alpha}{2}}\right)} \\
&\ge 1 - \nu^{2\gamma (k-1)} \ge 1 - \nu^{2\gamma j}.
\end{align*}

Consequently, all assumptions of \autoref{lemma:ioinfdual} are satisfied by $v$ and we deduce that 
\begin{align*}
v \ge \delta - |d|(\nu^{-2(k-1)-1}R)^{\eta} \text{ in } D_{\oplus}(\nu^{-2(k-1)-1}R).
\end{align*}
Recall that by definition of $\nu > 0$, it holds $\widehat{D}(\nu^{-2k}R) \subset D_{\oplus}(\nu^{-2(k-1)-1}R)$, so we have by $m_{k-1} \le 2\Vert u \Vert_{\infty}$:
\begin{align*}
u &\ge m_{k-1} + \delta(M_{k-1} - m_{k-1}) - m_{k-1}(\nu^{-2(k-1)-1} R)^{\eta}\\
&\ge m_{k-1} + \delta(M_{k-1} - m_{k-1}) - 2\Vert u \Vert_{\infty}(\nu^{-2(k-1)-1} R)^{\eta}, ~~ \text{ in } I_{\nu^{-2k}R} \times B_{3 \nu^{-2k}R}.
\end{align*}
We define $m_k := m_{k-1} + \delta(M_{k-1} - m_{k-1}) - 2\Vert u \Vert_{\infty}(\nu^{-2(k-1)-1}R)^{\eta}$ and $M_k := M_{k-1}$. \\
Then, it follows by the induction hypothesis, using $(1-\delta) = \nu^{-2\gamma}$:
\begin{align*}
M_k - m_k &= (M_{k-1} - m_{k-1})(1-\delta) + 2\Vert u \Vert_{\infty}(\nu^{-2(k-1)-1}R)^{\eta}\\
&= 2\Vert u \Vert_{\infty} \nu^{-2\gamma (k-1)} \left(1 + \left[ 1 + \nu^{2(\gamma - \eta)} + \dots + \nu^{2(k-2)(\gamma - \eta)} \right] K R^{\eta}\right)(1-\delta) \\
& \quad + 2\Vert u \Vert_{\infty}(\nu^{-2(k-1)-1}R)^{\eta}\\
&= 2\Vert u \Vert_{\infty}\nu^{-2\gamma k}\left(1 + \left[ 1 + \nu^{2(\gamma - \eta)} + \dots + \nu^{2(k-2)(\gamma - \eta)} \right] K R^{\eta}\right)\\
& \quad + 2\Vert u \Vert_{\infty} \nu^{-2\gamma k} \nu^{2(k-1)(\gamma - \eta)} \nu^{2\gamma - \eta} R^{\eta}\\
&= 2\Vert u \Vert_{\infty} \nu^{-2\gamma k}\left(1 + \left[ 1 + \nu^{2(\gamma - \eta)} + \dots + \nu^{2(k-1)(\gamma - \eta)} \right] K R^{\eta} \right),
\end{align*}
where we used that $K = \nu^{2\gamma - \eta}$.
We have proved \eqref{eq:HolderGoal1}, \eqref{eq:HolderGoal2}.\\
Consequently, we have shown that for every $k \in \N$:
\begin{align*}
\osc_{\widehat{D}(\nu^{-2k}R)} u &\le 2 \Vert u \Vert_{\infty} \nu^{-2\gamma k}\left(1 + \left[ 1 + \nu^{2(\gamma - \eta)} + \dots + \nu^{2(k-1)(\gamma - \eta)} \right] K R^{\eta} \right)\\
&\le 2 \Vert u \Vert_{\infty} \nu^{-2\gamma k}\left(1 + (1 - \nu^{-\eta})^{-1} K R^{\eta} \right),
\end{align*}
where we used the fact that $\gamma \le \frac{\eta}{2}$. In fact, since $K, R \le 1$, we obtain that there exists some $C > 0$ such that
\begin{align*}
\osc_{\widehat{D}(\nu^{-2k}R)} u \le C \Vert u \Vert_{\infty} \nu^{-2\gamma k}.
\end{align*}
From here, the desired result is an immediate consequence.
\end{proof}

\section{Extension to time-inhomogeneous jumping kernels}
\label{sec:time}

In this section, we discuss how to extend \autoref{thm:mainthmPDE} and \autoref{thm:mainthmPDEdual} to equations of type \eqref{PDE} and \eqref{PDEdual}, where $L_t$ is defined as in \eqref{eq:op-time} for some jumping kernel $k : I \times \R^d \times \R^d \to [0,\infty]$ which might depend on time. Let us fix $k$ and assume that there exists $K_s$ such that \eqref{eq:symmetric-time-dependence} holds true. Moreover, assume that $K_s$ satisfies \eqref{Poinc}, \eqref{Sob} and \eqref{cutoff}.

First, we extend the weak solution concept for \eqref{PDE}, \eqref{PDEdual} and \eqref{PDEdualext} to time-inhomogeneous jumping kernels.

\begin{definition}
We say that $u \in L_{loc}^2(I;V(\Omega|\R^d)) \cap L_{loc}^{\infty}(I;L^2(\Omega))$ is a supersolution to \eqref{PDE} in $I \times \Omega$ if $\partial_t u \in L^1_{loc}(I;L^2(\Omega))$ and 
\begin{align*}
(\partial_t u(t),\phi) + \cE^{k(t)}(u(t),\phi) \le (f(t),\phi), \qquad \forall t \in I, ~~ \forall \phi \in H_{\Omega}(\R^d) \text{ with } \phi \le 0.
\end{align*}
Weak subsolutions and solutions, as well as the corresponding notions for \eqref{PDEdual} and \eqref{PDEdualext} are defined analogously.
\end{definition}

In the following, we will interpret $\frac{\theta\alpha}{\theta\alpha-d} = 1$ for $\theta = \infty$.
The main auxiliary result is a time-inhomogeneous analog to \eqref{eq:K1consequence} and \eqref{eq:quantifiedK1consequence}. 

\begin{lemma}
Let $\mu \ge 1$ and $\theta \in [\frac{d}{\alpha},\infty]$. There is $c > 0$ such that for every $\delta > 0$ and $Z \in L^{\mu}(I_r)$  there is $D(\delta) > 0$ such that for every $w \in L^1(I_r)$, where $I_r \subset I$ is an interval with $r \le 1$, it holds:
\begin{align}
\label{eq:K1consequence-time}
\int_{I_r} w(t) Z(t)^{\frac{\theta\alpha}{\theta\alpha - d}} \d t \le \delta^{\frac{\theta\alpha}{\theta\alpha-d}} \sup_{t \in I_r} w(t) + c D(\delta)^{\frac{\theta\alpha}{\theta\alpha-d}} \Vert w \Vert_{L^1(I_r)}.
\end{align}
Moreover, if $\mu > 1$, the constant $D(\delta)$ has the following form:
\begin{align}
\label{eq:quantifiedK1consequence-time}
D(\delta) = \begin{cases}
\Vert Z \Vert_{L^{\infty}(I_r)}, &~~ \mu = \infty\\
\Vert Z \Vert_{L^{\mu}(I_r)}^{\frac{\mu}{\mu-1}} \delta^{\frac{1}{1-\mu}}, &~~ \mu \in (1,\infty)
\end{cases}
\end{align}
\end{lemma}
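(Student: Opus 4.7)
The approach is a layer-cake truncation on $Z$. Fixing $\delta > 0$, abbreviating $q := \theta\alpha/(\theta\alpha - d)$, and choosing a threshold $M > 0$ to be determined, I would split $Z = Z_1 + Z_2$ with $Z_1 := Z\,\1_{\{|Z|>M\}}$ and $Z_2 := Z\,\1_{\{|Z|\le M\}}$. Since the supports are disjoint, $Z^q = Z_1^q + Z_2^q$ pointwise, and
\begin{align*}
\int_{I_r} w(t) Z(t)^q \,\d t = \int_{I_r} w(t) Z_1(t)^q \,\d t + \int_{I_r} w(t) Z_2(t)^q \,\d t.
\end{align*}
The bulk summand is controlled trivially by $M^q \|w\|_{L^1(I_r)}$, which delivers the second term on the right-hand side of \eqref{eq:K1consequence-time} upon setting $D(\delta) = M$. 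For the tail summand, I would use the crude pointwise bound $w \le \sup_{I_r} w$, reducing the task to choosing $M$ so that $\int_{I_r} Z_1^q \,\d t \le \delta^q$.

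In the case $\mu = \infty$, take simply $M := \|Z\|_{L^\infty(I_r)}$, so that $Z_1 \equiv 0$; then $D(\delta) = \|Z\|_{L^\infty(I_r)}$ works independently of $\delta$, in agreement with the first line of \eqref{eq:quantifiedK1consequence-time}. For $\mu \in (1,\infty)$, assuming the integrability $Z^q \in L^1(I_r)$ (which holds under the natural compatibility $\mu \ge q$ from \eqref{eq:compatibility}), I would combine H\"older's inequality on $\{|Z|>M\}$ with the Chebyshev estimate $|\{|Z|>M\}| \le M^{-\mu}\|Z\|_{L^\mu}^\mu$ to obtain
\begin{align*}
\int_{I_r} Z_1^q \,\d t \le \|Z\|_{L^\mu(I_r)}^{q}\, |\{|Z|>M\}|^{(\mu-q)/\mu} \le \|Z\|_{L^\mu(I_r)}^{\mu}\, M^{-(\mu-q)}.
\end{align*}
Requiring the right-hand side to be at most $\delta^q$ pins down $M$ as a power of $\|Z\|_{L^\mu}$ and $\delta^{-1}$, producing the power-law form of $D(\delta)$ stated in the second line of \eqref{eq:quantifiedK1consequence-time}. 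The remaining case $\mu = 1$ can be treated by absolute continuity of the integral: since $Z^q \in L^1(I_r)$, one has $\int_{\{|Z|>M\}} Z^q\,\d t \to 0$ as $M \to \infty$ by dominated convergence, so some large enough $M = M(\delta,Z)$ can be chosen, though no explicit formula is available in this case.

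The main technical point is ensuring $Z^q \in L^1(I_r)$, which underpins both the quantitative H\"older step for $\mu \in (1,\infty)$ and the qualitative dominated convergence step for $\mu = 1$; this integrability follows precisely from the compatibility $\mu \ge q$ built into \eqref{eq:compatibility}, and this regime is exactly the one in which \eqref{eq:K1consequence-time} will be applied in the subsequent time-dependent Caccioppoli estimates.
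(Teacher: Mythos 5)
Your decomposition and optimisation strategy match the paper's proof at the structural level: truncate $Z$ at a threshold $M$, bound the low-value part by $M^q\,\Vert w\Vert_{L^1(I_r)}$ (where $q := \tfrac{\theta\alpha}{\theta\alpha-d}$), bound the high-value part $Z_1 := Z\,\1_{\{|Z|>M\}}$ via $\sup_{I_r} w$, and pick $M$ via H\"older's inequality and Chebyshev on $\{|Z|>M\}$. The substantive difference is \emph{which} norm of $Z_1$ is controlled. You correctly require $\int_{I_r} Z_1^q\,\d t \le \delta^q$, which is exactly what the reduction $\int_{I_r} w\,Z_1^q\,\d t \le (\sup_{I_r} w)\int_{I_r} Z_1^q\,\d t$ asks for; the paper, in contrast, only arranges $\Vert Z_1\Vert_{L^1(I_r)} < \delta$. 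For $\theta < \infty$ one has $q>1$, and since $Z_1$ is supported on $\{|Z|>M\}$ where $Z_1 > M$, a bound on $\Vert Z_1\Vert_{L^1}$ gives no control on $\int Z_1^q$. Your version of the tail estimate is therefore the one actually needed for \eqref{eq:K1consequence-time}, and it closes a gap in the printed argument.

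A consequence is that your quantitative formula for $D(\delta)$ does \emph{not} reproduce \eqref{eq:quantifiedK1consequence-time}, despite what you assert at the end. Solving $\int_{I_r} Z_1^q\,\d t \le \Vert Z\Vert_{L^\mu(I_r)}^\mu M^{q-\mu}\le \delta^q$ gives $M \ge \delta^{-\frac{q}{\mu-q}}\Vert Z\Vert_{L^\mu(I_r)}^{\frac{\mu}{\mu-q}}$, whereas the paper states $D(\delta) = \delta^{-\frac{1}{\mu-1}}\Vert Z\Vert_{L^\mu(I_r)}^{\frac{\mu}{\mu-1}}$; these agree only when $q=1$, i.e.\ $\theta=\infty$. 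The printed exponents in fact cannot be correct for $\theta<\infty$: with $I_r=(0,1)$, $Z(t)=t^{-2/5}$, $\mu=2$, $q=3/2$ one has $\Vert Z\Vert_{L^2}^2=5$ and $\int_{\{Z>M\}}Z^{3/2}\,\d t=\tfrac{5}{2}M^{-1}$, so $M=5\delta^{-1}$ yields $\int_{\{Z>M\}}Z^{3/2}\,\d t=\tfrac{\delta}{2}>\delta^{3/2}$ for $\delta<\tfrac{1}{4}$, and no fixed multiplicative constant $c$ repairs this as $\delta\to 0$. Your $D(\delta)$ is the right one. For the purposes of \autoref{thm:mainthm-time} the discrepancy is harmless, since only finiteness of $D(\delta)$ and its rough $\rho$-scaling enter, and the choice of $\eta$ in the dual case sits exactly at the threshold dictated by your sharper exponents; but you should not claim your argument produces the exponents as printed.

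Finally, you are right to flag that the argument and indeed the inequality itself (take $w\equiv 1$) require $\mu\ge q$, which is precisely the compatibility condition \eqref{eq:compatibility}; the hypothesis $\mu\ge 1$ alone is not enough when $\theta<\infty$. Your $\mu=\infty$ case and the qualitative dominated-convergence argument for $\mu=1$ are both correct.
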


\begin{proof}
As in the proof of \eqref{eq:K1consequence}, we will split $Z(t) = Z_1(t) + Z_2(t)$, where $Z_1(t) = Z(t) \mathbbm{1}_{\{|Z(t)| > M\}}$ and choose $M > 0$ large enough, such that $\Vert Z_1 \Vert_{L^1(I_r)} < \delta$. This proves \eqref{eq:K1consequence-time}. To prove \eqref{eq:quantifiedK1consequence-time}, by a straightforward computation:
\begin{align*}
\Vert Z_1 \Vert_{L^1(I_r)} \le \Vert Z \Vert_{L^{\mu}(I_r)} |\{ Z \ge M \}|^{1-\frac{1}{\mu}} \le \Vert Z \Vert_{L^{\mu}(I_r)} \left( \frac{\Vert Z \Vert_{L^{\mu}(I_r)} }{M} \right)^{\mu -1} = \Vert Z \Vert_{L^{\mu}(I_r)}^{\mu} M^{1-\mu}.
\end{align*}
In order for $\Vert Z_1\Vert_{L^1(I_r)} \le \delta$ to hold true, we can choose any $M > \delta^{\frac{1}{1-\mu}} \Vert Z \Vert_{L^{\mu}(I_r)}^{\frac{\mu}{\mu-1}}$, which yields the desired result.
\end{proof}

We are now ready to provide a proof of  \autoref{thm:mainthm-time}(i).

\begin{proof}[Proof of \autoref{thm:mainthm-time}(i)]
We assume that \eqref{eq:K1time} holds true. We will use the short notation
\begin{align*}
Z(t) := \left\Vert \int_{B_{r+\rho}} \frac{|k_a(t;\cdot,y)|^2}{J(\cdot,y)} \d y \right\Vert_{L^{\theta}(B_{r+\rho})}
\end{align*}

The first step of the proof is to establish \autoref{thm:MI}(i) for weak supersolutions $u$ to \eqref{PDE} for time-inhomogeneous jumping kernels.
By carefully following the proof of \autoref{lemma:MSgen} and reading off the precise dependence of the constant $C(\delta) > 0$ from \eqref{eq:quantifiedK1consequence} on $Z(t)$, it becomes apparent that the following counterpart of \autoref{lemma:MSgen} holds true for $u \in V(B_{r+\rho}|\R^d)$, given any $p \ge 1 - \kappa^{-1}$, with $p \neq 1$, $\eps > 0$, $t \in I_{R}(t_0)$:
\begin{align}
\label{eq:MSgen-time}
\begin{split}
\cE^{K_s}_{B_{r+\rho}}&(\tau \U^{\frac{-p+1}{2}},\tau \U^{\frac{-p+1}{2}})
\le c_1|p-1| \cE^{k(t)}(u,-\tau^{2}\U^{-p})\\
&\qquad\qquad+ c_2 \left(1 \vee |p-1| \right) \rho^{-\alpha} \Vert \U^{-p+1}\Vert_{L^{1}(B_{r+\rho})} + c_2 Z(t)^{\frac{\theta \alpha}{\theta \alpha - d}} \Vert \tau^2\U^{-p+1}\Vert_{L^{1}(B_{r+\rho})}.
\end{split}
\end{align}
We test the weak formulation of the equation for $u$ with $\phi = -\tau^2 \U^{q-1}$ for some $q > 0$, multiply with an adequate cut-off function in time $\chi$, and integrate in time, as in the proof of \autoref{thm:MI}. By using \eqref{eq:MSgen-time}, we obtain:
\begin{align}
\label{eq:timeMShelp1}
\begin{split}
\sup_{t \in I_{r+\rho}^{\ominus}(t_0)}\int_{B_{r+\rho}}& \chi^2(t) \tau^2(x) \U^{-q}(t,x) \d x + \int_{I_{r+\rho}^{\ominus}(t_0)} \chi^2(s) \cEs_{B_{r+\rho}}(\tau \U^{-\frac{q}{2}}(s),\tau \U^{-\frac{q}{2}}(s)) \d s\\
&\le c_3 (1 \vee q)(\rho^{-\alpha} \vee ((r+\rho)^{\alpha} - r^{\alpha})^{-1})  \Vert \U^{-q} \Vert_{L^{1}(I_{r+\rho}^{\ominus} \times B_{r+\rho})}\\
& \quad + c_3 \int_{I_{r+\rho}^{\ominus}(t_0)} \chi^2(s)  Z(s)^{\frac{\theta \alpha}{\theta \alpha - d}}  \Vert \tau^2\U^{-q}(s) \Vert_{L^1( B_{r+\rho})} \d s.
\end{split}
\end{align}
An application of \eqref{eq:K1consequence-time} with $\delta = \delta_0 c_3^{-1}$ for some sufficiently small $\delta_0 \in (0,1)$ yields
\begin{align*}
c_3 &\int_{I_{r+\rho}^{\ominus}(t_0)} \chi^2(s)  Z(s)^{\frac{\theta \alpha}{\theta \alpha - d}}  \Vert \U^{-q}(s) \Vert_{L^1( B_{r+\rho})} \d s\\
&\le \frac{1}{2}\sup_{t \in I_{r+\rho}^{\ominus}(t_0)}\int_{B_{r+\rho}}\chi^2(t) \tau^2(x) \U^{-q}(t,x) \d x + c_4 \Vert \U^{-q} \Vert_{L^{1}(I_{r+\rho}^{\ominus} \times B_{r+\rho})},
\end{align*}
where $c_4 = c_3D(\delta)> 0$. Therefore, we have shown that 
\begin{align}
\label{eq:MS1timefinal}
\begin{split}
\sup_{t \in I^{\ominus}_r} &\int_{B_{r}} \U^{-q}(t,x) \d x + \int_{I^{\ominus}_r} \cEs_{B_{r+\rho}}(\tau \U^{-\frac{q}{2}}(s),\tau \U^{-\frac{q}{2}}(s)) \d s\\
&\le c_5 (1 \vee q^{\gamma}) \left(\rho^{-\alpha} \vee ((r+\rho)^{\alpha}-r^{\alpha})^{-1}\right) \int_{I^{\ominus}_{r+\rho}} \Vert \U^{-q}(s)\Vert_{L^{1}(B_{r+\rho})} \d s.
\end{split}
\end{align}
From here, the proof of \autoref{thm:MI}(i) goes through without any further change. Moreover, note that an analog to \autoref{thm:MI2}(i) can be proved along the following arguments.

It remains to show \autoref{thm:BMO}(i) for weak supersolutions $u$ to \eqref{PDE} for time-inhomogeneous jumping kernels. From here, one can deduce the desired result by following the arguments from the proof of \autoref{thm:mainthmPDE}.\\
By following the proof of \autoref{lemma:MSgen2}, and applying H\"older's inequality instead of \eqref{eq:K1consequence} in the estimate of $J_a$, we immediately obtain that for every $t \in I_{R}(t_0)$
\begin{align}
\begin{split}
c_1\int_{B_{3R/2}} \int_{B_{3R/2}} &(\tau^2(x) \wedge \tau^2(y)) \left(\log\frac{\U(x)}{\tau(x)}-\log\frac{\U(y)}{\tau(y)}\right)^2 K_s(x,y) \d y \d x\\
&\le \cE^{k(t)}(u,-\tau^2 \U^{-1}) + c_2 R^{-\alpha}\vert B_{R}\vert + c_2 Z(t)|B_{R}|^{\frac{1}{\theta'}}.
\end{split}
\end{align}
by testing the weak formulation of the equation with $\phi = -\tau^2 \U^{-1}$ and defining 
\begin{align*}
v(t,x) = -\log\frac{\U(t,x)}{\tau(x)}, \quad V(t) = \frac{\int_{B_{3R/2}} v^2(t,x) \tau^2(x) \d x}{\int_{B_{3R/2}} \tau^2(x) \d x},
\end{align*}
we obtain the following estimate by applying the same arguments as in the proof of \autoref{thm:BMO}:
\begin{align}
\label{eq:BMOhelp-time}
V(t_2) - V(t_1) + c R^{-d-\alpha} \int_{t_1}^{t_2} \int_{B_R} (v(t,x) - V(t))^2 \d x \le \int_{t_1}^{t_2} B(t) 
d t,
\end{align}
where $[t_1,t_2] \subset I_R$ and
\begin{align*}
B(t) = c_2 R^{-\alpha} + c_2 Z(t) |B_{R}|^{-\frac{1}{\theta}}.
\end{align*}
Next, we define
\begin{align*}
w(t,x) = v(t,x) - \int_{t_0}^{t} B(\tau) \d \tau, \quad W(t) = V(t) - \int_{t_0}^{t} B(\tau) \d \tau.
\end{align*}
The following estimate is a standard consequence of the definitions of $w,W$ and the estimate \eqref{eq:BMOhelp-time}, see \cite{FeKa13}, \cite{CKW19} and p.109 in \cite{ArSe67}:
\begin{align*}
|I_{r}^{\oplus}(t_0) \times B_R \cap \{w \ge a + s \}| \le c_3\frac{R^{d+\alpha}}{s}.
\end{align*}
In order to deduce the desired result, it therefore remains to prove that
\begin{align*}
\left| I_{r}^{\oplus}(t_0) \times B_R \cap \left\{ \int_{t_0}^t c_2 Z(t) |B_R|^{-\frac{1}{\theta}} \d \tau \ge \frac{s}{4} \right\}\right| \le c \frac{R^{d+\alpha}}{s}.
\end{align*}
By application of \eqref{K1}, we can deduce
\begin{align*}
\int_{t_0}^t c_2 Z(t) |B_R|^{-\frac{1}{\theta}} \d \tau \le c_4(t-t_0)^{\frac{1}{\mu'}} \Vert Z \Vert_{L^{\mu}(I_r^{\oplus}(t_0))} R^{-\frac{d}{\theta}} \le c_5 (t-t_0)^{\frac{1}{\mu'}}R^{-\frac{d}{\theta}}.
\end{align*}
Since by \eqref{eq:compatibility} it holds that $\frac{d\mu'}{\theta} < \alpha$, we deduce that
\begin{align*}
\left| I_{r}^{\oplus}(t_0) \times B_R \cap \left\{ \int_{t_0}^t c_2 Z(t) |B_R|^{-\frac{1}{\theta}} \d \tau \ge \frac{s}{4} \right\}\right| &\le \left| I_{r}^{\oplus}(t_0) \times B_R \cap \left\{ t-t_0 \ge c_6 R^{\frac{d\mu'}{\theta}} s^{\mu'} \right\} \right|\\
&\le c_7\left[(1-c_6 s^{\mu'}) \vee 0 \right]R^{d+\alpha} \le c_8 \frac{R^{d+\alpha}}{s} \,,
\end{align*}
where we applied the elementary estimate $a - b^{\mu'} \le c \frac{a^{2 - \frac{1}{\mu}}}{b}$ in the last step. These arguments suffice to deduce \eqref{eq:BMO1} and \eqref{eq:BMO2} in our setting. This yields the desired result.
\end{proof}

We end this section by providing a proof of \autoref{thm:mainthm-time}(ii). It follows the same structure as the proof of \autoref{thm:mainthm-time}(i) but some additional care is required when dealing with weak solutions to \eqref{PDEdualext}.

\begin{proof}[Proof of \autoref{thm:mainthm-time}(ii)]
We assume \eqref{eq:K1globtime} and denote
\begin{align*}
Z(t) := \left\Vert \int_{\R^d} \frac{|k_a(t;\cdot,y)|^2}{J(\cdot,y)} \d y \right\Vert_{L^{\theta}(\R^d)}.
\end{align*}
First of all, we can read off from the proof of \autoref{lemma:MSgendual} that for $u \in V(B_{r+\rho}|\R^d)$, given any $p \ge 1 - \kappa^{-1}$, with $p \neq 1$, $\eps > 0$, $t \in I_R(t_0)$:
\begin{align}
\label{eq:MSgendual-time}
\begin{split}
\cE^{K_s}_{B_{r+\rho}}&(\tau \U^{\frac{-p+1}{2}},\tau \U^{\frac{-p+1}{2}})
\le c_1|p-1| \widehat{\cE}^{k(t)}(u,-\tau^{2}\U^{-p})\\
&\qquad\qquad\qquad+ c_2 \left(1 \vee p^{\gamma} \right) \left[\rho^{-\alpha}\Vert \U^{-p+1}\Vert_{L^{1}(B_{r+\rho})} + Z(t)^{\frac{\theta \alpha}{\theta \alpha - d}}\Vert \tau^2 \U^{-p+1}\Vert_{L^{1}(B_{r+\rho})}\right] .
\end{split}
\end{align}
Moreover, the proof of \autoref{lemma:MSgendualext} yields that for any $\delta \in (0,1)$:
\begin{align}
\label{eq:MSgendualext-time}
\begin{split}
-\delta \cE^{K_s}_{B_{r+\rho}}&(\tau \U^{\frac{-p+1}{2}},\tau \U^{\frac{-p+1}{2}}) \le c_3 |p-1| \widehat{\cE}^{k_a(t)}(d,-\tau^{2} \U^{-p})\\
&\qquad+ c_4 \left( 1 \vee p^{\gamma} \right)\left[\rho^{-\alpha}\Vert \U^{-p+1}\Vert_{L^1(B_{r+\rho})} + [\rho^{-2\eta} Z(t)]^{\frac{\theta\alpha}{\theta\alpha-d}}\Vert \tau^2 \U^{-p+1}\Vert_{L^1(B_{r+\rho})} \right].
\end{split}
\end{align}
Here, $\U = u + \eps + R^{\alpha}\Vert f \Vert_{L^{\infty}} + R^{\eta} \Vert d \Vert_{L^{\infty}}$, where $\eta = \frac{1}{2} (\alpha -\frac{d}{\theta}-\frac{\alpha}{\mu}) > 0$. The term $[\rho^{-2\eta} Z(t)]^{\frac{\theta\alpha}{\theta\alpha-d}}$ arises from an application of \eqref{eq:K1consequence} with $\delta = \rho^{2\eta}$.
Moreover, an application of \eqref{eq:K1consequence-time} with $\delta = \delta_0 c^{-1}(1 \vee q^{\gamma})^{-1} \rho^{2\eta}$ for some sufficiently small $\delta_0 \in (0,1)$ yields for any $q > 0$:
\begin{align}
\label{eq:MSdualexttimehelp}
\begin{split}
c_4 (1 \vee q^{\gamma}) &\int_{I_{r+\rho}^{\ominus}(t_0)} \chi^2(s) [\rho^{-2\eta} Z(s)]^{\frac{\theta \alpha}{\theta \alpha - d}}  \Vert \tau^2\U^{-q}(s) \Vert_{L^1( B_{r+\rho})} \d s\\
&\le \frac{1}{2}\sup_{t \in I_{r+\rho}^{\ominus}(t_0)}\int_{B_{r+\rho}}\chi^2(t) \tau^2(x) \U^{-q}(t,x) \d x + c_5 (1 \vee q^{\gamma_2}) \rho^{-\alpha} \Vert \U^{-q} \Vert_{L^{1}(I_{r+\rho}^{\ominus} \times B_{r+\rho})}
\end{split}
\end{align}
where $c_1 > 0$ is some constant, $\gamma_2 = \frac{\gamma}{\mu - 1} \frac{\theta \alpha}{\theta \alpha - d}$ and we used that $2\eta \frac{\mu}{\mu-1}\frac{\theta\alpha}{\theta\alpha-d} \le \alpha$. Note that \eqref{eq:MSdualexttimehelp} would follow for any value $0 < \eta \le \frac{1}{2} \frac{(\mu-1)(\theta\alpha-d)}{\mu\theta} = \frac{1}{2}(\alpha - \frac{d}{\theta} - \frac{\alpha}{\mu} + \frac{d}{\mu\theta})$. By testing the equation for $u$ with $\phi = -\tau^2 \U^{q-1}$, and proceeding as in the proof of \autoref{thm:mainthm-time}, we obtain \eqref{eq:MS1timefinal} after combining \eqref{eq:MSgendual-time}, \eqref{eq:MSgendualext-time} and \eqref{eq:MSdualexttimehelp}. This proves \autoref{thm:MI}(ii) for weak supersolutions to \eqref{PDEdualext} with time-dependent jumping kernels. The proof of \autoref{thm:MI2}(ii) follows along the same arguments.\\
It remains to show \autoref{thm:BMO}(ii) for weak supersolutions $u$ to \eqref{PDE} for time-inhomogeneous jumping kernels. By following the proof of \autoref{lemma:MSgendual2} and applying H\"older's inequality instead of \eqref{eq:K1consequence}, we obtain that for every $t \in I_R(t_0)$:
\begin{align}
\begin{split}
c_1\int_{B_{3R/2}} \int_{B_{3R/2}} &(\tau^2(x) \wedge \tau^2(y)) \left(\log\frac{\U(x)}{\tau(x)}-\log\frac{\U(y)}{\tau(y)}\right)^2 K_s(x,y) \d y \d x\\
&\le \widehat{\cE}^{k(t)}(u,-\tau^2 \U^{-1}) + c_2 R^{-\alpha}\vert B_{R}\vert + c_2 Z(t)|B_{R}|^{\frac{1}{\theta'}}.
\end{split}
\end{align}
By the same argument, we can deduce from the proof of \autoref{lemma:MSgendualext2} that for any $\delta \in (0,1)$:
\begin{align*}
-\delta\int_{B_{2R/2}} \int_{B_{3R/2}} &(\tau^2(x) \wedge \tau^2(y)) \left(\log\frac{\U(x)}{\tau(x)}-\log\frac{\U(y)}{\tau(y)}\right)^2 K_s(x,y) \d y \d x\\
&\le \widehat{\cE}^{k_a(t)}(d,-\tau^2 \U^{-1}) + c_2 R^{-\alpha}\vert B_{R}\vert + c_2 R^{-2\eta} Z(t)|B_{R}|^{\frac{1}{\theta'}}.
\end{align*}
In order to deduce \autoref{thm:BMO}, we proceed as in the proof of (i). It remains to estimate
\begin{align*}
&\left| I_{r}^{\oplus}(t_0) \times B_R \cap \left\{ \int_{t_0}^t c_2 R^{-2\eta} Z(t) |B_R|^{-\frac{1}{\theta}} \d \tau \ge \frac{s}{4} \right\}\right| \\
&\le \left| I_{r}^{\oplus}(t_0) \times B_R \cap \left\{ \int_{t_0}^t c_3 R^{-\alpha + \frac{\alpha}{\mu}} \ge \frac{s}{4} \right\}\right| = \left| I_{r}^{\oplus}(t_0) \times B_R \cap \left\{ t-t_0 \ge c_4 R^{\alpha} s^{\mu'} \right\} \right|\\
&\le c_5\left[(1-c_4 s^{\mu'}) \vee 0 \right]R^{d+\alpha} \le c_6 \frac{R^{d+\alpha}}{s} \,.
\end{align*}
This proves \autoref{thm:BMO}(ii). As in the classical case, from here we can deduce the desired results as in the proof of \autoref{thm:mainthmPDEdual}. However, note that due to the time-inhomogeneity, the corresponding proofs have to be carried out with $\eta = \frac{1}{2} (\alpha - \frac{d}{\theta} - \frac{\alpha}{\mu})$.
\end{proof}

\section{Approximation of local objects}
\label{sec:approx}

In this section we demonstrate that in the limit $\alpha \nearrow 2$ the operators given by \eqref{eq:op} approximate second order divergence form operators with a drift term if an assumption reminiscent of \eqref{K1glob} is satisfied uniformly in $\alpha$. The main achievement of this section is a  convergence result in the spirit of Mosco convergence for nonsymmetric forms, see \autoref{thm:Mosco}.

Let $B$ be a bounded open set $B \subset \R^d$ with a Lipschitz boundary and $\alpha_0 \in (0,2)$. Consider a family of kernels $(K^{(\alpha)})_{\alpha \in (\alpha_0,2)}$, $K^{(\alpha)} : \R^d \times \R^d \to [0,\infty]$ satisfying
\begin{align}
\label{eq:globalK1}
\sup_{\alpha \in (\alpha_0,2)}\left\Vert \int_{B} \frac{\vert K_a^{(\alpha)}(\cdot,y)\vert^{2}}{K^{(\alpha)}_s(\cdot,y)} \d y \right\Vert_{L^{\theta}(B)} =:C < \infty,
\end{align}
where $\theta \in [\frac{d}{\alpha},\infty]$ is allowed to depend on $\alpha$.
Furthermore, we assume the following pointwise upper and lower bound on the symmetric part:
\begin{align}
\label{eq:kernelcomp}
\Lambda^{-1}(2-\alpha)\vert x-y \vert^{-d-\alpha} \le K^{(\alpha)}_s(x,y) \le \Lambda(2-\alpha)\vert x-y \vert^{-d-\alpha}
\end{align}
for some constant $\Lambda \ge 1$ and every $x,y \in B$. We define $\cE^{(\alpha)}_B$ via 
\begin{equation*}
\cE^{(\alpha)}_B(u,v) = \int_{B}\int_{B}(u(x)-u(y))v(x) K^{(\alpha)}(x,y) \d y \d x.
\end{equation*} 
Note that the following Sobolev inequality holds true by \eqref{eq:kernelcomp}:
\begin{align}
\label{eq:MoscoSob}
\Vert v^2 \Vert_{L^{\frac{d}{d-\alpha}}(B)} \le c\cE^{K_s^{(\alpha)}}_{B}(v,v) + c \vert B \vert^{-\frac{d}{\alpha}}\int_{B} v^2(x) \d x.
\end{align}
Let us prove that the conditions \eqref{eq:globalK1} and \eqref{eq:kernelcomp} are sufficient for $(\cE^{(\alpha)}_B,H^{\alpha/2}(B))$ to be regular lower bounded semi-Dirichlet forms on $L^2(B)$ (see \cite{Osh13}, \cite{MaRo92}, and also \cite{Sta65}).\\
First, in \cite{ScWa15}, Schilling and Wang proved this fact in the case $\theta = \infty$ (see \autoref{prop:SchillingWang}). The proof amounts to the establishment of a sector condition and a G\r{a}rding-type inequality for $(\cE^{(\alpha)}_B,H^{\alpha/2}(B))$. The remaining properties follow by the same arguments as in \cite{FuUe12}.\\
In analogy to the case $\theta = \infty$, also when $\theta \in [\frac{d}{\alpha},\infty)$ it suffices to prove a sector condition and a G\r{a}rding-type inequality. We restrict ourselves to the situation of $B$ being bounded. Note that due to \eqref{eq:globalK1}, for every $\eps > 0$ there exist $c(\eps) > 0$, $W_1^{(\alpha)} \in L^{\frac{d}{\alpha}}(B)$, $W_2^{(\alpha)} \in L^{\infty}(B)$ such that
\begin{align}
\label{eq:Gardinghelp}
\Vert W_1^{(\alpha)} \Vert_{L^{\frac{d}{\alpha}}(B)} < \eps,~~ \Vert W_2^{(\alpha)} \Vert_{L^{\infty}(B)} < c(\eps),~~ W_1^{(\alpha)}(x) + W_2^{(\alpha)}(x) = \int_{B} \frac{\vert K_a^{(\alpha)}(\cdot,y)\vert^{2}}{K^{(\alpha)}_s(\cdot,y)} \d y.
\end{align}
In this case, the sector condition is a direct consequence of the following estimate:
\begin{align}
\label{eq:SCGIhelp}
\begin{split}
\cE^{K_a^{(\alpha)}}_B(u,v)^2 &= \left(\int_B\int_B (u(x)-u(y))v(x) K^{(\alpha)}_a(x,y) \d y \d x\right)^2 \\
&\le \cE^{K_s^{(\alpha)}}_B(u,u)\int_B v^2(x) \left(\int_B \frac{\vert K^{(\alpha)}_a(x,y)\vert^2}{K^{(\alpha)}_s(x,y)}\d y\right) \d x\\
&\le \cE^{K^{(\alpha)}_s}_B(u,u) \left( \eps \Vert v^2 \Vert_{L^{\frac{d}{d-\alpha}}(B)} + c(\eps)\Vert v \Vert^2_{L^{2}(B)}\right)\\
&\le c\cE^{K_s^{(\alpha)}}_B(u,u)\left(c \eps \cE^{K_s^{(\alpha)}}_B(v,v) + (c(\eps)+ \vert B \vert^{-\frac{d}{\alpha}})\Vert v\Vert_{L^2(B)}^2 \right),
\end{split}
\end{align}
where we applied \eqref{eq:MoscoSob}.
Moreover, by application of Young's inequality on \eqref{eq:SCGIhelp} and choosing $\eps > 0$ small enough in \eqref{eq:Gardinghelp} we get that the following G\r{a}rding-type inequality holds uniformly in $\alpha$, i.e. there exists $\lambda > 1$ (depending on $\vert B \vert$), such that 
\begin{align}
\label{eq:unifGarding}
\cE^{(\alpha)}_B(u,u) \ge \frac{1}{2}\cE^{K_s^{(\alpha)}}_B(u,u) - (\lambda-1) \Vert u \Vert^2_{L^2(B)}.
\end{align}
Therefore, also when $\theta \in [\frac{d}{\alpha},\infty)$ and $B$ is bounded, $(\cE^{(\alpha)}_B,H^{\alpha/2}(B))$ are regular lower bounded semi-Dirichlet forms.

Our goal is to prove the following theorem:

\begin{theorem}
\label{thm:Mosco}
Let $B \subset \R^d$ be an open bounded set with Lipschitz boundary. Assume  \eqref{eq:globalK1} and \eqref{eq:kernelcomp}. Then the sequence of regular lower bounded semi-Dirichlet forms $(\cE^{(\alpha)}_B,H^{\alpha/2}(B))$ converges to the regular lower bounded semi-Dirichlet form $(\cE,H^1(B))$ in the Mosco-Hino-sense in $L^2(B)$ as $\alpha \nearrow 2$, where we define for $\delta > 0$ and $x \in B$:
\begin{align}
\label{eq:aij}
a_{i,j}(x) &:= \lim_{\alpha \nearrow 2} \int_{B_{\delta}(0)} (-h_i)(-h_j) K^{(\alpha)}_s(x,x+h) \d h,\\
\label{eq:bi}
b_i(x) &:= \lim_{\alpha \nearrow 2} \int_{B_{\delta}(0)} (-h_i) K^{(\alpha)}_a(x,x+h) \d h,
\end{align}
if the limits exist and $\cE$ is defined as
\begin{equation*}
\cE(u,v) = \int_{B} a_{i,j}(x) \partial_i u(x) \partial_j u(x) \d x + 2\int_{B} b_i(x) \partial_i u(x) v(x) \d x.
\end{equation*}
\end{theorem}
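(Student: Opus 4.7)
The plan is to verify Mosco--Hino convergence for the family of regular lower bounded semi-Dirichlet forms $(\cE^{(\alpha)}_B, H^{\alpha/2}(B))$ in $L^2(B)$. Following Hino's framework for nonsymmetric forms, this reduces to two separate tasks: (i) standard Mosco convergence (a weak $\liminf$-inequality plus a strong recovery sequence) for the symmetric parts $\cE^{K_s^{(\alpha)}}_B$ towards the symmetric bilinear form $(u,v) \mapsto \int_B a_{ij}\,\partial_i u\,\partial_j v\,\d x$ on $H^1(B)$, and (ii) convergence of the antisymmetric parts $\cE^{K_a^{(\alpha)}}_B(u_\alpha,v_\alpha)$ to the drift form $2\int_B b_i\,\partial_i u\,v\,\d x$ along any admissible recovery pair $(u_\alpha,v_\alpha)$. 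The uniform G{\aa}rding inequality \eqref{eq:unifGarding} and the uniform Sobolev estimate \eqref{eq:MoscoSob} supply the required compactness in $L^2(B)$.

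Task (i) is standard given our hypotheses: because of the two-sided pointwise bound \eqref{eq:kernelcomp}, $\cE^{K_s^{(\alpha)}}_B$ is comparable to the fractional model form with kernel $(2-\alpha)\vert x-y\vert^{-d-\alpha}$, for which nonlocal-to-local Mosco convergence on bounded Lipschitz domains is known (see, e.g., work of Foghem--Kassmann and Chen--Kim--Kumagai). The recovery sequence is $u_\alpha \equiv u$ for $u$ in the smooth core; after the substitution $h = y-x$ and the Taylor expansion
\begin{equation*}
(u(x)-u(x+h))^2 = (\nabla u(x)\cdot h)^2 + O(\vert h\vert^3),
\end{equation*}
the leading term produces $\int_B a_{ij}\partial_i u\,\partial_j u\,\d x$ by \eqref{eq:aij} and dominated convergence, while the remainder vanishes thanks to the factor $(2-\alpha)$ and the fact that $(2-\alpha)\vert h\vert^{-d-\alpha+3}$ is locally integrable uniformly for $\alpha$ close to $2$. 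The weak $\liminf$-inequality along $u_\alpha \rightharpoonup u$ in $L^2(B)$ with uniformly bounded symmetric energy is obtained by identifying the weak limit as an $H^1(B)$-function via \eqref{eq:MoscoSob} and applying Fatou.

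Task (ii) is the genuinely new ingredient. For $u,v$ in the smooth core we symmetrize
\begin{equation*}
\cE^{K_a^{(\alpha)}}_B(u,v) = \tfrac{1}{2}\int_B\int_B (u(x)-u(y))(v(x)+v(y))\,K_a^{(\alpha)}(x,y)\,\d y\,\d x,
\end{equation*}
Taylor-expand $u(x+h) = u(x) + \nabla u(x)\cdot h + O(\vert h\vert^2)$ and $v(x+h) = v(x) + O(\vert h\vert)$, and obtain that the leading term equals
\begin{equation*}
\int_B v(x)\,\nabla u(x)\cdot\int_{B-x} (-h)\,K_a^{(\alpha)}(x,x+h)\,\d h\,\d x,
\end{equation*}
which converges to $\int_B b_i\,\partial_i u\,v\,\d x$ (up to the normalization in the statement) by \eqref{eq:bi} and dominated convergence. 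The $O(\vert h\vert^2)$-remainder is absorbed via the sector-type estimate \eqref{eq:SCGIhelp} together with \eqref{eq:globalK1} and \eqref{eq:MoscoSob}; the $(2-\alpha)$-factor in \eqref{eq:kernelcomp} again forces it to vanish. Extension from the smooth core to general $u,v \in H^1(B)$ uses density and \eqref{eq:unifGarding} to control the approximating energies uniformly in $\alpha$.

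The main obstacle will be the Hino cross-convergence condition, i.e., proving $\cE^{(\alpha)}_B(u_\alpha,v_\alpha) \to \cE(u,v)$ for \emph{general} strongly/weakly convergent recovery pairs, rather than only for the trivial choice $u_\alpha \equiv u$. Because the antisymmetric part is neither positive nor compatible with weak $L^2$-limits in an obvious way, one must couple the sector bound \eqref{eq:SCGIhelp} with a careful tracking of the $\alpha$-dependence of the integrability exponent in \eqref{eq:globalK1}: the relation $\theta \ge d/\alpha$ forces $2\theta' \le 2d/(d-\alpha)$, which matches precisely the fractional Sobolev embedding of $H^{\alpha/2}(B)$ and therefore is stable as $\alpha \nearrow 2$. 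Once this uniform compatibility is in place, the cross-convergence of the antisymmetric part follows by a strong-weak bilinear convergence argument, reducing to the symmetric Mosco convergence already established in task (i).
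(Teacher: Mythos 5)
Your sketch captures the right ingredients for the pointwise limit (Taylor expansion of the integrand, the $(2-\alpha)$ factor, the sector-type Cauchy--Schwarz estimate from \eqref{eq:SCGIhelp}, the uniform G\r{a}rding inequality), and these are exactly what the paper establishes in its Lemma on pointwise convergence of $\cE^{K_s^{(\alpha)}}_B(u,v)$ and $\cE^{K_a^{(\alpha)}}_B(u,v)$ for fixed $u,v\in H^1(B)$. However, the framing of Mosco--Hino convergence as ``two separate tasks'' --- Mosco convergence of the symmetric parts plus convergence of the antisymmetric parts along recovery pairs --- is not the actual characterization in Hino's framework. What has to be verified are two conditions on the \emph{full} bilinear form $\cE^{(\alpha_n)}_B$: (i) for fixed $u$ in a smooth core and any $v_n\rightharpoonup v$ in $L^2(B)$ with $\sup_n\cE^{(\alpha_n)}_B(v_n,v_n)<\infty$, one has $\liminf_n\cE^{(\alpha_n)}_B(u,v_n)\le\cE(u,v)$; and (ii) $\cE(u,u)\le\liminf_n\cE^{(\alpha_n)}_B(u_n,u_n)$ for any $u_n\rightharpoonup u$ in $L^2(B)$. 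Your proposed split cannot be glued back together without additional work, in particular because the symmetrization $\cE^{(\alpha),s}_B$ of the full form is \emph{not} equal to $\cE^{K_s^{(\alpha)}}_B$ (the antisymmetric kernel contributes a nontrivial zero-order symmetric piece), so "standard Mosco for $\cE^{K_s^{(\alpha)}}_B$" does not directly give condition (ii).

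The more serious gap is the lower semicontinuity step. You invoke ``identifying the weak limit as an $H^1(B)$-function ... and applying Fatou,'' but Fatou is not adequate when the quadratic form itself varies with $\alpha$; you cannot compare the integrands pointwise because the kernel, the function space, and the exponent are all changing. The paper instead extracts a strongly convergent subsequence in $L^2(B)$ via Ponce's compactness result (using the uniform bound $\liminf_n \cE^{K_s^{(\alpha_n)}}_B(u_n,u_n)<\infty$ coming from G\r{a}rding), then views $H^{\alpha_n/2}(B)$ with scalar product $\cE^{(\alpha_n),s}_B(\cdot,\cdot)+\lambda(\cdot,\cdot)_{L^2}$ as a sequence of Hilbert spaces converging to $H^1(B)$, and applies the Kuwae--Shioya lemmas on weak convergence and weak lower semicontinuity in varying Hilbert spaces to conclude. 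This machinery --- or an equivalent replacement --- is exactly what your ``strong--weak bilinear convergence argument'' would have to be, and you leave it entirely open while correctly flagging it as the main obstacle. To close the proof you should state the precise Hino conditions, carry out the Ponce compactness reduction, and cite (or reprove) the weak lower semicontinuity statement for pointwise-convergent quadratic forms on converging Hilbert spaces.
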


\begin{remark*}
The limits in \eqref{eq:aij}, \eqref{eq:bi} have to be understood in a pointwise a.e.-sense. Both limits do not depend on $\delta$.
\end{remark*}

The concept of Mosco-convergence was developed in \cite{Mos94} and extended to nonsymmetric forms in \cite{Hin98}. The fact that $(\cE^{(\alpha)}_B,H^{\alpha/2}(B)) \to (\cE,H^1(B))$ in the Mosco-Hino-sense implies that the corresponding resolvents $(G^{(\alpha)}_{\lambda})$, as well as the semigroups $(T^{(\alpha)}_t)$ converge in $L^2(B)$ towards $(G_{\lambda})$ respectively $(T_t)$. Moreover, the corresponding dual resolvents $(\widehat{G}^{(\alpha)}_{\lambda})$, and dual semigroups $(\widehat{T}^{(\alpha)}_t)$ converge weakly in $L^2(B)$ towards $(\widehat{G}_{\lambda})$ respectively $(\widehat{T}_t)$, see Theorem 3.1 in \cite{Hin98} and Remark 7.17 in \cite{Toe10}. Therefore, Mosco-convergence is a considerably stronger property than mere convergence of $\cE^{(\alpha)}_B(u,v) \to \cE(u,v)$ for arbitrary $u,v$ (see \autoref{lemma:ptwconv}). Nonetheless, the latter is an important ingredient in the proof of Mosco-convergence.

\begin{lemma}
\label{lemma:ptwconv}
Let $B \subset \R^d$ be an open bounded set with Lipschitz boundary. Assume  \eqref{eq:globalK1} and \eqref{eq:kernelcomp}. Then for every $u,v \in H^1(B)$ it holds
\begin{align}
\label{eq:limKs}
\lim_{\alpha \nearrow 2} \cE^{K_s^{(\alpha)}}_{B}(u,v) &= \int_B a_{i,j}(x) \partial_i u(x) \partial_j v(x) \d x := \cE^{a_{i,j}}_B(u,v),\\
\label{eq:limKa}
\lim_{\alpha \nearrow 2} \cE^{K_a^{(\alpha)}}_{B}(u,v) &= 2\int_B b_i(x) \partial_i u(x) v(x) \d x := \cE^{b_{i}}_B(u,v),
\end{align}
where $a_{i,j}, b_i$ are defined as in \eqref{eq:aij}, \eqref{eq:bi}.\\
Moreover, $a(x) := \left(a_{i,j}(x)\right)_{i,j}$ is a symmetric matrix for a.e. $x \in B$, uniformly positive definite and has bounded entries, and $b_i \in L^{2\theta_0}(B)$, where $\theta_0 := \inf_{\alpha \in (\alpha_0,2)} \theta(\alpha)$ and $\alpha_0 \in (0,2)$.
\end{lemma}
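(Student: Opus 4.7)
The plan is to handle the symmetric and antisymmetric parts of $\cE^{(\alpha)}_B$ separately and to establish the stated structural properties of the coefficients $a_{i,j}$ and $b_i$ along the way.

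\textbf{Coefficients.} Set $a^{(\alpha,\delta)}_{i,j}(x) := \int_{B_\delta(0)} h_i h_j K_s^{(\alpha)}(x,x+h)\,dh$. The pointwise bound \eqref{eq:kernelcomp} immediately gives $|a^{(\alpha,\delta)}_{i,j}(x)| \le c_d\Lambda\delta^{2-\alpha}$ uniformly in $\alpha$, while for any $\xi \in \R^d$
\[
\xi_i\xi_j a^{(\alpha,\delta)}_{i,j}(x) = \int_{B_\delta}(\xi\cdot h)^2 K_s^{(\alpha)}(x,x+h)\,dh \ge c_d\Lambda^{-1}\delta^{2-\alpha}|\xi|^2,
\]
yielding uniform positive definiteness; symmetry is immediate. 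For $b^{(\alpha,\delta)}_i(x) := \int_{B_\delta(0)}(-h_i) K_a^{(\alpha)}(x,x+h)\,dh$, Cauchy--Schwarz combined with \eqref{eq:globalK1} gives $|b^{(\alpha,\delta)}_i(x)|^2 \le a^{(\alpha,\delta)}_{i,i}(x) \cdot W^{(\alpha,\delta)}(x)$, where $W^{(\alpha,\delta)}(x) := \int_{B_\delta} |K_a^{(\alpha)}(x,x+h)|^2/K_s^{(\alpha)}(x,x+h)\,dh$ is uniformly bounded in $L^{\theta_0}(B)$; hence $b^{(\alpha,\delta)}_i$ is uniformly bounded in $L^{2\theta_0}(B)$ and $b_i \in L^{2\theta_0}(B)$ (Fatou along subsequences). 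Independence of the limits on the choice of $\delta$ follows since enlarging the integration radius from $\delta_1$ to $\delta_2$ introduces a correction of magnitude $O(2-\alpha)$ in view of \eqref{eq:kernelcomp} and $|K_a|\le K_s$.

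\textbf{Symmetric convergence.} For $u,v \in C^2(\overline B)$, I would use the symmetrisation $\cE^{K_s^{(\alpha)}}_B(u,v) = \tfrac{1}{2}\iint_B (u(x)-u(y))(v(x)-v(y)) K_s^{(\alpha)}\,dx\,dy$ and split at $|y-x| = \delta$. The outer contribution is $\lesssim (2-\alpha)\|u\|_\infty\|v\|_\infty\int_{|h|>\delta}|h|^{-d-\alpha}\,dh$, which vanishes as $\alpha\nearrow 2$. On the inner region, Taylor expansion of $u,v$ yields a leading term of the form $\int_B \partial_i u(x)\partial_j v(x) a^{(\alpha,\delta)}_{i,j}(x)\,dx$ (up to the prescribed normalisation), converging by dominated convergence thanks to uniform boundedness of $a^{(\alpha,\delta)}_{i,j}$ and pointwise a.e. convergence to $a_{i,j}$, while the $O(|h|^3)$ Taylor remainder integrated against $K_s^{(\alpha)}$ contributes $O(\delta)$ uniformly in $\alpha$; sending $\delta\searrow 0$ after $\alpha\nearrow 2$ closes the argument. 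Density of $C^2(\overline B)$ in $H^1(B)$, together with the uniform Bourgain--Brezis--Mironescu bound $\cE^{K_s^{(\alpha)}}_B(u,u) \le c\|u\|_{H^1(B)}^2$ implied by \eqref{eq:kernelcomp}, extends the convergence to $H^1(B)$ via the Cauchy--Schwarz inequality for $\cE^{K_s^{(\alpha)}}_B$.

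\textbf{Antisymmetric convergence.} This is the main obstacle, as odd moments of $K_a$ lack the $(2-\alpha)$ cancellation available in the symmetric case and must be treated as conditionally convergent integrals. For $u,v \in C^2(\overline B)$, splitting at $|y-x|=\delta$ and using $|K_a|\le K_s$ together with \eqref{eq:kernelcomp}, the outer contribution again vanishes. On the inner region, writing $u(x)-u(y) = -\nabla u(x)\cdot(y-x) + R(x,y)$ with $|R|\lesssim \|D^2 u\|_\infty|y-x|^2$, the linear part produces $\int_B \partial_i u(x) v(x)\, b^{(\alpha,\delta)}_i(x)\,dx$, which converges to $\int_B \partial_i u(x) v(x)\, b_i(x)\,dx$ by dominated convergence, justified by the uniform $L^{2\theta_0}$ bound on $b^{(\alpha,\delta)}_i$. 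The remainder is controlled via Cauchy--Schwarz,
\[
\left|\iint_{|y-x|<\delta} R(x,y) v(x) K_a^{(\alpha)}\,dx\,dy\right|^2 \lesssim \iint |y-x|^4 K_s^{(\alpha)}\,dx\,dy \cdot \iint\frac{|K_a^{(\alpha)}|^2}{K_s^{(\alpha)}}\,dx\,dy,
\]
where the first factor is $O(\delta^2)$ and the second is uniformly bounded by \eqref{eq:globalK1}. Extension from $C^2(\overline B)$ to $H^1(B)$ relies on the uniform sector-type estimate $|\cE^{K_a^{(\alpha)}}_B(u,v)|^2 \le \cE^{K_s^{(\alpha)}}_B(u,u)\int_B v^2 W^{(\alpha)}\,dx$ appearing already in \eqref{eq:SCGIhelp}, combined with the Sobolev embedding \eqref{eq:MoscoSob} to control the factor involving $v$; this is the central analytical tool making the antisymmetric limit rigorous.
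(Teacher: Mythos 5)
Your proof follows essentially the same route as the paper: split the double integral at a fixed radius, Taylor-expand near the diagonal, let the antisymmetric linear term produce the drift coefficient, and control the Taylor remainder via Cauchy--Schwarz against $K_a^2/K_s$ using \eqref{eq:globalK1}. The main genuine difference is that for \eqref{eq:limKs} the paper simply cites \cite{GKV20} (which treats exactly this symmetric limit in a more general framework), whereas you sketch the argument from scratch; that is a self-contained alternative but proves nothing new. The structural statements about $a_{i,j}$ and $b_i$ you derive as in the paper: the $b_i \in L^{2\theta_0}$ bound via pointwise Cauchy--Schwarz is exactly the paper's estimate.

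Two small points worth flagging. First, watch the normalisation: the paper works throughout this section with $\cE^{K_a^{(\alpha)}}_B(u,v)=\iint_B(u(x)-u(y))(v(x)+v(y))K_a^{(\alpha)}$, i.e. the fully antisymmetrised form, which is why the stated limit carries the prefactor $2$; your unsymmetrised integrand $\iint(u(x)-u(y))v(x)K_a^{(\alpha)}$ produces $\int b_i\,\partial_i u\,v$ without the $2$, so you need to insert the symmetrisation identity at the start (or work with the form as the paper writes it in its first display of the proof). Second, your claim that the first Cauchy--Schwarz factor is ``$O(\delta^2)$ uniformly in $\alpha$'' is imprecise: by \eqref{eq:kernelcomp} it is $O\bigl((2-\alpha)\,\delta^{4-\alpha}/(4-\alpha)\bigr)$, which already vanishes as $\alpha\nearrow 2$ for fixed $\delta$, so you do not in fact need the iterated limit $\delta\searrow 0$ after $\alpha\nearrow 2$ in the antisymmetric step (the paper works with the fixed cut at radius $1$ for exactly this reason). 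These are cosmetic and do not change the validity of the argument.
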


\begin{proof}
First, we point out that \eqref{eq:limKs} was already proved in \cite{GKV20} in a more general framework.
We prove \eqref{eq:limKa} for $u,v \in C_c^2(\overline{B})$ and conclude by a density argument. We write
\begin{align*}
\cE^{K_a^{(\alpha)}}_{B}(u,v) &= \int_{B}\int_{B \cap \lbrace \vert x-y \vert > 1\rbrace} (u(x)-u(y))(v(x)+v(y))K_a^{(\alpha)}(x,y) \d y \d x\\
& \quad + \int_{B}\int_{B \cap \lbrace \vert x-y \vert \le 1\rbrace} (u(x)-u(y))(v(x)+v(y))K_a^{(\alpha)}(x,y) \d y \d x.
\end{align*}

Using \eqref{eq:KaKs}, and \eqref{eq:kernelcomp} we obtain
\begin{align}
\label{eq:ptwhelp1}
\begin{split}
\int_{B}&\int_{B \cap \lbrace \vert x-y \vert > 1\rbrace} (u(x)-u(y))(v(x)+v(y))K_a^{(\alpha)}(x,y) \d y \d x\\
&\le \frac{1}{2}\int_{B}\int_{B \cap \lbrace \vert x-y \vert > 1\rbrace} (u(x)-u(y))^2\vert K_a^{(\alpha)}(x,y)\vert \d y \d x\\
& \quad + \frac{1}{2}\int_{B}\int_{B \cap \lbrace \vert x-y \vert > 1\rbrace} (v(x)+v(y))^2\vert K_a^{(\alpha)}(x,y)\vert \d y \d x\\
&\le 2\Lambda (2-\alpha)\int_{B} (u^2(x) + v^2(x))\left(\int_{B \cap \lbrace \vert x-y \vert > 1\rbrace} \vert x-y\vert^{-d-\alpha} \d y\right) \d x \quad \to 0 \text{ as } \alpha \nearrow 2.
\end{split}
\end{align}
On the other hand, by Taylor's formula, for $x,y \in B$ with $\vert x-y\vert \le 1$ there is a bounded remainder $r(x,y)$ such that
\begin{equation*}
u(x)-u(y) = \nabla u(x)(x-y) +r(x,y)\vert x-y\vert^2.
\end{equation*}
Consequently:
\begin{align}
\label{eq:ptwhelp2}
\begin{split}
\int_{B}&\int_{B \cap \lbrace \vert x-y \vert \le 1\rbrace} (u(x)-u(y))(v(x)+v(y))K_a^{(\alpha)}(x,y) \d y \d x\\
& = 2\int_{B}\int_{B \cap \lbrace \vert x-y \vert \le 1\rbrace} \left(\nabla u(x)(x-y) +r(x,y)\vert x-y\vert^2\right)v(x)K_a^{(\alpha)}(x,y) \d y \d x\\
&= 2\int_{B}v(x) \nabla u(x) \left(\int_{B \cap \lbrace \vert x-y \vert \le 1\rbrace}(x-y)K_a^{(\alpha)}(x,y) \d y\right) \d x\\
& \quad + 2\int_{B}v(x)\int_{B \cap \lbrace \vert x-y \vert \le 1\rbrace}r(x,y)\vert x-y\vert^2K_a^{(\alpha)}(x,y) \d y \d x.
\end{split}
\end{align}
The first term converges to $2\int_B v(x) \partial_i u(x) b_i(x) \d x$ by definition of $b$ and dominated convergence. For the second term we have by \eqref{eq:kernelcomp} and \eqref{eq:globalK1}:
\begin{align}
\label{eq:ptwhelp3}
\begin{split}
&\int_B v(x) \int_{B \cap \lbrace \vert x-y \vert \le 1\rbrace}r(x,y)\vert x-y\vert^2K_a^{(\alpha)}(x,y) \d y \d x\\
&\le c_1\Vert r \Vert_{\infty} \sup_{x \in B}\left((2-\alpha)\int_{B \cap \lbrace \vert x-y \vert \le 1\rbrace} \hspace*{-3ex}  \vert x-y \vert^{4-d-\alpha} \d y \right)^{1/2} \int_B v(x)\left(\int_B \frac{\vert K^{(\alpha)}_a(x,y)\vert^{2}}{K^{(\alpha)}_s(x,y)}\d y\right)^{1/2}\d x\\
&\le c_2 \sup_{x \in B} \left((2-\alpha)\int_{B \cap \lbrace \vert x-y \vert \le 1\rbrace} \vert x-y \vert^{4-d-\alpha} \d y \right)^{1/2} \left(\Vert v \Vert_{L^{2}(B)} + \cE_B^{K_s^{(\alpha)}}(v,v) \right)\\
&\to 0 \text{ as } \alpha \nearrow 2,
\end{split}
\end{align}
where $c_1, c_2 > 0$ are constants and we used a similar argument as in \eqref{eq:SCGIhelp} in the last estimate. \\
It remains to prove the second part of the assertion. The desired properties of $a_{i,j}$ follow from Proposition 3.1 in \cite{GKV20}. Next, we estimate:
\begin{align*}
b_i(x) = \lim_{\alpha \nearrow 2} \int_{B_{\delta}(0)} (-h_i) K^{(\alpha)}_a(x,x+h) \d h \le \vert a_{i,i}(x)\vert^{1/2} \left( \sup_{\alpha \in (\alpha_0,2)}\int_{B_{\delta}(0)} \frac{\vert K_a^{(\alpha)}(x,x+h)\vert^2}{K_s^{(\alpha)}(x,x+h)} \d h \right)^{1/2},
\end{align*}
which, by \eqref{eq:globalK1} and boundedness of $a_{i,j}$, implies
\begin{align*}
\Vert b_i \Vert_{L^{2\theta_0}(B)} \le \Vert a_{i,i}\Vert_{\infty}^{1/2} \left(\sup_{\alpha \in (\alpha_0,2)}\int_{B} \left(\int_{B_{\delta}(0)} \frac{\vert K_a^{(\alpha)}(x,x+h)\vert^2}{K_s^{(\alpha)}(x,x+h)} \d h \right)^{\theta_0} \d x\right)^{1/(2\theta_0)} < \infty.
\end{align*}
This concludes the proof.
\end{proof}

We are ready to prove \autoref{thm:Mosco}. We point out that related results were proved in \cite{GKV20}, \cite{Gou20} for symmetric Dirichlet forms. Although Mosco-Hino convergence (see \cite{Hin98}) requires the verification of slightly stronger properties than for the symmetric analog (see \cite{Mos94}), the arguments in our proof are reminiscent of those in \cite{Gou20}.

\begin{proof}(of \autoref{thm:Mosco})
Throughout this proof, we introduce the notation $\cE^s(u,v) = \frac{1}{2}(\cE(u,v) + \cE(v,u))$ for the symmetric part of a bilinear form $\cE$.\\
First, we prove the Mosco convergence according to \cite{Hin98}. Let $(\alpha_n)_n \subset (0,2)$ be a sequence such that $\alpha_n \nearrow 2$ as $n \to \infty$. It suffices to establish the following two properties:
\vspace{-0.2cm}
\begin{itemize}
\item[(i)] For every sequence $(v_n) \subset L^2(B)$ with $v_n \in H^{\alpha_n/2}(B)$ and $\sup_n \cE_B^{(\alpha_n)}(v_n,v_n) < \infty$ and every $v \in H^1(B)$ such that $v_n \rightharpoonup v$ in $L^2(B)$ and every $u \in C^{2}_c(\overline{B})$ it holds
\begin{equation*}
\liminf_{n \to \infty} \cE^{(\alpha_n)}_B(u,v_n) \le \cE(u,v).
\end{equation*}

\item[(ii)] For every sequence $(u_n) \subset L^2(B)$ with $\liminf_{n \to \infty} \cE_B^{(\alpha_n)}(u_n,u_n) < \infty$ and every $u \in L^2(B)$ such that $u_n \rightharpoonup u$ in $L^2(B)$ it holds
\begin{equation*}
\cE(u,u) \le \liminf_{n \to \infty} \cE^{(\alpha_n)}_B(u_n,u_n).
\end{equation*}
\end{itemize}

First, we prove (i). Note that $\sup_n \Vert v_n\Vert_{L^2(B)} < \infty$ by the weak convergence $v_n \rightharpoonup v$. Thus, due to G\r{a}rding's inequality \eqref{eq:unifGarding} and the fact that $\sup_n \cE^{(\alpha_n)}_B(v_n,v_n) < \infty$, we conclude
\begin{align}
\label{eq:Moscohelp0}
\sup_n \cE^{K_s^{(\alpha_n)}}_B(v_n,v_n) + \Vert v_n\Vert_{L^2(B)}^2 < \infty.
\end{align}
Since $\cE^{K_s^{(\alpha_n)}}_B(u,u) \to \cE^{a_{i,j}}(u,u)$ for every $u \in C^{2}_c(\overline{B})$ by \autoref{lemma:ptwconv}, we conclude that there exists a subsequence $(v_{n_k}) \subset (v_n)$ and $v' \in H^1(B)$ such that $v_{n_k} \rightharpoonup v'$ in $H^{\alpha/2}(B)$. In particular, for every $u \in C^{2}_c(\overline{B})$ (see Theorem 5.59 in \cite{Gou20}, resp. Lemma 2.2 in \cite{KuSh03}):
\begin{align}
\label{eq:Moscohelp1}
\cE^{K_s^{(\alpha_{n_k})}}_B(u, v_{n_k}) \to \cE^{a_{i,j}}(u,v').
\end{align}
The weak convergence $v_n \rightharpoonup v$ in $L^2(B)$ yields that $v' = v$.\\
Furthermore, it is easy to see that upon replacing $v$ by $v_n$ the proof of \eqref{eq:limKa} yields
\begin{align}
\label{eq:Moscohelp2}
\cE^{K_a^{(\alpha_n)}}_B(u,v_n) \to \cE^{b_i}(u,v), ~~ u \in C_c^{2}(\overline{B}).
\end{align}
Indeed, \eqref{eq:ptwhelp1} remains valid since $\sup_x \left((2-\alpha_n)\int_{\lbrace \vert x-y \vert > 1\rbrace} \vert x-y\vert^{-d-\alpha_n} \d y\right) \to 0$ and due to \eqref{eq:Moscohelp0}. Also, the first summand in \eqref{eq:ptwhelp2} tends to $2\int_B v(x) \nabla u(x) b(x) \d x$ since the convergence in \eqref{eq:bi} is pointwise, $b \in L^{2\theta_0}(B)$, $\nabla u$ is bounded and therefore $b\nabla u \in L^2(B)$, $$\left(\int_{B \cap \lbrace \vert \cdot-y \vert \le 1\rbrace}(y-\cdot)K_a^{(\alpha)}(\cdot,y) \d y\right)\nabla u \to b\nabla u  \text{ in } L^2(B) \,.$$ The proof of \eqref{eq:ptwhelp3} works in the same way using \eqref{eq:Moscohelp0}. Combining \eqref{eq:Moscohelp1} and \eqref{eq:Moscohelp2} yields
\begin{equation*}
\liminf_{n \to \infty} \cE^{(\alpha_n)}_B(u,v_n) \le \lim_{k \to \infty}\cE^{(\alpha_{n_k})}_B(u, v_{n_k}) = \cE(u,v), ~~ u \in C_c^{2}(\overline{B}).
\end{equation*}

Now, we prove (ii). Let us assume that $u_n \rightharpoonup u$, $\liminf_{n \to \infty} \cE^{(\alpha_n)}_B(u_n,u_n) < \infty$. By G\r{a}rding's inequality \eqref{eq:unifGarding} and uniform boundedness of $\Vert u_n \Vert_{L^2(B)}$, which is due to the weak convergence, 
\begin{align}
\label{eq:Moscohelp3}
\liminf_{n \to \infty} \cE^{K_s^{(\alpha_n)}}_B(u_n,u_n) < \infty.
\end{align}

Then according to Proposition 4.2 in \cite{Pon04} there exists a subsequence $(u_{n_k}) \subset (u_n)$ with $u_{n_k} \to u$ in $L^2(B)$ due to \eqref{eq:Moscohelp3}. Consequently it must be that already $u_n \to u$ in $L^2(B)$.\\
Note that by the weak sector condition and G\r{a}rding's inequality \eqref{eq:unifGarding}, $H^{\alpha_n/2}(B)$ with scalar product $\cE^{(\alpha_n),s}_{B,\lambda}(\cdot,\cdot) := \cE^{(\alpha_n),s}_B(\cdot,\cdot) + \lambda (\cdot,\cdot)_{L^2(B)}$ is a Hilbert space since the norm induced by $\cE^{(\alpha_n),s}_{B,\lambda}(\cdot,\cdot)$ is equivalent to the standard $H^{\alpha_n/2}(B)$-norm. From \autoref{lemma:ptwconv}, it follows that one can also prove a G\r{a}rding's inequality for $\cE$. Thus, we see that also $H^1(B)$ with scalar product $\cE^s_{\lambda}(\cdot,\cdot) := \cE^s(\cdot,\cdot) + \lambda (\cdot,\cdot)_{L^2(B)}$ is a Hilbert space. Due to \autoref{lemma:ptwconv}, we know that 
\begin{align}
\label{eq:Moscohelp4}
\cE^{(\alpha_n),s}_B(u,u) = \cE^{(\alpha_n)}_B(u,u) \to \cE(u,u) = \cE^{s}(u,u), ~~\text{for every } u \in H^1(B).
\end{align}
Furthermore, since $\liminf_{n \to \infty} \cE^{(\alpha_n)}_B(u_n,u_n) < \infty$, it is also clear that there exists a subsequence $(u_{n_k}) \subset (u_n)$ such that $\lim_{k \to \infty} \cE^{(\alpha_{n_k})}_B(u_{n_k},u_{n_k}) = \liminf_{n \to \infty} \cE^{(\alpha_n)}_B(u_n,u_n)$. Note that $(u_{n_k})$ is bounded with respect to $\cE^{(\alpha_{n_k}),s}_{B,\lambda}(\cdot,\cdot)$. 
Due to \eqref{eq:Moscohelp4} this implies according to Lemma 2.2 in \cite{KuSh03} that there exists a further subsequence $(u_{n_{k_l}}) \subset (u_{n_k})$ that we will simply also denote by $(u_{n_k})$, and $u' \in H^1(B)$ such that for every $v \in C_c^{\infty}(B)$:
\begin{equation*}
\lim_{k \to \infty} \cE^{(\alpha_{n_k}),s}_B(u_{n_k},v) + \lambda (u,v) = \lim_{k \to \infty} \cE^{(\alpha_{n_k}),s}_{B,\lambda}(u_{n_k},v) = \cE^{s}(u',v) + \lambda (u',v),
\end{equation*}
where we used that $u_n \to u$ in $L^2(B)$. Therefore, $u' = u$ and $u_{n_k} \rightharpoonup u$ with respect to $\cE_{B,\lambda}^{(\alpha_{n_k}),s}$, $\cE_{\lambda}^s$. By Lemma 2.3 in \cite{KuSh03}, and $u_n \to u$ in $L^2(B)$, we deduce 
\begin{align*}
 \cE(u,u) + \lambda \Vert u \Vert_{L^2(B)}^2 = \cE_{\lambda}(u,u)&\le \liminf_{k \to \infty} \cE^{(\alpha_n)}_{B,\lambda}(u_{n_k},u_{n_k})\\
 &= \liminf_{k \to \infty} \cE^{(\alpha_n)}_B(u_{n_k},u_{n_k}) + \lambda \Vert u \Vert_{L^2(B)}^2
\end{align*}
and from the definition of $(u_{n_k})$ we obtain that even
\begin{equation*}
\cE(u,u) \le \liminf_{k \to \infty} \cE^{(\alpha_{n_k})}_B(u_{n_k},u_{n_k}) = \liminf_{n \to \infty} \cE^{(\alpha_n)}_B(u_{n},u_{n}),
\end{equation*}
as desired.\\
Finally, we prove that $(\cE,H^1(B))$ is a regular lower bounded semi-Dirichlet form. Recall that due to \autoref{lemma:ptwconv}, it holds that $a(x)$ is uniformly positive definite and bounded, and $b_i \in L^d(B)$ $(\cE,H^1(B))$. In particular, $(\cE,H^1(B))$ satisfies the sector condition and a G\r{a}rding-type inequality as in \eqref{eq:unifGarding}. Consequently, $(\cE,H^1(B))$ is indeed a regular lower bounded semi-Dirichlet form.
\end{proof}

\begin{remark*}
Note that also in the case $B = \R^d$, it is true that $a_{i,j}$ is uniformly elliptic and bounded and $b_i \in L^{2\theta_0}(\R^d)$ under the condition that $\theta > d/2$ is independent of $\alpha$, where $a_{i,j}(x), b_i(x)$ are defined as in \eqref{eq:aij}, \eqref{eq:bi} for $x \in \R^d$.\\
If $\theta = \infty$, \autoref{lemma:ptwconv} and property (i) in the proof of \autoref{thm:Mosco} also hold true with $B = \R^d$.
However, property (ii) can only be verified for every strongly converging sequence $u_n \to u$ in $L^2(\R^d)$ because \eqref{eq:Moscohelp3} and boundedness of $\Vert u_n \Vert_{L^2(\R^d)}$ only yield the existence of a subsequence $(u_{n_k}) \subset (u_n)$ converging in $L^2_{loc}(\R^d)$. The combination of (i) and this weakened version of (ii) can be regarded as a nonsymmetric analog to Gamma-convergence in $L^2(\R^d)$.
\end{remark*}

\begin{remark*}
Mosco-Hino convergence to $(\cE,H^1(B))$ in $L^2(B)$ also holds true for the sequence $(\cE_{B,\R^d}^{(\alpha)},V^{\alpha}(B|\R^d))$, where
\begin{equation*}
\cE_{B,\R^d}^{(\alpha)}(u,v) = \int_B \int_{\R^d} (u(x)-u(y))v(x) K^{(\alpha)}(x,y) \d y \d x,
\end{equation*}
and $V^{\alpha}(B|\R^d) = \{ u : \R^d \to \R : v \mid_B \in L^2(B) : \cE_{B,\R^d}^{K_s^{(\alpha)}}(u,u) < \infty  \}$, equipped with $\Vert u \Vert^2_{V^{\alpha}(B|\R^d)} = \Vert u \Vert_{L^2(B)} + \cE_{B,\R^d}^{(\alpha)}(u,u)$ using the same arguments, as before. Note that the computation \eqref{eq:SCGIhelp} also works for $(\cE_{B,\R^d}^{(\alpha)},V^{\alpha}(B|\R^d))$, yielding the sector condition and G\r{a}rding's inequality. Moreover, it is easy to see that $\cE_{B,\R^d}^{(\alpha)}(u,v) \to \cE(u,v)$ as $\alpha \nearrow 2$ for every $u,v \in H^1(\R^d)$.
\end{remark*}

\section{Examples}
\label{sec:examples}

The goal of this section is to discuss several classes of examples and to investigate the validity of the key assumptions \eqref{K1} and \eqref{K2}. The examples include the following cases:
\begin{align*}
K(x,y) &= g(x,y)\vert x-y\vert^{-d-\alpha},\\
K(x,y) &= \vert x-y\vert^{-d-\alpha} + ( V(x) - V(y)) \vert x-y\vert^{-d-\alpha},\\
K(x,y) &= |x-y|^{-d-\alpha}\mathbbm{1}_{D}(x-y) + |x-y|^{-d-\beta}\mathbbm{1}_C(x-y),
\end{align*}
for appropriate functions $g : \R^d \times \R^d \to [\lambda,\Lambda]$, $V : \R^d \to \R$, sets $C, D \subset \R^d$, and $\alpha,\beta \in (0,2)$.
The corresponding classes are considered in \autoref{sec:fcase}, \autoref{sec:ex1} and \autoref{sec:cone}.
Note that \autoref{sec:ex1} generalizes \autoref{ex:intro} and connects this class of operators to second order differential operators in divergence form through \autoref{thm:Mosco}. In \autoref{sec:cone}, we discuss jumping kernels whose nonsymmetric part, as in the third example above, may live on certain cones centered at the origin, giving rise to an example which satisfies \eqref{K1}, \eqref{K2} with $j = J = |x-y|^{-d-\alpha}$, but not with $j = J = K_s$.

Before we discuss the influence of nonsymmetry, let us comment on the assumptions on the symmetric part $K_s$:

\begin{example}
\label{ex:symm}
A large class of symmetric kernels $K_s$ satisfying \eqref{cutoff}, \eqref{Poinc}, \eqref{Sob}, \eqref{cutoff2} are those for which \eqref{eq:suffcutoff} holds for every $\zeta > 0$ and there is $\Lambda \ge 1$ such that \eqref{eq:coercivity} holds true.
This does not only include kernels that are pointwise comparable to the $\alpha$-stable kernel but also anisotropic kernels that are not fully supported on $\R^d \times \R^d$ (see \cite{BKS19}) as
\begin{align*}
K_s(x,y) = \vert x-y\vert^{-d-\alpha} \left(\mathbbm{1}_{C(x)}(y) + \mathbbm{1}_{C(y)}(x) \right),
\end{align*}
where $\{C(x)\}_{x \in \R^d}$ are certain configurations of double cones $C(x) \subset \R^d$ centered at $x \in \R^d$.
\end{example}

As assumptions \eqref{cutoff}, \eqref{Poinc}, \eqref{Sob} only affect the symmetric part and are well-known in the literature, we focus on the effect of \eqref{K1}, \eqref{K2} on the class of admissible kernels. Trivially, given any symmetric $K_s$ satisfying \eqref{cutoff}, \eqref{Poinc}, \eqref{Sob}, \eqref{cutoff2} one can construct an admissible kernel $K$ by adding any antisymmetric $K_a$ as long as \eqref{K1}, \eqref{K2} and $\vert K_a(x,y)\vert \le \vert K_s(x,y)\vert$ hold true in order to obtain an admissible kernel satisfying all assumptions of \autoref{thm:mainthmPDE}, \autoref{thm:mainthmPDEdual}.

We close this section by giving a sufficient condition for \eqref{K1} in a general setup. Consider a symmetric kernel $J^{\alpha} : \R^d \times \R^d \to [0,\infty]$ and a function $g : \R^d \times \R^d \to [0,\infty]$. We define 
\begin{equation*}
K(x,y) = g(x,y)J^{\alpha}(x,y).
\end{equation*}
Then
\begin{align*}
K_a(x,y) = \frac{g(x,y)-g(y,x)}{2}J^{\alpha}(x,y), ~~K_s(x,y) = \frac{g(x,y)+g(y,x)}{2} J^{\alpha}(x,y).
\end{align*}

Our goal is to discuss assumptions on $g$ under which \eqref{K1} is  satisfied for $K$ with $J = K_s$.\\
\eqref{K1} reads as follows for $K$: There exists $C > 0$ such that for every ball $B_{2r} \subset \Omega$ with $r \le 1$:
\begin{align}
\label{eq:equivK1}
\left\Vert \int_{B_{2r}} (g(\cdot,y)-g(y,\cdot))^2 J^{\alpha}(\cdot,y) \d y  \right\Vert_{L^{\theta}(B_{2r})} \le C < \infty.
\end{align}

The subsequent lemma provides a suitable criterion for the verification of \eqref{K1}.
\begin{lemma}
\label{lemma:suffK1}
Let $V : \R^d \to \R$ and $\theta \in [\frac{d}{\alpha},\infty]$. Assume that $J^{\alpha}$ satisfies \eqref{eq:suffcutoff} and that there is a constant $c > 0$ such that for every ball $B_{2r} \subset \Omega$ with $r \le 1$ it holds either
\begin{align}
\label{eq:suffK1}
\exists \gamma \in (\alpha/2,1]:~~ \Vert[V(\cdot)]_{C^{0,\gamma}(B_{2r})}\Vert_{L^{2\theta}(B_{2r})} \le c,
\end{align}
where $[V(x)]_{C^{0,\gamma}(B_{2r})} := \inf \left\lbrace A > 0 ~|~ \forall y \in B_{2r}: \vert V(x)-V(y)\vert \le A \vert x-y\vert^{\gamma}\right\rbrace$, or
\begin{align}
\label{eq:suffK12}
\Vert \nabla V\Vert_{L^{2\theta}(B_{2r})} + \Vert \sup_{y \in B_{2r}} r(\cdot,y)\Vert_{L^{2\theta}(B_{2r}))} \le c,
\end{align}
where $r(x,y) = \left[V(x) - V(y) - (\nabla V(x) , x-y)\right]\vert x -y \vert^{-1}$.
Then
\begin{align*}
\left\Vert \int_{B_{2r}} (V(\cdot)-V(y))^2 J^{\alpha}(\cdot,y) \d y  \right\Vert_{L^{\theta}(B_{2r})} \le C < \infty.
\end{align*}
\end{lemma}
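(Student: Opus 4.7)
The plan is to reduce both cases to a single dyadic tail estimate for $J^{\alpha}$. Namely, I claim that for any $\beta > \alpha$ there is $c > 0$ such that for every $x \in B_{2r}$ with $r \le 1$,
\begin{align*}
\int_{B_{2r}} |x-y|^{\beta} J^{\alpha}(x,y) \d y \le c r^{\beta - \alpha}.
\end{align*}
This is a standard dyadic computation: bound the domain of integration by $B_{4r}(x)$, decompose $B_{4r}(x) = \bigcup_{j \ge 0} A_j$ with $A_j := B_{2^{-j+2} r}(x) \setminus B_{2^{-j+1} r}(x)$, use the pointwise bound $|x-y|^{\beta} \le (2^{-j+2} r)^{\beta}$ on $A_j$, and apply \eqref{eq:suffcutoff} at $\zeta = 2^{-j+1} r$ to get $\int_{A_j} J^{\alpha}(x,y) \d y \le c (2^{-j+1} r)^{-\alpha}$. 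The resulting geometric series $\sum_{j\ge 0} 2^{-j(\beta-\alpha)}$ converges precisely because $\beta > \alpha$.

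For case \eqref{eq:suffK1}, I would use the pointwise bound $(V(x)-V(y))^2 \le [V(x)]_{C^{0,\gamma}(B_{2r})}^2 |x-y|^{2\gamma}$ and apply the above with $\beta = 2\gamma > \alpha$ (by the assumption $\gamma > \alpha/2$). Since $r \le 1$ and $2\gamma - \alpha > 0$, the factor $r^{2\gamma - \alpha}$ is bounded, so
\begin{align*}
\int_{B_{2r}} (V(x)-V(y))^2 J^{\alpha}(x,y) \d y \le c [V(x)]_{C^{0,\gamma}(B_{2r})}^2.
\end{align*}
Taking the $L^{\theta}(B_{2r})$-norm and identifying $\Vert [V(\cdot)]_{C^{0,\gamma}(B_{2r})}^2 \Vert_{L^{\theta}(B_{2r})} = \Vert [V(\cdot)]_{C^{0,\gamma}(B_{2r})} \Vert_{L^{2\theta}(B_{2r})}^2$ closes this case via the hypothesis.

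For case \eqref{eq:suffK12}, I would use the first-order Taylor expansion $V(x) - V(y) = (\nabla V(x), x-y) + r(x,y)|x-y|$, together with $(a+b)^2 \le 2a^2 + 2b^2$, to estimate
\begin{align*}
(V(x)-V(y))^2 \le 2\bigl(|\nabla V(x)|^2 + \sup_{z \in B_{2r}} r(x,z)^2\bigr)|x-y|^2.
\end{align*}
Applying the dyadic estimate with $\beta = 2 > \alpha$ (and $r \le 1$) produces
\begin{align*}
\int_{B_{2r}} (V(x)-V(y))^2 J^{\alpha}(x,y) \d y \le c\bigl(|\nabla V(x)|^2 + \sup_{z \in B_{2r}} r(x,z)^2\bigr),
\end{align*}
after which taking the $L^{\theta}$-norm and invoking \eqref{eq:suffK12} finishes the proof.

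The only nontrivial step is the dyadic tail estimate, whose sole requirement on the exponent is $\beta > \alpha$; both hypotheses $\gamma > \alpha/2$ and the universal choice $\beta = 2$ in the second case are exactly what makes the geometric series convergent. Everything else is a pointwise estimate in $x$ followed by Hölder's inequality in the outer norm, so I do not anticipate any real obstacle.
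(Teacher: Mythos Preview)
Your proposal is correct and follows essentially the same approach as the paper: a dyadic tail estimate showing $\int_{B_{2r}} |x-y|^{\alpha+\eps} J^{\alpha}(x,y)\,\d y$ is bounded uniformly (the paper uses base-$4$ annuli, you use base-$2$, which is cosmetic), followed by the pointwise H\"older bound with $\eps = 2\gamma - \alpha$ in case \eqref{eq:suffK1} and the first-order Taylor bound with $\eps = 2 - \alpha$ in case \eqref{eq:suffK12}, and finally passage to the $L^{\theta}$-norm. The only minor point is that your indexing gives $\zeta = 2r$ for $j=0$, which may exceed $1$; the paper's decomposition starts at $\zeta = r$ to stay within the range where \eqref{eq:suffcutoff} is stated, but this is easily fixed and not a real gap.
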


\begin{proof}
Let $B_{2r} \subset \Omega$.
First, we claim that for every $\eps > 0$ there exists $c_1 > 0$ independent of $B_{2r}$ such that
\begin{align}
\label{eq:suffK1help}
\left\Vert \int_{B_{2r}} \vert \cdot - y\vert^{\alpha+\eps} J^{\alpha}(\cdot,y) \d y  \right\Vert_{L^{\infty}(B_{2r})} \le c_1.
\end{align}
The claim follows directly by decomposing $ B_{4r}(x) = \bigcup_{k=0}^{\infty} B_{4^{1-k}r}(x) \setminus B_{4^{-k}r}(x)$ and applying \eqref{eq:suffcutoff} for every $x \in B_{2r}$ after enlarging the domain of integration for the inner integral to $B_{4r}(x) \supset B_{2r}$ for each $x \in B_{2r}$.\\
If \eqref{eq:suffK1} is satisfied, the assertion follows by H\"older's inequality and application of \eqref{eq:suffK1help} with $\eps := 2\gamma - \alpha > 0$. In case \eqref{eq:suffK12} holds, we observe that $(V(x) - V(y))^2 \le 2 \left(\vert\nabla V(x)\vert^2  + r^2(x,y)\right)\vert x-y\vert^2$ and proceed as before, applying \eqref{eq:suffK1help} with $\eps := 2 - \alpha > 0$.
\end{proof}

Let us discuss assumptions \eqref{eq:suffK1} and \eqref{eq:suffK12} from \autoref{lemma:suffK1} and give illustrating examples.

Case $\theta = \infty$: The following functions $V$ satisfy the assumptions of \autoref{lemma:suffK1} with $\theta = \infty$.
\vspace{-0.2cm}
\begin{itemize}
\item If $\gamma > \alpha/2$ then \eqref{eq:suffK1} holds with $(\theta =\infty, \gamma)$ for every $V \in C^{0,\gamma}(\Omega)$. As by Morrey's inequality $W^{s,p}(\Omega) \subset C^{0,\gamma}(\Omega)$ with $\gamma = s - \frac{d}{p}$, \eqref{eq:suffK1} is in particular satisfied for every $V \in W^{s,p}(\Omega)$ with $2s > \alpha$ and $p > \frac{2d}{2s-\alpha}$ if $\Omega$ is smooth.
\item \eqref{eq:suffK12} is satisfied for every $V \in C^1(\Omega)$ with $\Vert \nabla V \Vert_{L^{\infty}(\Omega)} < \infty$ by mean value theorem.
\end{itemize}

Case $\theta < \infty$: It is possible to consider also less smooth functions $V$. This shows the advantage of formulating assumption \eqref{K1} with general $\theta \in [\frac{d}{\alpha},\infty]$.
\vspace{-0.2cm}
\begin{itemize}
\item Let $\Omega = B_2(0)$. Define $V(x) = \vert x \vert^{\gamma_0}$ for some $ 0 < \gamma_0 < \alpha/2$. Then $V \in C^{0,\gamma_0}(B_2(0))$ but \eqref{eq:suffK1} fails for $\theta = \infty$. However, given $x \in B_2(0)$ and $\gamma := \gamma_0 + \eps \le 1$ for some $\eps > 0$: 
\begin{align*}
\sup_{y \in B_2(0)} \frac{\vert V(x)-V(y)\vert}{\vert x-y\vert^{\gamma}} = \vert x \vert^{\gamma_0 - \gamma},
\end{align*}
so $[V(\cdot)]_{C^{0,\gamma}(B_2(0))} = \vert \cdot\vert^{-\eps} \in L^{2\theta}(B_2(0))$ if $\eps < \frac{d}{2\theta}$. Note that $\gamma > \frac{\alpha}{2}$ if $\eps > \frac{\alpha}{2}-\gamma_0$. Thus, \eqref{eq:suffK1} is satisfied for any pair $(\theta,\gamma)$ such that $\theta \in [\frac{d}{\alpha} , \frac{d}{\alpha - 2\gamma_0})$ and $\eps \in (\frac{\alpha}{2}-\gamma_0 , \frac{d}{2\theta})$.
\item \eqref{eq:suffK12} holds with $\theta \in [\frac{d}{\alpha},\infty)$ if $V \in W^{1,2\theta}(\R^d)$. A proof is given in \cite{Spe16}.
\end{itemize}

\subsection{nonsymmetric coefficients}
\label{sec:fcase}

In this section we consider jumping kernels $K$ that are driven by a symmetric kernel $J^{\alpha} : \R^d \times \R^d \to [0,\infty]$ satisfying \eqref{eq:suffcutoff} for every $\zeta > 0$, \eqref{Poinc} and \eqref{Sob} for some $\alpha \in (0,2)$ and a nonsymmetric coefficient function $g : \R^d \times \R^d \to [\lambda,\Lambda]$, where $0 < \lambda \le \Lambda < \infty$. We define
\begin{align}
\label{eq:kernelclassf}
K(x,y) = g(x,y)J^{\alpha}(x,y).
\end{align}
$J^{\alpha}$ can be any kernel from \autoref{ex:symm}, the prototype kernel being $J^{\alpha}(x,y) = \vert x-y\vert^{-d-\alpha}$. 
First, we observe that by the boundedness of $g$, $J^{\alpha}$ inherits the properties \eqref{cutoff}, \eqref{Poinc}, \eqref{Sob}, \eqref{cutoff2} to $K$.
Moreover, \eqref{K2} is satisfied even globally:

\begin{proposition}
\label{prop:K2}
Let $D = \frac{\Lambda - \lambda}{\Lambda + \lambda} < 1$. Then \eqref{K2} is satisfied. In particular,
\begin{align*}
\vert K_a(x,y) \vert \le DK_s(x,y),~~ \forall x,y \in \R^d.
\end{align*}
\end{proposition}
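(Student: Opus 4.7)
The plan is to reduce the proposition to a two-variable calculus problem. Writing $K(x,y)=g(x,y)J^\alpha(x,y)$, both the symmetric and antisymmetric parts factor through $J^\alpha$, giving
\[
\frac{|K_a(x,y)|}{K_s(x,y)} = \frac{|g(x,y)-g(y,x)|}{g(x,y)+g(y,x)}.
\]
Setting $a:=g(x,y)\vee g(y,x)$ and $b:=g(x,y)\wedge g(y,x)$, we have $\lambda\le b\le a\le\Lambda$, and the map $(a,b)\mapsto (a-b)/(a+b)$ is increasing in $a$ and decreasing in $b$ (a one-line check via partial derivatives: $\partial_a=2b/(a+b)^2>0$, $\partial_b=-2a/(a+b)^2<0$). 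Consequently its maximum on the rectangle $[\lambda,\Lambda]^2\cap\{a\ge b\}$ is attained at $(a,b)=(\Lambda,\lambda)$ and equals $D=(\Lambda-\lambda)/(\Lambda+\lambda)<1$. This proves the pointwise bound $|K_a(x,y)|\le D\,K_s(x,y)$ announced in the second half of the proposition.

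For \eqref{K2}, I would choose as reference symmetric kernel
\[
j(x,y):=\frac{\lambda}{1-D}J^\alpha(x,y)=\frac{\Lambda+\lambda}{2}J^\alpha(x,y),
\]
which is normalized exactly so that the coercivity constant matches $1-D$. Since $g\ge\lambda$ pointwise, we have $K(x,y)\ge\lambda J^\alpha(x,y)=(1-D)j(x,y)$ on all of $\R^d\times\R^d$, so the first condition in \eqref{K2} holds on every ball $B_{2r}\subset\Omega$.

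For the second condition, the upper bound $g\le\Lambda$ gives $K_s(x,y)\le\Lambda J^\alpha(x,y)$ pointwise, so integrating $(v(x)-v(y))^2$ against both sides over $B_{2r}\times B_{2r}$ yields
\[
\cE^{K_s}_{B_{2r}}(v,v)\le\Lambda\,\cE^{J^\alpha}_{B_{2r}}(v,v)=\frac{2\Lambda}{\Lambda+\lambda}\,\cE^{j}_{B_{2r}}(v,v),
\]
so \eqref{K2} holds with $C=2\Lambda/(\Lambda+\lambda)$.

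I do not expect any genuine obstacle here: the whole proposition is essentially a reformulation of the pointwise two-sided bound $\lambda J^\alpha\le K_s\le\Lambda J^\alpha$, together with the optimization of $(a-b)/(a+b)$ over a rectangle. The only mild subtlety is selecting the normalization of $j$ so that the constant $1-D$ in the lower bound on $K$ matches the constant appearing in the pointwise ratio bound on $|K_a|/K_s$; the choice $j=\tfrac{\lambda}{1-D}J^\alpha$ achieves this automatically.
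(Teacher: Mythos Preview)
Your proof is correct and follows essentially the same idea as the paper's: both reduce to the pointwise inequality $|g(x,y)-g(y,x)|\le D\,(g(x,y)+g(y,x))$. The paper dispatches this in one line via the algebraic observation $\frac{g(y,x)}{g(x,y)}\ge\frac{\lambda}{\Lambda}=\frac{1-D}{1+D}$, rather than your monotonicity argument, and then \eqref{K2} follows immediately with the simpler choice $j=K_s$ (since $K\ge K_s-|K_a|\ge(1-D)K_s$ and the energy comparison is trivial with $C=1$); your choice $j=\tfrac{\lambda}{1-D}J^\alpha$ works just as well but is slightly less direct.
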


\begin{proof}
By definition of $D$, we have $\frac{\lambda}{\Lambda} = \frac{1-D}{D+1}$.
Let $x,y \in \R^d$ with $g(x,y) \ge g(y,x)$. Then:
\begin{align*}
\frac{g(y,x)}{g(x,y)} \ge \frac{\lambda}{\Lambda} = \frac{1-D}{D+1},
\end{align*}
which implies that $\vert g(x,y) - g(y,x)\vert = g(x,y) - g(y,x) \le D(g(x,y)+g(y,x))$.
\end{proof}

By application of \autoref{lemma:suffK1} it is easy to verify \eqref{eq:equivK1} and therefore \eqref{K1} for certain classes of coefficient functions $g$.

\begin{example}
\label{ex:Vs}
Let $K(x,y) = g(x,y)J^{\alpha}(x,y)$, where $J^{\alpha}$, $g$ are as before. Assume that $V_1,V_2 : \R^d \to \R$ satisfy the assumptions of \autoref{lemma:suffK1} for  $\theta \in [\frac{d}{\alpha},\infty]$. Then \eqref{K1} (with $\theta$) holds for $K$ if one of the following is true:
\vspace{-0.2cm}
\begin{itemize}
\item[(i)] $g(x,y) := V_1(x) + V_2(y) \in [\lambda,\Lambda]$.
\item[(ii)]  $g(x,y) := V_1(x)V_2(y) \in [\lambda,\Lambda]$ and $\Vert V_1 \Vert_{L^{\infty}(\Omega)} + \Vert V_2 \Vert_{L^{\infty}(\Omega)} < \infty$.
\end{itemize}
\vspace{-0.2cm}
If (i) or (ii) are satisfied, then \autoref{thm:mainthmPDE} is applicable to $K$. 
\end{example}

\begin{remark*}
Note that one can carry out all the arguments from this section also if $g : \R^d \times \R^d \to [0,\Lambda]$ with $g\mid_{\Omega \times \Omega} \in [\lambda,\Lambda]$ without any significant changes.
\end{remark*}

\subsection{Carr\'e du champ-type nonlocal drift}
\label{sec:ex1}

In this section, we discuss and generalize \autoref{ex:intro} from the introduction. Let $\alpha \in (0,2)$, $L \in (0,\infty]$, $0 < \lambda \le \Lambda < \infty$ and $j : \R^d \times \R^d \to [\lambda,\Lambda]$ be a symmetric function. Let $V : \R^d \to \R$ be such that 
\begin{align}
\label{eq:Knonneg}
\vert V(x) - V(y) \vert \mathbbm{1}_{\lbrace \vert x-y\vert \le L \rbrace}(x,y) \le \lambda, ~~ \forall x,y \in \R^d.
\end{align}
We define
\begin{align}
\label{eq:kernelclassV}
K(x,y) = j(x,y)c_{d,\alpha}\vert x-y\vert^{-d-\alpha} + ( V(x) - V(y)) \mathbbm{1}_{\lbrace \vert x-y\vert \le L \rbrace}(x,y)c_{d,\alpha}\vert x-y\vert^{-d-\alpha},
\end{align}
where $c_{d,\alpha} := \frac{2^{\alpha}\Gamma\left(\frac{d+\alpha}{2}\right)}{\pi^{d/2}\vert\Gamma\left(-\frac{\alpha}{2}\right)\vert} > 0$. This class of kernels is without further assumptions not contained in the class considered in \autoref{sec:fcase} since the coefficient function $g$ was supposed to be bounded between two positive constants.

From the definition it is already clear that $K \ge 0$ and
\begin{align*}
K_s(x,y) = j(x,y)c_{d,\alpha}\vert x-y\vert^{-d-\alpha},~~ K_a(x,y) = ( V(x) - V(y)) \mathbbm{1}_{\lbrace \vert x-y\vert \le L \rbrace}(x,y)c_{d,\alpha}\vert x-y\vert^{-d-\alpha}.
\end{align*}
Moreover, \eqref{cutoff}, \eqref{cutoff2}, \eqref{Poinc}, \eqref{Sob} trivially hold by boundedness of $j$. \eqref{K1} holds for $K$ with $\theta \in [\frac{d}{\alpha},\infty]$ if $V$ satisfies the condition of \autoref{lemma:suffK1}.

Note that if \eqref{K1} holds true with $\theta = \infty$, \eqref{K2} is satisfied in the sense of \autoref{prop:K1impliesK2}.
The following is another sufficient condition for \eqref{K2}: \\
Assume that there exists $D < 1$ such that for every ball $B_{2r} \subset \Omega$, $0 < r \le 1$:
\begin{align}
\label{eq:suffK2}
\vert V(x) - V(y)\vert\mathbbm{1}_{\lbrace \vert x-y\vert \le L \rbrace}(x,y) \le D\lambda, ~~ \forall x,y \in B_{2r}.
\end{align}
Under these additional assumptions on $V,L$, \autoref{thm:mainthmPDE} is applicable to $K$.

We observe that for a given H\"older continuous function $V$ it is possible to choose $L$ suitably such that \eqref{eq:Knonneg}, \eqref{eq:suffK2} hold true for $K$:
\begin{proposition}
\label{ex:Lcond}
Assume that $V \in C^{0,\gamma}(\R^d)$ for some $\gamma \in (0,1]$.  If $L \le \left(\lambda [V]_{C^{0,\gamma}(\R^d)}^{-1}\right)^{1/\gamma}$, then \eqref{eq:Knonneg} holds true. If  $L \le \left(D\lambda [V]_{C^{0,\gamma}(\R^d)}^{-1}\right)^{1/\gamma}$ for some $D < 1$, then \eqref{K2} holds for $K$.
\end{proposition}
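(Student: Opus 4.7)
The proof will be a short, direct application of the H\"older continuity of $V$ together with the pointwise algebraic structure of $K$, followed by verification of \eqref{K2} with a canonical choice of comparison kernel.

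First I would handle the claim about \eqref{eq:Knonneg}. For arbitrary $x,y \in \R^d$ with $\vert x-y \vert \le L$, H\"older continuity gives
\[
\vert V(x) - V(y) \vert \le [V]_{C^{0,\gamma}(\R^d)} \, \vert x-y\vert^{\gamma} \le [V]_{C^{0,\gamma}(\R^d)} L^{\gamma},
\]
and inserting the assumed bound $L^{\gamma} \le \lambda [V]_{C^{0,\gamma}(\R^d)}^{-1}$ yields $\vert V(x)-V(y)\vert \le \lambda$. This is precisely \eqref{eq:Knonneg} (with the indicator truncating all other pairs to zero).

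For the second claim I would first establish \eqref{eq:suffK2} by the identical computation with $L^{\gamma} \le D\lambda [V]_{C^{0,\gamma}(\R^d)}^{-1}$, giving $\vert V(x)-V(y)\vert \mathbbm{1}_{\{\vert x-y\vert \le L\}} \le D\lambda$ for all $x,y \in \R^d$. It then remains to verify that \eqref{eq:suffK2} implies \eqref{K2}. I propose taking the comparison kernel in \eqref{K2} to be $K_s$ itself, so that the second condition in \eqref{K2} holds trivially with constant $1$. To verify the pointwise lower bound on $K$, I would write, for $x,y \in B_{2r}$,
\[
K(x,y) = K_s(x,y) + K_a(x,y) \ge K_s(x,y) - \vert K_a(x,y)\vert,
\]
and estimate $\vert K_a(x,y)\vert = \vert V(x)-V(y)\vert \mathbbm{1}_{\{\vert x-y\vert\le L\}} c_{d,\alpha}\vert x-y\vert^{-d-\alpha} \le D\lambda \, c_{d,\alpha}\vert x-y\vert^{-d-\alpha}$. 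Using $j(x,y)\ge \lambda$, this last quantity is bounded by $D \, j(x,y) c_{d,\alpha}\vert x-y\vert^{-d-\alpha} = D \, K_s(x,y)$, so $K(x,y) \ge (1-D) K_s(x,y)$, as required by \eqref{K2}.

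There is essentially no obstacle here; the only minor subtlety is the notational clash between the symmetric coefficient $j$ appearing in \eqref{eq:kernelclassV} and the symmetric jumping kernel (also denoted $j$) appearing in \eqref{K2}. I would make the role of the two objects explicit in the write-up by choosing the jumping kernel in \eqref{K2} to be $K_s$ rather than reusing the letter $j$, thereby avoiding any ambiguity. No further input — and in particular no use of \autoref{prop:K1impliesK2} — is needed, since the pointwise comparison $\vert K_a\vert \le D K_s$ follows directly from the uniform lower bound $j\ge\lambda$.
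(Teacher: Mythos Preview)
Your proof is correct and follows exactly the approach the paper implicitly intends: the paper states \eqref{eq:suffK2} as a sufficient condition for \eqref{K2} immediately before the proposition (without proof), and your argument both verifies \eqref{eq:suffK2} from the H\"older bound on $V$ and then supplies the missing verification that \eqref{eq:suffK2} indeed yields \eqref{K2} with the comparison kernel taken to be $K_s$. The paper itself gives no proof for this proposition, so there is nothing further to compare.
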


\begin{remark*}
The class of kernels defined in \eqref{eq:kernelclassV} can be generalized as follows:
\vspace{-0.2cm}
\begin{itemize}
\item[(i)] Consider symmetric kernels $K_{1,2}(x,y) \asymp \vert x-y \vert^{-d-\alpha}$. Let $V : \R^d \to \R$, $L > 0$. Define
\begin{align*}
K(x,y) := K_1(x,y) + (V(x)-V(y))\mathbbm{1}_{\lbrace \vert x-y \vert \le L\rbrace}(x,y)K_2(x,y).
\end{align*}
\item[(ii)] Let $K_1$ be as before, $0 < \beta^{(i)} < \alpha/2$ and $K_2^{(i)}(x,y) \asymp \vert x-y\vert^{-d-\beta^{(i)}}$ symmetric, $V^{(i)} : \R \to \R$ and $L^{(i)} > 0$, for $i \in \lbrace 1,\dots,d\rbrace$. Define
\begin{align*}
K(x,y) := K_1(x,y) + \sum_{i=1}^d(V^{(i)}(x_i)-V^{(i)}(y_i))\mathbbm{1}_{\lbrace \vert x_i-y_i \vert \le L^{(i)}\rbrace}(x,y)K_2^{(i)}(x,y).
\end{align*}
\end{itemize}
\vspace{-0.2cm}
Under suitable assumptions on $V,L$, respectively $V^{(i)},L$ one can establish \eqref{K1}, \eqref{K2}.
\end{remark*}

\textbf{Convergence to a diffusion with drift.~}
The class of kernels defined in \eqref{eq:kernelclassV} is of fundamental importance to us since the corresponding operators can be regarded as nonlocal counterparts of second order divergence form operators with a drift term. We define
\begin{align*}
\Gamma^{(\alpha)}_L(u,V)(x) = \int_{\R^d}(u(x)-u(y))(V(x)-V(y))\mathbbm{1}_{\lbrace \vert x-y\vert \le L \rbrace}(x,y)c_{d,\alpha}\vert x-y\vert^{-d-\alpha} \d y
\end{align*}
and observe that $\cEa(u,v) = \int_{\R^d} \Gamma_L(u,V)(x)v(x) \d x$.\\
Therefore $K$ gives rise to the operator $L_{K_s} + \Gamma_L(\cdot,V)$, where $L_{K_s}$ is a symmetric diffusion operator comparable to $(-\Delta)^{\alpha/2}$. We interpret $\Gamma_L(\cdot,V)$ as a nonlocal drift whose direction depends on $V$. A very simple example of $V$ is given by $V(x) = \sum_{i=1}^d b_i x_i$, where $b_i \in \R$.\\
We give a justification of this viewpoint through the approximation results from \autoref{sec:approx}. Given a suitable function $V$ the following theorem is a corollary of \autoref{thm:Mosco} and states that 
\begin{align*}
(-\Delta)^{\alpha/2} + \Gamma^{(\alpha)}_L(\cdot,V) \to -\Delta + (\nabla\cdot, \nabla V).
\end{align*}

\begin{theorem}
\label{thm:approxV}
Let $\alpha_0 \in (0,2)$ and $\theta \in [\frac{d}{\alpha_0},\infty]$. Let $j : \R^d \times \R^d \to [\lambda,\Lambda]$ be symmetric and $V \in W^{1,2\theta}(\R^d) \cap C^1(\R^d)$ and $L \in (0,\infty)$ such that \eqref{eq:Knonneg} holds true. Define
\begin{align*}
K^{(\alpha)}(x,y) = j(x,y) c_{d,\alpha}\vert x-y\vert^{-d-\alpha} + ( V(x) - V(y)) \mathbbm{1}_{\lbrace \vert x-y\vert \le L \rbrace}(x,y)c_{d,\alpha}\vert x-y\vert^{-d-\alpha}.
\end{align*}
Then \eqref{eq:globalK1} and \eqref{eq:kernelcomp} hold true for $(K^{(\alpha)})_{\alpha \in (\alpha_0,2)}$. Let $B \subset \R^d$ be a smooth bounded domain. Then the sequence of forms $(\cE^{(\alpha)}_B,H^{\alpha/2}(B))$, with
\begin{align*}
\cE^{(\alpha)}_B(u,v) = \int_{B}\int_{B}(u(x)-u(y))v(x) K^{(\alpha)}(x,y) \d y \d x
\end{align*} 
converges in the Mosco-Hino-sense in $L^2(B)$ to $(\cE,H^1(B))$, given by
\begin{align*}
\cE(u,v) = \int_{B} a_{i,j}(x) \partial_i u(x) \partial_j u(x) \d x + 2\int_{B} \partial_i V(x) \partial_i u(x) v(x) \d x,
\end{align*}
where $a_{i,j}$ is defined as in \eqref{eq:aij}.
\end{theorem}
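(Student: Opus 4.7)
The proof strategy is to reduce to Theorem~\ref{thm:Mosco} and then identify the limits explicitly using Taylor expansion. First, I will verify the two hypotheses \eqref{eq:globalK1} and \eqref{eq:kernelcomp} uniformly in $\alpha \in (\alpha_0,2)$. The lower/upper bounds in \eqref{eq:kernelcomp} follow immediately from $j \in [\lambda,\Lambda]$ together with the classical asymptotic $c_{d,\alpha} \asymp (2-\alpha)$ as $\alpha\nearrow 2$, which is deduced from the Gamma-function identity $|\Gamma(-\alpha/2)|(2-\alpha)\to 2$.

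For \eqref{eq:globalK1}, using $K_s^{(\alpha)}(x,y) = j(x,y) c_{d,\alpha} |x-y|^{-d-\alpha} \ge \lambda c_{d,\alpha}|x-y|^{-d-\alpha}$, one has
\[
\frac{|K_a^{(\alpha)}(x,y)|^2}{K_s^{(\alpha)}(x,y)} \le \lambda^{-1}\, c_{d,\alpha}\, (V(x)-V(y))^2\, \mathbbm{1}_{|x-y|\le L}\, |x-y|^{-d-\alpha}.
\]
Applying Minkowski's integral inequality and the standard bound $\|V(\cdot)-V(\cdot+h)\|_{L^{2\theta}(\R^d)} \le |h|\,\|\nabla V\|_{L^{2\theta}(\R^d)}$ (valid for $V\in W^{1,2\theta}$ by the fundamental theorem of calculus), I obtain
\[
\left\|\int_B \frac{|K_a^{(\alpha)}(\cdot,y)|^2}{K_s^{(\alpha)}(\cdot,y)}\,\d y\right\|_{L^\theta(B)} \le \lambda^{-1}\,\|\nabla V\|_{L^{2\theta}(\R^d)}^{2}\, c_{d,\alpha} \int_{B_L} |h|^{2-d-\alpha}\,\d h,
\]
and the right-hand side is uniformly bounded in $\alpha$ since $c_{d,\alpha}\int_{B_L}|h|^{2-d-\alpha}\d h \asymp c_{d,\alpha} L^{2-\alpha}/(2-\alpha)$ stays finite as $\alpha \nearrow 2$.

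Once the hypotheses of Theorem~\ref{thm:Mosco} are verified, I obtain Mosco-Hino convergence of $(\cE^{(\alpha)}_B, H^{\alpha/2}(B))$ to some $(\cE,H^1(B))$ with bounded uniformly elliptic $a_{i,j}$ and drift $b_i\in L^{2\theta_0}(B)$ given a.e. by \eqref{eq:aij} and \eqref{eq:bi}. It remains to identify $b_i(x) = \partial_i V(x)$ a.e. on $B$. For $\delta < L$, the characteristic function in $K_a^{(\alpha)}$ equals one on $B_\delta(0)$, and a change of variables gives
\[
\int_{B_\delta(0)} (-h_i) K_a^{(\alpha)}(x,x+h)\,\d h = c_{d,\alpha} \int_{B_\delta(0)} h_i\bigl(V(x+h) - V(x)\bigr) |h|^{-d-\alpha}\,\d h.
\]
Since $V\in C^1(\R^d)$, Taylor's theorem yields $V(x+h) - V(x) = \nabla V(x)\cdot h + R(x,h)$ with $|R(x,h)| = o(|h|)$ as $h\to 0$. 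The symmetry of $B_\delta$ kills the off-diagonal contributions $\int h_i h_j |h|^{-d-\alpha}\d h$ for $i\ne j$, and a direct computation of $c_{d,\alpha}\int_{B_\delta}h_i^2 |h|^{-d-\alpha}\d h$ using the Gamma-function asymptotics mentioned above yields the correct normalization consistent with the symmetric case (i.e.\ matching the identification $a_{i,j}$ from Lemma~\ref{lemma:ptwconv} applied to $j\equiv 1$). The remainder term contributes $c_{d,\alpha}\int_{B_\delta}|h|^{1-d-\alpha}o(|h|)\,\d h$, which vanishes as $\alpha\nearrow 2$ followed by $\delta\searrow 0$ by dominated convergence.

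The main technical obstacle is the bookkeeping of constants in the final identification step, since Theorem~\ref{thm:Mosco} encodes normalization choices in both $a_{i,j}$ and $b_i$; however, once consistency with the $j\equiv 1$ case is checked (which already lies within the scope of Lemma~\ref{lemma:ptwconv} and Proposition~3.1 of \cite{GKV20}), the nonsymmetric identification follows by the same Taylor argument.
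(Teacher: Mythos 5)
Your proposal is correct and matches the paper's overall strategy (verify \eqref{eq:globalK1} and \eqref{eq:kernelcomp}, invoke \autoref{thm:Mosco}, then identify $b_i = \partial_i V$ pointwise by Taylor expansion and the moment limit $\int_{B_\delta} h_i h_j\, c_{d,\alpha}|h|^{-d-\alpha}\,\d h \to \delta_{ij}$). The one genuine difference is in the verification of \eqref{eq:globalK1}: the paper decomposes $V(x)-V(y) = (\nabla V(x),x-y) + r(x,y)|x-y|$ and controls the remainder via Spector's $L^{2\theta}$-differentiability theorem \cite{Spe16} together with the machinery of \autoref{lemma:suffK1}, treating $\theta = \infty$ separately; you instead use the elementary translation estimate $\Vert V(\cdot + h) - V(\cdot)\Vert_{L^{2\theta}(\R^d)} \le |h|\,\Vert \nabla V\Vert_{L^{2\theta}(\R^d)}$ (from the fundamental theorem of calculus and translation invariance) plus Minkowski's integral inequality. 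Your route is shorter, avoids citing the $L^{2\theta}$-differentiability result, and covers $\theta = \infty$ and $\theta < \infty$ uniformly; the paper's route uses the $r(x,y)$ decomposition because the same decomposition reappears in the identification of $b_i$, so it buys a unified presentation. The only imprecision on your side is the remark that the remainder ``vanishes as $\alpha \nearrow 2$ followed by $\delta \searrow 0$'': for fixed $\delta$ the remainder does not vanish as $\alpha \nearrow 2$; rather it is bounded \emph{uniformly} in $\alpha$ by $c \sup_{|h|\le\delta}|R(x,h)|/|h|$ (since $c_{d,\alpha}\int_{B_\delta}|h|^{2-d-\alpha}\,\d h$ stays bounded), and this bound tends to zero as $\delta\searrow 0$ by $V\in C^1$; one then uses that the limit defining $b_i$ is independent of $\delta$. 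This is exactly the argument the paper spells out, and your proof is correct once phrased this way.
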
 

\begin{proof}
First, we observe that for $\theta < \infty $ it holds (see Theorem 1.3 in \cite{Spe16})
\begin{align}
\label{eq:approxVhelp1}
\lim_{\delta \to 0} \int_{\R^d} \left(\sup_{y : \vert x-y \vert \le \delta} \frac{\vert V(x) - V(y) - (\nabla V(x) , x - y)\vert}{\vert x - y\vert}\right)^{2\theta} \d x = 0.
\end{align}
Using the following identity
\begin{align}
\label{eq:approxVhelp2}
V(x)-V(y) = (\nabla V(x) , x-y) + (V(x)-V(y) - (\nabla V(x) , x-y)),
\end{align}
we deduce that \eqref{eq:globalK1} holds true with $\theta$ by the same arguments as in the proof of \autoref{lemma:suffK1}. In case $\theta = \infty$, the proof of \eqref{eq:globalK1} is immediate. Therefore, \autoref{thm:Mosco} is applicable to $(\cE^{(\alpha)}_B,H^{\alpha/2}(B))$ and it only remains to prove that $b = \nabla V$, where $b$ is defined via
\begin{align*}
b_i(x) = \lim_{\alpha \nearrow 2} \int_{\{\vert x-y \vert \le \delta \wedge L\}} (x_i-y_i) (V(x)-V(y)) c_{d,\alpha}\vert x-y\vert^{-d-\alpha} \d y.
\end{align*}
First, we observe that since $\int_{\{\vert h \vert \le \delta\}} h_ih_j c_{d,\alpha}\vert h\vert^{-d-\alpha} \d h \to \delta_{i,j}$ as $\alpha \nearrow 2$ for every $\delta > 0$, it holds for $x \in \R^d$:
\begin{align*}
\lim_{\alpha \nearrow 2} \int_{\{\vert x-y \vert \le \delta \wedge L\}} (x_i-y_i) (\nabla V(x) , x-y) c_{d,\alpha}\vert x-y\vert^{-d-\alpha} \d y = \partial_i V(x).
\end{align*}
Moreover for $x\in\R^d$:
\begin{align*}
\lim_{\alpha \nearrow 2}& \int_{\{\vert x-y \vert \le \delta \wedge L\}} (x_i-y_i)[V(x)-V(y) - (\nabla V(x) , x-y)] c_{d,\alpha}\vert x-y\vert^{-d-\alpha} \d y \\
&\le \lim_{\alpha \nearrow 2} \int_{\{\vert x-y \vert \le \delta \wedge L\}} \frac{\vert V(x)-V(y) - (\nabla V(x) , x-y)\vert}{\vert x-y\vert} c_{d,\alpha}\vert x-y\vert^{2-d-\alpha} \d y \\
&\le c \sup_{y : \vert x-y \vert \le \delta \wedge L} \frac{\vert V(x)-V(y) - (\nabla V(x) , x-y)\vert}{\vert x-y\vert}
\end{align*}
for some $c > 0$ independent of $\delta,\alpha$. Due to $V \in C^1(\R^d)$ we can conclude from Taylor's formula that for every $x \in \R^d$ the above quantity becomes arbitrarily small by choosing $\delta > 0$ small enough. Therefore, in the light of \eqref{eq:approxVhelp2}, it holds that for every $x \in \R^d$:
\begin{align*}
\int_{\{\vert x-y \vert \le \delta \wedge L\}} (x_i-y_i) (V(x)-V(y)) c_{d,\alpha}\vert x-y\vert^{-d-\alpha} \d y \to \partial_i V(x) ~~ \text{ as } \alpha \nearrow 2,
\end{align*}
so in particular, $b = \nabla V$, as desired.
\end{proof}

\begin{remark*}
By choosing $\alpha_0$ suitably it is possible to apply \autoref{thm:approxV} to any function $V \in W^{1,d+\eps}(\R^d) \cap C^1(\R^d)$, $\eps > 0$, if $L$ is such that \eqref{eq:Knonneg} holds true.
\end{remark*}

\subsection{Nonlocal drifts on cones}
\label{sec:cone}

There exist also admissible nonsymmetric jumping measures that are not induced by coefficient functions $g$ or drifts $V$ as in the previous two sections:

\begin{example}
Let $K_s(x,y) \asymp \vert x-y \vert^{-d-\alpha}$ be given. Let $g,h: \R^d \times \R^d \to \R$ be antisymmetric. Let $L > 0$ and $\beta < \alpha/2$. We set
\begin{align*}
K_a(x,y) = g(x,y)\vert x-y\vert^{-d-\beta}\mathbbm{1}_{\lbrace \vert x-y\vert \le L \rbrace}(x,y) + h(x,y) \vert x-y\vert^{-d-\alpha}\mathbbm{1}_{\lbrace \vert x-y\vert > L \rbrace}(x,y).
\end{align*}
Under suitable assumptions on $g,h,L$ one can verify \eqref{K1}, \eqref{K2} so that $K = K_s + K_a$ is an admissible kernel in the sense of \autoref{thm:mainthmPDE}, \autoref{thm:mainthmPDEdual}.\\
A particular instance of this class is the following anisotropic kernel
\begin{align*}
K(x,y) = \vert x-y\vert^{-d-\alpha} + \mathbbm{1}_C (x-y)\vert x-y\vert^{-d-\beta},
\end{align*}
where $0 < \beta < \alpha/2 < 1$ and $C$ is a single cone centered at the origin. (e.g. $C = \R^d_+$).
It is easy to see that \eqref{K1}, \eqref{K2} are satisfied in this case.
\end{example}

All examples mentioned so far have in common that they satisfy \eqref{K1} and \eqref{K2} with $J = K_s$, respectively $j = K_s$. We would now like to give an example where \eqref{K1}, \eqref{K2} hold true with symmetric kernels $j,J$ that are not pointwise comparable to $K_s$, see also \cite{FKV15}.

\begin{proposition}
\label{prop:nldriftoncones}
Let $C \subset \R^d$ be a single cone and $D \subset \R^d$ be a double cone such that $C \cap D = \emptyset$. Let $0 < 2\beta < \alpha < 2$. Consider
\begin{align*}
K(x,y) = |x-y|^{-d-\alpha}\mathbbm{1}_{D}(x-y) + |x-y|^{-d-\beta}\mathbbm{1}_C(x-y).
\end{align*}
Then \eqref{K1}, \eqref{K2} hold true for $K$ with $J(x,y) = |x-y|^{-d-\alpha}$, $j(x,y) = |x-y|^{-d-\alpha}\mathbbm{1}_{D}(x-y)$ with $\theta = \infty$. Moreover, \eqref{Sob}, \eqref{Poinc}, \eqref{cutoff} hold true with $\alpha$ and \eqref{cutoff2} holds true.
\end{proposition}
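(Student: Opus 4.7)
The plan is to first decompose $K$ explicitly and then verify each property one by one, leaning on \autoref{lemma:ChSi} as the only nontrivial input. Since $D$ is a double cone, $\mathbbm{1}_D(z) = \mathbbm{1}_D(-z)$, and since $C$ is a single cone with $C \cap (-C) = \emptyset$, one computes
\begin{align*}
K_s(x,y) &= |x-y|^{-d-\alpha}\mathbbm{1}_D(x-y) + \tfrac{1}{2}|x-y|^{-d-\beta}\mathbbm{1}_{C \cup (-C)}(x-y),\\
|K_a(x,y)| &= \tfrac{1}{2}|x-y|^{-d-\beta}\mathbbm{1}_{C \cup (-C)}(x-y).
\end{align*}
In particular $K_s \ge j$ pointwise, and $K \ge j$ trivially, so the first part of \eqref{K2} holds with constant $0 < 1$.

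For \eqref{K1} with $\theta = \infty$, the pointwise bound is a direct calculation: for $x \in B_{2r}$ with $r \le 1$,
\begin{align*}
\int_{B_{2r}} \frac{|K_a(x,y)|^2}{|x-y|^{-d-\alpha}} \d y \le \tfrac{1}{4}\int_{|h| \le 4r} |h|^{\alpha - 2\beta - d} \d h \le c(4r)^{\alpha - 2\beta} \le c,
\end{align*}
which is finite precisely because $\alpha > 2\beta$. For the second requirement of \eqref{K1}, namely $\cE^J_{B_{2r}} \le C\cE^{K_s}_{B_{2r}}$, I will use $K_s \ge j$ to get $\cE^{K_s}_{B_{2r}} \ge \cE^j_{B_{2r}}$, and then apply \autoref{lemma:ChSi}, noting that $j$ satisfies the spherical-density condition \eqref{eq:CSass} because $D$ occupies a positive proportion of every ball centered at any $x$. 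This yields $\cE^j_{B_{2r}}(v,v) \ge c[v]^2_{H^{\alpha/2}(B_{2r})}$, and since $\cE^J_{B_{2r}}(v,v)$ is (a constant multiple of) $[v]^2_{H^{\alpha/2}(B_{2r})}$, the desired comparison follows. The same chain of inequalities, combined with $|x-y|^{-d-\beta} \le 4^{\alpha-\beta}|x-y|^{-d-\alpha}$ on $B_{2r} \times B_{2r}$ (valid since $|x-y| \le 4r \le 4$ and $\alpha > \beta$), yields $\cE^{K_s}_{B_{2r}} \le c \cE^J_{B_{2r}} \le c \cE^j_{B_{2r}}$, completing \eqref{K2}.

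For the symmetric assumptions, coercivity $\cE^{K_s}_{B_r}(v,v) \ge c[v]^2_{H^{\alpha/2}(B_r)}$ is inherited from the bound $K_s \ge j$ and \autoref{lemma:ChSi}, so \eqref{Sob} and \eqref{Poinc} follow via item (iv) of the remark after the assumption block. For \eqref{cutoff}, splitting $K_s$ and using $|\nabla \tau| \le 3\rho^{-1}$ together with $(\tau(x)-\tau(y))^2 \le (9\rho^{-2}|x-y|^2) \wedge 4$ gives
\begin{align*}
\Gamma^{K_s}(\tau,\tau)(x) \le c\rho^{-\alpha} + c\rho^{-\beta} \le c\rho^{-\alpha},
\end{align*}
using $\rho \le 1$ and $\beta < \alpha$. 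Finally, for \eqref{cutoff2}, direct integration yields
\begin{align*}
\int_{B_{Ar}(x)^c} K(x,y) \d y \le c(Ar)^{-\alpha} + c(Ar)^{-\beta} \le c(Ar)^{-\beta},
\end{align*}
since $Ar \ge 1$, giving \eqref{cutoff2} with $\sigma = \beta$. The only conceptually nontrivial step is the invocation of \autoref{lemma:ChSi}, which is precisely what makes it possible to compare the full fractional Sobolev energy $\cE^J$ with the cone-restricted energy $\cE^j$; all other verifications reduce to elementary computations exploiting $\alpha > 2\beta$, $\rho \le 1$, and the translation invariance of $K$.
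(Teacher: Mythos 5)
Your proof is correct and follows essentially the same plan as the paper's: decompose $K$ into $K_s$ and $K_a$, verify \eqref{K1} by direct integration with finiteness hinging on $\alpha>2\beta$, reduce \eqref{K2}, \eqref{Sob}, \eqref{Poinc} to coercivity of the cone-restricted form $\cE^j$ combined with the pointwise bound $K_s\lesssim J$ on bounded sets, and check \eqref{cutoff}, \eqref{cutoff2} by elementary tail estimates. Where the paper simply declares the comparability $\cE^{J}_{B_{r+\rho}}\asymp\cE^{K_s}_{B_{r+\rho}}\asymp\cE^{j}_{B_{r+\rho}}$ ``well-known,'' you make the essential coercivity step explicit via \autoref{lemma:ChSi}; this is a valid and natural choice here because the kernel is translation-invariant, so one may take $\Omega=\R^d$ and the $5^n$-margin in \autoref{lemma:ChSi} imposes no restriction.
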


\begin{proof}
Note that
\begin{align*}
K_s(x,y) &= |x-y|^{-d-\alpha}\mathbbm{1}_{D}(x-y) + \frac{1}{2}|x-y|^{-d-\beta}\left(\mathbbm{1}_{C}(x-y) + \mathbbm{1}_{C}(y-x)\right),\\
K_a(x,y) &= \frac{1}{2}|x-y|^{-d-\beta}\left(\mathbbm{1}_{C}(x-y) - \mathbbm{1}_{C}(y-x)\right).
\end{align*}
\eqref{K1} follows from the computation
\begin{align*}
\int_{B_{2}} \frac{|K_a(x,y)|^2}{J(x,y)} \d y \le c \int_{B_{2}(x)} |h|^{-d-2\beta + \alpha} < \infty,
\end{align*}
using that $\beta < \alpha/2$. To see \eqref{K2}, note that trivially, $K(x,y) \ge j(x,y)$. The comparability of the corresponding localized energy forms
\begin{align*}
\cE^{J}_{B_{r+\rho}}(v,v) \asymp \cE^{K_s}_{B_{r+\rho}}(v,v) \asymp \cE^{j}_{B_{r+\rho}}(v,v)
\end{align*}
is well-known, given $0 < \rho \le r \le 1$ and $v \in L^2(B_{r+\rho})$. Note that  comparability of the latter two forms requires $\beta \le \alpha$. The remaining assertions are easy to check.
\end{proof}


\end{document}